\newtheorem{theorem}{Theorem}[section]
\newtheorem{proposition}{Proposition}[section]
\newtheorem{corollary}{Corollary}[section]
\newtheorem{lemma}{Lemma}[section]
\newtheorem{conjecture}{Conjecture}[section]
\theoremstyle{definition}
\newtheorem{definition}{Definition}[section]
\newtheorem{remark}{Remark}[section]
\newtheorem{example}{Example}[section]
\numberwithin{equation}{section}
\begin{document}

\title[Co-periodic forests of finite \(p\)-groups]
{Co-periodicity isomorphisms \\ between forests of finite \(p\)-groups}

\author{Daniel C. Mayer}
\address{Naglergasse 53 \\ 8010 Graz \\ Austria}
\email{algebraic.number.theory@algebra.at}
\urladdr{http://www.algebra.at}

\thanks{Research supported by the Austrian Science Fund (FWF): P 26008-N25}

\subjclass[2000]{05C05, 05C63, 20D15, 20E18, 20E22, 20F05, 20F12, 20F14, 20--04}
\keywords{Finite \(p\)-groups, descendant trees, pro-\(p\) groups, coclass forests,
generator rank, relation rank, nuclear rank,
parametrized polycyclic pc-presentations, automorphism groups,
central series, two-step centralizers,
commutator calculus, transfer kernels, abelian quotient invariants,
\(p\)-group generation algorithm}

\date{December 21, 2017}

\begin{abstract}
Based on a general theory of descendant trees of finite \(p\)-groups
and the virtual periodicity isomorphisms between the branches of a coclass subtree,
the behavior of algebraic invariants of the tree vertices
and their automorphism groups under these isomorphisms
is described with simple transformation laws.
For the tree of finite \(3\)-groups with elementary bicyclic commutator quotient,
the information content of each coclass subtree with metabelian mainline is shown to be finite.
As a striking novelty in this paper,
evidence is provided of co-periodicity isomorphisms between coclass forests
which reduce the information content of the entire metabelian skeleton
and a significant part of non-metabelian vertices
to a finite amount of data.
\end{abstract}

\maketitle



\section{Introduction}
\label{s:Intro}
\noindent
Denote by \(\mathcal{T}\) the rooted tree of all finite \(3\)-groups \(G\)
with elementary bicyclic commutator quotient \(G/G^\prime\simeq C_3\times C_3\),
and let \(\mathcal{T}_\ast\) be the infinite pruned subtree of \(\mathcal{T}\),
where all descendants of capable non-metabelian vertices are eliminated.
The main intention of this paper is to prove that the information content of the tree \(\mathcal{T}_\ast\) 
can be reduced to a finite set of representatives with the aid of two kinds of periodicity.
\begin{itemize}
\item
Firstly,
the well-known \textit{virtual periodicity} isomorphisms \(\mathcal{B}(n+\ell)\simeq\mathcal{B}(n)\)
between the finite depth-pruned branches
\(\mathcal{B}(n)\), \(n\ge n_\ast\), of a coclass subtree \(\mathcal{T}^r\subset\mathcal{T}_\ast\)
are refined to \textit{strict periodicity} isomorphisms between complete branches
which reduce the information content of the infinite coclass subtree to the finite union
of pre-period \((\mathcal{B}(n))_{n_\ast\le n<p_\ast}\)
and first primitive period \((\mathcal{B}(n))_{p_\ast\le n<p_\ast+\ell}\).
The virtual periodicity was proved by du Sautoy
\cite{dS}
and independently by Eick and Leedham-Green
\cite{EkLg}
for groups of any prime power order.
The strict periodicity for \(p=3\) and type \(C_3\times C_3\) is proved in the present paper.
\item
Secondly,
evidence is provided of \textit{co-periodicity} isomorphisms \(\mathcal{F}(r+2)\simeq\mathcal{F}(r)\)
between the infinite coclass forests
\(\mathcal{F}(r)\), \(r\ge 1\),
which reduce the information content of the pruned tree \(\mathcal{T}_\ast\) to the union
of pre-period \((\mathcal{F}(r))_{1\le r\le 4}\) and first primitive period \((\mathcal{F}(r))_{5\le r\le 6}\),
consisting of the leading six coclass forests only.
The discovery of this co-periodicity is the progressive innovation in the present paper.
\end{itemize}
Together with the coclass theorems of Leedham-Green
\cite{Lg}
and Shalev
\cite{Sv},
which imply that each coclass forest \(\mathcal{F}(r)\) consists of
a finite sporadic part \(\mathcal{F}_0(r)\) and a finite number of coclass trees \(\mathcal{T}^r_j\), \(1\le j\le t\),
each having a finite information content due to the strict periodicity,
this shows that the pruned infinite subtree \(\mathcal{T}_\ast\) of the tree \(\mathcal{T}\)
is described by finitely many representatives only.

We begin with a general theory of descendant trees of finite \(p\)-groups with arbitrary prime \(p\) in \S\
\ref{s:TreesForests}
and we explain the conceptual foundations of
the \textit{virtual periodicity} isomorphisms between the finite branches of coclass subtrees
\cite{dS,EkLg,dSSg}
and the recently discovered \textit{co-periodicity} isomorphisms between infinite coclass forests in \S\
\ref{s:IsomorphicDigraphs}.
The behavior of algebraic invariants of the tree vertices and their automorphism groups
is described with simple \textit{transformation laws} in \S\
\ref{s:StructuredDigraphs}.
The graph theoretic preliminaries are supplemented by connections
between depth, width, \textit{information content} and numbers of immediate descendants in \S\
\ref{s:GraphStruc},
identifiers of groups in \S\
\ref{s:SmallGroups},
and precise definitions of mainlines and sporadic parts in \S\
\ref{s:MainlineSporadic}.
The main theorems are presented in \S\
\ref{s:MainTheorems}.

Then we focus on the tree \(\mathcal{T}\) of finite \(3\)-groups \(G\) with abelianization \(G/G^\prime\simeq C_3\times C_3\).
The flow of our investigations is guided by \S\
\ref{s:MainTrunk}
concerning the remarkable infinite \textit{main trunk} \((P_{2r-1})_{r\ge 2}\) of certain metabelian vertices in \(\mathcal{T}\)
which gives rise to the top vertices of all coclass forests \(\mathcal{F}(r)\), \(r\ge 2\), by periodic bifurcations
and constitutes the germ of the newly discovered co-periodicity \(\mathcal{F}(r+2)\simeq\mathcal{F}(r)\) of length two.
To start with a beautiful highlight, we immediately celebrate
the simple structure of the first primitive period \((\mathcal{F}(r))_{5\le r\le 6}\) in \S\S\
\ref{s:PeriodicSporadic4}
and
\ref{s:PeriodicSporadic5}
and defer the somewhat arduous task of describing
the exceptional pre-period \((\mathcal{F}(r))_{1\le r\le 4}\) to the concluding \S\S\
\ref{s:Coclass1}
and
\ref{s:Conclusion}.

Finally, we point out that our theory,
together with the investigations of Eick
\cite{Ek},
provides an independent verification and confirmation
of all results about the metabelian skeleton \(\mathcal{M}\) of the tree \(\mathcal{T}\)
in the dissertation of Nebelung
\cite{Ne},
since \(\mathcal{M}\) is a subtree of the pruned tree \(\mathcal{T}_\ast\).
The present paper shows the co-periodicity of the sporadic parts \(\mathcal{F}_0(r)\)
and coclass trees \(\mathcal{T}_j^r\), \(1\le j\le t\), of the coclass forests \(\mathcal{F}(r)\), and
\cite[\S\ 5.2, pp. 114--116]{Ek}
establishes the connection between the coclass trees \(\mathcal{T}_j^r\) and
infinite metabelian pro-\(3\) groups of coclass \(r\).



\section{Descendant trees and coclass forests}
\label{s:TreesForests}
\noindent
Let \(p\) be a prime number.
In the mathematical theory of finite groups of order a power of \(p\),
so-called \(p\)-\textit{groups},
the introduction of the \textit{parent-child relation} by Leedham-Green and Newman
\cite[pp. 194--195]{LgNm}
has simplified the classification of such groups considerably.
The relation is defined in terms of the \textit{lower central series} \((\gamma_i{G})_{i\ge 1}\) of a \(p\)-group \(G\), where
\begin{equation}
\label{eqn:LowerCentral}
\gamma_1{G}:=G \quad \text{ and } \quad \gamma_i{G}:=\lbrack\gamma_{i-1}{G},G\rbrack \quad \text{ for } i\ge 2,
\end{equation}
in particular, \(\gamma_2{G}=G^\prime\) is the commutator subgroup of \(G\).
Since the series becomes stationary,
\begin{equation}
\label{eqn:Nilpotency}
\gamma_1{G}>\gamma_2{G}>\ldots>\gamma_c{G}>\gamma_{c+1}{G}, \quad \text{ and } \quad \gamma_i{G}=1 \quad \text{ for } i\ge c+1,
\end{equation}
a non-trivial \(p\)-group \(G>1\) is \textit{nilpotent} of \textit{class} \(\mathrm{cl}(G)=c\ge 1\).


\begin{definition}
\label{dfn:ParentChild}
If \(G\) is non-abelian,
then the class-\((c-1)\) quotient
\begin{equation}
\label{eqn:Parent}
\pi{G}:=G/\gamma_c{G} \quad \text{ with } \quad c\ge 2
\end{equation}
is called the \textit{parent} of \(G\),
and \(G\) is a \textit{child} (or \textit{immediate descendant}) of \(\pi{G}\).
\end{definition}


Parent and child share a common class-\(1\) quotient (or derived quotient or \textit{abelianization}), since
\begin{equation}
\label{eqn:Class1Quot}
\gamma_2(\pi{G})=\gamma_2{G}/\gamma_c{G} \quad \text{ and } \quad \pi{G}/\gamma_2(\pi{G})=(G/\gamma_c{G})/(\gamma_2{G}/\gamma_c{G})\simeq G/\gamma_2{G},
\end{equation}
according to the isomorphism theorem.
The lower central series of \(\pi{G}\) is shorter by one term:
\begin{equation}
\label{eqn:ParentNilpotency}
\gamma_1(\pi{G})=\gamma_1{G}/\gamma_c{G}>\gamma_2(\pi{G})=\gamma_2{G}/\gamma_c{G}>\ldots>\gamma_{c-1}(\pi{G})=\gamma_{c-1}{G}/\gamma_c{G}>\gamma_c(\pi{G})=1,
\end{equation}
and thus \(\mathrm{cl}(\pi{G})=c-1=\mathrm{cl}(G)-1\).


\begin{definition}
\label{dfn:DescTree}
For an assigned finite \(p\)-group \(R>1\),
the \textit{descendant tree} \(\mathcal{T}(R)\) with root \(R\)
is defined as the digraph \((V,E)\)
whose set of vertices \(V\) consists of all isomorphism classes of \(p\)-groups \(G\)
with \(G/\gamma_i{G}\simeq R\), for some \(2\le i\le\mathrm{cl}(G)+1\),
and whose set of directed edges \(E\) consists of all child-parent pairs
\begin{equation}
\label{eqn:DirectedEdge}
(G\to\pi{G}):=(G,\pi{G})\in V\times V.
\end{equation}
The mapping \(\pi:\,V\setminus\lbrace R\rbrace\to V\) is called the \textit{parent operator}.
\end{definition}


If the root \(R\) is \textit{abelian}, then all vertices of the tree \(\mathcal{T}(R)\)
share the common abelianization \(G/G^\prime\simeq R\).
Since a nilpotent group with cyclic abelianization is abelian,
the descendant tree \(\mathcal{T}(R)\) of a cyclic root \(R>1\) consists of the single isolated vertex \(R\).
The classification of \(p\)-groups by their abelianization is refined further,
if directed edges are restricted to starting vertices \(G\)
with cyclic last non-trivial lower central \(\gamma_c{G}\) of order \(p\).
Then the descendant tree of \(R\) splits into a countably infinite disjoint union
\begin{equation}
\label{eqn:CoclassSubGraphs}
\mathcal{T}(R)=\dot{\cup}_{r=e-1}^\infty\,\mathcal{G}(p,r,R)
\end{equation}
of directed subgraphs,
where \(\mathrm{ord}(R)=p^e\) and the vertices of the component \(\mathcal{G}(p,r,R)\) with fixed \(r\ge e-1\)
share the same \textit{coclass}, \(\mathrm{cc}=r\), as a common invariant,
since the logarithmic order \(\mathrm{lo}:=(\log_p\circ\,\mathrm{ord})\) and the nilpotency class \(\mathrm{cl}\)
of the parent \(\pi{G}\) and child \(G\) satisfy the rule
\begin{equation}
\label{eqn:FixedCoclass}
\begin{aligned}
\mathrm{lo}(\pi{G})=\mathrm{lo}(G/\gamma_c{G})=\mathrm{lo}(G)-\mathrm{lo}(\gamma_c{G})=\mathrm{lo}(G)-1, \quad \mathrm{cl}(\pi{G})=\mathrm{cl}(G/\gamma_c{G})=\mathrm{cl}(G)-1, \\
\text{whence } \quad \mathrm{cc}(\pi{G})=\mathrm{lo}(\pi{G})-\mathrm{cl}(\pi{G})=\mathrm{lo}(G)-1-(\mathrm{cl}(G)-1)=\mathrm{lo}(G)-\mathrm{cl}(G)=\mathrm{cc}(G).
\end{aligned}
\end{equation}


\begin{definition}
\label{dfn:CcGraphs}
Thus, the components \(\mathcal{G}(p,r,R)\) with \(r\ge e-1\) are called the \textit{coclass subgraphs}
of the descendant tree \(\mathcal{T}(R)\).
\end{definition}


According to the \textit{coclass theorems} by Leedham-Green
\cite{Lg}
and Shalev
\cite{Sv},
a coclass graph \(\mathcal{G}(p,r,R)\) is the disjoint union of a finite \textit{sporadic} part \(\mathcal{G}_0(p,r,R)\)
and finitely many \textit{coclass trees} \(\mathcal{T}_j^r\) (with infinite mainlines),
that is, a \textit{forest} for which there exist integers \(\tilde{s},\tilde{t}\ge 0\) such that
\begin{equation}
\label{eqn:CoclassTrees}
\mathcal{G}(p,r,R)=\mathcal{G}_0(p,r,R)\dot{\cup}\left(\dot{\bigcup}_{j=1}^{\tilde{t}}\,\mathcal{T}_j^r\right) \quad \text{ with } \quad \#\mathcal{G}_0(p,r,R)=\tilde{s}.
\end{equation}


\begin{definition}
\label{dfn:Forests}
In the present paper, the focus will lie on finite \(p\)-groups with fixed prime \(p=3\)
arising as descendants of the fixed elementary bicyclic \(3\)-group \(R:=C_3\times C_3\) of order \(3^e\) with \(e=2\),
where \(C_n\) denotes the cyclic group of order \(n\).
This assumption permits a simplified notation by omitting the explicit mention of \(p\) and \(R\).
Further, we shall slightly reduce the complexity of the forests \(\mathcal{G}(r):=\mathcal{G}(3,r,C_3\times C_3)\), \(r\ge e-1=1\),
by \textit{eliminating the descendants of capable} (i.e., non-terminal) \textit{non-metabelian} vertices.
This pruned light-weight version of \(\mathcal{G}(r)\) will be denoted by \(\mathcal{F}(r)\), called the \textit{coclass-\(r\) forest},
and Formula
\eqref{eqn:CoclassTrees}
becomes
\begin{equation}
\label{eqn:Forests}
\mathcal{F}(r)=\mathcal{F}_0(r)\dot{\cup}\left(\dot{\bigcup}_{j=1}^t\,\mathcal{T}_j^r\right) \quad \text{ with } \quad \#\mathcal{F}_0(r)=s
\end{equation}
and possibly different integers \(s\ne\tilde{s}\) and \(1\le t\le\tilde{t}\).
\end{definition}


\begin{remark}
\label{rmk:Pruning}
In \S\S\
\ref{s:PeriodicSporadic4}
and
\ref{s:PeriodicSporadic5}
it will turn out that the coclass trees \(\mathcal{T}_j^r\) with metabelian mainlines
do not contain any capable non-metabelian vertices.
So the pruning process from \(\mathcal{G}(r)\) to \(\mathcal{F}(r)\)
concerns the sporadic part \(\mathcal{F}_0(r)\),
and reduces the number \(t\le\tilde{t}\) of coclass trees
by eliminating those with non-metabelian mainlines entirely,
but does not affect the coclass trees with metabelian mainlines,
which remain complete in spite of pruning.
\end{remark}



\section{Isomorphic digraphs and trees}
\label{s:IsomorphicDigraphs}
\noindent
In general, we denote a graph \(\mathcal{G}\) as a pair \(\mathcal{G}=(V,E)\)
with set of vertices \(V\) and set of edges \(E\).

\begin{definition}
\label{dfn:GraphIso}
Let \(\mathcal{G}=(V,E)\) and \(\tilde{\mathcal{G}}=(\tilde{V},\tilde{E})\) be two digraphs
with directed edges in \(E\subset V\times V\), respectively \(\tilde{E}\subset\tilde{V}\times\tilde{V}\).
If there exists a bijection \(\psi:\,V\to\tilde{V}\) such that
\begin{equation}
\label{eqn:GraphIso}
\lbrack\quad (v,w)\in E\ \Longleftrightarrow\ (\psi(v),\psi(w))\in\tilde{E} \quad\rbrack, \text{ for all } (v,w)\in V\times V,
\end{equation}
then \(\mathcal{G}\) and \(\tilde{\mathcal{G}}\) are called \textit{isomorphic digraphs},
and \(\psi\) is an \textit{isomorphism of digraphs}.
\end{definition}

\noindent
When \(\mathcal{G}=(V,E)\) is a finite digraph with vertex cardinality \(\#(V)=n\in\mathbb{N}\),
we can identify \(V\) with the set \(\lbrace 1,\ldots,n\rbrace\).
Then the set of directed edges \(E\subset V\times V\) is characterized uniquely
by the characteristic function \(\chi_E\) of \(E\) in \(V\times V=\lbrace 1,\ldots,n\rbrace\times\lbrace 1,\ldots,n\rbrace\),
which is called the \(n\times n\) \textit{adjacency matrix} \(A=(a_{i,j})_{1\le i,j\le n}\) of \(\mathcal{G}\).
Its entries are defined, for all \(1\le i,j\le n\), by \\
\begin{equation}
\label{eqn:Adjacency}
a_{i,j}=\chi_E(i,j)=
\begin{cases}
1 & \text{ if } (i,j)\in E, \\
0 & \text{ otherwise}.
\end{cases}
\end{equation}


\begin{proposition}
\label{prp:GraphIso}
Let \(\mathcal{G}=(V,E)\) and \(\tilde{\mathcal{G}}=(\tilde{V},\tilde{E})\) be two finite digraphs with \(n\) vertices.
Then \(\mathcal{G}\) and \(\tilde{\mathcal{G}}\) are isomorphic if and only if
there exists a bijection \(\psi:\,V\to\tilde{V}\) such that
the entries of the adjacency matrices
\(a_{i,j}=\chi_E(i,j)=\chi_{\tilde{E}}(\psi(i),\psi(j))=\tilde{a}_{\psi(i),\psi(j)}\)
coincide for all \(1\le i,j\le n\).
\end{proposition}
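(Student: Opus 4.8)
The plan is to show that the stated condition is merely a reformulation of Definition \ref{dfn:GraphIso} once the vertex sets are identified with \(\lbrace 1,\ldots,n\rbrace\), so the whole proof reduces to unwinding the definition of the characteristic functions \(\chi_E\) and \(\chi_{\tilde E}\) together with the defining property \eqref{eqn:Adjacency} of the adjacency matrices.

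First I would record the elementary fact underlying everything: for any pair \((i,j)\in V\times V\) one has \((i,j)\in E\) if and only if \(\chi_E(i,j)=1\), and likewise \((\psi(i),\psi(j))\in\tilde E\) if and only if \(\chi_{\tilde E}(\psi(i),\psi(j))=1\). Since both characteristic functions take values in the two-element set \(\lbrace 0,1\rbrace\), the logical biconditional
\[
(i,j)\in E\ \Longleftrightarrow\ (\psi(i),\psi(j))\in\tilde E
\]
is equivalent to the numerical equality \(\chi_E(i,j)=\chi_{\tilde E}(\psi(i),\psi(j))\), that is, to \(a_{i,j}=\tilde a_{\psi(i),\psi(j)}\).

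Then the two implications follow by quantifying this equivalence over all \((i,j)\). For the forward direction I would assume \(\mathcal{G}\) and \(\tilde{\mathcal{G}}\) are isomorphic, take a digraph isomorphism \(\psi:\,V\to\tilde V\) satisfying \eqref{eqn:GraphIso}, and apply the equivalence from the previous step to each pair \((i,j)\), obtaining \(a_{i,j}=\chi_E(i,j)=\chi_{\tilde E}(\psi(i),\psi(j))=\tilde a_{\psi(i),\psi(j)}\) as required. Conversely, given any bijection \(\psi\) with \(a_{i,j}=\tilde a_{\psi(i),\psi(j)}\) for all \(1\le i,j\le n\), I would read the same chain of equivalences backwards to recover the biconditional \eqref{eqn:GraphIso} for all \((i,j)\in V\times V\); hence \(\psi\) is an isomorphism of digraphs and \(\mathcal{G}\simeq\tilde{\mathcal{G}}\).

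There is essentially no hard step here; the proof is definitional. The only points requiring a word of care are that a bijection \(\psi\) is part of the data in both directions, so that the two sides of the biconditional in the proposition genuinely speak about the same type of object, and that the passage between the logical condition on edge membership and the numerical condition on matrix entries uses nothing beyond the two-valuedness of \(\chi_E\) and \(\chi_{\tilde E}\). The implicit identification of \(V\) and \(\tilde V\) with \(\lbrace 1,\ldots,n\rbrace\), needed to speak of adjacency matrices at all, is harmless because the proposition is phrased directly via \(\psi(i),\psi(j)\) rather than via a fixed labelling.
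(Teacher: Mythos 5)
Your proposal is correct and follows essentially the same route as the paper's proof: both simply unwind the definitions, using the two-valuedness of the characteristic functions to translate the biconditional \((i,j)\in E\Leftrightarrow(\psi(i),\psi(j))\in\tilde E\) into the equality of adjacency-matrix entries \(a_{i,j}=\tilde a_{\psi(i),\psi(j)}\) for all pairs \((i,j)\). No gap to report.
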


\begin{proof}
The bijection \(\psi:\,V\to\tilde{V}\) satisfies the condition in Formula
\eqref{eqn:GraphIso}
if and only if
\(a_{i,j}=\chi_E(i,j)=1\ \Longleftrightarrow\ (i,j)\in E\ \Longleftrightarrow\ (\psi(i),\psi(j))\in\tilde{E}\ \Longleftrightarrow\ \tilde{a}_{\psi(i),\psi(j)}=\chi_{\tilde{E}}(\psi(i),\psi(j))=1\).
\end{proof}


\noindent
The \textit{in-} resp. \textit{out-degree} of a vertex \(v\in V\) in a finite digraph
can be expressed in terms of the \(v\)th column- resp. row-sum of the adjacency matrix:
\begin{equation}
\label{eqn:GraphInOut}
\mathrm{in}(v)=\sum_{w\in V}\,a_{w,v} \quad \text{ and } \quad \mathrm{out}(v)=\sum_{w\in V}\,a_{v,w}
\end{equation}
In particular, if \(\mathcal{G}=\mathcal{T}(R)\) is a finite directed \textit{in-tree} with \textit{root} \(R\), then
each row of the adjacency matrix \(A\) corresponding to a vertex \(v\ne R\) contains a unique \(1\) and
\begin{equation}
\label{eqn:TreeInOut}
\mathrm{in}(v)=\sum_{w\in V}\,a_{w,v}
\begin{cases}
   =0 & \text{ if } v \text{ is terminal}, \\
\ge 1 & \text{ if } v \text{ is capable},
\end{cases}
\quad \text{ and } \quad
\mathrm{out}(v)=\sum_{w\in V}\,a_{v,w}=
\begin{cases}
0 & \text{ if } v=R, \\
1 & \text{ else}.
\end{cases}
\end{equation}



\begin{proposition}
\label{prp:TreeIso}
Let \(\mathcal{T}(R)=(V,E)\) and \(\tilde{\mathcal{T}}(\tilde{R})=(\tilde{V},\tilde{E})\) be
two rooted directed in-trees, and
denote by \(\pi\) and \(\tilde{\pi}\) their parent operators.
Then a bijection \(\psi:\,V\to\tilde{V}\) with \(\psi(R)=\tilde{R}\)
is an isomorphism of rooted directed in-trees if and only if
\(\psi(\pi(v))=\tilde{\pi}(\psi(v))\) for all \(v\in V\setminus\lbrace R\rbrace\), that is, \(\psi\circ\pi=\tilde{\pi}\circ\psi\)
(briefly: \(\psi\) commutes with the parent operator), as shown in Figure
\ref{fig:IsomTrees}.
\end{proposition}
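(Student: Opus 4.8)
The plan is to reduce everything to the elementary observation that in a rooted directed in-tree the edge set is completely determined by the parent operator: for $v\in V\setminus\lbrace R\rbrace$ the unique outgoing edge of $v$ is $(v,\pi(v))$, while $R$ has no outgoing edge, by Formula \eqref{eqn:TreeInOut}. Thus $(v,w)\in E$ holds if and only if $v\ne R$ and $w=\pi(v)$, and analogously $\tilde{E}$ is governed by $\tilde{\pi}$. Once this characterization is in place, the two implications become almost mechanical, and the role of the normalization $\psi(R)=\tilde{R}$ is isolated.

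For the necessity ($\Rightarrow$), I would take $\psi$ to be an isomorphism of rooted directed in-trees, so in particular a digraph isomorphism with $\psi(R)=\tilde{R}$. Given $v\in V\setminus\lbrace R\rbrace$, the edge $(v,\pi(v))\in E$ is carried by \eqref{eqn:GraphIso} to the edge $(\psi(v),\psi(\pi(v)))\in\tilde{E}$. Since $\psi$ is a bijection and $\psi(R)=\tilde{R}$, we have $\psi(v)\ne\tilde{R}$, so $\psi(v)$ possesses a parent; and because $\psi(v)$ has out-degree $1$ in $\tilde{\mathcal{T}}(\tilde{R})$ by \eqref{eqn:TreeInOut}, its only outgoing edge is $(\psi(v),\tilde{\pi}(\psi(v)))$. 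Comparing the two expressions for this edge yields $\psi(\pi(v))=\tilde{\pi}(\psi(v))$, which is the asserted commutation relation.

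For the sufficiency ($\Leftarrow$), I would start from a bijection $\psi$ with $\psi(R)=\tilde{R}$ satisfying $\psi\circ\pi=\tilde{\pi}\circ\psi$ on $V\setminus\lbrace R\rbrace$ and verify the condition \eqref{eqn:GraphIso} for an arbitrary pair $(v,w)\in V\times V$. If $v=R$, both $(v,w)\in E$ and $(\psi(v),\psi(w))=(\tilde{R},\psi(w))\in\tilde{E}$ are false, since neither root has an outgoing edge. If $v\ne R$, then $\psi(v)\ne\tilde{R}$ (as $\psi$ is injective and $\psi(R)=\tilde{R}$), so $(v,w)\in E\iff w=\pi(v)$ and $(\psi(v),\psi(w))\in\tilde{E}\iff\psi(w)=\tilde{\pi}(\psi(v))=\psi(\pi(v))$; by injectivity of $\psi$ the latter is again equivalent to $w=\pi(v)$. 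Hence the two sides of \eqref{eqn:GraphIso} agree, $\psi$ is a digraph isomorphism by Proposition \ref{prp:GraphIso}, and together with $\psi(R)=\tilde{R}$ this makes $\psi$ an isomorphism of rooted directed in-trees, as visualised in Figure \ref{fig:IsomTrees}.

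The only genuinely delicate point — and the one I would be most careful about — is the bookkeeping around the root: one must invoke the hypothesis $\psi(R)=\tilde{R}$ together with the bijectivity of $\psi$ to guarantee that non-root vertices are mapped to non-root vertices, so that $\tilde{\pi}\circ\psi$ is even defined on $V\setminus\lbrace R\rbrace$, and one must use the in-tree property from \eqref{eqn:TreeInOut} that every non-root vertex has out-degree exactly $1$. Without the normalization $\psi(R)=\tilde{R}$ the equivalence fails, so I would emphasize precisely where it enters on each side.
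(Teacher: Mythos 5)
Your proof is correct and takes essentially the same route as the paper: both arguments rest on the observation that in a rooted directed in-tree the edge set is precisely the graph of the parent operator (each non-root vertex has out-degree one, the root has none), and both then chase the equivalence in \eqref{eqn:GraphIso}. The differences are only presentational --- you bypass the adjacency-matrix formulation (which the paper itself must discard for infinite trees), you write out the converse direction that the paper dismisses as ``similar'', and you make explicit the root bookkeeping (that \(\psi(R)=\tilde{R}\) and injectivity force non-root vertices to non-root vertices), which tightens but does not change the argument.
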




\begin{proof}
Recall that each row of the adjacency matrix \(A\) of the tree \(\mathcal{T}(R)\)
corresponding to a vertex \(v\in V\), \(v\ne R\), contains a unique \(1\).
This fact can be used to define the parent operator \(\pi\) of \(\mathcal{T}(R)\) by
\(\pi(v)=w\) \(\Longleftrightarrow\) \(a_{v,w}=1\).
Consequently, if \(\psi\) has the claimed property to commute with the parent operator, then
\((v,w)\in E\) \(\Longleftrightarrow\) \(a_{v,w}=1\) \(\Longleftrightarrow\) \(\pi(v)=w\)
\(\Longleftrightarrow\) \(\psi(\pi(v))=\psi(w)\) \(\Longleftrightarrow\) \(\tilde{\pi}(\psi(v))=\psi(w)\)
\(\Longleftrightarrow\) \(\tilde{a}_{\psi(v),\psi(w)}=1\) \(\Longleftrightarrow\) \((\psi(v),\psi(w))\in\tilde{E}\).
For infinite trees, the steps concerning adjacency matrices must be omitted.
The proof of the converse statement is similar.
\end{proof}



\begin{figure}[ht]
\caption{Isomorphism \(\psi\) of in-trees \((\mathcal{T},\pi)\) and \((\tilde{\mathcal{T}},\tilde{\pi})\)}
\label{fig:IsomTrees}


\setlength{\unitlength}{0.8cm}
\begin{picture}(7,5)(-4,-6.5)

\put(-3.2,-2){\makebox(0,0)[rc]{\(V\ni\pi(v)=w\)}}
\put(-3.0,-2){\circle*{0.2}}
\put(-2.6,-4){\makebox(0,0)[rc]{\(\pi\)}}
\put(-2.0,-6){\vector(-1,4){0.95}}
\put(-2.0,-6){\circle*{0.2}}
\put(-2.2,-6){\makebox(0,0)[rc]{\(V\ni v\)}}

\put(-0.5,-1.9){\makebox(0,0)[cb]{\(\psi\)}}
\put(-2.5,-2){\vector(1,0){4}}

\put(0,-4){\makebox(0,0)[cc]{\(\backslash\backslash\backslash\)}}

\put(-1.5,-6){\vector(1,0){4}}
\put(0.5,-6.1){\makebox(0,0)[ct]{\(\psi\)}}

\put(2.2,-2){\makebox(0,0)[lc]{\(\tilde{w}=\tilde{\pi}(\tilde{v})=\psi(w)\in\tilde{V}\)}}
\put(2.0,-2){\circle*{0.2}}
\put(2.6,-4){\makebox(0,0)[lc]{\(\tilde{\pi}\)}}
\put(3.0,-6){\vector(-1,4){0.95}}
\put(3.0,-6){\circle*{0.2}}
\put(3.2,-6){\makebox(0,0)[lc]{\(\tilde{v}=\psi(v)\in\tilde{V}\)}}

\end{picture}

\end{figure}



\section{Algebraically structured digraphs}
\label{s:StructuredDigraphs}

\subsection{General invariants and their transformation laws}
\label{ss:InvariantsTransforms}
\noindent
Since the vertices of all trees and branches in this paper
are realized by isomorphism classes of finite \(p\)-groups,
the abstract intrinsic graph theoretic structure of the trees and branches
can be extended by additional concrete structures
defined with the aid of algebraic invariants of \(p\)-groups.

Not all algebraic structures are strict invariants under graph isomorphisms.
Some of them change in a well defined way, described by a mapping \(\phi\),
the \textit{transformation law},
when a graph isomorphism is applied.
This behaviour is made precise in the following definitions.


\begin{definition}
\label{dfn:GraphStruc}
Let \(\mathcal{G}=(V,E)\) be a graph.
Suppose that \(X\ne\emptyset\) is a set,
and each vertex \(v\in V\) is associated with some kind of information \(\mathcal{S}(v)\in X\).
Then \((\mathcal{G},\mathcal{S})\) is called a \textit{structured graph} with respect to the mapping \(\mathcal{S}:\,V\to X\), \(v\mapsto\mathcal{S}(v)\).

If \((\mathcal{G},\mathcal{S})\) and \((\tilde{\mathcal{G}},\tilde{\mathcal{S}})\) are two structured digraphs with respect to mappings
\(\mathcal{S}:\,V\to X\), \(v\mapsto\mathcal{S}(v)\), and \(\tilde{\mathcal{S}}:\,\tilde{V}\to\tilde{X}\), \(v\mapsto\tilde{\mathcal{S}}(v)\),
and \(\phi:\,X\to\tilde{X}\) is a mapping,
then an isomorphism of digraphs \(\psi:\,V\to\tilde{V}\) is called
a \(\phi\)-\textit{isomorphism of structured digraphs} \((\mathcal{G},\mathcal{S})\) and \((\tilde{\mathcal{G}},\tilde{\mathcal{S}})\),
if \(\tilde{\mathcal{S}}(\psi(v))=\phi(\mathcal{S}(v))\) for all \(v\in V\), that is, \(\tilde{\mathcal{S}}\circ\psi=\phi\circ\mathcal{S}\),
as visualized in Figure
\ref{fig:PhiIsomStrucTrees}.

In particular, if the sets \(\tilde{X}=X\) coincide and \(\phi=1_X\) is the identity mapping of the set \(X\),
then \(\psi\) is called a \textit{strict isomorphism of structured digraphs},
and it satisfies the relation \(\tilde{\mathcal{S}}\circ\psi=\mathcal{S}\).
\end{definition}



\begin{figure}[ht]
\caption{\(\phi\)-Isomorphism \(\psi\) of structured digraphs \((\mathcal{G},\mathcal{S})\) and \((\tilde{\mathcal{G}},\tilde{\mathcal{S}})\)}
\label{fig:PhiIsomStrucTrees}


\setlength{\unitlength}{0.8cm}
\begin{picture}(7,5)(-4,-6.5)

\put(-2.5,-2){\makebox(0,0)[cc]{\(V\)}}
\put(-2.5,-2.5){\vector(0,-1){3}}
\put(-2.6,-4){\makebox(0,0)[rc]{\(\mathcal{S}\)}}
\put(-2.5,-6){\makebox(0,0)[cc]{\(X\)}}

\put(0,-1.9){\makebox(0,0)[cb]{\(\psi\)}}
\put(-2.0,-2){\vector(1,0){4}}

\put(0,-4){\makebox(0,0)[cc]{\(\slash\slash\slash\)}}

\put(-2.0,-6){\vector(1,0){4}}
\put(0,-6.1){\makebox(0,0)[ct]{\(\phi\)}}

\put(2.5,-2){\makebox(0,0)[cc]{\(\tilde{V}\)}}
\put(2.5,-2.5){\vector(0,-1){3}}
\put(2.6,-4){\makebox(0,0)[lc]{\(\tilde{\mathcal{S}}\)}}
\put(2.5,-6){\makebox(0,0)[cc]{\(\tilde{X}\)}}

\end{picture}

\end{figure}



\begin{definition}
\label{dfn:IsoInvariants}
Let \(\mathcal{G}=(V,E)\) and \(\tilde{\mathcal{G}}=(\tilde{V},\tilde{E})\) be two structured digraphs
with structure mappings \(\mathcal{S}:\,V\to X\) and \(\tilde{\mathcal{S}}:\,\tilde{V}\to\tilde{X}\),
and let \(\psi:\,V\to\tilde{V}\) be a \(\phi\)-isomorphism of the two structured digraphs
with respect to a mapping \(\phi:\, X\to\tilde{X}\),
that is, \(\tilde{\mathcal{S}}\circ\psi=\phi\circ\mathcal{S}\).
Then \(\mathcal{S}\) is called a \(\phi\)-\textit{invariant} under \(\psi\)
(or \textit{invariant under the isomorphism} \(\psi\) \textit{and transformation law} \(\phi\)).
In particular,
if \(\tilde{X}=X\), \(\phi=1_X\) and \(\tilde{\mathcal{S}}\circ\psi=\mathcal{S}\),
then \(\mathcal{S}\) is called a \textit{strict invariant} under \(\psi\).
\end{definition}



\subsection{Algebraic invariants considered in this paper}
\label{ss:AlgebraicInvariants}
\noindent
With respect to applications in other mathematical theories,
in particular, algebraic number theory and class field theory,
certain properties of the automorphism group \(\mathrm{Aut}(G)\)
of a finite \(3\)-group \(G\) are crucial.
The general frame of these aspects is the following.

\begin{definition}
\label{dfn:Automorphisms}
Let \(p\) be an odd prime number
and let \(G\) be a pro-\(p\) group.
We call \(G\) a \textit{group with GI-action} or a \(\sigma\)-\textit{group},
if there exists a \textit{generator inverting} automorphism \(\sigma\in\mathrm{Aut}(G)\)
such that \(\sigma(x)\equiv x^{-1}\mod{G^\prime}\), for all \(x\in G\),
or equivalently \(\sigma(x)=x^{-1}\), for all \(x\in\mathrm{H}_1(G,\mathbb{F}_p)\).
If additionally \(\sigma(x)=x^{-1}\), for all \(x\in\mathrm{H}_2(G,\mathbb{F}_p)\),
then \(G\) is called a \textit{group with RI-action}
or group with \textit{relator inverting} automorphism.
If \(\mathrm{Aut}(G)\) contains a bicyclic subgroup \(C_2\times C_2\),
then we call \(G\) a \textit{group with} \(V_4\)-\textit{action}.
It is convenient to define the \textit{action flag} of \(G\) by
\begin{equation}
\label{eqn:ActionFlag}
\sigma(G):=
\begin{cases}
2 & \text{ if } G \text{ possesses GI-action and } V_4\text{-action, starred } (2^\ast) \text{ for RI-action}, \\
1 & \text{ if } G \text{ possesses GI-action but no } V_4\text{-action, starred } (1^\ast) \text{ for RI-action}, \\
0 & \text{ if } G \text{ has no GI-action and no } V_4\text{-action}.
\end{cases}
\end{equation}
\end{definition}


\begin{remark}
\label{rmk:Automorphisms}
Suppose that \(G\) is a finite \(p\)-group with odd prime \(p\).
We point out that \(2\) divides the order \(\#\mathrm{Aut}(G)\),
if \(G\) is a group with GI-action, but the converse claim may be false.
If \(G\) is a group with \(V_4\)-action, then \(4\) divides \(\#\mathrm{Aut}(G)\),
but we emphasize that the converse statement, even in the case that \(8\) divides \(\#\mathrm{Aut}(G)\),
may be false, when \(\mathrm{Aut}(G)\) contains a cyclic group \(C_4\)
or a (generalized) quaternion group \(Q(2^e)\) of order \(2^e\) with \(e\ge 3\).
\end{remark}


For a brief description of abelian quotient invariants in logarithmic form, 
we need the concept of nearly homocyclic \(p\)-groups.
With an arbitrary prime \(p\ge 2\)
these groups appear in
\cite[p. 68, Thm. 3.4]{Bl2}
and they are treated systematically in
\cite[\S\ 2.4]{Ne}.
For our purpose, it suffices to consider the special case \(p=3\).

\begin{definition}
\label{dfn:NrlHomAbl}
By the \textit{nearly homocyclic abelian \(3\)-group}
\(A(3,n)\) of order \(3^n\),
for an integer \(n\ge 2\),
we understand the abelian group with logarithmic type invariants
\((q+r,q)\),
where \(n=2q+r\) with integers \(q\ge 1\) and \(0\le r<2\),
by Euclidean division with remainder.
Additionally, including two degenerate cases, we define that
\(A(3,1)\) denotes the cyclic group \(C_3\) of order \(3\),
and \(A(3,0)\) denotes the trivial group \(1\).
\end{definition}


The following invariants \(\mathcal{S}:\,V\to X\) of finite \(3\)-groups \(v\in V\) with abelianization \(v/v^\prime\simeq C_3\times C_3\)
will be of particular interest in the whole paper:

\begin{itemize}
\item
The \textit{logarithmic order} \(\mathrm{lo}:\,V\to\mathbb{N}_0\), \(v\mapsto n:=\log_p(\mathrm{ord}(v))\),
\item
the \textit{nilpotency class} \(\mathrm{cl}:\,V\to\mathbb{N}_0\), connected with the index of nilpotency \(m\)
by the relation \(c:=\mathrm{cl}(v)=m-1\), where the lower central series stops with \(\gamma_{m-1}{v}>\gamma_m{v}=1\),
\item
the \textit{coclass} \(\mathrm{cc}:\,V\to\mathbb{N}_0\), defined by \(r:=\mathrm{cc}(v)=n-c=\mathrm{lo}(v)-\mathrm{cl}(v)\),
\item
the \textit{order of the automorphism group} \(\#\mathrm{Aut}:\,V\to\mathbb{N}\), \(v\mapsto\#\mathrm{Aut}(v)\),
\item
the \textit{action flag} \(\sigma:\,V\to\mathbb{N}_0\), defined by Formula
\eqref{eqn:ActionFlag},
\item
the \textit{transfer kernel type} (TKT) \(\varkappa:\,V\to X\), \(v\mapsto (\ker(T_i))_{1\le i\le 4}\),
where \(T_i:v/v^\prime\to u_i/u_i^\prime\) denote the transfer homomorphisms from \(v\) to the maximal subgroups \(u_i<v\), for \(1\le i\le 4\),
\item
the \textit{transfer target type} (TTT) \(\tau:\,V\to X\), \(v\mapsto (u_i/u_i^\prime)_{1\le i\le 4}\),
viewed as abelian quotient invariants,
where \(u_1,\ldots u_4\) denote the maximal subgroups of \(v\)
\cite[Dfn. 5.3, p. 83]{Ma9},
\item
the \textit{abelian quotient invariants} of the first TTT component \(\tau(1):\,V\to X\), \(v\mapsto u_1/u_1^\prime\simeq A(3,c-k)\),
where \(k\) denotes the defect of commutativity of \(v\)
\cite[\S\ 2, p. 469]{Ma2},
\item
the \textit{abelian quotient invariants} of the commutator subgroup \(\tau_2:\,V\to X\), \(v\mapsto v^\prime/v^{\prime\prime}\simeq A(3,c-1)\times A(3,r-1)\)
(or \(A(3,c-2)\times A(3,r)\) in irregular cases)
\cite[Satz 4.2.4, p. 131]{Ne},
\item
the \textit{relation rank} \(\mu:\,V\to\mathbb{N}_0\), \(v\mapsto\dim_{\mathbb{F}_p}\mathrm{H}_2(v,\mathbb{F}_p)\),
which coincides with the rank of the \(p\)-multiplicator of \(v\)
\cite[Thm. 2.4]{Ob},
\item
the \textit{nuclear rank} \(\nu:\,V\to\mathbb{N}_0\), i.e.
the rank of the nucleus of \(v\in V\)
\cite[Thm. 2.4]{Ob}.
For a coclass tree, the nuclear rank is given by
\(v\mapsto\begin{cases} 1 & \text{ if \(v\) is capable and coclass settled}, \\ 0 & \text{ if \(v\) is terminal}.
\end{cases}\)
\end{itemize}


\begin{remark}
\label{rmk:TKT}
\noindent
Abelian quotient invariants are given in logarithmic notation.
The transfer kernel type \(\varkappa(v)\) is simplified
by a family of non-negative integers, in the following way:
for \(1\le i\le 4\),
\begin{equation}
\label{eqn:TKT}
\varkappa(v)_i:=
\begin{cases}
j & \text{ if } \ker(T_i)=u_j/v^\prime \text{ for some } j\in\lbrace 1,\ldots,4\rbrace \text{ (partial kernel)}, \\
0 & \text{ if } \ker(T_i)=v/v^\prime \text{ (total kernel)}.
\end{cases}
\end{equation}
\end{remark}



\section{The graph theoretic structure of a tree}
\label{s:GraphStruc}

\subsection{Cardinality of branches and layers, depth and width of a tree}
\label{ss:BranchesDepthWidth}
\noindent
The graph theoretic structure of a coclass tree \(\mathcal{T}\)
with unique infinite mainline and finite branches,
consisting of isomorphism classes of finite \(p\)-groups,
is described by the following concepts.

\begin{definition}
\label{dfn:TreeWidth}
Let \(\mathcal{T}\) be a coclass tree.
Suppose that the tree root \(R\) is of logarithmic order \(\mathrm{lo}(R)=n_\ast\),
and denote by \(m_e\) the unique mainline vertex with \(\mathrm{lo}(m_e)=e\ge n_\ast\).
In particular, \(R=m_{n_\ast}\).

For \(e\ge n_\ast\), the difference set
\(\mathcal{B}(e):=\mathcal{T}(m_e)\setminus\mathcal{T}(m_{e+1})\)
is called the \(e\)th \textit{branch} of \(\mathcal{T}\).

\noindent
Let \(\mathcal{B}\subset\mathcal{T}\) be one of the branches of \(\mathcal{T}\).
For any integer \(n\ge n_\ast\), we let
\begin{equation}
\label{eqn:TreeLayer}
\mathrm{Lyr}_n{\mathcal{T}}:=\lbrace v\in\mathcal{T}\mid\mathrm{lo}(v)=n\rbrace,
\quad \text{ respectively } \quad
\mathrm{Lyr}_n{\mathcal{B}}:=\lbrace v\in\mathcal{B}\mid\mathrm{lo}(v)=n\rbrace,
\end{equation}
denote the \(n\)th \textit{layer} of \(\mathcal{T}\), respectively \(\mathcal{B}\).

The \textit{width} of the tree is the maximal cardinality of its layers,
\begin{equation}
\label{eqn:TreeWidth}
\mathrm{wd}(\mathcal{T}):=\sup\lbrace\#\mathrm{Lyr}_n{\mathcal{T}}\mid n\ge n_\ast\rbrace.
\end{equation}
\end{definition}


\noindent
Each vertex \(v\) of the branch \(\mathcal{B}(e)\)
is connected with the mainline by a unique finite path of directed edges
from \(v\) to the branch root \(m_e\),
formed by the iterated parents \(\pi^i{v}\) of \(v\),
\begin{equation}
\label{eqn:BranchPath}
v=\pi^0{v}\rightarrow\pi^1{v}\rightarrow\pi^2{v}\rightarrow\ldots\rightarrow\pi^d{v}=m_e.
\end{equation}
The length \(d\ge 0\) of this path is called the \textit{depth} \(\mathrm{dp}(v)\) of \(v\).

The \textit{depth} of a branch \(\mathcal{B}\subset\mathcal{T}\)
is the maximal depth of its vertices,
\begin{equation}
\label{eqn:BranchDepth}
\mathrm{dp}(\mathcal{B}):=\max\lbrace\mathrm{dp}(v)\mid v\in\mathcal{B}\rbrace.
\end{equation}

\begin{definition}
\label{dfn:TreeDepth}
Let \(\mathcal{T}\) be a coclass tree.
The \textit{depth} of the tree is the maximal depth of its branches,
\begin{equation}
\label{eqn:TreeDepth}
\mathrm{dp}(\mathcal{T}):=\sup\lbrace\mathrm{dp}(\mathcal{B}(n))\mid n\ge n_\ast\rbrace.
\end{equation}
\end{definition}

Throughout this paper, we assume that both,
the depth \(\mathrm{dp}(\mathcal{T})\)
and the width \(\mathrm{wd}(\mathcal{T})\)
of the tree, are \textit{bounded}.
This assumption is satisfied by all trees of finite \(3\)-groups
under investigation in the sequel.
However, we point out that
that tree \(\mathcal{T}^1(C_5\times C_5)\) of finite \(5\)-groups with coclass one
has unbounded depth,
and the tree \(\mathcal{T}^1(C_7\times C_7)\) of finite \(7\)-groups with coclass one
even has unbounded width and depth.
(Compare \cite[\S\ 5.1, pp. 113--114]{Ek}.)


\begin{lemma}
\label{lem:BranchCardinality}
Let \(e\ge n_\ast\), \(\mathcal{B}:=\mathcal{B}(e)\) and \(d:=\mathrm{dp}(\mathcal{B})\).
Then
\begin{equation}
\label{eqn:BranchCardinality}
\mathcal{B}=\dot{\bigcup}_{n=e}^{e+d}\,\mathrm{Lyr}_n{\mathcal{B}} \quad \text{ and } \quad \#\mathcal{B}=\sum_{n=e}^{e+d}\,\#\mathrm{Lyr}_n{\mathcal{B}}.
\end{equation}
\end{lemma}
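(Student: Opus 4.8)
The plan is to prove this by the trivial observation that distinct layers of $\mathcal{B}$ are automatically disjoint (a vertex has only one logarithmic order), together with a computation showing that every vertex of $\mathcal{B}=\mathcal{B}(e)$ has logarithmic order lying in the finite range $\{e,e+1,\dots,e+d\}$.

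First I would recall that by the definition of depth in Formula \eqref{eqn:BranchPath}, every vertex $v\in\mathcal{B}(e)$ is joined to the branch root $m_e$ by the path $v=\pi^0 v\to\pi^1 v\to\dots\to\pi^{d(v)}v=m_e$ of length $d(v):=\mathrm{dp}(v)$. Along each directed (child-parent) edge the logarithmic order drops by exactly one, by the rule $\mathrm{lo}(\pi G)=\mathrm{lo}(G)-1$ recorded in Formula \eqref{eqn:FixedCoclass}. Applying this $d(v)$ times gives $\mathrm{lo}(m_e)=\mathrm{lo}(v)-d(v)$, and since $m_e$ is by hypothesis the mainline vertex with $\mathrm{lo}(m_e)=e$, we obtain $\mathrm{lo}(v)=e+d(v)$. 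Because $0\le d(v)\le d=\mathrm{dp}(\mathcal{B})$ by Formula \eqref{eqn:BranchDepth}, this forces $e\le\mathrm{lo}(v)\le e+d$, i.e. $v\in\mathrm{Lyr}_n{\mathcal{B}}$ for the unique index $n=e+d(v)$ in the stated range.

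Next I would assemble the two inclusions. The containment $\bigcup_{n=e}^{e+d}\mathrm{Lyr}_n{\mathcal{B}}\subseteq\mathcal{B}$ is immediate from the definition of $\mathrm{Lyr}_n{\mathcal{B}}$ in Formula \eqref{eqn:TreeLayer} as a subset of $\mathcal{B}$, and the reverse containment $\mathcal{B}\subseteq\bigcup_{n=e}^{e+d}\mathrm{Lyr}_n{\mathcal{B}}$ is exactly what the previous paragraph establishes. The union is disjoint because the sets $\mathrm{Lyr}_n{\mathcal{B}}$ are fibres of the well-defined map $\mathrm{lo}\colon\mathcal{B}\to\mathbb{N}_0$ over distinct values $n$, so no vertex can lie in two of them. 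This yields the partition $\mathcal{B}=\dot{\bigcup}_{n=e}^{e+d}\mathrm{Lyr}_n{\mathcal{B}}$, and the cardinality identity $\#\mathcal{B}=\sum_{n=e}^{e+d}\#\mathrm{Lyr}_n{\mathcal{B}}$ follows by additivity of cardinality over a finite disjoint union; finiteness of each $\#\mathrm{Lyr}_n{\mathcal{B}}$ (hence of $\#\mathcal{B}$) is guaranteed by the standing boundedness assumption on the width $\mathrm{wd}(\mathcal{T})$ together with the finiteness of the index set $\{e,\dots,e+d\}$ coming from bounded depth.

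There is no genuine obstacle here: the only point requiring care is the bookkeeping in the first step, namely that the length of the path from $v$ to $m_e$ equals the difference of logarithmic orders, which is precisely the content of the coclass computation \eqref{eqn:FixedCoclass}. Everything else is a formal consequence of the definitions of branch, layer and depth.
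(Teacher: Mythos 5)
Your proposal is correct and follows essentially the same route as the paper: both arguments rest on the identity \(\mathrm{lo}(v)=e+\mathrm{dp}(v)\) coming from the drop of logarithmic order by one along each child--parent edge, which confines every vertex of \(\mathcal{B}(e)\) to the layers indexed by \(e\le n\le e+d\), with disjointness automatic since \(\mathrm{lo}\) is single-valued. The only difference is cosmetic: the paper additionally exhibits a vertex of maximal depth to show each layer in that range is nonempty (so the range is tight), a fact not needed for the stated equality.
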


\begin{proof}
Since \(m_e\) is the root of the branch \(\mathcal{B}(e)\), we have
\(\mathrm{Lyr}_e{\mathcal{B}}=\lbrace m_e\rbrace\),
but \((\forall\,n<e)\) \(\mathrm{Lyr}_n{\mathcal{B}}=\emptyset\).
Since \(d=\mathrm{dp}(\mathcal{B})=\max\lbrace\mathrm{dp}(v)\mid v\in\mathcal{B}\rbrace\),
there exists a vertex \(t\in\mathcal{B}\), necessarily terminal if \(d>0\), such that \(\mathrm{dp}(t)=d\).
The iterated parents \(\pi^i{t}\) of \(t\) form the unique finite path from \(t\) to the branch root \(m_e\)
(see Figure
\ref{fig:SchematicTree}),
\[t=\pi^0{t}\rightarrow\pi^1{t}\rightarrow\pi^2{t}\rightarrow\ldots\rightarrow\pi^d{t}=m_e,\]
and we have
\((\forall\,e\le n\le e+d)\) \(\mathrm{Lyr}_n{\mathcal{B}}\supseteq\lbrace\pi^{e+d-n}{t}\rbrace\ne\emptyset\)
but \((\forall\,n>e+d)\) \(\mathrm{Lyr}_n{\mathcal{B}}=\emptyset\).
\end{proof}


\begin{lemma}
\label{lem:LayerCardinality}
Let \(n\ge n_\ast\) and \(d:=\mathrm{dp}(\mathcal{T})\). Then
\begin{equation}
\label{eqn:LayerCardinality}
\#\mathrm{Lyr}_n{\mathcal{T}}=\sum_{i=M}^{n}\,\#\mathrm{Lyr}_n{\mathcal{B}(i)}, \quad \text{ where } M:=\max(n_\ast,n-d).
\end{equation}
\end{lemma}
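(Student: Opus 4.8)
The plan is to reduce the statement to the disjoint branch decomposition of the coclass tree and then to discard the contributions of branches whose roots are too low or too high in logarithmic order.

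First I would establish that the branches partition the tree, $\mathcal{T}=\dot{\bigcup}_{e\ge n_\ast}\mathcal{B}(e)$. The mainline is the infinite descending path $m_{n_\ast}\leftarrow m_{n_\ast+1}\leftarrow\cdots$, where $m_{e+1}$ is the mainline child of $m_e$, so the descendant subtrees form a strictly decreasing nested chain $\mathcal{T}=\mathcal{T}(m_{n_\ast})\supset\mathcal{T}(m_{n_\ast+1})\supset\cdots$, since the descendant tree of a child is contained in that of its parent. Given a vertex $v$ of $\mathcal{T}$ with $\mathrm{lo}(v)=n$, every vertex of $\mathcal{T}(m_{n+1})$ has logarithmic order at least $n+1$, hence $v\notin\mathcal{T}(m_{n+1})$; therefore the set of indices $e$ with $v\in\mathcal{T}(m_e)$ is a nonempty initial segment $\lbrace n_\ast,\ldots,e_0\rbrace$ of $\mathbb{Z}_{\ge n_\ast}$, and $v$ lies in $\mathcal{B}(e_0)=\mathcal{T}(m_{e_0})\setminus\mathcal{T}(m_{e_0+1})$ and in no other branch. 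This yields the disjoint union, and intersecting it with a fixed layer gives $\mathrm{Lyr}_n{\mathcal{T}}=\dot{\bigcup}_{e\ge n_\ast}\mathrm{Lyr}_n{\mathcal{B}(e)}$, whence $\#\mathrm{Lyr}_n{\mathcal{T}}=\sum_{e\ge n_\ast}\#\mathrm{Lyr}_n{\mathcal{B}(e)}$.

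Next I would show that only the summands with $M\le e\le n$ can be nonzero. Lemma~\ref{lem:BranchCardinality} gives $\mathcal{B}(e)=\dot{\bigcup}_{k=e}^{e+\mathrm{dp}(\mathcal{B}(e))}\mathrm{Lyr}_k{\mathcal{B}(e)}$, so $\mathrm{Lyr}_n{\mathcal{B}(e)}\ne\emptyset$ forces $e\le n\le e+\mathrm{dp}(\mathcal{B}(e))\le e+d$ by Definition~\ref{dfn:TreeDepth}; equivalently $n-d\le e\le n$. Combined with the constraint $e\ge n_\ast$ coming from the range of the branch index, this confines the nonzero terms to $M=\max(n_\ast,n-d)\le e\le n$. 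All remaining terms vanish, so the infinite sum collapses to $\sum_{e=M}^{n}\#\mathrm{Lyr}_n{\mathcal{B}(e)}$, and renaming the index $e$ to $i$ gives the claimed formula.

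The only genuinely delicate point is the first one, that the branches partition $\mathcal{T}$ disjointly; it rests on the mainline being a full infinite descending path and on the descendant subtrees of consecutive mainline vertices being properly nested, and everything afterwards is bookkeeping with logarithmic orders and the uniform depth bound $d=\mathrm{dp}(\mathcal{T})$. The edge cases are absorbed harmlessly: the alternative $n-d<n_\ast$ versus $n-d\ge n_\ast$ is handled by the $\max$ in the definition of $M$, and any index $i\in\lbrace M,\ldots,n\rbrace$ for which $\mathrm{Lyr}_n{\mathcal{B}(i)}$ happens to be empty simply contributes a zero summand.
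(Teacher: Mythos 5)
Your proposal is correct and follows essentially the same route as the paper: decompose the layer over the branches and use Lemma \ref{lem:BranchCardinality} together with the depth bound \(\mathrm{dp}(\mathcal{B}(i))\le d\) to show that only the indices \(M\le i\le n\) contribute. The only difference is that you spell out the disjoint branch decomposition \(\mathcal{T}=\dot{\bigcup}_{e\ge n_\ast}\mathcal{B}(e)\) explicitly, which the paper's proof uses implicitly.
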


\begin{proof}
Since \(\mathrm{dp}(\mathcal{T})=\sup\lbrace\mathrm{dp}(\mathcal{B}(n))\mid n\ge n_\ast\rbrace\),
we have \(\mathrm{dp}(\mathcal{B}(i))\le\mathrm{dp}(\mathcal{T})=d\) for each \(i\ge n_\ast\).
A branch \(\mathcal{B}(i)\) with \(i>n\) cannot contribute to \(\mathrm{Lyr}_n{\mathcal{T}}\).
On the other hand, if \(n-d>n_\ast\), then
a branch \(\mathcal{B}(i)\) with \(i<M=n-d\) cannot contribute to \(\mathrm{Lyr}_n{\mathcal{T}}\) either,
since \(\mathcal{B}(i)=\dot{\bigcup}_{j=i}^{i+\mathrm{dp}(\mathcal{B}(i))}\,\mathrm{Lyr}_j{\mathcal{B}(i)}\),
according to Lemma
\ref{lem:BranchCardinality},
and we obtain \(i+\mathrm{dp}(\mathcal{B}(i))\le i+d<n\)
(see Figure
\ref{fig:SchematicTree}).
Consequently,
\begin{equation}
\label{eqn:LayerDecomposition}
\mathrm{Lyr}_n{\mathcal{T}}=\dot{\bigcup}_{i=M}^{n}\,\mathrm{Lyr}_n{\mathcal{B}(i)}.
\qedhere
\end{equation}
\end{proof}



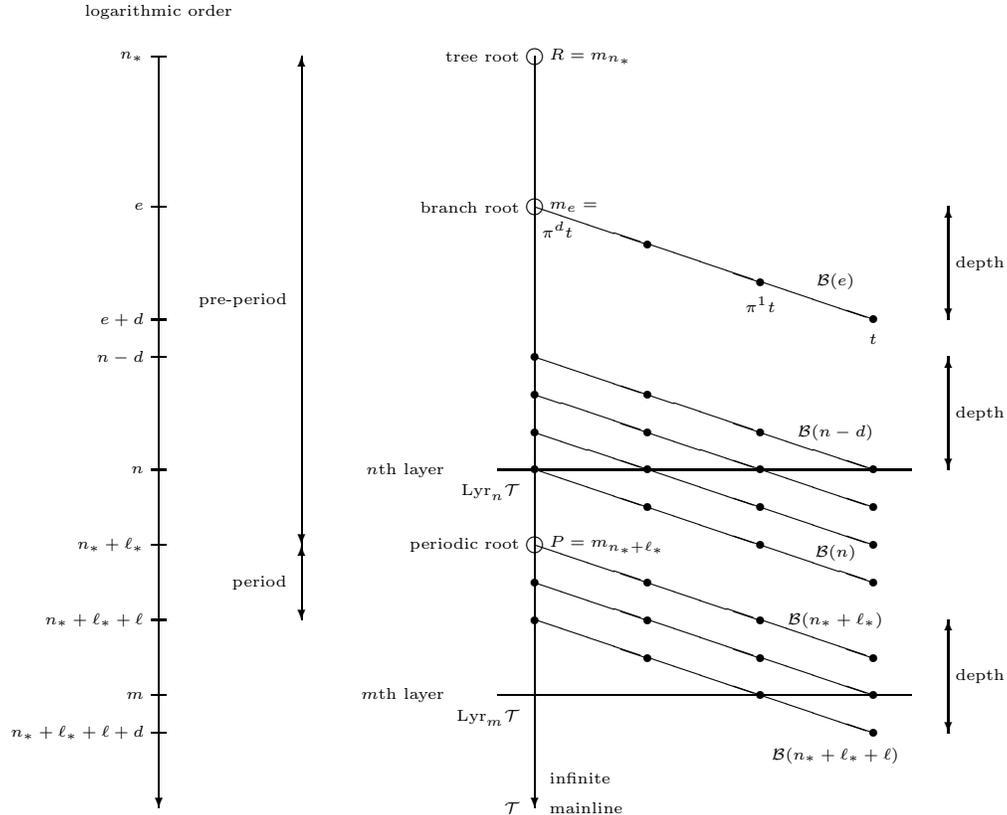
\begin{figure}[ht]
\caption{Schematic coclass tree \(\mathcal{T}\) with ultimately periodic branches and layers}
\label{fig:SchematicTree}


{\tiny

\setlength{\unitlength}{1cm}
\begin{picture}(13,11)(-7,-10)

\put(-5,0.5){\makebox(0,0)[cb]{logarithmic order}}

\put(-5,0){\line(0,-1){8}}
\put(-5.1,0){\line(1,0){0.2}}
\put(-5.1,-2){\line(1,0){0.2}}
\put(-5.1,-3.5){\line(1,0){0.2}}
\put(-5.1,-4){\line(1,0){0.2}}
\put(-5.1,-5.5){\line(1,0){0.2}}
\put(-5.1,-6.5){\line(1,0){0.2}}
\put(-5.1,-7.5){\line(1,0){0.2}}
\put(-5.1,-8.5){\line(1,0){0.2}}
\put(-5.1,-9){\line(1,0){0.2}}

\put(-5.2,0){\makebox(0,0)[rc]{\(n_\ast\)}}
\put(-5.2,-2){\makebox(0,0)[rc]{\(e\)}}
\put(-5.2,-3.5){\makebox(0,0)[rc]{\(e+d\)}}
\put(-5.2,-4){\makebox(0,0)[rc]{\(n-d\)}}
\put(-5.2,-5.5){\makebox(0,0)[rc]{\(n\)}}
\put(-5.2,-6.5){\makebox(0,0)[rc]{\(n_\ast+\ell_\ast\)}}
\put(-5.2,-7.5){\makebox(0,0)[rc]{\(n_\ast+\ell_\ast+\ell\)}}
\put(-5.2,-8.5){\makebox(0,0)[rc]{\(m\)}}
\put(-5.2,-9){\makebox(0,0)[rc]{\(n_\ast+\ell_\ast+\ell+d\)}}

\put(-5,-8){\vector(0,-1){2}}

\put(5.5,-2.75){\vector(0,1){0.75}}
\put(5.6,-2.75){\makebox(0,0)[lc]{depth}}
\put(5.5,-2.75){\vector(0,-1){0.75}}

\put(5.5,-4.75){\vector(0,1){0.75}}
\put(5.6,-4.75){\makebox(0,0)[lc]{depth}}
\put(5.5,-4.75){\vector(0,-1){0.75}}

\put(5.5,-8.25){\vector(0,1){0.75}}
\put(5.6,-8.25){\makebox(0,0)[lc]{depth}}
\put(5.5,-8.25){\vector(0,-1){0.75}}

\put(-3.1,-3.25){\vector(0,1){3.25}}
\put(-3.3,-3.25){\makebox(0,0)[rc]{pre-period}}
\put(-3.1,-3.25){\vector(0,-1){3.25}}

\put(-3.1,-7){\vector(0,1){0.5}}
\put(-3.3,-7){\makebox(0,0)[rc]{period}}
\put(-3.1,-7){\vector(0,-1){0.5}}

\put(0,0){\line(0,-1){8}}
\put(0,-8){\vector(0,-1){2}}
\put(0.2,-9.6){\makebox(0,0)[lc]{infinite}}
\put(0.2,-10){\makebox(0,0)[lc]{mainline}}
\put(-0.2,-10){\makebox(0,0)[rc]{\(\mathcal{T}\)}}

\put(-0.2,0){\makebox(0,0)[rc]{tree root}}
\put(0,0){\circle{0.2}}
\put(0.2,0){\makebox(0,0)[lc]{\(R=m_{n_\ast}\)}}

\put(-0.2,-2){\makebox(0,0)[rc]{branch root}}
\put(0,-2){\circle{0.2}}
\put(0.2,-2){\makebox(0,0)[lc]{\(m_e=\)}}

\put(-0.2,-6.5){\makebox(0,0)[rc]{periodic root}}
\put(0,-6.5){\circle{0.2}}
\put(0.2,-6.5){\makebox(0,0)[lc]{\(P=m_{n_\ast+\ell_\ast}\)}}

\multiput(0,-2)(1.5,-0.5){3}{\line(3,-1){1.5}}
\multiput(1.5,-2.5)(1.5,-0.5){3}{\circle*{0.1}}
\put(4,-3){\makebox(0,0)[cc]{\(\mathcal{B}(e)\)}}
\put(4.5,-3.7){\makebox(0,0)[ct]{\(t\)}}
\put(3,-3.2){\makebox(0,0)[ct]{\(\pi^1{t}\)}}
\put(0.1,-2.2){\makebox(0,0)[lt]{\(\pi^d{t}\)}}

\multiput(0,-4)(0,-0.5){4}{\circle*{0.1}}

\multiput(0,-4)(1.5,-0.5){3}{\line(3,-1){1.5}}
\multiput(1.5,-4.5)(1.5,-0.5){3}{\circle*{0.1}}
\put(4,-5){\makebox(0,0)[cc]{\(\mathcal{B}(n-d)\)}}
\multiput(0,-4.5)(1.5,-0.5){3}{\line(3,-1){1.5}}
\multiput(1.5,-5)(1.5,-0.5){3}{\circle*{0.1}}
\multiput(0,-5)(1.5,-0.5){3}{\line(3,-1){1.5}}
\multiput(1.5,-5.5)(1.5,-0.5){3}{\circle*{0.1}}
\multiput(0,-5.5)(1.5,-0.5){3}{\line(3,-1){1.5}}
\multiput(1.5,-6)(1.5,-0.5){3}{\circle*{0.1}}
\put(4,-6.6){\makebox(0,0)[cc]{\(\mathcal{B}(n)\)}}

\put(-1.2,-5.5){\makebox(0,0)[rc]{\(n\)th layer}}
\put(-0.2,-5.8){\makebox(0,0)[rc]{\(\mathrm{Lyr}_n{\mathcal{T}}\)}}

\multiput(0,-7)(0,-0.5){2}{\circle*{0.1}}

\multiput(0,-6.5)(1.5,-0.5){3}{\line(3,-1){1.5}}
\multiput(1.5,-7)(1.5,-0.5){3}{\circle*{0.1}}
\put(4,-7.5){\makebox(0,0)[cc]{\(\mathcal{B}(n_\ast+\ell_\ast)\)}}
\multiput(0,-7)(1.5,-0.5){3}{\line(3,-1){1.5}}
\multiput(1.5,-7.5)(1.5,-0.5){3}{\circle*{0.1}}
\multiput(0,-7.5)(1.5,-0.5){3}{\line(3,-1){1.5}}
\multiput(1.5,-8)(1.5,-0.5){3}{\circle*{0.1}}
\put(4,-9.3){\makebox(0,0)[cc]{\(\mathcal{B}(n_\ast+\ell_\ast+\ell)\)}}

\put(-1.2,-8.5){\makebox(0,0)[rc]{\(m\)th layer}}
\put(-0.2,-8.8){\makebox(0,0)[rc]{\(\mathrm{Lyr}_m{\mathcal{T}}\)}}

\multiput(-0.5,-5.5)(0,-3){2}{\line(1,0){5.5}}

\end{picture}

}

\end{figure}


\noindent
Since the implementation of the \(p\)-group generation algorithm
\cite{Nm2,Ob,HEO}
in the computational algebra system MAGMA
\cite{BCP,BCFS,MAGMA}
is able to give the number of all, respectively only the capable,
immediate descendants (children) of an assigned finite \(p\)-group,
we express the cardinalities of the branches of a coclass tree,
which were given in a preliminary form in Lemma
\ref{lem:BranchCardinality},
in terms of these numbers \(N_1\), respectively \(C_1\).

\begin{theorem}
\label{thm:DescendantNumbers}
\noindent
Let \(\mathcal{T}=(V,E)\) be a coclass tree
with tree root \(R\) of logarithmic order \(\mathrm{lo}(R)=n_\ast\),
pre-period of length \(\ell_\ast\ge 0\), and
period of primitive length \(\ell\ge 1\).
For each vertex \(v\in V\),
denote by \(N_1(v)\) the number of all children (of step size \(s=1\))
and by \(C_1(v)\) the number of capable children of \(v\).
When \(m_e\) is the vertex with \(\mathrm{lo}(m_e)=e\ge n_\ast\)
on the mainline of \(\mathcal{T}\),
let \(v_i=v_i(m_e)\) with \(1\le i\le C_1(m_e)\) be the capable children of \(m_e\),
in particular, let \(v_1=m_{e+1}\) be the next mainline vertex.
Finally, let \(v_{i,j}=v_{i,j}(m_e)\) with \(1\le j\le C_1(v_i)\) denote
the capable children of \(v_i\), for each \(2\le i\le C_1(m_e)\). 
\begin{enumerate}
\item
If the tree is of depth \(\mathrm{dp}(\mathcal{T})=1\), then
\begin{equation}
\label{eqn:BranchDepth1}
\#\mathcal{B}(e)=N_1(m_e).
\end{equation}
\item
If the tree is of depth \(\mathrm{dp}(\mathcal{T})=2\), then
\begin{equation}
\label{eqn:BranchDepth2}
\#\mathcal{B}(e)=N_1(m_e)+\sum_{i=2}^{C_1(m_e)}\,N_1(v_i).
\end{equation}
\item
If the tree is of depth \(\mathrm{dp}(\mathcal{T})=3\), then
\begin{equation}
\label{eqn:BranchDepth3}
\#\mathcal{B}(e)=N_1(m_e)+\sum_{i=2}^{C_1(m_e)}\,\left(N_1(v_i)+\sum_{j=1}^{C_1(v_i)}\,N_1(v_{i,j})\right).
\end{equation}
\end{enumerate}
\end{theorem}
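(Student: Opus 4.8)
The plan is to decompose the branch $\mathcal{B}(e)$ into its layers and to count each layer directly, relying only on Lemma~\ref{lem:BranchCardinality} together with two elementary facts about how the mainline threads through the branch. First I would observe that, since every edge of a coclass tree has step size $1$, the logarithmic order drops by exactly one along each edge; hence for $v\in\mathcal{B}(e)$ the path \eqref{eqn:BranchPath} yields $\mathrm{lo}(v)=e+\mathrm{dp}(v)$, so that the layer $\mathrm{Lyr}_{e+k}\mathcal{B}(e)$ is precisely the set of branch vertices of depth $k$. By Lemma~\ref{lem:BranchCardinality} this reduces the claim to computing $\#\mathrm{Lyr}_{e+k}\mathcal{B}(e)$ for $0\le k\le\mathrm{dp}(\mathcal{B}(e))\le\mathrm{dp}(\mathcal{T})$, with all higher layers empty.

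The two structural facts I would isolate are: (a) if $v\in\mathcal{B}(e)$ and $v\ne m_e$, then every child of $v$ in $\mathcal{T}$ again lies in $\mathcal{B}(e)$; and (b) among the $N_1(m_e)$ children of $m_e$, exactly one — the next mainline vertex $v_1=m_{e+1}$ — is absent from $\mathcal{B}(e)$. Fact (b) is immediate from $m_{e+1}\in\mathcal{T}(m_{e+1})$. For (a) the argument is: a child $w$ of such a $v$ certainly lies in $\mathcal{T}(m_e)$; and if $w\in\mathcal{T}(m_{e+1})$ then $m_{e+1}$ occurs on the ancestor chain of $w$, so, since $w\ne m_{e+1}$ (otherwise $v=\pi(m_{e+1})=m_e$), it already occurs on the ancestor chain of $v=\pi(w)$, giving $v\in\mathcal{T}(m_{e+1})$, which contradicts $v\in\mathcal{B}(e)=\mathcal{T}(m_e)\setminus\mathcal{T}(m_{e+1})$. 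The consequence of (a) worth spelling out is that below any non-mainline vertex of the branch the branch \emph{is} a complete descendant subtree of $\mathcal{T}$; hence for $v\in\mathcal{B}(e)\setminus\{m_e\}$ the number of its children lying in $\mathcal{B}(e)$ equals $N_1(v)$ and the number of capable ones equals $C_1(v)$.

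The computation is then a three-level bookkeeping. Layer $0$ is $\{m_e\}$. Layer $1$ is, by (b), the set of all children of $m_e$ other than $m_{e+1}$, of cardinality $N_1(m_e)-1$, its capable members being exactly $v_2,\ldots,v_{C_1(m_e)}$. Layer $2$, present as soon as $\mathrm{dp}(\mathcal{T})\ge 2$, is the disjoint union over $2\le i\le C_1(m_e)$ of the children of $v_i$ — all of which lie in $\mathcal{B}(e)$ by (a) — hence of cardinality $\sum_{i=2}^{C_1(m_e)}N_1(v_i)$, with capable members the $v_{i,j}$, $1\le j\le C_1(v_i)$. Layer $3$, present as soon as $\mathrm{dp}(\mathcal{T})\ge 3$, is the disjoint union over those $i$ and $j$ of the children of $v_{i,j}$, of cardinality $\sum_{i=2}^{C_1(m_e)}\sum_{j=1}^{C_1(v_i)}N_1(v_{i,j})$. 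Adding the layers up to level $\mathrm{dp}(\mathcal{T})$ and simplifying $1+(N_1(m_e)-1)=N_1(m_e)$ produces \eqref{eqn:BranchDepth1}, \eqref{eqn:BranchDepth2} and \eqref{eqn:BranchDepth3} in the cases $\mathrm{dp}(\mathcal{T})=1,2,3$; note that when $\mathrm{dp}(\mathcal{T})=1$ the emptiness of layer $2$ forces $C_1(m_e)=1$, so the three formulas are consistent nested specialisations and remain valid, with empty sums, for individual branches that are shallower than the tree.

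I expect structural fact (a) to be the only point that needs genuine care: it is exactly what licenses treating the subtrees of $\mathcal{B}(e)$ dangling from the vertices $v_i$ (and from the $v_{i,j}$) as full descendant trees of $\mathcal{T}$, and hence applying the ambient counts $N_1$ and $C_1$ with no correction term. After that, the argument is purely combinatorial — counting in a rooted in-tree of depth at most three — and uses no commutator calculus or other group-theoretic input.
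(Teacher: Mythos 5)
Your proposal is correct and follows essentially the same route as the paper: decompose \(\mathcal{B}(e)\) into layers via Lemma \ref{lem:BranchCardinality}, observe that the only child of \(m_e\) missing from the branch is the next mainline vertex \(m_{e+1}\), and count the deeper layers as the children of the capable non-mainline vertices \(v_i\) and \(v_{i,j}\). The only difference is that you spell out explicitly (your facts (a) and (b)) why the ambient descendant numbers \(N_1\), \(C_1\) apply without correction below non-mainline vertices, a point the paper's proof takes as understood.
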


\begin{proof}
Put \(\mathcal{B}:=\mathcal{B}(e)\). Generally, we have
\(\mathcal{B}=\mathrm{Lyr}_e{\mathcal{B}}\,\dot{\cup}\,\ldots\,\dot{\cup}\,\mathrm{Lyr}_{e+d}{\mathcal{B}}\)
with \(d:=\mathrm{dp}(\mathcal{B})\),
according to Lemma
\ref{lem:BranchCardinality}.

If \(\mathrm{dp}(\mathcal{T})=1\), then \(d\le 1\) and
\(\mathcal{B}=\mathrm{Lyr}_e{\mathcal{B}}\,\dot{\cup}\,\mathrm{Lyr}_{e+1}{\mathcal{B}}\).
We have \(\mathrm{Lyr}_e{\mathcal{B}}=\lbrace m_e\rbrace\)
and \(\#\mathrm{Lyr}_{e+1}{\mathcal{B}}=N_1(m_e)-1\),
since the next mainline vertex \(m_{e+1}\) is one of the \(N_1(m_e)\) children of \(m_e\)
but does not belong to \(\mathcal{B}\).
Thus, we obtain \(\#\mathcal{B}=\#\mathrm{Lyr}_e{\mathcal{B}}+\#\mathrm{Lyr}_{e+1}{\mathcal{B}}=1+N_1(m_e)-1=N_1(m_e)\).

If \(\mathrm{dp}(\mathcal{T})=2\), then \(d\le 2\) and
\(\mathcal{B}=\mathrm{Lyr}_e{\mathcal{B}}\,\dot{\cup}\,\mathrm{Lyr}_{e+1}{\mathcal{B}}\,\dot{\cup}\,\mathrm{Lyr}_{e+2}{\mathcal{B}}\),
where \(\#\mathrm{Lyr}_e{\mathcal{B}}=1\) and \(\#\mathrm{Lyr}_{e+1}{\mathcal{B}}=N_1(m_e)-1\) as before, and
\(\#\mathrm{Lyr}_{e+2}{\mathcal{B}}=\sum_{i=2}^{C_1(m_e)}\,N_1(v_i)\). Therefore,
\(\#\mathcal{B}=\#\mathrm{Lyr}_e{\mathcal{B}}+\ldots+\#\mathrm{Lyr}_{e+2}{\mathcal{B}}=N_1(m_e)+\sum_{i=2}^{C_1(m_e)}\,N_1(v_i)\).

If \(\mathrm{dp}(\mathcal{T})=3\), then \(d\le 3\) and
\(\mathcal{B}=\mathrm{Lyr}_e{\mathcal{B}}\,\dot{\cup}\,\ldots\,\dot{\cup}\,\mathrm{Lyr}_{e+3}{\mathcal{B}}\),
where \(\#\mathrm{Lyr}_e{\mathcal{B}}=1\), \(\#\mathrm{Lyr}_{e+1}{\mathcal{B}}=N_1(m_e)-1\),
\(\#\mathrm{Lyr}_{e+2}{\mathcal{B}}=\sum_{i=2}^{C_1(m_e)}\,N_1(v_i)\) as before, and
\(\#\mathrm{Lyr}_{e+3}{\mathcal{B}}=\sum_{i=2}^{C_1(m_e)}\,\sum_{j=1}^{C_1(v_i)}\,N_1(v_{i,j})\).
Thus,
\(\#\mathcal{B}=\#\mathrm{Lyr}_e{\mathcal{B}}+\ldots+\#\mathrm{Lyr}_{e+3}{\mathcal{B}}=N_1(m_e)+\sum_{i=2}^{C_1(m_e)}\,\left(N_1(v_i)+\sum_{j=1}^{C_1(v_i)}\,N_1(v_{i,j})\right)\).
\end{proof}


\begin{remark}
\label{rmk:DescendantNumbers}
In Theorem
\ref{thm:DescendantNumbers},
item (1) is included in item (2),
since \(\mathrm{dp}(\mathcal{T})=1\) implies \(C_1(m_e)=1\), and
item (2) is included in item (3),
since \(\mathrm{dp}(\mathcal{T})=2\) implies \(C_1(v_i)=0\),
for all \(2\le i\le C_1(m_e)\).
\end{remark}


\begin{corollary}
\label{cor:TreeWidth}
Under the same assumptions as in Theorem
\ref{thm:DescendantNumbers},
the width of the coclass tree \(\mathcal{T}\),
in dependence on the depth \(d:=\mathrm{dp}(\mathcal{T})\) and the periodicity \((\ell_\ast,\ell)\),
is generally given by
\begin{equation}
\label{eqn:TreeWidthd}
\mathrm{wd}(\mathcal{T})=\max\lbrace\#\mathrm{Lyr}_n{\mathcal{T}}\mid n_\ast<n<n_\ast+\ell_\ast+\ell+d\rbrace.
\end{equation}
For assigned small values of the depth \(d\le 3\),
the width can be expressed in terms of descendant numbers in the following manner:
\begin{enumerate}
\item
If the tree is of depth \(d=1\), then
\begin{equation}
\label{eqn:TreeWidth1}
\mathrm{wd}(\mathcal{T})=\max\lbrace N_1(m_{n-1})\mid n_\ast+1\le n\le n_\ast+\ell_\ast+\ell\rbrace.
\end{equation}
\item
If the tree is of depth \(d=2\), then
\(\mathrm{wd}(\mathcal{T})\) is the maximum among the number \(N_1(m_{n_\ast})\) and all expressions
\begin{equation}
\label{eqn:TreeWidth2}
N_1(m_{n-1})+\sum_{i=2}^{C_1(m_{n-2})}\,N_1(v_i(m_{n-2})),
\end{equation}
where \(n\) runs from \(n_\ast+2\) to \(n_\ast+\ell_\ast+\ell+1\).
\item
If the tree is of depth \(d=3\), then
\(\mathrm{wd}(\mathcal{T})\) is the maximum among the numbers \(N_1(m_{n_\ast})\),
\(N_1(m_{n_\ast+1})+\sum_{i=2}^{C_1(m_{n_\ast})}\,N_1(v_i(m_{n_\ast}))\), and all expressions
\begin{equation}
\label{eqn:TreeWidth3}
N_1(m_{n-1})+\sum_{i=2}^{C_1(m_{n-2})}\,N_1(v_i(m_{n-2}))+\sum_{i=2}^{C_1(m_{n-3})}\,\sum_{j=1}^{C_1(v_i(m_{n-3}))}\,N_1(v_{i,j}(m_{n-3})),
\end{equation}
where \(n\) runs from \(n_\ast+3\) to \(n_\ast+\ell_\ast+\ell+2\).
\end{enumerate}
\end{corollary}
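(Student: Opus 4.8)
The plan is to derive the general formula \eqref{eqn:TreeWidthd} first, by showing that the layer cardinalities \(\#\mathrm{Lyr}_n{\mathcal{T}}\) settle into a pattern of period \(\ell\) beyond an explicit threshold, and then to convert the finitely many layer cardinalities inside the resulting window into descendant numbers with the help of Lemmas \ref{lem:BranchCardinality} and \ref{lem:LayerCardinality}.

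For the general formula, note first that \(R=m_{n_\ast}\) is capable, so \(\#\mathrm{Lyr}_{n_\ast}{\mathcal{T}}=1\le\#\mathrm{Lyr}_{n_\ast+1}{\mathcal{T}}\); hence the value at \(n=n_\ast\) never realizes the supremum in \eqref{eqn:TreeWidth}, which justifies the strict inequality \(n_\ast<n\). Next, for \(n\ge n_\ast+\ell_\ast+d\) the decomposition of Lemma \ref{lem:LayerCardinality} reads \(\mathrm{Lyr}_n{\mathcal{T}}=\dot{\bigcup}_{i=n-d}^{n}\mathrm{Lyr}_n{\mathcal{B}(i)}\), and every contributing index satisfies \(i\ge n_\ast+\ell_\ast\), so it lies in the periodic range. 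Applying the virtual periodicity isomorphism \(\mathcal{B}(i)\simeq\mathcal{B}(i+\ell)\), which raises the logarithmic order by \(\ell\), to each summand and re-indexing by \(i\mapsto i+\ell\) yields \(\#\mathrm{Lyr}_n{\mathcal{T}}=\#\mathrm{Lyr}_{n+\ell}{\mathcal{T}}\). Thus the sequence \((\#\mathrm{Lyr}_n{\mathcal{T}})_n\) is \(\ell\)-periodic for \(n\ge n_\ast+\ell_\ast+d\), and since \(\mathrm{wd}(\mathcal{T})\) is assumed to be finite, its supremum is already attained for some \(n\) in the finite window \(n_\ast<n<n_\ast+\ell_\ast+\ell+d\); this is \eqref{eqn:TreeWidthd}.

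For the explicit formulas I would reuse the four layer-cardinality identities that already surfaced in the proof of Theorem \ref{thm:DescendantNumbers}: for a mainline vertex \(m_e\) of a coclass tree of depth \(d\), one has \(\#\mathrm{Lyr}_e{\mathcal{B}(e)}=1\), \(\#\mathrm{Lyr}_{e+1}{\mathcal{B}(e)}=N_1(m_e)-1\), \(\#\mathrm{Lyr}_{e+2}{\mathcal{B}(e)}=\sum_{i=2}^{C_1(m_e)}N_1(v_i(m_e))\), and \(\#\mathrm{Lyr}_{e+3}{\mathcal{B}(e)}=\sum_{i=2}^{C_1(m_e)}\sum_{j=1}^{C_1(v_i(m_e))}N_1(v_{i,j}(m_e))\). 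Substituting these into \eqref{eqn:LayerCardinality} with \(M=\max(n_\ast,n-d)\), the case \(d=1\) gives \(\#\mathrm{Lyr}_n{\mathcal{T}}=\#\mathrm{Lyr}_n{\mathcal{B}(n)}+\#\mathrm{Lyr}_n{\mathcal{B}(n-1)}=1+(N_1(m_{n-1})-1)=N_1(m_{n-1})\) for every \(n\ge n_\ast+1\), and taking the maximum over the window of \eqref{eqn:TreeWidthd} produces \eqref{eqn:TreeWidth1}. For \(d=2\) and \(d=3\) the same bookkeeping applies verbatim whenever the full list of branches \(\mathcal{B}(n),\mathcal{B}(n-1),\ldots,\mathcal{B}(n-d)\) is available, i.e.\ when \(n-d\ge n_\ast\), yielding the displayed expressions \eqref{eqn:TreeWidth2} and \eqref{eqn:TreeWidth3}; the remaining small layers, at \(n=n_\ast+1\) for \(d=2\) and at \(n=n_\ast+1,\,n_\ast+2\) for \(d=3\), where \(M=n_\ast\) truncates the sum, must be evaluated separately and account precisely for the isolated boundary terms \(N_1(m_{n_\ast})\) and \(N_1(m_{n_\ast+1})+\sum_{i=2}^{C_1(m_{n_\ast})}N_1(v_i(m_{n_\ast}))\) listed apart in the statement.

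I expect the only delicate point to be the bookkeeping near the lower end of the window: one must handle the truncation \(M=\max(n_\ast,n-d)\) in \eqref{eqn:LayerCardinality} carefully, so that the separately listed boundary terms are complete and so that no value of \(n\) outside the asserted range can exceed the maximum, and one must invoke the periodicity in the sharp form \(\#\mathrm{Lyr}_n{\mathcal{B}(i)}=\#\mathrm{Lyr}_{n+\ell}{\mathcal{B}(i+\ell)}\) with the correct shift of logarithmic order. Everything else is a direct substitution of the layer cardinalities already computed in Theorem \ref{thm:DescendantNumbers}.
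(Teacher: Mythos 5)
Your proposal is correct and follows essentially the same route as the paper's proof: branch periodicity \(\mathcal{B}(k+\ell)\simeq\mathcal{B}(k)\) for \(k\ge n_\ast+\ell_\ast\) yields eventual \(\ell\)-periodicity of the layer cardinalities via Lemma \ref{lem:LayerCardinality}, giving the finite window of Formula \eqref{eqn:TreeWidthd}, after which the layer counts from the proof of Theorem \ref{thm:DescendantNumbers} are substituted for \(d\le 3\). Your treatment of the truncated layers near \(n_\ast\) (the boundary terms \(N_1(m_{n_\ast})\) and \(N_1(m_{n_\ast+1})+\sum_{i=2}^{C_1(m_{n_\ast})}N_1(v_i(m_{n_\ast}))\)) is in fact spelled out more explicitly than in the paper, but it is the same argument.
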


\begin{proof}
According to
\cite{dS,EkLg,dSSg},
the periodicity of the branches of a coclass tree \(\mathcal{T}\)
with root \(R:=m_{n_\ast}\) and
bounded depth \(d:=\mathrm{dp}(\mathcal{T})\) and width \(\mathrm{wd}(\mathcal{T})\)
can be expressed by means of isomorphisms between branches,
starting from the \textit{periodic root} \(P:=m_{n_\ast+\ell_\ast}\):
\begin{equation}
\label{eqn:Periodicity}
(\forall\,k\ge n_\ast+\ell_\ast)\ \mathcal{B}(k+\ell)\simeq\mathcal{B}(k),
\end{equation}
where \(\ell_\ast\ge 0\) denotes the length of the pre-period and \(\ell\ge 1\) is the primitive period length.
With Lemma
\ref{lem:BranchCardinality},
an immediate consequence is the periodicity of branch layer cardinalities:
\[
(\forall\,k\ge n_\ast+\ell_\ast)\ (\forall\,k\le n\le k+d)\ \#\mathrm{Lyr}_{n+\ell}{\mathcal{B}(k+\ell)}=\#\mathrm{Lyr}_n{\mathcal{B}(k)}.
\]
According to Lemma
\ref{lem:LayerCardinality},
we have \(\#\mathrm{Lyr}_n{\mathcal{T}}=\sum_{k=\max(n_\ast,n-d)}^{n}\,\#\mathrm{Lyr}_n{\mathcal{B}(k)}\), and thus
\[
\#\mathrm{Lyr}_n{\mathcal{T}}=
\begin{cases}
\#\mathrm{Lyr}_n{\mathcal{B}(n_\ast)} & \text{ if } n=n_\ast, \\
\#\mathrm{Lyr}_n{\mathcal{B}(n_\ast)}+\#\mathrm{Lyr}_n{\mathcal{B}(n_\ast+1)} & \text{ if } n=n_\ast+1, \\
 & \ldots \\
\#\mathrm{Lyr}_n{\mathcal{B}(n_\ast)}+\ldots+\#\mathrm{Lyr}_n{\mathcal{B}(n_\ast+d)} & \text{ if } n=n_\ast+d, \\
\#\mathrm{Lyr}_n{\mathcal{B}(n_\ast+x-d)}+\ldots+\#\mathrm{Lyr}_n{\mathcal{B}(n_\ast+x)} & \text{ if } n=n_\ast+x,\ x\ge d.
\end{cases}
\]
For finding the maximal layer cardinality,
the root term \(\#\mathrm{Lyr}_{n_\ast}{\mathcal{T}}=\#\mathrm{Lyr}_{n_\ast}{\mathcal{B}(n_\ast)}=1\)
can be omitted, since each layer contains a mainline vertex.
Beginning with \(n=n_\ast+d\),
the expression for the tree layer cardinality \(\#\mathrm{Lyr}_n{\mathcal{T}}\) is a sum of \(d+1\) terms
and we must find the logarithmic order \(n=n_\ast+x\) where periodicity of all terms sets in.
This leads to the inequality \(n_\ast+x-d\ge n_\ast+\ell_\ast+\ell\)
with solution \(x\ge\ell_\ast+\ell+d\).
Consequently, \(m=n_\ast+\ell_\ast+\ell+d-1\) is the biggest logarithmic order
for which a new value of the tree layer cardinality \(\#\mathrm{Lyr}_m{\mathcal{T}}\) may occur
(see Figure
\ref{fig:SchematicTree}).
At the logarithmic order \(m+1\), periodic repetitions of the values of tree layer cardinalities begin.

\noindent
In the special case of \(d\le 3\), Theorem
\ref{thm:DescendantNumbers}
yields an expression in terms of descendant numbers:
\begin{equation*}
\begin{aligned}
\#\mathrm{Lyr}_n{\mathcal{T}}
= & \#\mathrm{Lyr}_n{\mathcal{B}(n-d)}+\ldots+\#\mathrm{Lyr}_n{\mathcal{B}(n)} \\
= & N_1(m_{n-1})+\sum_{i=2}^{C_1(m_{n-2})}\,N_1(v_i(m_{n-2}))+\sum_{i=2}^{C_1(m_{n-3})}\,\sum_{j=1}^{C_1(v_i(m_{n-3}))}\,N_1(v_{i,j}(m_{n-3})).
\end{aligned}
\qedhere
\end{equation*}
\end{proof}


\noindent
The following concept provides
a quantitative measure for the \textit{finite} information content
of an \textit{infinite} tree with periodic branches.

\begin{definition}
\label{dfn:InfoCont}
By the \textit{information content} of a coclass tree \(\mathcal{T}\)
we understand the sum of the cardinalities of all branches
belonging to the pre-period and to the primitive period of \(\mathcal{T}\),
\begin{equation}
\label{eqn:InfoCont}
\mathrm{IC}(\mathcal{T}):=\left(\sum_{n=n_\ast}^{p_\ast-1}\,\#\mathcal{B}(n)\right)+\left(\sum_{n=p_\ast}^{p_\ast+\ell-1}\,\#\mathcal{B}(n)\right),
\end{equation}
where \(p_\ast=n_\ast+\ell_\ast\) denotes the logarithmic order of the periodic root \(P\) of \(\mathcal{T}\)
(see Figure
\ref{fig:SchematicTree}).
\end{definition}



\section{Identifiers of the SmallGroups Library}
\label{s:SmallGroups}
\noindent
Independently of being metabelian or non-metabelian,
a finite \(3\)-group \(G\) of order up to \(3^8=6561\) will be characterized by
its absolute identifier \(G\simeq\langle\lvert G\rvert,i\rangle\),
according to the SmallGroups Database
\cite{BEO1,BEO}.
Starting with order \(3^9=19683\), a group \(G\) is characterized by
the absolute identifier of the parent \(\pi(G)\simeq\langle\lvert\pi(G)\rvert,i\rangle\)
in the SmallGroups Database
\cite{BEO}
together with a relative identifier \(-\#s;j\) generated by the ANUPQ package
\cite{GNO}
of MAGMA
\cite{MAGMA}.
Here, \(s\) denotes the step size of the directed edge \(G\to\pi(G)\).
Occasionally, certain groups of order \(3^6=729\) and coclass \(2\)
are identified by single capital letters \(\mathrm{A},\ldots,\mathrm{X}\) similarly as in
\cite{As1,As2,AHL}.



\section{Mainlines of coclass trees and sporadic parts of coclass forests}
\label{s:MainlineSporadic}
\noindent
If we define a \textit{mainline} as a maximal path of infinitely many directed edges of step size \(s=1\),
then there arises the ambiguity that a vertex could be root of several coclass trees.
The metabelian \(3\)-group \(\langle 243,3\rangle=P_5\),
for instance, would be the end vertex of more then one mainline, namely
on the one hand of the metabelian mainline
\[P_5\leftarrow\langle 729,40\rangle=B=R_1^2\leftarrow\langle 2187,247\rangle\leftarrow\langle 6561,1988\rangle\leftarrow\ldots\]
and on the other hand of non-metabelian mainlines,
one which ends with
\[P_5\leftarrow\langle 729,35\rangle=I\leftarrow\langle 2187,235\rangle\leftarrow\langle 6561,1979\rangle\leftarrow\ldots \text{ and}\]
three which end with
\(P_5\leftarrow\langle 729,34\rangle=H\leftarrow\langle 2187,228\rangle\leftarrow\langle 6561,i\rangle\leftarrow\ldots\),
\(i\in\lbrace 1916,1920,1928\rbrace\).

Therefore, an additional condition is required in the precise definition of a mainline.


\begin{definition}
\label{dfn:mainline}
A \textit{mainline} is a maximal path of infinitely many equally oriented edges of step size \(s=1\),
in none of whose vertices other infinite paths of step size \(s=1\) are ending.

The end vertex of a mainline is called the \textit{root of a coclass tree}.
\end{definition}

\noindent
Definition
\ref{dfn:mainline}
can be expressed equivalently in terms of infinite pro-\(p\) groups
\cite[\S\ 3.1, p. 107]{Ek}.


\begin{example}
\label{exm:mainline}
The metabelian \(3\)-group \(\langle 729,40\rangle=B\) is root of the coclass-\(2\) tree \(\mathcal{T}^2{B}\)
with \textit{metabelian} mainline.

The metabelian \(3\)-group \(\langle 729,35\rangle=I\) is root of the coclass-\(2\) tree \(\mathcal{T}^2{I}\)
with \textit{non-metabelian} mainline.
According to our pruning convention that descendants of capable non-metabelian vertices
do not belong to the coclass forests \(\mathcal{F}(r)\),
this tree is not an object of examination in the present paper.

Finally, the metabelian \(3\)-group \(\langle 729,34\rangle=H\) is \textit{not} root of a coclass tree.
\end{example}

\noindent
Based on the precise definition of a mainline and a root of a coclass tree,
we are now in the position to give an exact specification of the sporadic part of a coclass forest.


\begin{definition}
\label{dfn:sporadic}
The \textit{sporadic part} of the coclass forest \(\mathcal{F}(r)\) with \(r\ge 1\)
is the complement of the union of the (finitely many) coclass trees in the forest,
\begin{equation}
\label{eqn:sporadic}
\mathcal{F}_0(r)=\mathcal{F}(r)\setminus\left(\dot{\bigcup}_{i=1}^t\,\mathcal{T}^r_i\right) \text{ with } t\ge 0.
\end{equation}
\end{definition}

There is no necessity, to restrict the concepts of a mainline, a coclass tree and its root further
by stipulating the \textit{coclass stability} of the root.
It is therefore admissible that directed edges of step size \(s\ge 2\)
end in vertices (mainline or of depth \(\mathrm{dp}\ge 1\)) of a coclass tree,
due to the phenomenon of \textit{multifurcation}.


\begin{example}
\label{exm:multifurcation}
In the second mainline vertex \(m_6=\langle 729,49\rangle=Q\)
of the coclass-\(2\) tree \(\mathcal{T}^2{R_2^2}\) with root \(R_2^2=\langle 243,6\rangle\),
a \textit{bifurcation} occurs, due to the nuclear rank \(\nu(Q)=2\).
In fact, the directed edge of step size \(s=2\)
which ends in the vertex \(m_6=Q\)
is the final edge of an infinite path with alterating step sizes \(s=2\) and \(s=1\),
due to \textit{periodic bifurcations}.
However, this non-metabelian path is not the topic of investigations in the present paper.
For detailed information on these matters see
\cite{Ma6}
and
\cite{Ma16}.

The same is true for the second mainline vertex \(\langle 729,54\rangle=U\)
of the coclass-\(2\) tree \(\mathcal{T}^2{R_3^2}\) with root \(R_3^2=\langle 243,8\rangle\).

The unnecessary requirement of coclass stability
would eliminate the pre-periods of the trees \(\mathcal{T}^2{R_2}\) and \(\mathcal{T}^2{R_3}\)
and enforce \textit{purely} periodic subtrees with periodic \textit{coclass-settled} roots, namely
\(\mathcal{T}^2\langle 2187,285\rangle\subset\mathcal{T}^2{R_2}\) and
\(\mathcal{T}^2\langle 2187,303\rangle\subset\mathcal{T}^2{R_3}\).
\end{example}



\section{Two main theorems on periodicity and co-periodicity isomorphisms}
\label{s:MainTheorems}
\noindent
An important technique in the theory of descendant trees
is to reduce the structure of an infinite tree
to a periodically repeating finite pattern.
In particular, it is well known
\cite{dS,EkLg,dSSg}
that an infinite coclass tree \(\mathcal{T}^r\) of finite \(p\)-groups with fixed coclass \(r\ge 1\)
is the disjoint union of its branches \(\mathcal{T}^r=\dot{\bigcup}_{n=n_\ast}^\infty\,\mathcal{B}(n)\),
which can be partitioned into a single finite pre-period \(\dot{\bigcup}_{n=n_\ast}^{p_\ast-1}\,\mathcal{B}(n)\) of length \(\ell_\ast=p_\ast-n_\ast\ge 0\)
and infinitely many copies of a finite primitive period \(\dot{\bigcup}_{n=p_\ast}^{p_\ast+\ell-1}\,\mathcal{B}(n)\) of length \(\ell\ge 1\),
where the integer \(p_\ast\ge n_\ast\) characterizes the position of the \textit{periodic root} on the mainline,
provided the tree is suitably \textit{depth-pruned}.


The following \textit{first main result} of this paper
establishes the details of the primitive period of branches of five coclass-\(4\) trees
\(\mathcal{T}^4{R_i^4}=\dot{\bigcup}_{n=n_\ast}^\infty\,\mathcal{B}(n)\), with \(n_\ast=9\),
respectively of three coclass-\(5\) trees
\(\mathcal{T}^5{R_j^5}=\dot{\bigcup}_{n=n_\ast}^\infty\,\mathcal{B}(n)\), with \(n_\ast=11\),
of finite \(3\)-groups with mainline vertices having a single total transfer kernel
and roots \(R_i^4:=\langle 2187,64\rangle-\#2;n(i)\) with \((n(i))_{2\le i\le 6}=(39,44,54,57,59)\),
respectively \(R_j^5:=\langle 2187,64\rangle-\#2;33-\#2;n(j)\) with \((n(j))_{2\le j\le 4}=(29,37,39)\),
written in the notation of
\cite{BEO1,BEO,GNO}.
In fact, we prove more than the \textit{virtual} periodicity for arbitrary finite \(p\)-groups in
\cite{dS,EkLg,dSSg},
since all trees of the particular finite \(3\)-groups in our investigation have \textit{bounded depth}
and therefore reveal \textit{strict} periodicity.

\begin{theorem}
\label{thm:FirstPeriod}
\textbf{(Main Theorem on Strict Periodicity Isomorphisms of Branches.)} \\
For each integer \(n\ge n_\ast\),
there exists a bijective mapping \(\psi:\,\mathcal{B}(n)\to\mathcal{B}(n+2)\)
which is a strict isomorphism of finite structured in-trees for the \textbf{strict invariants}
in-degree \(\mathrm{in}()\),
out-degree \(\mathrm{out}()\),
coclass \(r=\mathrm{cc}()\),
relation rank \(\mu()\),
nuclear rank \(\nu()\),
action flag \(\sigma()\),
and transfer kernel type \(\varkappa()\).
Moreover, \(\psi\) is a \(\phi\)-isomorphism of finite structured in-trees
for the following \(\phi\)-\textbf{invariants} with their transformation laws \(\phi\):
\begin{itemize}
\item
logarithmic order \(n=\mathrm{lo}()\) with \(\phi(n)=n+2\),
\item
nilpotency class \(m-1=c=\mathrm{cl}()\) with \(\phi(c)=c+2\),
\item
order of the automorphism group \(a=\#\mathrm{Aut}()\) with \(\phi(a)=a\cdot 3^4\),
\item
first component of the transfer target type \(\tau(1)()\) with \(\phi(A(3,c-k))=A(3,2+c-k)\),
and
\item
commutator subgroup \(\tau_2()\) with \(\phi(A(3,c-1)\times A(3,r-1))=A(3,c+1)\times A(3,r-1)\),
respectively \(\phi(A(3,c-2)\times A(3,r))=A(3,c)\times A(3,r)\).
\end{itemize}
Consequently, the branches of each tree \(\mathcal{T}^4{R_i^4}\) \((2\le i\le 6)\),
respectively \(\mathcal{T}^5{R_j^5}\) \((2\le j\le 4)\),
are purely periodic with primitive length at most \(\ell=2\).
\end{theorem}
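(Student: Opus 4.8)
The plan is to realize every vertex of the eight trees by a parametrized polycyclic power--commutator presentation and to take \(\psi\) to be the ``shift by two classes'' on the parameter. First I would record that, by the coclass theorems of Leedham-Green~\cite{Lg} and Shalev~\cite{Sv} together with the virtual periodicity of du Sautoy~\cite{dS} and Eick--Leedham-Green~\cite{EkLg}, each of \(\mathcal{T}^4 R_i^4\) and \(\mathcal{T}^5 R_j^5\) is a single coclass tree whose depth-pruned branches are already periodic; it therefore suffices to show that each of these trees has \emph{bounded} depth \(d\) (so that the depth-pruned and the complete branches coincide), that the period may be taken equal to \(2\), and that there is no pre-period. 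Boundedness of the depth is reduced, through Theorem~\ref{thm:DescendantNumbers}, to checking that the grandchildren (resp. great-grandchildren) of every mainline vertex are terminal, which becomes a finite nuclear-rank computation once the parametrized presentations are available.

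Next I would construct the presentations, using the commutator calculus and two-step centralizer method of Nebelung~\cite{Ne} and the \(p\)-group generation algorithm~\cite{Ob}: a presentation \(\mathcal{P}_e\) for the mainline vertex \(m_e\) of logarithmic order \(e\) whose relator exponents depend on \(e\) only through a bounded, ultimately \(2\)-periodic pattern, and, for each mainline vertex, presentations of its capable off-mainline children \(v_i(m_e)\) and, to the depth dictated above, of their capable children \(v_{i,j}(m_e)\), obtained by adjoining a central generator and imposing the nuclear relations. The decisive structural point to establish is that all presentation data attached to \(\mathcal{B}(n+2)\) is obtained from that attached to \(\mathcal{B}(n)\) by the substitution \(c\mapsto c+2\) (equivalently \(e\mapsto e+2\)), already valid from \(n=n_\ast\) on. This substitution defines \(\psi\) vertex by vertex; it plainly commutes with the parent operator, so by Proposition~\ref{prp:TreeIso} it is an isomorphism of rooted directed in-trees, and hence the in-degree and out-degree are strict invariants under \(\psi\).

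It then remains to read each invariant off the parametrized presentation and to verify its transformation law. The logarithmic order and nilpotency class manifestly increase by \(2\), while the coclass \(r=n-c\) is unchanged; the relation rank \(\mu=\dim_{\mathbb{F}_3}\mathrm{H}_2(v,\mathbb{F}_3)\) and the nuclear rank \(\nu\) are the ranks of the \(p\)-multiplicator and the nucleus of the presentation, quantities that do not involve \(e\); and the transfer kernel type \(\varkappa\) is computed from the four transfer homomorphisms to the maximal subgroups \(u_1,\dots,u_4\), which, together with the transfer formulas, are carried into one another by \(c\mapsto c+2\), so that \(\varkappa\) is strictly invariant. The abelian quotient invariants \(\tau(1)\simeq A(3,c-k)\) and \(\tau_2\simeq A(3,c-1)\times A(3,r-1)\) (or its irregular analogue) shift exactly as asserted because the defect of commutativity \(k\) and the coclass \(r\) are unaffected. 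Finally, for the action flag \(\sigma\) one exhibits from the presentation the generator-inverting, relator-inverting, and \(V_4\)-automorphisms and checks that these properties survive the substitution, and for \(\#\mathrm{Aut}\) one analyses \(\mathrm{Aut}(v)\) through the presentation, showing that the subgroup acting trivially on \(v/v^\prime\simeq C_3\times C_3\) is multiplied by \(3^4\) under \(c\mapsto c+2\) while its image in \(\mathrm{Aut}(v/v^\prime)\) is unchanged, whence \(\#\mathrm{Aut}(\psi(v))=3^4\cdot\#\mathrm{Aut}(v)\).

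I expect the real work to lie in two places. The first is simultaneously proving bounded depth and the uniform \(2\)-periodicity of the presentation exponents from \(n_\ast\) onward; this is the hard commutator bookkeeping, and it has to be carried out for each of the eight trees separately. The second is the uniform verification of the law \(a\mapsto a\cdot 3^4\) for \(\#\mathrm{Aut}\), which, unlike the remaining invariants, is not a formal consequence of \(c\mapsto c+2\) but demands control of the entire automorphism group along the parametrized family rather than on finitely many instances. Once both are settled, every claimed invariance follows, and since \(\psi\) has been produced for \emph{every} \(n\ge n_\ast\), the branches are purely periodic with primitive period length dividing \(2\), hence at most \(\ell=2\), which is the final assertion.
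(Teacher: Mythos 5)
Your strategy is genuinely different from the paper's, and as it stands it has a real gap. The paper does not construct a uniform shift map on parametrized presentations: its proof of this theorem simply delegates to Theorems \ref{thm:TKTd19Tree1Cc4}, \ref{thm:TKTd19Tree2Cc4}, \ref{thm:TKTd23TreeCc4}, \ref{thm:TKTd25Tree1Cc4}, \ref{thm:TKTd25Tree2Cc4} and \ref{thm:TKTd19TreeCc5}, \ref{thm:TKTd23TreeCc5}, \ref{thm:TKTd25TreeCc5}, each of which is proved by explicit computation of all vertices up to logarithmic order roughly \(30\) with the \(p\)-group generation algorithm in MAGMA (descendant numbers in Propositions \ref{prp:TKTd19Tree1Cc4}, \ref{prp:TKTd25Tree1Cc4}, \ref{prp:TKTd19TreeCc5}, \ref{prp:TKTd23TreeCc5}, \ref{prp:TKTd25TreeCc5}; invariants \(\sigma\), \(\mu\), \(\nu\), \(\varkappa\), \(\tau(1)\), \(\tau_2\), \(\#\mathrm{Aut}\) in the accompanying tables), cross-checked against Nebelung's classification for the metabelian skeleton, and then extended to all higher branches by the virtual periodicity theorems of du Sautoy and Eick--Leedham-Green, which apply without depth-pruning because the depth is uniformly bounded by \(2\). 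The transformation laws \(\phi(n)=n+2\), \(\phi(a)=a\cdot 3^4\), etc., are read off from the computed data and continued periodically; they are never derived from a parametrized family.

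The concrete gap in your plan is the opening claim that every vertex of the eight trees can be realized by a parametrized polycyclic presentation whose exponents are ultimately \(2\)-periodic in \(e\). The presentations available (Blackburn's \eqref{eqn:PresentationCc1} and Nebelung's \eqref{eqn:Presentation}) cover only metabelian groups, while a substantial fraction of every branch consists of vertices of derived length \(3\) (the blank parameter entries in Tables \ref{tbl:TKTd19TreeCc4}, \ref{tbl:TKTd25TreeCc4}, \ref{tbl:TKTd19TreeCc5}, etc.); no such uniform presentations for these vertices exist in the cited literature, and the paper never produces them -- it handles those vertices purely by machine. So the two points you yourself flag as ``the real work'' (uniform \(2\)-periodicity of the presentation data from \(n_\ast\) on, and the homothety \(a\mapsto a\cdot 3^4\) for \(\#\mathrm{Aut}\) along the family) are not incidental bookkeeping but essentially the whole content of the theorem, and your proposal states them as goals rather than proving them. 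In particular the \(3^4\)-law for the full automorphism group and the strict invariance of \(\sigma\) for non-metabelian vertices are exactly the kind of facts that resist a formal argument from \(c\mapsto c+2\) and are obtained in the paper only by computation plus periodic continuation. Until those steps are actually carried out (for all eight trees, including the non-metabelian layers), the proposal is a plausible programme but not a proof.
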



\begin{proof}
The \(\phi\)-isomorphisms between the finite branches of a tree describe the \textit{first periodicity} and
reduce an infinite tree to its finite primitive period, provided the periodicity is pure.
This will be proved for even coclass \(r\ge 4\) in Theorem
\ref{thm:TKTd19Tree1Cc4}
for \(i=39\), in Thm.
\ref{thm:TKTd19Tree2Cc4}
for \(i=44\), in Thm.
\ref{thm:TKTd23TreeCc4}
for \(i=54\), in Thm.
\ref{thm:TKTd25Tree1Cc4}
for \(i=57\), and in Thm.
\ref{thm:TKTd25Tree2Cc4}
for \(i=59\).
For odd coclass \(r\ge 5\), it will be proved in Theorem
\ref{thm:TKTd19TreeCc5}
for \(j=29\), in Thm.
\ref{thm:TKTd23TreeCc5}
for \(j=37\), and in Thm.
\ref{thm:TKTd25TreeCc5}
for \(j=39\).

Invariants connected with the nilpotency class are not strict and satisfy the following transformation laws:
the shift \(\phi(i)=i+2\) for \(\mathrm{lo}()\) and \(\mathrm{cl}()\),
and the corresponding transformations
\(\phi(A(3,c-k))=A(3,2+c-k)\) for \(\tau(1)()\), and
\(\phi(A(3,c-1)\times A(3,r-1))=A(3,2+c-1)\times A(3,r-1)\) for \(\tau_2()\),
with fixed coclass \(r\).
For \(\#\mathrm{Aut}()\), the transformation law is described by the homothety \(\phi(i)=i\cdot 3^4\).
\end{proof}

Theorems
\ref{thm:TKTd19Tree1Cc4},
\ref{thm:TKTd19Tree2Cc4},
\ref{thm:TKTd23TreeCc4},
\ref{thm:TKTd25Tree1Cc4},
\ref{thm:TKTd25Tree2Cc4}
and
\ref{thm:TKTd19TreeCc5},
\ref{thm:TKTd23TreeCc5},
\ref{thm:TKTd25TreeCc5}
will give detailed descriptions of the structure of these trees,
in particular they will establish a
quantitative measure for the finite information content of each tree.


\begin{remark}
\label{rmk:FirstPeriod}
According to Theorem
\ref{thm:FirstPeriod},
the diagrams of coclass-\(r\) trees \((r\ge 4)\) whose mainline vertices \(V\) possess
a single total kernel \(\ker(T_1)=V\) among the transfers \(T_i:\,V\to U_i/U_i^\prime\)
to the four maximal subgroups \(U_i<V\) \((1\le i\le 4)\)
reveal several \textbf{surprising features}:
firstly, the branches are \textbf{purely periodic} of primitive length at most \(2\) without pre-period,
secondly, the branches are of \textbf{uniform depth \(2\) only}, and
finally, none of the vertices gives rise to descendants of coclass bigger than \(r\).
So the trees are \textbf{entirely regular and coclass-stable},
in contrast to the trees with \(3\)-groups \(G\) of coclass \(r=\mathrm{cc}(G)\in\lbrace 1,2,3\rbrace\) as vertices.
\end{remark}



Unfortunately it is much less well known that
the entire metabelian skeleton \(\mathcal{M}(R)\) of the descendant tree \(\mathcal{T}(R)\)
of the elementary bicyclic \(3\)-group \(R:=\langle 9,2\rangle\simeq C_3\times C_3\)
is the disjoint union of its coclass subgraphs \(\mathcal{M}(R)=\dot{\bigcup}_{r=1}^\infty\,\mathcal{M}^r\),
where each component \(\mathcal{M}^r=\mathcal{M}_0^r\dot{\cup}(\dot{\bigcup}\,\tilde{\mathcal{T}}_i^r)\) consists of
a finite sporadic part \(\mathcal{M}_0^r\) and finitely many metabelian coclass trees \(\tilde{\mathcal{T}}_i^r\),
and there is a periodicity \(\mathcal{M}^r\simeq\mathcal{M}^{r+2}\) for each \(r\ge 3\).
This was proved by Nebelung
\cite{Ne}
and confirmed by Eick
\cite[Cnj. 14, p. 115]{Ek}.


The following \textit{second main result} of this paper
extends the periodicity from the metabelian skeleton to the entire descendant tree,
including all the non-metabelian vertices,
provided the mainline vertices are still metabelian.
Here, we include coclass trees of finite \(3\)-groups
with mainline vertices having two total transfer kernels
and roots \(R_1^4:=\langle 2187,64\rangle-\#2;n(1)\) with \(n(1)=33\),
respectively \(R_1^5:=\langle 2187,64\rangle-\#2;33-\#2;n(1)\) with \(n(1)=25\).

\begin{theorem}
\label{thm:SecondPeriod}
\textbf{(Main Theorem on Co-Periodicity Isomorphisms of Coclass Trees.)} \\
Let the integer \(u:=19\) be an upper bound.
For each integer \(4\le r\le u\), and
for each of the six roots \(R_i^r\), \(1\le i\le 6\), with even coclass \(r\ge 4\),
respectively the four roots \(R_j^r\), \(1\le j\le 4\), with odd coclass \(r\ge 5\),
there exists a bijective mapping \(\psi:\,\mathcal{T}^r{R_i^r}\to\mathcal{T}^{r+2}{R_i^{r+2}}\),
respectively \(\psi:\,\mathcal{T}^r{R_j^r}\to\mathcal{T}^{r+2}{R_j^{r+2}}\),
which is a strict isomorphism of infinite structured in-trees for the \textbf{strict invariants}
in-degree \(\mathrm{in}()\),
out-degree \(\mathrm{out}()\),
relation rank \(\mu()\),
nuclear rank \(\nu()\),
action flag \(\sigma()\),
and transfer kernel type \(\varkappa()\).
Moreover, \(\psi\) is a \(\phi\)-isomorphism of infinite structured in-trees
for the following \(\phi\)-\textbf{invariants} with their transformation laws \(\phi\):
\begin{itemize}
\item
logarithmic order \(n=\mathrm{lo}()\) with \(\phi(n)=n+4\),
\item
nilpotency class \(m-1=c=\mathrm{cl}()\) with \(\phi(c)=c+2\),
\item
coclass \(r=\mathrm{cc}()\) with \(\phi(r)=r+2\),
\item
order of the automorphism group \(a=\#\mathrm{Aut}()\) with \(\phi(a)=a\cdot 3^8\),
\item
first component of the transfer target type \(\tau(1)()\) with \(\phi(A(3,c-k))=A(3,2+c-k)\), and
\item
commutator subgroup \(\tau_2()\) with \(\phi(A(3,c-1)\times A(3,r-1))=A(3,c+1)\times A(3,r+1)\),
respectively \(\phi(A(3,c-2)\times A(3,r))=A(3,c)\times A(3,r+2)\).
\end{itemize}
\end{theorem}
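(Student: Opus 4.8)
The plan is to build the bijection $\psi$ explicitly from \emph{parametrized polycyclic power--commutator presentations} of the tree vertices and then to verify, via Proposition~\ref{prp:TreeIso}, that it commutes with the parent operators and carries each listed invariant as claimed. Since the mainlines of the ten families of coclass trees $\mathcal{T}^r{R_i^r}$ in question are metabelian, the restriction of $\psi$ to the metabelian skeleton is already supplied by Nebelung's periodicity $\mathcal{M}^r\simeq\mathcal{M}^{r+2}$ \cite{Ne,Ek}: on the mainline we simply put $m_n^r\mapsto m_{n+4}^{r+2}$, which is consistent with $\mathrm{ord}(R_i^{r+2})=3^4\cdot\mathrm{ord}(R_i^r)$, i.e. $n_\ast(r+2)=n_\ast(r)+4$. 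The real content is to extend $\psi$ over the \emph{non}-metabelian vertices. By the strict-periodicity results of Theorem~\ref{thm:FirstPeriod} (established coclass by coclass in the theorems cited there, together with their companions for the two--total--kernel trees $R_1^r$), these vertices lie in branches of bounded depth that are purely periodic of primitive length at most~$2$; hence it suffices to define $\psi$ branchwise by $\mathcal{B}^r(n)\to\mathcal{B}^{r+2}(n+4)$ and to check commutation with $\pi$, $\tilde\pi$ on the finitely many branches $\mathcal{B}^r(n_\ast),\mathcal{B}^r(n_\ast+1)$ of each tree and on their roots $m_{n_\ast}^r,m_{n_\ast+1}^r$ where the bifurcations occur.

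For the presentation machinery I would exhibit, for each such coclass tree, a parametrized pc-presentation in the spirit of \cite{Ne,Ek}: generators $x_1,x_2$ mapping onto $G/G'\simeq C_3\times C_3$ and further generators $x_3,\dots,x_m$ with $x_j\in\gamma_{j-1}G$, the commutator relations being governed by the two-step centralizers and a bounded list of parameters, with the relation and nuclear structure read off from the $p$-covering group. The crucial claim is that passing from a coclass-$r$ presentation to the corresponding coclass-$(r+2)$ presentation is effected by a \emph{uniform} insertion of four further generators into the lower central series, realizing $\mathrm{cl}\colon c\mapsto c+2$, $\mathrm{lo}\colon n\mapsto n+4$, $\mathrm{cc}\colon r\mapsto r+2$, while leaving the leading commutator parameters — and in particular the transfer-relevant relations — intact. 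Running the $p$-group generation algorithm of \cite{Nm2,Ob} on both sides, one checks that the allowable subgroups of the $p$-multiplicator correspond compatibly with the step-size filtration; this simultaneously shows that $\psi$ commutes with the parent operators through the periodic bifurcations (cf. Example~\ref{exm:multifurcation}) and that $\mathrm{in}()$, $\mathrm{out}()$, the relation rank $\mu()=\dim_{\mathbb{F}_3}\mathrm{H}_2(\cdot,\mathbb{F}_3)$ and the nuclear rank $\nu()$ are \emph{strict} invariants under $\psi$.

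It then remains to bookkeep the algebraic invariants along this correspondence. The transfer kernel type $\varkappa()$ is computed from the pc-presentation by explicit commutator calculus for the four transfers $T_i\colon v/v'\to u_i/u_i'$; as the relevant relations are untouched, the $\ker(T_i)$ match as index-$3$ subgroups of $v/v'$, so $\varkappa()$ is strict, and likewise $\sigma()$ is strict because the generator-inverting (resp. relator-inverting, resp. $V_4$-) automorphisms written down from the parameters persist verbatim under the shift. For $\#\mathrm{Aut}()$ one analyses the parametrized action of $\mathrm{Aut}$ on the presentation: the four additional pc-generators each enlarge the $p$-part of the automorphism group by a factor $3^2$ — consistently with the factor $3^4=(3^2)^2$ obtained for two added generators in Theorem~\ref{thm:FirstPeriod} — giving the homothety $\phi(a)=a\cdot3^8$. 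Finally $\tau(1)()\simeq A(3,c-k)$ and $\tau_2()\simeq A(3,c-1)\times A(3,r-1)$ (or its irregular variant) depend only on $c$, $r$ and the defect $k$, which is unchanged; substituting $c\mapsto c+2$ and $r\mapsto r+2$, and using that $A(3,n+2)$ is the nearly homocyclic group with both logarithmic invariants of $A(3,n)$ raised by~$1$, yields precisely the stated transformation laws for $\tau(1)$ and $\tau_2$.

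The main obstacle is the first half of the programme: proving that the parametrized presentations are valid \emph{uniformly in $r$} and that the induced correspondence of $p$-covering groups is exact — through the periodic bifurcations on the mainline and down to the full depth of the branches — rather than only verifiable for small coclass. A second delicate point is pinning down $\#\mathrm{Aut}()$ exactly (the factor $3^8$, not merely its $2$-part), since automorphism orders are not graph-theoretic and must be extracted from the parametrized presentation itself. For these reasons the argument is anchored by explicit computation with MAGMA and the ANUPQ package \cite{MAGMA} for the base coclasses, with the structural isomorphism above then propagating the pattern; this is why the statement is phrased with the finite upper bound $u=19$, beyond which one would fall back on the virtual periodicity of \cite{dS,EkLg}.
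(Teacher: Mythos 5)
Your skeleton — Nebelung's result for the metabelian skeleton, the strict periodicity of Theorem \ref{thm:FirstPeriod} (with the \(\mathrm{b}.10^\ast\) companions) to cut each infinite tree down to finitely many leading branches, and a branchwise definition of \(\psi\) checked against Proposition \ref{prp:TreeIso} — matches the paper. The genuine gap is the step you yourself label ``the main obstacle'': uniform-in-\(r\) parametrized pc-presentations covering the \emph{non-metabelian} vertices of the trees \(\mathcal{T}^r{R_i^r}\), together with an exact correspondence of \(p\)-covering groups and allowable subgroups realizing the shift \(r\mapsto r+2\). Nebelung's presentations \eqref{eqn:Presentation} are metabelian only, and neither the paper nor your proposal produces anything analogous for the depth-\(1\) and depth-\(2\) vertices of derived length \(3\); without that, your closing hedge that computation for ``base coclasses'' plus ``the structural isomorphism then propagating the pattern'' is circular, because the propagation \emph{is} the unproven structural isomorphism. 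The paper does not attempt this route: its proof of Theorem \ref{thm:SecondPeriod} is a direct MAGMA/ANUPQ verification, separately for every \(4\le r\le 19\) (and every root \(i\)), that \(\mathcal{B}^r(2r+1)\simeq\mathcal{B}^{r+2}(2(r+2)+1)\) and \(\mathcal{B}^r(2r+2)\simeq\mathcal{B}^{r+2}(2(r+2)+2)\), after computing the first four branches \(\mathcal{B}^r(2r+1),\ldots,\mathcal{B}^r(2r+4)\) of each tree to settle the in-tree periodicity (note the pre-period of length \(1\) in the \(\mathrm{b}.10^\ast\) trees, so two branches per tree are not quite enough); the transformation laws for \(\mathrm{lo}\), \(\mathrm{cl}\), \(\mathrm{cc}\), \(\#\mathrm{Aut}\), \(\tau(1)\), \(\tau_2\) are then read off from the computed data, not derived from a parametrized family.

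A second, related error is your account of the bound \(u=19\): you say that beyond it one ``would fall back on the virtual periodicity of du Sautoy and Eick--Leedham-Green''. Those theorems give periodicity of branches \emph{within} a fixed coclass tree, in the class direction, and say nothing about isomorphisms between trees of different coclass; they cannot extend the co-periodicity past \(r=19\). The bound is present simply because the computations were carried out only that far — for \(r>19\) the statement is exactly Conjecture \ref{cnj:SecondPeriod}, not part of the theorem. Likewise your claims that \(\varkappa\), \(\sigma\), \(\mu\), \(\nu\) are strict and that \(\#\mathrm{Aut}\) scales by \(3^8\) are asserted from presentations that have not been exhibited for the non-metabelian vertices; as the proposal stands, these are conclusions of the very computation you were trying to replace.
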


\begin{proof}
The statement for the metabelian skeletons \(\tilde{\mathcal{T}}^r{R_i^r}\) of the coclass trees \(\mathcal{T}^r{R_i^r}\)
is one of the main results of Nebelung's thesis
\cite{Ne}.
With the aid of Theorem
\ref{thm:FirstPeriod},
the periodicity of the entire coclass trees \(\mathcal{T}^r{R_i^r}\) with \(4\le r\le 21\) and fixed subscript \(i\)
has been verified by computing the metabelian and non-metabelian vertices
of the first four branches \(\mathcal{B}^r(n)\) with \(2r+1\le n\le 2r+4\) of the trees \(\mathcal{T}^r{R_i^r}\).
The computations were executed by running our own program scripts for the Computer Algebra System MAGMA
\cite{MAGMA},
which contains an implementation of the \(p\)-group generation algorithm by Newman
\cite{Nm2,Nm}
and O'Brien
\cite{Ob,HEO},
the SmallGroups Database
\cite{BEO1,BEO},
and the ANUPQ package
\cite{GNO}.
It turned out that,
firstly, \(\mathcal{B}^r(2r+1)\simeq\mathcal{B}^r(2r+3)\) and \(\mathcal{B}^r(2r+2)\simeq\mathcal{B}^r(2r+4)\), for each \(4\le r\le 21\),
and secondly, \(\mathcal{B}^r(2r+1)\simeq\mathcal{B}^{r+2}(2(r+2)+1)\) and \(\mathcal{B}^r(2r+2)\simeq\mathcal{B}^{r+2}(2(r+2)+2)\), for each \(4\le r\le 19\).

The established \(\phi\)-isomorphisms between the infinite coclass trees
\(\mathcal{T}^r{R_i^r}\) and \(\mathcal{T}^{r+2}{R_i^{r+2}}\), for \(4\le r\le 19\),
describe the germ of the \textit{second periodicity} expressed in Conjecture
\ref{cnj:SecondPeriod}.
Invariants connected with the nilpotency class or coclass are not strict and are subject to the following mappings:
the shifts \(\phi(c)=c+2\) for \(c=\mathrm{cl}()\), \(\phi(r)=r+2\) for \(r=\mathrm{cc}()\), and \(\phi(n)=n+4\) for \(n=\mathrm{lo}()\),
and the corresponding transformations
\(\phi(A(3,c-k))=A(3,2+c-k)\) for \(\tau(1)()\), and
\(\phi(A(3,c-1)\times A(3,r-1))=A(3,c+1)\times A(3,r+1)\) for \(\tau_2()\).
For \(a=\#\mathrm{Aut}()\), the transformation law is described by the homothety \(\phi(a)=a\cdot 3^8\).
\end{proof}



Thus, the confidence in the validity of the following conjecture is supported extensively by sound numerical data.

\begin{conjecture}
\label{cnj:SecondPeriod}
\textbf{(Co-Periodicity Isomorphisms of All Coclass-\(r\) Trees for \(r\ge 4\).)} \\
Theorem
\ref{thm:SecondPeriod}
remains true
when the upper bound \(u=19\) is replaced by any upper bound \(u>19\).

Consequently, all coclass trees \(\mathcal{T}^r{R_i^r}\) with \(r\ge 4\) and fixed subscript \(i\)
are co-periodic in the variable coclass parameter \(r\) with primitive length \(\ell=2\).
The eight coclass trees \(\mathcal{T}^r{R_i^r}\) with \(r\in\lbrace 1,2,3\rbrace\),
and \(i=1\) for \(r=1\), \(i\in\lbrace 1,2,3\rbrace\) for \(r=2\), \(i\in\lbrace 1,2,3,4\rbrace\) for \(r=3\),
can be viewed as the pre-period of the co-periodicity.
(Compare
\cite[Cnj. 14, p. 115]{Ek}.)
\end{conjecture}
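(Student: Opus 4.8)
The plan is to upgrade the finite numerical verification underlying Theorem~\ref{thm:SecondPeriod}, which currently stops at the upper bound \(u=19\), to a genuinely uniform argument covering every coclass \(r\ge 4\) at once. The first move is a reduction: by the strict periodicity of Theorem~\ref{thm:FirstPeriod} each coclass tree \(\mathcal{T}^r R_i^r\) is already determined, up to strict isomorphism, by its pre-period together with a single primitive period of branches, so that the whole of the conjecture amounts to the claim that the two period branches \(\mathcal{B}^r(2r+1)\) and \(\mathcal{B}^r(2r+2)\) are \(\phi\)-isomorphic to \(\mathcal{B}^{r+2}(2(r+2)+1)\) and \(\mathcal{B}^{r+2}(2(r+2)+2)\), with the transformation laws \(\phi(n)=n+4\), \(\phi(c)=c+2\), \(\phi(r)=r+2\), \(\phi(a)=a\cdot 3^8\), for \emph{all} \(r\) rather than only \(r\le 19\). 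Accordingly I would seek to describe these two branches by a \emph{parametrized} family of polycyclic power--commutator presentations in which the coclass \(r\) enters only through the length of one distinguished chain of central generators, so that the passage \(r\mapsto r+2\) is realised simply by inserting two further generators into that chain.

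The second step separates the metabelian skeleton from the non-metabelian vertices. On the metabelian side the periodicity \(\mathcal{M}^r\simeq\mathcal{M}^{r+2}\) is already available from Nebelung~\cite{Ne} (and confirmed in~\cite[Cnj.~14]{Ek}), so \(\psi\) is defined on all metabelian vertices and on the metabelian mainline; one has only to check that it respects in- and out-degree, nuclear and relation rank, the action flag and the transfer kernel type with the stated \(\phi\), which is forced by the transformation laws already recorded in Theorem~\ref{thm:FirstPeriod}. The non-metabelian vertices I would then attach by induction on depth inside a branch, the depth being uniformly bounded: every non-metabelian child of a metabelian mainline vertex \(V\) corresponds to an \(\mathrm{Aut}(V)\)-stable subspace of the nucleus of \(V\), and the commutator calculus governing this nucleus, namely the two-step centralizers, the structure of the last nontrivial term \(\gamma_c\), and the induced action on \(\mathrm{H}_2(\cdot,\mathbb{F}_3)\), must be shown to be insensitive to lengthening the distinguished central chain. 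The automorphism factor \(3^8=3^4\cdot 3^4\) in \(\phi(a)=a\cdot 3^8\) should then emerge as two copies of the single-step factor \(3^4\) of Theorem~\ref{thm:FirstPeriod}, which serves as an internal consistency check on the parametrized presentation; the pre-period trees with \(r\in\lbrace 1,2,3\rbrace\) are exceptional and are disposed of by their explicit descriptions rather than by this induction.

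The third ingredient is to control the descendant-generating process uniformly in \(r\). Concretely one would execute the \(p\)-group generation algorithm symbolically on the parametrized presentations of the two period branches, carrying the nilpotency class as a formal parameter, and verify that the nuclei of the \(p\)-covering groups, the allowable subgroups therein, and their orbit structure under the (equally parametrized) automorphism groups are the same for all \(r\ge 4\) up to the obvious relabelling; because the depth is bounded this is a finite, if intricate, computation. A structurally more transparent alternative is to pass to the infinite metabelian pro-\(3\) groups of coclass \(r\) from Eick~\cite[\S\,5.2]{Ek}, produce an explicit ``coclass-shift'' central extension linking the coclass-\(r\) and coclass-\((r+2)\) such groups, and transport the entire descendant forest along it by functoriality, with the main trunk \((P_{2r-1})_{r\ge 2}\) of metabelian vertices acting as the carrier of the shift. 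Either way, the real obstacle is exactly this uniformity claim: one must rule out that for some large \(r\) an extra capable vertex appears, or the branch depth exceeds its periodic value, or the relation rank jumps, pathologies which the commutator calculus makes very implausible but which are precisely what the bound \(u=19\) leaves unchecked. Proving from the parametrized commutator relations that no such exception can occur for any \(r\) is the crux of converting Conjecture~\ref{cnj:SecondPeriod} into a theorem.
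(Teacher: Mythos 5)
The statement you are addressing is not proved in the paper at all: it is stated as Conjecture~\ref{cnj:SecondPeriod}, and the only support the paper offers is the finite computational verification recorded in the proof of Theorem~\ref{thm:SecondPeriod} (branches \(\mathcal{B}^r(2r+1),\ldots,\mathcal{B}^r(2r+4)\) computed for \(4\le r\le 21\), giving the co-periodicity isomorphisms for \(4\le r\le 19\)), together with Nebelung's result \(\mathcal{M}^r\simeq\mathcal{M}^{r+2}\) for the metabelian skeleton. Your proposal is therefore not being measured against a hidden proof in the paper; it is an attempt to supply a proof where the authors deliberately stopped at a conjecture.

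Judged on its own terms, your text is a research programme rather than a proof, and you say so yourself: the decisive step is postponed in phrases such as ``must be shown to be insensitive to lengthening the distinguished central chain,'' ``one would execute the \(p\)-group generation algorithm symbolically,'' and finally ``Proving \dots\ is the crux of converting Conjecture~\ref{cnj:SecondPeriod} into a theorem.'' That crux is exactly the content of the conjecture: uniformity in \(r\) of the nuclei, the allowable subgroups of the \(3\)-covering groups, and their orbit structure under the parametrized automorphism groups, for the non-metabelian vertices of bounded depth. Neither of your two suggested routes closes it. A ``symbolic'' run of the \(p\)-group generation algorithm with the class as a formal parameter is not an available tool one can simply invoke; making it rigorous for this family is precisely the open uniformity problem, and nothing in the paper (Formulae \eqref{eqn:PresentationCc1}, \eqref{eqn:Presentation} are presentations of the \emph{metabelian} groups only) provides parametrized presentations for the non-metabelian descendants you would need to feed into it. Likewise, the alternative via Eick's infinite pro-\(3\) groups of coclass \(r\) and a ``coclass-shift central extension'' transported ``by functoriality'' is unsupported: descendant trees are not functorial along central extensions in any established sense, and \cite{Ek} ties the coclass trees with metabelian mainlines to metabelian pro-\(3\) groups but says nothing that would transport the non-metabelian branch vertices across the shift \(r\mapsto r+2\). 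Your reduction, via Theorem~\ref{thm:FirstPeriod}, of the conjecture to a \(\phi\)-isomorphism of the two primitive-period branches is sound and matches how the paper's finite verification is organized, and the consistency check \(3^8=3^4\cdot 3^4\) is a reasonable observation; but with the central uniformity claim left unestablished, the proposal does not prove the statement — it restates the conjecture in a sharper form.
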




\section{Parametrized polycyclic power-commutator presentations}
\label{s:Presentations}
\noindent
The general graph theoretic and algebraic foundations of the coclass forests \(\mathcal{F}(r)\) with \(r\ge 1\)
have been developed completely in the preceding sections
\ref{s:TreesForests}
--
\ref{s:MainlineSporadic}.
Now we can turn to the main goal of the present paper, that is,
the proof of the main theorems in section
\ref{s:MainTheorems}
by the systematic investigation of finite \(3\)-groups \(G\) with commutator quotient \(G/G^\prime\simeq R:=C_3\times C_3\),
represented by vertices of the descendant tree \(\mathcal{T}(R)\),
with the single restriction that the parent \(\pi(G)\) of \(G\) is metabelian.
To this end, we first need parametrized presentations for all metabelian vertices of \(\mathcal{T}(R)\).


\subsection{\(3\)-groups of coclass \(r=1\)}
\label{ss:PresentationsMaxCls}
\noindent
The identification of \(3\)-groups \(G\) with coclass \(\mathrm{cc}(G)=1\),
which are metabelian without exceptions
\cite{Bl1},
will be achieved with the aid of
parametrized polycyclic power-commutator presentations, as given by Blackburn
\cite{Bl2}:
\begin{equation}
\label{eqn:PresentationCc1}
\begin{aligned}
G_a^n(z,w) := \langle x,y,s_2,\ldots,s_{n-1}\mid s_2=\lbrack y,x\rbrack,\ (\forall_{i=3}^n)\ s_i=\lbrack s_{i-1},x\rbrack,\ s_n=1,\ \lbrack y,s_2\rbrack=s_{n-1}^a, \\
(\forall_{i=3}^{n-1})\ \lbrack y,s_i\rbrack=1,\ x^3=s_{n-1}^w,\ y^3s_2^3s_3=s_{n-1}^z,\ (\forall_{i=2}^{n-3})\ s_i^3s_{i+1}^3s_{i+2}=1,\ s_{n-2}^3=s_{n-1}^3=1\ \rangle,
\end{aligned}
\end{equation}
where \(a\in\lbrace 0,1\rbrace\) and \(w,z\in\lbrace -1,0,1\rbrace\) are bounded parameters,
and the \textit{index of nilpotency} \(m=\mathrm{cl}(G)+1=\mathrm{cl}(G)+\mathrm{cc}(G)=\log_3(\mathrm{ord}(G))=\mathrm{lo}(G)=n\) is an unbounded parameter.


\subsection{\(3\)-groups of coclass \(r\ge 2\)}
\label{ss:PresentationsNonMaxCls}
\noindent
Metabelian \(3\)-groups with coclass \(\mathrm{cc}(G)\ge 2\) will be identified with the aid of
parametrized polycyclic power-commutator presentations, given by Nebelung
\cite{Ne}:
\(\stackbin[\rho]{m,n}{G}\begin{pmatrix} \alpha & \beta \\ \gamma & \delta \end{pmatrix}:=G_\rho^{m,n}(\alpha,\beta,\gamma,\delta):=\)
\begin{equation}
\label{eqn:Presentation}
\begin{aligned}
& \langle\ x,y,s_2,s_3,t_3,\sigma_3,\ldots,\sigma_{m-1},\tau_3,\ldots,\tau_{e+1}\ \mid
\ s_2=\lbrack y,x\rbrack,\ s_3=\lbrack s_2,x\rbrack,\ t_3=\lbrack s_2,y\rbrack, \\
& \sigma_3=y^3,\ (\forall_{i=4}^m)\ \sigma_i=\lbrack\sigma_{i-1},x\rbrack,\ \sigma_m=1,
\ \tau_3=x^3,\ (\forall_{i=4}^{e+2})\ \tau_i=\lbrack\tau_{i-1},y\rbrack,\ \tau_{e+2}=1, \\
& s_2^3=\sigma_4\sigma_{m-1}^{-\rho\beta}\tau_4^{-1},
\ s_3\sigma_3\sigma_4=\sigma_{m-2}^{\rho\beta}\sigma_{m-1}^\gamma\tau_e^\delta,
\ t_3^{-1}\tau_3\tau_4=\sigma_{m-2}^{\rho\delta}\sigma_{m-1}^\alpha\tau_e^\beta,
\ \tau_{e+1}=\sigma_{m-1}^{-\rho}, \\
& \lbrack s_3,y\rbrack=\sigma_{m-1}^{-\rho\delta},\ (\forall_{i=3}^{m-1})\ \lbrack\sigma_i,y\rbrack=1,
\ (\forall_{i=3}^{m-3})\ \sigma_i^3\sigma_{i+1}^3\sigma_{i+2}=1,\ \sigma_{m-2}^3=\sigma_{m-1}^3=1, \\
& \lbrack t_3,x\rbrack=\sigma_{m-1}^{-\rho\delta},\ (\forall_{i=3}^{e+1})\ \lbrack\tau_i,x\rbrack=1,
\ (\forall_{i=3}^{e-1})\ \tau_i^3\tau_{i+1}^3\tau_{i+2}=1,\ \tau_{e}^3=\tau_{e+1}^3=1\ \rangle,
\end{aligned}
\end{equation}
where \(\alpha,\beta,\gamma,\delta,\rho\in\lbrace -1,0,1\rbrace\) are bounded parameters,
and the \textit{index of nilpotency} \(m=\mathrm{cl}(G)+1\),
the \textit{logarithmic order} \(n=\mathrm{lo}(G)=\log_3(\mathrm{ord}(G))=\mathrm{cl}(G)+\mathrm{cc}(G)=m+e-2\),
and the CF-\textit{invariant} \(e=\mathrm{cc}(G)+1\)
are unbounded parameters.



\section{The backbone of the tree \(\mathcal{T}(R)\): the infinite main trunk}
\label{s:MainTrunk}
\noindent
The flow of our investigations is guided by the present section
concerning the remarkable infinite \textit{main trunk} \((P_{2r-1})_{r\ge 2}\) of certain metabelian vertices in \(\mathcal{T}\)
which gives rise to the top vertices of all coclass forests \((\mathcal{F}(r))_{r\ge 2}\) by periodic bifurcations
and constitutes the germ of the newly discovered co-periodicity \(\mathcal{F}(r+2)\simeq\mathcal{F}(r)\) of length two.
Since the minimal possible values of the nilpotency class and logarithmic order
of a finite metabelian \(3\)-group with coclass \(\mathrm{cc}(G)=r\ge 2\), belonging to the forest \(\mathcal{F}(r)\),
are given by \(c=\mathrm{cl}(G)=r+1\) and \(\mathrm{lo}(G)=\mathrm{cl}(G)+\mathrm{cc}(G)=2r+1\),
it follows that \(G\) must be an immediate descendant of step size \(s=2\) of its parent \(\pi(G)=G/\gamma_c{G}\).
The crucial fact is that this parent is precisely
the vertex \(\pi(G)=P_{2r-1}\) with \(\mathrm{lo}(\pi(G))=2r-1\) of the main trunk.
In the following, we rather use the coclass \(j\) of the parent than \(r\) of the children.



\begin{figure}[hb]
\caption{Metabelian mainline skeleton of the descendant tree \(\mathcal{T}(C_3\times C_3)\)}
\label{fig:MainTrunk}


{\tiny

\setlength{\unitlength}{0.9cm}
\begin{picture}(17,17)(0,-16)

\put(0,0.8){\makebox(0,0)[cb]{Logarithmic}}
\put(0,0.5){\makebox(0,0)[cb]{Order}}

\put(0,0){\line(0,-1){14}}
\multiput(-0.1,0)(0,-1){14}{\line(1,0){0.2}}

\put(-0.2,0){\makebox(0,0)[rc]{\(2\)}}
\put(-0.2,-1){\makebox(0,0)[rc]{\(3\)}}
\put(-0.2,-2){\makebox(0,0)[rc]{\(4\)}}
\put(-0.2,-3){\makebox(0,0)[rc]{\(5\)}}
\put(-0.2,-4){\makebox(0,0)[rc]{\(6\)}}
\put(-0.2,-5){\makebox(0,0)[rc]{\(7\)}}
\put(-0.2,-6){\makebox(0,0)[rc]{\(8\)}}
\put(-0.2,-7){\makebox(0,0)[rc]{\(9\)}}
\put(-0.2,-8){\makebox(0,0)[rc]{\(10\)}}
\put(-0.2,-9){\makebox(0,0)[rc]{\(11\)}}
\put(-0.2,-10){\makebox(0,0)[rc]{\(12\)}}
\put(-0.2,-11){\makebox(0,0)[rc]{\(13\)}}
\put(-0.2,-12){\makebox(0,0)[rc]{\(14\)}}
\put(-0.2,-13){\makebox(0,0)[rc]{\(15\)}}

\put(0,-14){\vector(0,-1){1}}


\put(1.8,0.2){\makebox(0,0)[rc]{root}}
\put(2.2,0.2){\makebox(0,0)[lc]{\(\langle 9,2\rangle=C_3\times C_3\)}}
\put(1.95,-0.05){\framebox(0.1,0.1){}}

\put(2,0){\line(0,-1){1}}


\multiput(2.3,-1.2)(2.0,-2.0){6}{\makebox(0,0)[lc]{bifurcation}}
\multiput(2,-1)(2.0,-2.0){6}{\circle*{0.1}}

\multiput(2,-1)(2.0,-2.0){6}{\line(0,-1){3}}
\multiput(2,-4)(2.0,-2.0){6}{\vector(0,-1){1}}

\put(1.8,-5.2){\makebox(0,0)[rc]{TKT}}
\put(2.2,-5.2){\makebox(0,0)[lc]{a.1}}
\multiput(4.2,-7.2)(2.0,-2.0){5}{\makebox(0,0)[lc]{b.10}}

\multiput(2,-1)(2.0,-2.0){5}{\line(1,-2){1}}
\multiput(3,-3)(2.0,-2.0){5}{\circle{0.1}}
\multiput(3,-3)(2.0,-2.0){5}{\line(0,-1){3}}
\multiput(3,-6)(2.0,-2.0){5}{\vector(0,-1){1}}

\multiput(7.15,-7)(4.0,-4.0){2}{\circle{0.1}}
\multiput(7.15,-7)(4.0,-4.0){2}{\line(0,-1){3}}
\multiput(7.15,-10)(4.0,-4.0){2}{\vector(0,-1){1}}

\multiput(5.3,-5)(2.0,-2.0){4}{\circle{0.1}}
\multiput(5.3,-5)(2.0,-2.0){4}{\line(0,-1){3}}
\multiput(5.3,-8)(2.0,-2.0){4}{\vector(0,-1){1}}

\multiput(7.45,-7)(4.0,-4.0){2}{\circle{0.1}}
\multiput(7.45,-7)(4.0,-4.0){2}{\line(0,-1){3}}
\multiput(7.45,-10)(4.0,-4.0){2}{\vector(0,-1){1}}

\multiput(5.6,-5)(2.0,-2.0){4}{\circle{0.1}}
\multiput(5.6,-5)(2.0,-2.0){4}{\line(0,-1){3}}
\multiput(5.6,-8)(2.0,-2.0){4}{\vector(0,-1){1}}

\put(3.2,-7.2){\makebox(0,0)[rc]{c.18}}
\put(3.5,-3){\circle{0.1}}
\put(3.5,-3){\line(0,-1){3}}
\put(3.5,-6){\vector(0,-1){1}}
\put(3.7,-7.5){\makebox(0,0)[rc]{c.21}}
\multiput(5.2,-9.2)(4.0,-4.0){2}{\makebox(0,0)[rc]{d.19}}
\multiput(7.2,-11.2)(4.0,-4.0){2}{\makebox(0,0)[rc]{2 d.19}}
\multiput(5.5,-9.5)(2.0,-2.0){4}{\makebox(0,0)[rc]{d.23}}
\multiput(5.8,-9.8)(4.0,-4.0){2}{\makebox(0,0)[rc]{d.25}}
\multiput(7.8,-11.8)(4.0,-4.0){2}{\makebox(0,0)[rc]{2 d.25}}

\put(2.0,-2.0){\makebox(0,0)[cc]{\(\mathrm{cc}=1\)}}
\put(3.5,-4.0){\makebox(0,0)[cc]{\(\mathrm{cc}=2\)}}
\put(5.5,-6.0){\makebox(0,0)[cc]{\(\mathrm{cc}=3\)}}
\put(7.5,-8.0){\makebox(0,0)[cc]{\(\mathrm{cc}=4\)}}
\put(9.5,-10.0){\makebox(0,0)[cc]{\(\mathrm{cc}=5\)}}
\put(11.5,-12.0){\makebox(0,0)[cc]{\(\mathrm{cc}=6\)}}

\multiput(2,-1)(2.0,-2.0){6}{\line(1,-1){2}}

\put(14.2,-12.8){\makebox(0,0)[lc]{\textbf{main trunk}}}
\put(14.3,-13.2){\makebox(0,0)[lc]{TKT b.10}}
\put(14,-13){\vector(1,-1){2}}


\put(3.7,-1.2){\makebox(0,0)[lc]{\((4/1;7/5)\)}}
\put(5.7,-3.2){\makebox(0,0)[lc]{\((10/6;15/15)\)}}
\multiput(7.7,-5.2)(4.0,-4.0){2}{\makebox(0,0)[lc]{\((12/1;27/14)\)}}
\multiput(9.7,-7.2)(4.0,-4.0){2}{\makebox(0,0)[lc]{\((10/1;15/8)\)}}


\put(2.2,-0.8){\makebox(0,0)[lc]{\(P_3=\langle 27,3\rangle=G_0^3(0,0)\)}}
\put(4.2,-2.8){\makebox(0,0)[lc]{\(P_5=\langle 243,3\rangle=G_0^{4,5}(0,0,0,0)\)}}
\put(6.2,-4.8){\makebox(0,0)[lc]{\(P_7=\langle 2187,64\rangle=G_0^{5,7}(0,0,0,0)\)}}
\put(8.2,-6.8){\makebox(0,0)[lc]{\(P_9=P_7-\#2;33=G_0^{6,9}(0,0,0,0)\)}}
\put(10.2,-8.8){\makebox(0,0)[lc]{\(P_{11}=P_9-\#2;25=G_0^{7,11}(0,0,0,0)\)}}
\put(12.2,-10.8){\makebox(0,0)[lc]{\(P_{13}=P_{11}-\#2;37=G_0^{8,13}(0,0,0,0)\)}}

\end{picture}

}

\end{figure}
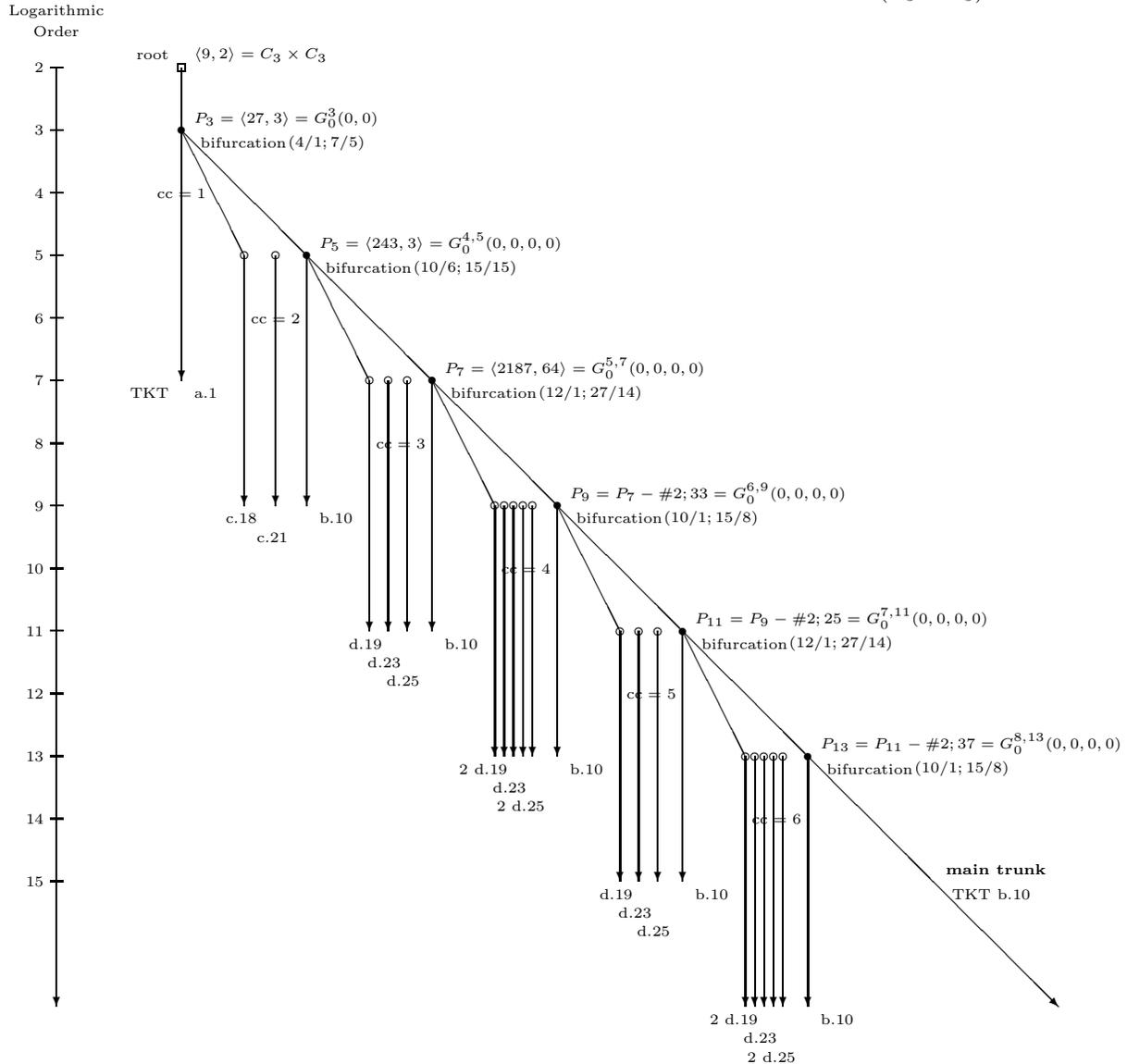


\begin{theorem}
\label{thm:MainTrunk}
\textbf{(The main trunk.)}
\begin{enumerate}
\item
In the descendant tree \(\mathcal{T}( R )\) of the abelian root \(R:=C_3\times C_3=\langle 9,2\rangle\),
there exists a unique infinite path of (reverse) directed edges \(\left(P_{2j+1}\leftarrow P_{2j+3}\right)_{j\ge 1}\)
such that, for each fixed coclass \(r=j+1\ge 2\),
every metabelian \(3\)-group \(G\) with \(G/G^\prime\simeq (3,3)\) and \(\mathrm{cc}(G)=r\)
is a proper descendant of \(P_{2j+1}\).
\item
The trailing vertex \(P_3\) is exactly the extra special Blackburn group \(G_0^3(0,0)=\langle 27,3\rangle\)
with exceptional transfer kernel type (TKT) \(\mathrm{a}.1\), \(\varkappa=(0000)\).
\item
All the other vertices \(P_{2j+1}\) with \(j\ge 2\)
share the common TKT \(\mathrm{b}.10\), \(\varkappa=(0043)\),
possess nilpotency class \(c=j+1\), coclass \(r=j\), logarithmic order \(n=c+r=2j+1\),
abelian commutator subgroup of type \(\mathrm{A}(3,c-1)\times\mathrm{A}(3,r-1)\),
and transfer target type \(\tau=\left\lbrack\mathrm{A}(3,c),\mathrm{A}(3,r+1),1^3,1^3\right\rbrack\),
where \(r+1=c\).
\item
For \(j\ge 4\), periodicity of length \(2\) sets in,
\(P_{2j+1}\) has nuclear rank \(\nu=2\), relation rank \(\mu=6\), and
immediate descendant numbers (including non-metabelian groups)
\[
(N_1/C_1,N_2/C_2)=
\begin{cases}
(21/1,151/21) & \text{ if } j\ge 4 \text{ is even,} \\
(30/1,295/37) & \text{ if } j\ge 5 \text{ is odd.} \\
\end{cases}
\]
Restricted to metabelian groups, the immediate descendant numbers are
\[
(\tilde{N}_1/\tilde{C}_1,\tilde{N}_2/\tilde{C}_2)=
\begin{cases}
(10/1,15/8) & \text{ if } j\ge 4 \text{ is even,} \\
(12/1,27/14) & \text{ if } j\ge 3 \text{ is odd.} \\
\end{cases}
\]
All immediate descendants are \(\sigma\)-groups, if \(j\ge 1\) is odd,
but only \((3/3,1/1)\), if \(j=2\), and
\((3/1,1/1)\), if \(j\ge 4\) is even.
\end{enumerate}
\end{theorem}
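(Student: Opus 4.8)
The plan is to derive all four assertions from the parametrized presentations of \S\ \ref{s:Presentations}, Nebelung's structural description of the metabelian skeleton \cite{Ne}, and explicit runs of the \(p\)-group generation algorithm, along the lines depicted in Figure \ref{fig:MainTrunk}.

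\emph{Parts (1) and (2).} The starting point is the elementary observation recorded at the beginning of this section: a metabelian \(3\)-group \(G\) with \(G/G^\prime\simeq C_3\times C_3\) and \(\mathrm{cc}(G)=r\ge 2\) has \(\mathrm{cl}(G)\ge r+1\), hence \(\mathrm{lo}(G)\ge 2r+1\), so that the chain of iterated parents of such a \(G\) passes, at logarithmic order \(2r-1\), through a metabelian vertex of class \(r\) and coclass \(r-1\). I would show this vertex is the same for every such \(G\): by Nebelung's classification every metabelian vertex of coclass \(r\) occurs among the groups \(G_\rho^{m,n}(\alpha,\beta,\gamma,\delta)\) of \eqref{eqn:Presentation}, and reading off \(\gamma_r{G}\) and the quotient \(G/\gamma_r{G}\) from that presentation shows that all of them collapse onto the single vertex \(P_{2r-1}=G_0^{r+1,2r-1}(0,0,0,0)\) (resp.\ \(P_3=G_0^3(0,0)\) of \eqref{eqn:PresentationCc1} when \(r=2\)). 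This yields existence and uniqueness simultaneously; infinitude of the path follows because \(P_{2r-1}\) is itself a step-\(2\) descendant of \(P_{2r-3}\), as one checks on the presentations. For Part (2) it then remains to recognise \(\langle 27,3\rangle\) as Blackburn's extra special group \(G_0^3(0,0)\) \cite{Bl1,Bl2}; since every maximal subgroup is abelian of logarithmic type \((1,1)\) and \(G^\prime=Z(G)\), a direct evaluation of the four Artin transfers \(T_i\colon G/G^\prime\to U_i/U_i^\prime\) gives total kernels throughout, i.e.\ \(\varkappa=(0000)\), TKT \(\mathrm{a}.1\).

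\emph{Part (3).} For \(j\ge 2\) the invariants \(c=j+1\), \(r=j\), \(n=2j+1\), together with the type \(A(3,c-1)\times A(3,r-1)\) of \(G^\prime/G^{\prime\prime}\), are read off from the presentation \(G_0^{j+2,2j+1}(0,0,0,0)\) and \cite[Satz 4.2.4]{Ne}, and the target type \(\tau=\lbrack A(3,c),A(3,r+1),1^3,1^3\rbrack\) follows from computing the abelianized maximal subgroups \(U_i/U_i^\prime\). The one genuine commutator-calculus step is the transfer kernel type: evaluating the transfers on the two generators of \(G/G^\prime\) modulo \(G^\prime\), using the standard Artin transfer formulae for the index-\(3\) maximal subgroups, one obtains total kernels for \(i=1,2\) and the partial kernels \(U_4\), resp.\ \(U_3\), for \(i=3,4\), that is, \(\varkappa=(0043)\), TKT \(\mathrm{b}.10\); the switch from \(\mathrm{a}.1\) at \(j=1\) to \(\mathrm{b}.10\) for \(j\ge 2\) is visible because the class-\(r\) quotient ceases to be extra special.

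\emph{Part (4) and the main obstacle.} I would first compute, by explicit application of the \(p\)-group generation algorithm \cite{GNO,MAGMA}, the numbers \((N_1/C_1,N_2/C_2)\) and \((\tilde N_1/\tilde C_1,\tilde N_2/\tilde C_2)\), the nuclear rank \(\nu=2\), the relation rank \(\mu=6\) and the \(\sigma\)-group pattern for \(P_9,P_{11},P_{13},P_{15}\), i.e.\ \(j=4,5,6,7\), observing the announced dependence on the parity of \(j\); the value \(\mu=6\) and the \(\sigma\)-group statement in the odd case come from applying the GI/RI-action analysis of Definition \ref{dfn:Automorphisms} to these presentations. To see that these values persist for all \(j\ge 4\) I would invoke Nebelung's metabelian periodicity \(\mathcal{M}^r\simeq\mathcal{M}^{r+2}\) for the metabelian counts and Theorem \ref{thm:FirstPeriod} for the full counts and the remaining invariants, the point being that the local data at the top of the lower central series of \(P_{2j+1}\) which feed into the algorithm (two-step centralizers, nucleus, \(p\)-multiplicator rank) depend on \(m=j+2\) and \(n=2j+1\) only through \(j\bmod 2\). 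The hard part is exactly this stabilization: proving genuine period-\(2\) behaviour of the descendant numbers — most delicately of the step-\(2\) counts \(N_2/C_2\) and their metabelian part \(\tilde N_2/\tilde C_2\) — rather than mere agreement for the finitely many computed values of \(j\). The mechanism is that of the virtual periodicity of \cite{dS,EkLg}, but here it must be tracked across the bifurcation at \(P_{2j+1}\) (step sizes \(1\) and \(2\) simultaneously) instead of along a single mainline, which is where the bookkeeping is most involved; by contrast, the commutator-calculus verification of \(\varkappa=(0043)\) in Part (3) is laborious but entirely routine once the presentation has been fixed.
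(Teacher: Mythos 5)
The paper's own ``proof'' of this theorem is a single citation to Nebelung's dissertation \cite[p.\ 192]{Ne}, so your reconstruction from the parametrized presentations of \S\ \ref{s:Presentations} is a genuinely different, more self-contained route. For parts (1)--(3) your plan is sound and is in substance exactly what the cited classification provides: every metabelian vertex of coclass \(r\) is a \(G_\rho^{m,n}(\alpha,\beta,\gamma,\delta)\), its class-\(r\) quotient collapses onto \(G_0^{r+1,2r-1}(0,0,0,0)\) (resp.\ \(G_0^3(0,0)\) for \(r=2\)), and the invariants \(c\), \(r\), \(n\), \(\tau_2\), \(\tau\) and the TKTs \(\mathrm{a}.1\), \(\mathrm{b}.10\) are read off from the presentation by the standard transfer computation. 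What your route buys is an explicit argument where the paper offers only a pointer; what it costs is that you must redo Nebelung's commutator calculus, which you correctly describe as routine but laborious.

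The one place your plan does not close is part (4), and your proposed repair is aimed at the wrong theorem. Theorem \ref{thm:FirstPeriod} governs the branches \(\mathcal{B}(n)\simeq\mathcal{B}(n+2)\) \emph{inside a single coclass tree with \(r\) fixed}; the claim here is that the immediate-descendant data of \(P_{2j+1}\) stabilize as \(j\) (equivalently the coclass) grows along the main trunk, which in this paper is the \emph{co-}periodicity of Theorem \ref{thm:SecondPeriod} -- verified only computationally for \(4\le r\le 21\) and conjectural beyond (Conjecture \ref{cnj:SecondPeriod}). Moreover Nebelung's thesis can only supply the metabelian counts \((\tilde N_1/\tilde C_1,\tilde N_2/\tilde C_2)\); the full counts \((N_1/C_1,N_2/C_2)\), \(\nu=2\), \(\mu=6\) and the \(\sigma\)-group pattern are obtained in the paper by MAGMA computation (compare Propositions \ref{prp:SporCc4}, \ref{prp:SporCc6}, \ref{prp:SporCc5} and Remark \ref{rmk:MainTrunk}). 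So your own flag -- that genuine period-\(2\) stabilization of the step-\(2\) counts across the bifurcation is the hard part -- is exactly right, but the mechanism you invoke does not supply it; to be fair, the paper's one-line citation does not supply it for the non-metabelian counts either, and an unconditional proof would require a uniform analysis of the \(p\)-multiplicator and nucleus of \(G_0^{j+2,2j+1}(0,0,0,0)\) as a function of \(j\bmod 2\), which neither you nor the paper carries out.
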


\begin{proof}
See the dissertation of Nebelung
\cite[p. 192]{Ne}.
\end{proof}



\begin{remark}
\label{rmk:MainTrunk}
Although the number of metabelian children of step sizes \(1\le s\le 2\) of the vertex \(P_7\) with \(j=3\)
fit into the periodic pattern \((\tilde{N}_1/\tilde{C}_1,\tilde{N}_2/\tilde{C}_2)=(12/1,27/14)\),
the number of all children of step sizes \(1\le s\le 2\) of \(P_7\) is bigger than usual with
\((N_1/C_1,N_2/C_2)=(33/2,453/84)\) instead of \((30/1,295/37)\).
Therefore, periodicity starts with \(j=4\) and not with \(j=3\).
\end{remark}


\begin{corollary}
\label{cor:MainTrunk}
\textbf{(All coclass trees with metabelian mainlines.)}\\
\noindent
The coclass trees of \(3\)-groups \(G\) with \(G/G^\prime\simeq (3,3)\),
whose mainlines consist of metabelian vertices,
possess the following remarkable periodicity of length \(2\),
drawn impressively in Figure
\(\ref{fig:MainTrunk}\).
\begin{enumerate}
\item
For even \(j\ge 2\), the vertex \(P_{2j+1}\) with subscript \(2j+1\ge 5\) of the main trunk
has exactly \(4\) immediate descendants of step size \(s=2\)
giving rise to coclass trees \(\mathcal{T}^{j+1}\subset\mathcal{F}(j+1)\)
whose mainline vertices are metabelian \(3\)-groups \(G\) with odd \(\mathrm{cc}(G)=j+1\) and fixed TKT,
either \(\mathrm{d}.19\), \(\varkappa=(0343)\),
or \(\mathrm{d}.23\), \(\varkappa=(0243)\),
or \(\mathrm{d}.25\), \(\varkappa=(0143)\),
or \(\mathrm{b}.10\), \(\varkappa=(0043)\),
the latter with root \(P_{2j+3}\).
\item
For odd \(j\ge 3\), the vertex \(P_{2j+1}\) with subscript \(2j+1\ge 7\) of the main trunk
has exactly \(6\) immediate descendants of step size \(s=2\)
giving rise to coclass trees \(\mathcal{T}^{j+1}\subset\mathcal{F}(j+1)\)
whose mainline vertices are metabelian \(3\)-groups \(G\) with even \(\mathrm{cc}(G)=j+1\) and fixed TKT,
either \(\mathrm{d}.19\), \(\varkappa=(0343)\), twice,
or \(\mathrm{d}.23\), \(\varkappa=(0243)\),
or \(\mathrm{d}.25\), \(\varkappa=(0143)\), twice,
or \(\mathrm{b}.10\), \(\varkappa=(0043)\),
the latter with root \(P_{2j+3}\).
\item
The unique pre-periodic exception is the vertex \(P_3\) of the main trunk,
which has exactly \(3\) immediate descendants of step size \(s=2\)
giving rise to coclass trees \(\mathcal{T}^{2}\subset\mathcal{F}(2)\)
whose mainline vertices are metabelian \(3\)-groups \(G\) with even \(\mathrm{cc}(G)=2\) and fixed TKT,
either \(\mathrm{c}.18\), \(\varkappa=(0313)\),
or \(\mathrm{c}.21\), \(\varkappa=(0231)\),
or \(\mathrm{b}.10\), \(\varkappa=(0043)\),
the latter with root \(P_{5}\).
\end{enumerate}
\end{corollary}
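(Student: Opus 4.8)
The plan is to obtain the corollary as a direct unwinding of Theorem~\ref{thm:MainTrunk} together with the presentations of \S\ref{s:Presentations}. The first step is the reduction already prepared at the opening of \S\ref{s:MainTrunk}: a metabelian $3$-group $G$ with $G/G'\simeq(3,3)$ and $\mathrm{cc}(G)=j+1\ge 2$ has nilpotency class at least $j+2$ and logarithmic order at least $2j+3$, with equality exactly when $G$ is an immediate descendant of step size $s=2$ of $P_{2j+1}$; hence the root of every coclass-$(j+1)$ tree with metabelian mainline is one of the metabelian step-size-$2$ children of $P_{2j+1}$. It therefore suffices to enumerate these children, to decide which of them are roots of coclass trees in the strict sense of Definition~\ref{dfn:mainline}, and to compute the transfer kernel type of the corresponding mainline vertices.

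For the enumeration I would run the $p$-group generation algorithm on $P_{2j+1}=G_0^{j+1,\,2j+1}(0,0,0,0)$, which carries the parameter $\rho=0$ in Nebelung's presentation~\eqref{eqn:Presentation}; its step-size-$2$ children are the lifts with $\rho=\pm1$, the two signs corresponding to the even- and odd-coclass subcases, and with the remaining bounded parameters $\alpha,\beta,\gamma,\delta\in\{-1,0,1\}$ ranging over a transversal of the action of $\mathrm{Aut}(P_{2j+1})$. Working inside~\eqref{eqn:Presentation} by commutator calculus, and evaluating the four Artin transfers $T_i\colon G\to U_i/U_i'$ to the maximal subgroups of a mainline vertex $G$, one reads off a transfer kernel type that stays constant along each resulting mainline and depends only on the parity of $j$; the expected outcome is the families $\mathrm{d}.19=(0343)$, $\mathrm{d}.23=(0243)$, $\mathrm{d}.25=(0143)$ together with the trunk continuation $\mathrm{b}.10=(0043)$ rooted at $P_{2j+3}$, with multiplicities $(1,1,1,1)$ for even $j$ and $(2,1,2,1)$ for odd $j$; the extra copies of $\mathrm{d}.19$ and $\mathrm{d}.25$ in the odd case account for the difference between $\tilde C_2=14$ and $\tilde C_2=8$ in Theorem~\ref{thm:MainTrunk}(4).

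Since Theorem~\ref{thm:MainTrunk}(4) already guarantees that the metabelian immediate-descendant numbers of $P_{2j+1}$ are periodic of length two, and since the first periodicity isomorphisms of Theorem~\ref{thm:FirstPeriod} carry the transfer kernel type unchanged along each branching coclass tree, the whole verification collapses to a finite computation: the leading main-trunk vertices $P_5,P_7,P_9,P_{11},P_{13}$ --- which contain two consecutive values of each parity --- are handled with MAGMA, the $p$-group generation algorithm, and the SmallGroups identifiers of \S\ref{s:SmallGroups}, after which length-two periodicity propagates the statement to all $j\ge 2$ and settles parts~(1) and~(2). Part~(3) is the genuine pre-periodic exception $j=1$: the trailing vertex $P_3=\langle 27,3\rangle$ carries the exceptional TKT $\mathrm{a}.1=(0000)$ instead of $\mathrm{b}.10$, and a direct inspection exhibits exactly three step-size-$2$ children of $P_3$, producing the coclass-$2$ trees with mainline TKT $\mathrm{c}.18=(0313)$, $\mathrm{c}.21=(0231)$ and $\mathrm{b}.10=(0043)$ rooted at $P_5=\langle 243,3\rangle$.

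I expect the genuine difficulty to lie not in the counting --- which is essentially bookkeeping on top of Theorem~\ref{thm:MainTrunk} --- but in the transfer-kernel computation of the second paragraph: showing that the mainline vertices of each branching tree really carry the asserted $\varkappa$ uniformly in $j$, which requires controlling the defect of commutativity and the ``irregular'' abelian-quotient cases flagged in \S\ref{ss:AlgebraicInvariants} when one passes from one branch to the next. A secondary subtlety is the Definition~\ref{dfn:mainline} side condition that none of these roots is simultaneously the end vertex of a second infinite step-size-$1$ path --- the phenomenon illustrated by $P_5$ at the start of \S\ref{s:MainlineSporadic} --- which must be checked so that each of the four, six, respectively three descendants is counted exactly once and genuinely generates a coclass tree.
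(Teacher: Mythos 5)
Your route is genuinely different from the paper's, which settles this corollary by a single citation to Nebelung's classification \cite[\S\ 5.2, pp. 181--195]{Ne}; but as written your argument has a gap at the propagation step. Neither of the two results you lean on supplies the periodicity \emph{in the coclass parameter} \(j\) that you need. Theorem \ref{thm:MainTrunk}(4) only gives the numbers \(N_2/C_2\) (and their metabelian restrictions) of step-size-\(2\) children of \(P_{2j+1}\); it says nothing about their transfer kernel types, nor about which of them satisfy the side condition of Definition \ref{dfn:mainline} and are genuinely roots of coclass trees. Theorem \ref{thm:FirstPeriod} is the \emph{first} (vertical) periodicity of branches inside one fixed tree \(\mathcal{T}^r{R_i^r}\); it carries \(\varkappa\) down a given mainline but compares nothing between \(\mathcal{F}(j+1)\) and \(\mathcal{F}(j+3)\). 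The periodicity in \(j\) your "collapse to a finite computation" requires is exactly the co-periodicity: for the metabelian skeleton it is Nebelung's theorem \(\mathcal{M}^r\simeq\mathcal{M}^{r+2}\), \(r\ge 3\) --- i.e.\ the very result the paper cites as its proof --- and for the full trees it is Theorem \ref{thm:SecondPeriod}, which is established only for \(4\le r\le 19\) and is conjectural beyond (Conjecture \ref{cnj:SecondPeriod}). Hence a MAGMA verification of \(P_5,\dots,P_{13}\) plus the theorems you cite does not reach all \(j\ge 2\). To close the argument you must either carry out the transfer-kernel and root-identification computation symbolically, uniformly in \(j\), inside presentation \eqref{eqn:Presentation} --- which amounts to redoing the relevant portion of Nebelung's work --- or cite Nebelung directly, as the paper does; the uniform computation is sketched in your second paragraph but then abandoned in favour of the unjustified finite check.

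Two smaller corrections. The step-size-\(2\) metabelian children of \(P_{2j+1}=G_0^{j+1,2j+1}(0,0,0,0)\) again have \(\rho=0\) (for instance \(R_2^4=G_0^{6,9}(0,-1,0,1)\)); the value \(\rho=\pm 1\) parametrizes the depth-\(1\), step-size-\(1\) children inside the coclass trees and has nothing to do with the bifurcation or with the parity of the coclass. And the two extra tree roots (the second copies of \(\mathrm{d}.19\) and \(\mathrm{d}.25\)) explain only \(2\) of the difference \(\tilde{C}_2=14\) versus \(\tilde{C}_2=8\); the remaining \(4\) come from the sporadic roots of finite trees of types \(\mathrm{G}.16\), \(\mathrm{G}.19\), \(\mathrm{H}.4\) (eight of them for odd \(j\), four for even \(j\), compare Propositions \ref{prp:SporCc6} and \ref{prp:SporCc5}).
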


\begin{proof}
See the dissertation of Nebelung
\cite[\S\ 5.2, pp. 181--195]{Ne}.
\end{proof}



\section{Sporadic and periodic \(3\)-groups \(G\) of even coclass \(\mathrm{cc}(G)\ge 4\)}
\label{s:PeriodicSporadic4}
\noindent
Although formulated for the particular coclass \(r=4\),
all results for periodic groups and
most of the results for sporadic groups
in this section are valid for any even coclass \(r\ge 4\).
The only exception is the bigger (and thus pre-periodic) sporadic part \(\mathcal{F}_0(4)\) of the coclass forest \(\mathcal{F}(4)\),
described in Proposition
\ref{prp:SporCc4},
whereas the (co-periodic) standard case, the sporadic part \(\mathcal{F}_0(6)\) of the coclass forest \(\mathcal{F}(6)\),
is presented in Proposition
\ref{prp:SporCc6}.



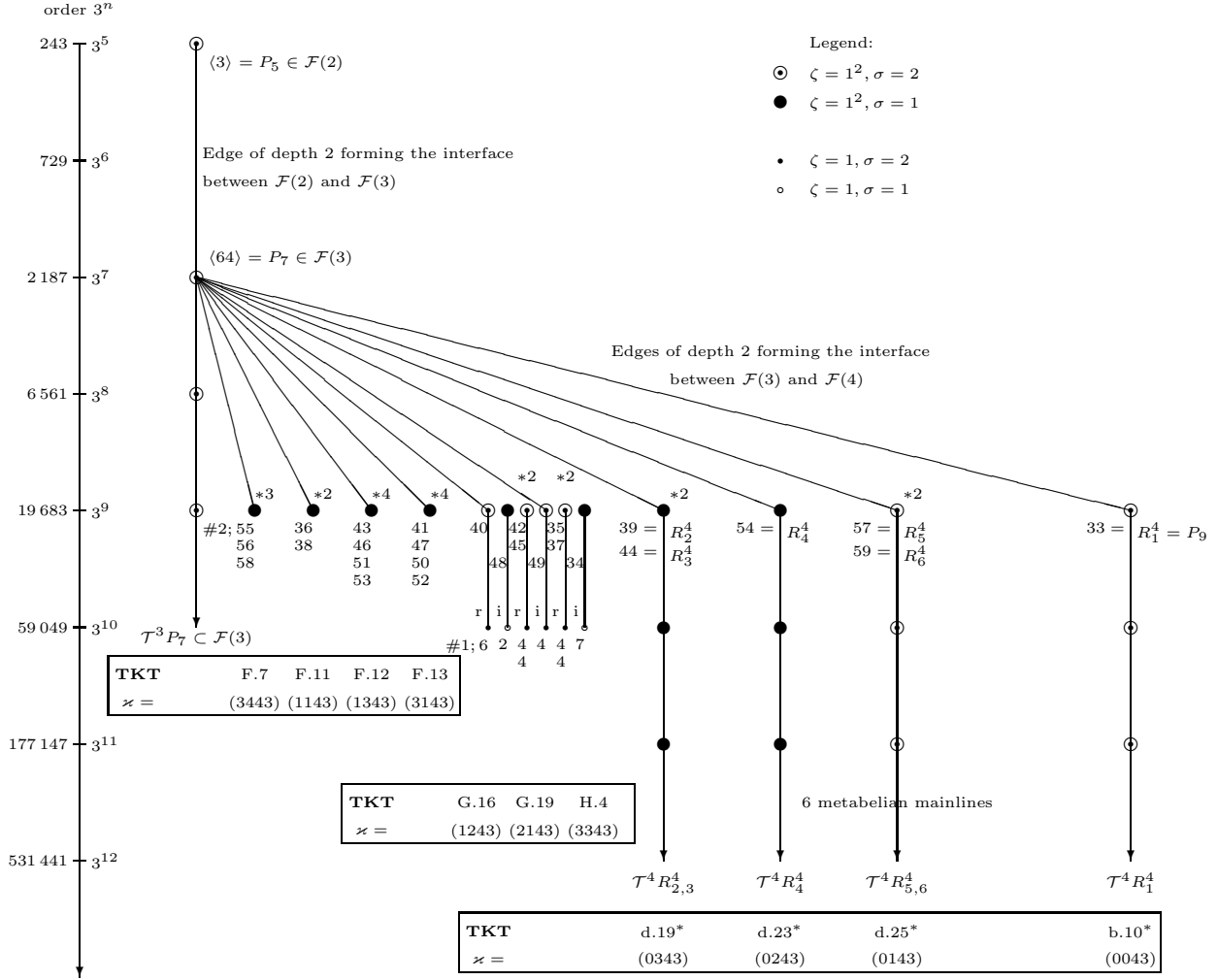
\begin{figure}[ht]
\caption{Metabelian interface between the coclass forests \(\mathcal{F}(3)\) and \(\mathcal{F}(4)\)}
\label{fig:SporCc4}


{\tiny

\begin{center}

\setlength{\unitlength}{0.8cm}
\begin{picture}(19,17)(-3,-14)

\put(-2,2.5){\makebox(0,0)[cb]{order \(3^n\)}}

\put(-2,2){\line(0,-1){14}}
\multiput(-2.1,2)(0,-2){8}{\line(1,0){0.2}}

\put(-2.2,2){\makebox(0,0)[rc]{\(243\)}}
\put(-1.8,2){\makebox(0,0)[lc]{\(3^5\)}}
\put(-2.2,0){\makebox(0,0)[rc]{\(729\)}}
\put(-1.8,0){\makebox(0,0)[lc]{\(3^6\)}}
\put(-2.2,-2){\makebox(0,0)[rc]{\(2\,187\)}}
\put(-1.8,-2){\makebox(0,0)[lc]{\(3^7\)}}
\put(-2.2,-4){\makebox(0,0)[rc]{\(6\,561\)}}
\put(-1.8,-4){\makebox(0,0)[lc]{\(3^8\)}}
\put(-2.2,-6){\makebox(0,0)[rc]{\(19\,683\)}}
\put(-1.8,-6){\makebox(0,0)[lc]{\(3^9\)}}
\put(-2.2,-8){\makebox(0,0)[rc]{\(59\,049\)}}
\put(-1.8,-8){\makebox(0,0)[lc]{\(3^{10}\)}}
\put(-2.2,-10){\makebox(0,0)[rc]{\(177\,147\)}}
\put(-1.8,-10){\makebox(0,0)[lc]{\(3^{11}\)}}
\put(-2.2,-12){\makebox(0,0)[rc]{\(531\,441\)}}
\put(-1.8,-12){\makebox(0,0)[lc]{\(3^{12}\)}}

\put(-2,-12){\vector(0,-1){2}}

\put(10.5,2){\makebox(0,0)[lc]{Legend:}}
\put(10,1.5){\circle{0.2}}
\put(10,1.5){\circle*{0.1}}
\put(10,1){\circle*{0.2}}
\put(10,0){\circle*{0.1}}
\put(10,-0.5){\circle{0.1}}
\put(10.5,1.5){\makebox(0,0)[lc]{\(\zeta=1^2,\sigma=2\)}}
\put(10.5,1){\makebox(0,0)[lc]{\(\zeta=1^2,\sigma=1\)}}
\put(10.5,0){\makebox(0,0)[lc]{\(\zeta=1,\sigma=2\)}}
\put(10.5,-0.5){\makebox(0,0)[lc]{\(\zeta=1,\sigma=1\)}}

\put(0,2){\circle{0.2}}
\put(0,2){\circle*{0.1}}

\put(0,2){\line(0,-1){4}}
\put(0.1,0){\makebox(0,0)[lb]{Edge of depth \(2\) forming the interface}}
\put(0.1,-0.5){\makebox(0,0)[lb]{between \(\mathcal{F}(2)\) and \(\mathcal{F}(3)\)}}

\multiput(0,-2)(0,-2){3}{\circle{0.2}}
\multiput(0,-2)(0,-2){3}{\circle*{0.1}}

\put(0,-2){\vector(0,-1){6}}
\put(0,-8.0){\makebox(0,0)[ct]{\(\mathcal{T}^3P_7\subset\mathcal{F}(3)\)}}

\put(0,-2){\line(1,-4){1}}
\put(0,-2){\line(1,-2){2}}
\put(0,-2){\line(3,-4){3}}
\put(0,-2){\line(1,-1){4}}
\put(0,-2){\line(5,-4){5}}
\put(0,-2){\line(3,-2){6}}
\put(0,-2){\line(2,-1){8}}
\put(0,-2){\line(5,-2){10}}
\put(0,-2){\line(3,-1){12}}
\put(0,-2){\line(4,-1){16}}
\put(7.1,-3.4){\makebox(0,0)[lb]{Edges of depth \(2\) forming the interface}}
\put(8.1,-3.9){\makebox(0,0)[lb]{between \(\mathcal{F}(3)\) and \(\mathcal{F}(4)\)}}

\multiput(1,-6)(1,0){4}{\circle*{0.2}}

\multiput(5,-6)(0.66,0){3}{\circle{0.2}}
\multiput(5,-6)(0.66,0){3}{\circle*{0.1}}

\put(5.33,-6){\circle*{0.2}}
\put(5.99,-6){\circle{0.2}}
\put(5.99,-6){\circle*{0.1}}
\put(6.65,-6){\circle*{0.2}}

\multiput(8,-6)(2,0){2}{\circle*{0.2}}
\multiput(8,-8)(2,0){2}{\circle*{0.2}}
\multiput(8,-10)(2,0){2}{\circle*{0.2}}

\multiput(12,-6)(0,-2){3}{\circle{0.2}}
\multiput(12,-6)(0,-2){3}{\circle*{0.1}}

\multiput(16,-6)(0,-2){3}{\circle{0.2}}
\multiput(16,-6)(0,-2){3}{\circle*{0.1}}

\put(8,-6){\vector(0,-1){6}}
\put(10,-6){\vector(0,-1){6}}
\put(12,-6){\vector(0,-1){6}}
\put(16,-6){\vector(0,-1){6}}

\put(12,-11){\makebox(0,0)[cc]{\(6\) metabelian mainlines}}

\put(8,-12.2){\makebox(0,0)[ct]{\(\mathcal{T}^4R_{2,3}^4\)}}
\put(10,-12.2){\makebox(0,0)[ct]{\(\mathcal{T}^4R_4^4\)}}
\put(12,-12.2){\makebox(0,0)[ct]{\(\mathcal{T}^4R_{5,6}^4\)}}
\put(16,-12.2){\makebox(0,0)[ct]{\(\mathcal{T}^4R_1^4\)}}

\multiput(5,-6)(0.33,0){6}{\line(0,-1){2}}
\multiput(5,-8)(0.66,0){3}{\circle*{0.1}}
\put(5.33,-8){\circle{0.1}}
\put(5.99,-8){\circle*{0.1}}
\put(6.65,-8){\circle{0.1}}

\put(0.2,1.8){\makebox(0,0)[lt]{\(\langle 3\rangle=P_5\in\mathcal{F}(2)\)}}
\put(0.2,-1.8){\makebox(0,0)[lb]{\(\langle 64\rangle=P_7\in\mathcal{F}(3)\)}}

\put(1,-5.8){\makebox(0,0)[lb]{\(\ast 3\)}}
\put(1,-6.2){\makebox(0,0)[rt]{\(\#2;55\)}}
\put(1,-6.5){\makebox(0,0)[rt]{\(56\)}}
\put(1,-6.8){\makebox(0,0)[rt]{\(58\)}}
\put(2,-5.8){\makebox(0,0)[lb]{\(\ast 2\)}}
\put(2,-6.2){\makebox(0,0)[rt]{\(36\)}}
\put(2,-6.5){\makebox(0,0)[rt]{\(38\)}}
\put(3,-5.8){\makebox(0,0)[lb]{\(\ast 4\)}}
\put(3,-6.2){\makebox(0,0)[rt]{\(43\)}}
\put(3,-6.5){\makebox(0,0)[rt]{\(46\)}}
\put(3,-6.8){\makebox(0,0)[rt]{\(51\)}}
\put(3,-7.1){\makebox(0,0)[rt]{\(53\)}}
\put(4,-5.8){\makebox(0,0)[lb]{\(\ast 4\)}}
\put(4,-6.2){\makebox(0,0)[rt]{\(41\)}}
\put(4,-6.5){\makebox(0,0)[rt]{\(47\)}}
\put(4,-6.8){\makebox(0,0)[rt]{\(50\)}}
\put(4,-7.1){\makebox(0,0)[rt]{\(52\)}}

\put(5.66,-5.5){\makebox(0,0)[cb]{\(\ast 2\)}}
\put(6.32,-5.5){\makebox(0,0)[cb]{\(\ast 2\)}}
\multiput(4.9,-7.8)(0.66,0){3}{\makebox(0,0)[rb]{r}}
\put(5.00,-6.2){\makebox(0,0)[rt]{\(40\)}}
\put(5.00,-8.2){\makebox(0,0)[rt]{\(\#1;6\)}}
\put(5.66,-6.2){\makebox(0,0)[rt]{\(42\)}}
\put(5.66,-6.5){\makebox(0,0)[rt]{\(45\)}}
\put(5.66,-8.2){\makebox(0,0)[rt]{\(4\)}}
\put(5.66,-8.5){\makebox(0,0)[rt]{\(4\)}}
\put(6.32,-6.2){\makebox(0,0)[rt]{\(35\)}}
\put(6.32,-6.5){\makebox(0,0)[rt]{\(37\)}}
\put(6.32,-8.2){\makebox(0,0)[rt]{\(4\)}}
\put(6.32,-8.5){\makebox(0,0)[rt]{\(4\)}}

\multiput(5.23,-7.8)(0.66,0){3}{\makebox(0,0)[rb]{i}}
\put(5.33,-6.8){\makebox(0,0)[rt]{\(48\)}}
\put(5.33,-8.2){\makebox(0,0)[rt]{\(2\)}}
\put(5.99,-6.8){\makebox(0,0)[rt]{\(49\)}}
\put(5.99,-8.2){\makebox(0,0)[rt]{\(4\)}}
\put(6.65,-6.8){\makebox(0,0)[rt]{\(34\)}}
\put(6.65,-8.2){\makebox(0,0)[rt]{\(7\)}}

\put(8.1,-5.8){\makebox(0,0)[lb]{\(\ast 2\)}}
\put(7.9,-6.2){\makebox(0,0)[rt]{\(39=\)}}
\put(8.1,-6.2){\makebox(0,0)[lt]{\(R_2^4\)}}
\put(7.9,-6.6){\makebox(0,0)[rt]{\(44=\)}}
\put(8.1,-6.6){\makebox(0,0)[lt]{\(R_3^4\)}}
\put(9.9,-6.2){\makebox(0,0)[rt]{\(54=\)}}
\put(10.1,-6.2){\makebox(0,0)[lt]{\(R_4^4\)}}
\put(12.1,-5.8){\makebox(0,0)[lb]{\(\ast 2\)}}
\put(11.9,-6.2){\makebox(0,0)[rt]{\(57=\)}}
\put(12.1,-6.2){\makebox(0,0)[lt]{\(R_5^4\)}}
\put(11.9,-6.6){\makebox(0,0)[rt]{\(59=\)}}
\put(12.1,-6.6){\makebox(0,0)[lt]{\(R_6^4\)}}
\put(15.9,-6.2){\makebox(0,0)[rt]{\(33=\)}}
\put(16.1,-6.2){\makebox(0,0)[lt]{\(R_1^4=P_9\)}}

\put(-1,-8.8){\makebox(0,0)[cc]{\textbf{TKT}}}
\put(1,-8.8){\makebox(0,0)[cc]{F.7}}
\put(2,-8.8){\makebox(0,0)[cc]{F.11}}
\put(3,-8.8){\makebox(0,0)[cc]{F.12}}
\put(4,-8.8){\makebox(0,0)[cc]{F.13}}
\put(-1,-9.3){\makebox(0,0)[cc]{\(\varkappa=\)}}
\put(1,-9.3){\makebox(0,0)[cc]{\((3443)\)}}
\put(2,-9.3){\makebox(0,0)[cc]{\((1143)\)}}
\put(3,-9.3){\makebox(0,0)[cc]{\((1343)\)}}
\put(4,-9.3){\makebox(0,0)[cc]{\((3143)\)}}
\put(-1.5,-9.5){\framebox(6,1){}}

\put(3,-11){\makebox(0,0)[cc]{\textbf{TKT}}}
\put(4.8,-11){\makebox(0,0)[cc]{G.16}}
\put(5.8,-11){\makebox(0,0)[cc]{G.19}}
\put(6.8,-11){\makebox(0,0)[cc]{H.4}}
\put(3,-11.5){\makebox(0,0)[cc]{\(\varkappa=\)}}
\put(4.8,-11.5){\makebox(0,0)[cc]{\((1243)\)}}
\put(5.8,-11.5){\makebox(0,0)[cc]{\((2143)\)}}
\put(6.8,-11.5){\makebox(0,0)[cc]{\((3343)\)}}
\put(2.5,-11.7){\framebox(5,1){}}

\put(5,-13.2){\makebox(0,0)[cc]{\textbf{TKT}}}
\put(8,-13.2){\makebox(0,0)[cc]{d.19\({}^\ast\)}}
\put(10,-13.2){\makebox(0,0)[cc]{d.23\({}^\ast\)}}
\put(12,-13.2){\makebox(0,0)[cc]{d.25\({}^\ast\)}}
\put(16,-13.2){\makebox(0,0)[cc]{b.10\({}^\ast\)}}
\put(5,-13.7){\makebox(0,0)[cc]{\(\varkappa=\)}}
\put(8,-13.7){\makebox(0,0)[cc]{\((0343)\)}}
\put(10,-13.7){\makebox(0,0)[cc]{\((0243)\)}}
\put(12,-13.7){\makebox(0,0)[cc]{\((0143)\)}}
\put(16,-13.7){\makebox(0,0)[cc]{\((0043)\)}}
\put(4.5,-13.9){\framebox(12,1){}}

\end{picture}

\end{center}

}

\end{figure}



\noindent
Figure
\ref{fig:SporCc4}
sketches an outline of the \textit{metabelian skeleton} of the coclass forest \(\mathcal{F}(4)\)
in its top region.
The vertices \(P_5=\langle 243,3\rangle\in\mathcal{F}(2)\) and
\(P_7=\langle 2187,64\rangle\in\mathcal{F}(3)\),
with the crucial bifucation from \(\mathcal{F}(3)\) to \(\mathcal{F}(4)\),
belong to the infinite main trunk (\S\
\ref{s:MainTrunk}).



\begin{proposition}
\label{prp:SporCc6}
\textbf{(Co-periodic standard case.)} \\
The sporadic part \(\mathcal{F}_0(6)\) of the coclass-\(6\) forest \(\mathcal{F}(6)\) consists of
\begin{itemize}
\item
\(13\) \((3+2+4+4)\) isolated metabelian vertices of order \(3^{13}\) with types \(\mathrm{F}.7\), \(\mathrm{F}.11\), \(\mathrm{F}.12\), \(\mathrm{F}.13\),
\item
\(8\) \((2+3+3)\) metabelian roots of finite trees with types \(\mathrm{G}.16\), \(\mathrm{G}.19\), \(\mathrm{H}.4\),
together with a metabelian child having a GI-action, which is unique for each root, and
\(22\) metabelian and \(38\) non-metabelian children without GI-action, all with depth \(\mathrm{dp}=1\) and \(\mathrm{lo}=14\),
\item
\(66\) \((32+16+18)\) isolated vertices with \(\mathrm{dl}=3\) and types \(\mathrm{d}.19\), \(\mathrm{d}.23\), \(\mathrm{d}.25\),
\item
\(179\) isolated vertices with \(\mathrm{dl}=3\) and type \(\mathrm{b}.10\),
\item
\(23\) capable vertices with \(\mathrm{dl}=3\) and type \(\mathrm{b}.10\), \\
whose children do not belong to \(\mathcal{F}_0(6)\), by definition.
\end{itemize}
The action flag of all metabelian top vertices with depth \(\mathrm{dp}=0\) is \(\sigma\ge 1\).
The value \(\sigma=2\) only occurs for all vertices with type \(\mathrm{b}.10\), \(\mathrm{d}.25\), \(\mathrm{G}.19\),
and certain vertices with type \(\mathrm{G}.16\), \(\mathrm{H}.4\),
but never for type \(\mathrm{d}.19\), \(\mathrm{d}.23\), \(\mathrm{F}.7\), \(\mathrm{F}.11\), \(\mathrm{F}.12\), \(\mathrm{F}.13\).
Exactly the isolated vertices with depth \(\mathrm{dp}=0\) have an RI-action. 

Together with the \(6\) metabelian roots \(R_i^6\), \(1\le i\le 6\),  of coclass-\(6\) trees,
the \(13+8+66+179+23=21+268\) top vertices of depth \(\mathrm{dp}=0\) of \(\mathcal{F}_0(6)\)
are exactly the \(N_2=295\) children of step size \(s=2\) of the main trunk vertex \(P_{11}=\langle 2187,64\rangle-\#2;33-\#2;25\),
and the \(6+8+23\) capable vertices among them correspond to the invariant \(C_2=37\) of \(P_{11}\).
\end{proposition}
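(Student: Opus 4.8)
\textbf{Localising the top layer of \(\mathcal{F}_0(6)\).}
The plan is to realise every depth-\(0\) vertex of \(\mathcal{F}_0(6)\) as an immediate descendant of one fixed known group, and then to partition the counting into a metabelian part governed by existing classification results and a non-metabelian part handled by the \(p\)-group generation algorithm. The starting point is Theorem \ref{thm:MainTrunk}(4) applied to the main-trunk vertex \(P_{11}=G_0^{7,11}(0,0,0,0)=\langle 2187,64\rangle-\#2;33-\#2;25\), which has \(j=5\) odd, nuclear rank \(\nu(P_{11})=2\), relation rank \(\mu(P_{11})=6\), descendant numbers \((N_1/C_1,N_2/C_2)=(30/1,295/37)\), of which \((\tilde{N}_1/\tilde{C}_1,\tilde{N}_2/\tilde{C}_2)=(12/1,27/14)\) are metabelian, and all of whose immediate descendants are \(\sigma\)-groups. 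The children of step size \(s=2\) of \(P_{11}\) have \(\mathrm{lo}=13\), \(\mathrm{cl}=7\) and \(\mathrm{cc}=6\). Using \S \ref{s:MainTrunk} — where it is shown that the minimal logarithmic order of a coclass-\(6\) group with \(G/G'\simeq(3,3)\) and metabelian parent is \(2\cdot 6+1=13\), attained precisely by step-\(2\) descendants of \(P_{11}\) — together with the coclass theorems of Leedham-Green \cite{Lg} and Shalev \cite{Sv} guaranteeing finiteness of \(\mathcal{F}_0(6)\), I would first conclude that these \(N_2=295\) children are exactly the depth-\(0\) vertices of \(\mathcal{F}(6)\); removing the \(6\) coclass-tree roots \(R_i^6\) produced by Corollary \ref{cor:MainTrunk}(2) leaves the \(289=N_2-6\) top vertices of \(\mathcal{F}_0(6)\), and the assertion \(\sigma\ge 1\) for all of them is immediate from Theorem \ref{thm:MainTrunk}(4).

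\textbf{The metabelian children.}
Among the \(\tilde{N}_2=27\) metabelian step-\(2\) children of \(P_{11}\), Corollary \ref{cor:MainTrunk}(2) singles out the \(6\) roots of coclass-\(6\) trees with metabelian mainline (types \(\mathrm{d}.19\) twice, \(\mathrm{d}.23\), \(\mathrm{d}.25\) twice, and \(\mathrm{b}.10\) with root \(P_{13}\)), and \(\tilde{C}_2=14\) forces the remaining \(21\) to split into \(8\) capable and \(13\) terminal vertices. Invoking Nebelung's parametrised polycyclic power–commutator presentations \eqref{eqn:Presentation} and the metabelian classification of \cite{Ne}, I would identify the \(13\) terminal vertices as carrying types \(\mathrm{F}.7,\mathrm{F}.11,\mathrm{F}.12,\mathrm{F}.13\) with multiplicities \(3+2+4+4\), and the \(8\) capable ones as roots of finite subtrees of types \(\mathrm{G}.16,\mathrm{G}.19,\mathrm{H}.4\) with multiplicities \(2+3+3\); for each such root the unique metabelian child with GI-action is located from the presentation, while the \(22\) metabelian and \(38\) non-metabelian children without GI-action (all of depth \(\mathrm{dp}=1\), \(\mathrm{lo}=14\)) are enumerated by running the \(p\)-group generation algorithm \cite{Nm2,Ob,HEO} of MAGMA \cite{MAGMA} on these roots, exactly as in the proof of Theorem \ref{thm:SecondPeriod}. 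The action flags \(\sigma\) and the existence of an RI-action for these metabelian vertices are then read off from \(\mathrm{Aut}(G)\) via Definition \ref{dfn:Automorphisms}.

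\textbf{The non-metabelian children.}
The remaining \(295-27=268\) step-\(2\) children of \(P_{11}\) have derived length \(3\) and are treated by direct computation: one enumerates the step-\(2\) immediate descendants of \(P_{11}\) with the \(p\)-group generation algorithm, discards the \(27\) metabelian ones, and for each of the \(268\) non-metabelian ones computes the transfer kernel type \(\varkappa\) in the sense of Remark \ref{rmk:TKT}, the nuclear rank \(\nu\), the action flag \(\sigma\), and whether an RI-action exists. This should return the claimed \(66=32+16+18\) terminal vertices of types \(\mathrm{d}.19,\mathrm{d}.23,\mathrm{d}.25\), the \(179\) terminal vertices of type \(\mathrm{b}.10\), and the \(23\) capable vertices of type \(\mathrm{b}.10\) whose descendants are pruned away by the convention of Definition \ref{dfn:Forests}. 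Adding up, the capable step-\(2\) children number \(6+8+23=37=C_2(P_{11})\), which is the internal consistency check against Theorem \ref{thm:MainTrunk}(4); and the stated \(\sigma=2\) and RI-action patterns are precisely the output of this enumeration, with Theorem \ref{thm:FirstPeriod} and the \(\mathcal{F}(4)\) picture of Figure \ref{fig:SporCc4} confirming that \(\mathcal{F}_0(6)\) is indeed the co-periodic standard case rather than an exceptional one.

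\textbf{Main obstacle.}
Two points will require care. The first is the completeness claim of the first step, namely that the \(295\) step-\(2\) children of \(P_{11}\) genuinely exhaust the depth-\(0\) vertices of \(\mathcal{F}_0(6)\), i.e.\ that no coclass-\(6\) vertex with metabelian parent can occur at logarithmic order below \(13\); this rests entirely on the main-trunk structure of Theorem \ref{thm:MainTrunk} together with the finiteness statements in the coclass theorems. The second is the bookkeeping of the non-strict invariants \(\#\mathrm{Aut}\), \(\sigma\) and the RI-action: by Remark \ref{rmk:Automorphisms} these cannot be deduced from mere divisibility of \(\#\mathrm{Aut}(G)\) and must be extracted group by group from the automorphism groups of all \(295\) children and of the children of the \(8\) finite subtrees. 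The purely graph-combinatorial counting and the MAGMA enumeration are otherwise routine, given the implementation cited in \S \ref{s:SmallGroups}.
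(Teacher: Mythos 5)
Your proposal is correct and follows essentially the same route as the paper: the paper's own proof likewise identifies the top vertices of \(\mathcal{F}_0(6)\) with the step-size-\(2\) children of the main-trunk vertex (Theorem \ref{thm:MainTrunk}, due to Nebelung for the metabelian skeleton), and obtains all the stated counts, transfer kernel types, \(\sigma\)-flags and RI-actions by explicit enumeration with the \(p\)-group generation algorithm in MAGMA, cross-checked against Nebelung's classification. The only difference is cosmetic: you make the bookkeeping \(27=6+8+13\) via \(\tilde{N}_2,\tilde{C}_2\) slightly more explicit and verify only \(r=6\), whereas the paper runs the same computation for all even \(r\le 20\) to support the surrounding co-periodicity claims.
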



\begin{proposition}
\label{prp:SporCc4}
\textbf{(Pre-periodic exception.)} \\
The constitution of the sporadic part \(\mathcal{F}_0(4)\) of the coclass-\(4\) forest \(\mathcal{F}(4)\)
with respect to the \(21\) metabelian top vertices and their \(68\) children (here with order \(3^{9}\), resp. \(\mathrm{lo}=10\))
is the same as described for \(\mathcal{F}_0(6)\) in Proposition
\ref{prp:SporCc6},
but the number of non-metabelian top vertices of depth \(\mathrm{dp}=0\) is bigger, namely
\begin{itemize}
\item
\(88\) \((40+22+26)\) isolated vertices with \(\mathrm{dl}=3\) and types \(\mathrm{d}.19\), \(\mathrm{d}.23\), \(\mathrm{d}.25\),
\item
\(12\) \((8+2+2)\) capable vertices with \(\mathrm{dl}=3\) and types \(\mathrm{d}.19\), \(\mathrm{d}.23\), \(\mathrm{d}.25\), \\
whose children do not belong to \(\mathcal{F}_0(4)\), by definition,
\item
\(268\) isolated vertices with \(\mathrm{dl}=3\) and type \(\mathrm{b}.10\),
\item
\(58\) capable vertices with \(\mathrm{dl}=3\) and type \(\mathrm{b}.10\), \\
whose children do not belong to \(\mathcal{F}_0(4)\), by definition.
\end{itemize}
The distribution of the action flags \(\sigma\) is the same as in Proposition
\ref{prp:SporCc6},
but the total census of top vertices is considerably bigger: 

Together with the \(6\) metabelian roots \(R_i^4\), \(1\le i\le 6\), of coclass-\(4\) trees,
the \(13+8+88+12+268+58=21+426\) top vertices of depth \(\mathrm{dp}=0\) of \(\mathcal{F}_0(4)\)
are exactly the \(N_2=453\) children of step size \(s=2\) of the main trunk vertex \(P_7=\langle 2187,64\rangle\),
and the \(6+8+12+58\) capable vertices among them correspond to the invariant \(C_2=84\) of \(P_7\).
\end{proposition}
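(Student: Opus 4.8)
The plan is to reduce the whole statement to a classification of the immediate descendants of step size $s=2$ of the main trunk vertex $P_7=\langle 2187,64\rangle$. By Theorem \ref{thm:MainTrunk} together with Remark \ref{rmk:MainTrunk}, $P_7$ has exactly $N_2=453$ such children, of which $C_2=84$ are capable, and by Corollary \ref{cor:MainTrunk}(2) six of them are the metabelian roots $R_i^4$ ($1\le i\le 6$) of the coclass-$4$ trees with metabelian mainlines. Since none of the remaining $447$ children lies on or below a mainline, Definition \ref{dfn:sporadic} identifies them with the top vertices of depth $\mathrm{dp}=0$ of the sporadic part $\mathcal{F}_0(4)$; a capable non-metabelian one among them contributes itself but, by the pruning convention of Definition \ref{dfn:Forests}, not its descendants. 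Everything then comes down to sorting these $447$ groups of logarithmic order $9$ according to metabelian versus non-metabelian status, derived length, transfer kernel type $\varkappa$, nuclear rank $\nu$, action flag $\sigma$, and presence of an RI-action, together with the bookkeeping $13+8+88+12+268+58=447=N_2-6$ and $6+8+12+58=84=C_2$.

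First I would settle the metabelian block. Using Nebelung's parametrized polycyclic pc-presentations of Section \ref{s:Presentations}, one lists the metabelian descendants of step size $s=2$ of $P_7$ and, by the commutator calculus of \cite{Ne}, reads off their transfer kernel and transfer target types; this exhibits the $13=3+2+4+4$ terminal metabelian vertices of $\mathrm{lo}=9$ with types $\mathrm{F}.7,\mathrm{F}.11,\mathrm{F}.12,\mathrm{F}.13$, the $8=2+3+3$ metabelian roots of finite subtrees with types $\mathrm{G}.16,\mathrm{G}.19,\mathrm{H}.4$, and then their $68$ children at $\mathrm{lo}=10$ (one GI-action child per root, together with $22$ further metabelian and $38$ non-metabelian children, all of depth $1$). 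That this block agrees with the metabelian block of $\mathcal{F}_0(6)$ recorded in Proposition \ref{prp:SporCc6} is exactly Nebelung's periodicity $\mathcal{M}^4\simeq\mathcal{M}^6$ of the metabelian skeleton \cite{Ne} (confirmed in \cite[Cnj. 14, p. 115]{Ek}); alternatively it follows by direct comparison of the two presentation families, which differ only by the class shift $c\mapsto c+2$, $r\mapsto r+2$ appearing in the transformation laws of Theorem \ref{thm:SecondPeriod}.

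Next comes the non-metabelian block, which is the actual work. The remaining $426$ top vertices are non-metabelian of derived length $\mathrm{dl}=3$; to enumerate and classify them I would run the $p$-group generation algorithm \cite{Nm2,Ob,HEO} on $P_7$ in MAGMA \cite{MAGMA}, produce all step-size-$2$ descendants, and for each resulting group of order $3^9$ compute the four transfer homomorphisms $T_i$ to obtain $\varkappa$ (hence the assignment to $\mathrm{d}.19$, $\mathrm{d}.23$, $\mathrm{d}.25$ or $\mathrm{b}.10$ by Corollary \ref{cor:MainTrunk}), the nuclear rank $\nu$ to separate capable from terminal, the order $\#\mathrm{Aut}$, and whether $\mathrm{Aut}$ contains a generator-inverting, respectively relator-inverting, automorphism in the sense of Definition \ref{dfn:Automorphisms} (hence $\sigma$ and the RI-action). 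This yields the split $88=40+22+26$ isolated plus $12=8+2+2$ capable vertices of types $\mathrm{d}.19,\mathrm{d}.23,\mathrm{d}.25$, and $268$ isolated plus $58$ capable vertices of type $\mathrm{b}.10$, together with the stated action-flag distribution; the non-metabelian top vertices then number $88+12+268+58=426$, the total census of top vertices is $21+426=447$, and with the six roots $R_i^4$ it becomes $6+447=453=N_2$, while the capable ones sum to $6+8+12+58=84=C_2$. This simultaneously re-confirms the descendant numbers of $P_7$ in Theorem \ref{thm:MainTrunk}.

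The hard part is precisely this last enumeration, for a structural rather than accidental reason: by Remark \ref{rmk:MainTrunk} the vertex $P_7$ carries \emph{more} step-size-$2$ children than the periodic pattern predicts ($453$ instead of $295$, with $84$ capable instead of $37$), so one cannot merely transport a known branch along a periodicity isomorphism and must instead classify several hundred non-metabelian $3$-groups of order $3^9$ individually, computing transfer kernels, nuclear ranks and automorphism-group orders for each. This excess over the co-periodic standard case of Proposition \ref{prp:SporCc6} is exactly what makes $r=4$ a pre-periodic exception, and it is also the point where the pruning convention must be handled with care, so that the $12+58$ capable $\mathrm{dl}=3$ vertices are counted while their offspring are excluded from $\mathcal{F}_0(4)$.
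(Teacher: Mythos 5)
Your proposal is correct and follows essentially the same route as the paper: a MAGMA computation (via the \(p\)-group generation algorithm) enumerating and classifying the step-size-\(2\) descendants of \(P_7\) by transfer kernel type, nuclear rank, derived length and action flag, with the metabelian block cross-checked against Nebelung's classification and the bookkeeping \(447=453-6\), \(84=C_2\). The paper's proof is exactly this kind of computational verification (extended to even \(r\le 20\) to separate the pre-periodic case \(r=4\) from the co-periodic standard), so no genuinely different method is involved.
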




\renewcommand{\arraystretch}{1.2}

\begin{table}[ht]
\caption{Data for sporadic \(3\)-groups \(G\) with \(9\le n=\mathrm{lo}(G)\le 10\) in the forest \(\mathcal{F}(4)\)}
\label{tbl:SporCc4}
\begin{center}
\begin{tabular}{|c|c|l||c|c|c||c|c||c|c|l|c||c|c|}
\hline
 \(\#\) & \(m,n\)  & \(\rho;\alpha,\beta,\gamma,\delta\) & dp & dl & \(\zeta\) & \(\mu\) & \(\nu\) & \(\tau(1)\) & \(\tau_2\) & Type                   & \(\varkappa\) & \(\sigma\) & \(\#\mathrm{Aut}\)  \\
\hline
  \(1\) & \(5,7\)  & \(0;0,0,0,0\) (\(P_7\))             & 0  & 2  & \(1^2\)   & \(6\)   & \(4\)   & \(2^2\)     & \(21^3\)   & \(\mathrm{b}.10^\ast\) & \((0043)\)    & \(2^\ast\) & \(2^3\cdot 3^{10}\) \\
\hline
  \(1\) & \(6,9\)  & \(0;0,0,0,0\) (\(P_9\))             & 0  & 2  & \(1^2\)   & \(6\)   & \(2\)   & \(32\)      & \(2^31\)   & \(\mathrm{b}.10^\ast\) & \((0043)\)    & \(2\)      & \(2^3\cdot 3^{14}\) \\
  \(2\) & \(6,9\)  & \(0;0,\pm 1,0,1\)                   & 0  & 2  & \(1^2\)   & \(5\)   & \(1\)   & \(32\)      & \(2^31\)   & \(\mathrm{d}.19^\ast\) & \((0343)\)    & \(1\)      & \(2\cdot 3^{14}\)   \\
  \(1\) & \(6,9\)  & \(0;0,0,0,1\)                       & 0  & 2  & \(1^2\)   & \(5\)   & \(1\)   & \(32\)      & \(2^31\)   & \(\mathrm{d}.23^\ast\) & \((0243)\)    & \(1\)      & \(2\cdot 3^{14}\)   \\
  \(2\) & \(6,9\)  & \(0;0,\pm 1,0,0\)                   & 0  & 2  & \(1^2\)   & \(5\)   & \(1\)   & \(32\)      & \(2^31\)   & \(\mathrm{d}.25^\ast\) & \((0143)\)    & \(2\)      & \(2^2\cdot 3^{14}\) \\
  \(2\) & \(6,9\)  & \(0;-1,\pm(1,1),1\)                 & 0  & 2  & \(1^2\)   & \(4\)   & \(0\)   & \(32\)      & \(2^31\)   & \(\mathrm{F}.7\)       & \((3443)\)    & \(1^\ast\) & \(2^2\cdot 3^{14}\) \\
  \(1\) & \(6,9\)  & \(0;1,1,-1,1\)                      & 0  & 2  & \(1^2\)   & \(4\)   & \(0\)   & \(32\)      & \(2^31\)   & \(\mathrm{F}.7\)       & \((3443)\)    & \(1^\ast\) & \(2\cdot 3^{14}\)   \\
  \(2\) & \(6,9\)  & \(0;1,\pm 1,0,0\)                   & 0  & 2  & \(1^2\)   & \(4\)   & \(0\)   & \(32\)      & \(2^31\)   & \(\mathrm{F}.11\)      & \((1143)\)    & \(1^\ast\) & \(2\cdot 3^{14}\)   \\
  \(4\) & \(6,9\)  & \(0;\pm 1,0,\pm 1,1\)               & 0  & 2  & \(1^2\)   & \(4\)   & \(0\)   & \(32\)      & \(2^31\)   & \(\mathrm{F}.12\)      & \((1343)\)    & \(1^\ast\) & \(2\cdot 3^{14}\)   \\
  \(4\) & \(6,9\)  & \(0;1,\pm 1,\pm 1,0\)               & 0  & 2  & \(1^2\)   & \(4\)   & \(0\)   & \(32\)      & \(2^31\)   & \(\mathrm{F}.13\)      & \((3143)\)    & \(1^\ast\) & \(2\cdot 3^{14}\)   \\
  \(1\) & \(6,9\)  & \(0;1,0,0,1\)                       & 0  & 2  & \(1^2\)   & \(5\)   & \(1\)   & \(32\)      & \(2^31\)   & \(\mathrm{G}.16\)      & \((1243)\)    & \(2\)      & \(2^2\cdot 3^{14}\) \\
  \(1\) & \(6,9\)  & \(0;-1,0,0,1\)                      & 0  & 2  & \(1^2\)   & \(5\)   & \(1\)   & \(32\)      & \(2^31\)   & \(\mathrm{G}.16\)      & \((1243)\)    & \(1\)      & \(2^2\cdot 3^{14}\) \\
  \(2\) & \(6,9\)  & \(0;0,\pm(1,1),0\)                  & 0  & 2  & \(1^2\)   & \(5\)   & \(1\)   & \(32\)      & \(2^31\)   & \(\mathrm{G}.19\)      & \((2143)\)    & \(2\)      & \(2^3\cdot 3^{14}\) \\
  \(1\) & \(6,9\)  & \(0;0,1,-1,0\)                      & 0  & 2  & \(1^2\)   & \(5\)   & \(1\)   & \(32\)      & \(2^31\)   & \(\mathrm{G}.19\)      & \((2143)\)    & \(2\)      & \(2^2\cdot 3^{14}\) \\
  \(2\) & \(6,9\)  & \(0;1,\pm(1,1),1\)                  & 0  & 2  & \(1^2\)   & \(5\)   & \(1\)   & \(32\)      & \(2^31\)   & \(\mathrm{H}.4\)       & \((3343)\)    & \(2\)      & \(2^2\cdot 3^{14}\) \\
  \(1\) & \(6,9\)  & \(0;-1,1,-1,1\)                     & 0  & 2  & \(1^2\)   & \(5\)   & \(1\)   & \(32\)      & \(2^31\)   & \(\mathrm{H}.4\)       & \((3343)\)    & \(1\)      & \(2\cdot 3^{14}\)   \\
\hline
  \(1\) & \(7,10\) & \(1;1,0,-1,1\)                      & 1  & 2  & \(1\)     & \(4\)   & \(0\)   & \(32\)      & \(32^21\)  & \(\mathrm{G}.16\)r     & \((1243)\)    & \(2^\ast\) & \(2^2\cdot 3^{16}\) \\
  \(1\) & \(7,10\) & \(-1;-1,0,1,1\)                     & 1  & 2  & \(1\)     & \(4\)   & \(0\)   & \(32\)      & \(2^4\)    & \(\mathrm{G}.16\)i     & \((1243)\)    & \(1^\ast\) & \(2^2\cdot 3^{16}\) \\
  \(2\) & \(7,10\) & \(1;0,\pm 1,0,0\)                   & 1  & 2  & \(1\)     & \(4\)   & \(0\)   & \(32\)      & \(32^21\)  & \(\mathrm{G}.19\)r     & \((2143)\)    & \(2^\ast\) & \(2^3\cdot 3^{16}\) \\
  \(1\) & \(7,10\) & \(-1;0,1,0,0\)                      & 1  & 2  & \(1\)     & \(4\)   & \(0\)   & \(32\)      & \(2^4\)    & \(\mathrm{G}.19\)i     & \((2143)\)    & \(2^\ast\) & \(2^2\cdot 3^{16}\) \\
  \(2\) & \(7,10\) & \(1;1,\pm 1,-1,1\)                  & 1  & 2  & \(1\)     & \(4\)   & \(0\)   & \(32\)      & \(32^21\)  & \(\mathrm{H}.4\)r      & \((3343)\)    & \(2^\ast\) & \(2^2\cdot 3^{16}\) \\
  \(1\) & \(7,10\) & \(-1;-1,1,1,1\)                     & 1  & 2  & \(1\)     & \(4\)   & \(0\)   & \(32\)      & \(2^4\)    & \(\mathrm{H}.4\)i      & \((3343)\)    & \(1^\ast\) & \(2\cdot 3^{16}\)   \\
\hline
 \(12\) & \(7,10\) &                                     & 1  & 2  & \(1\)     & \(4\)   & \(0\)   & \(32\)      &            &  G or H                &               & \(0\)      & \(2\cdot 3^{16}\)   \\
 \(10\) & \(7,10\) &                                     & 1  & 2  & \(1\)     & \(4\)   & \(0\)   & \(32\)      &            &  G or H                &               & \(0\)      & \(3^{16}\)          \\
\hline
  \(8\) & \(7,10\) &                                     & 1  & 3  & \(1\)     & \(4\)   & \(0\)   & \(32\)      & \(2^31\)   &  G or H                &               & \(0\)      & \(2\cdot 3^{15}\)   \\
 \(20\) & \(7,10\) &                                     & 1  & 3  & \(1\)     & \(4\)   & \(0\)   & \(32\)      & \(2^31\)   &  G or H                &               & \(0\)      & \(3^{15}\)          \\
  \(4\) & \(7,10\) &                                     & 1  & 3  & \(1\)     & \(4\)   & \(0\)   & \(32\)      & \(2^31\)   & \(\mathrm{G}.19\)      & \((2143)\)    & \(0\)      & \(2\cdot 3^{14}\)   \\
  \(6\) & \(7,10\) &                                     & 1  & 3  & \(1\)     & \(4\)   & \(0\)   & \(32\)      & \(2^31\)   &  G or H                &               & \(0\)      & \(3^{14}\)          \\
\hline
\end{tabular}
\end{center}
\end{table}



\begin{theorem}
\label{thm:SporCc4}
The coclass-\(r\) forest \(\mathcal{F}(r)\) with any even \(r\ge 4\) is the disjoint union of
its finite sporadic part \(\mathcal{F}_0(r)\) with total information content
\begin{equation}
\label{eqn:SporadicIC4}
s=\#\mathcal{F}_0(r)=
\begin{cases}
515 & \text{ if } r=4, \\
357 & \text{ if } r\ge 6, \text{ even},
\end{cases}
\end{equation}
and \(t=6\) infinite coclass-\(r\) trees \(\mathcal{T}^r(R_i^r)\)
with roots \(R_i^r:=P_{2r-1}-\#2;n_i\), where
\[
(n_i)_{1\le i\le 6}=
\begin{cases}
(33,39,44,54,57,59)\text{ for } r=4, \\
(37,43,48,58,61,63)\text{ for } r=6.
\end{cases}
\]
The algebraic invariants for groups with positive action flag \(\sigma\ge 1\),
and in cumulative form for \(\sigma=0\), are given for \(r=4\) in Table
\ref{tbl:SporCc4},
where the parent vertex \(P_{2r-1}=P_7\) on the main trunk is also included,
but the \(426\) non-metabelian top vertices of depth \(\mathrm{dp}=0\) are excluded.
\end{theorem}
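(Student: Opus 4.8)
The plan is to assemble the statement from four ingredients already in place: the coclass theorems of Leedham-Green and Shalev, the main-trunk analysis of Theorem~\ref{thm:MainTrunk} and Corollary~\ref{cor:MainTrunk}, the periodicity results of Theorems~\ref{thm:FirstPeriod} and \ref{thm:SecondPeriod}, and the explicit vertex enumerations of Propositions~\ref{prp:SporCc6} and \ref{prp:SporCc4}.

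First I would pin down the disjoint decomposition and the number \(t=6\). By the coclass theorems, \(\mathcal{G}(r)\) is the disjoint union of a finite sporadic part and finitely many coclass trees; by Remark~\ref{rmk:Pruning} the passage to the pruned forest \(\mathcal{F}(r)\) deletes exactly the coclass trees with non-metabelian mainlines and leaves the metabelian-mainline trees intact (these contain no capable non-metabelian vertices, so no internal pruning occurs). Now the root of any coclass tree with metabelian mainline is a metabelian \(3\)-group of coclass \(r\) of minimal nilpotency class \(r+1\), hence of logarithmic order \(2r+1\), so it is a step-size-\(2\) child of its parent, and by the main-trunk property (\S~\ref{s:MainTrunk}) that parent is exactly \(P_{2r-1}\). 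Writing \(j:=r-1\), which is odd and \(\ge 3\) for even \(r\ge 4\), item~(2) of Corollary~\ref{cor:MainTrunk} lists precisely six such children of \(P_{2j+1}=P_{2r-1}\), of types \(\mathrm{d}.19\) (twice), \(\mathrm{d}.23\), \(\mathrm{d}.25\) (twice) and \(\mathrm{b}.10\); hence \(t=6\), and these roots have the asserted shape \(R_i^r=P_{2r-1}-\#2;n_i\). The concrete relative identifiers \((n_i)\) for \(r=4\) are read off from the rows with \(\mathrm{dp}=0\), \(\mathrm{dl}=2\) of Table~\ref{tbl:SporCc4} after matching the parametrized presentations of \S~\ref{s:Presentations} against the SmallGroups and ANUPQ numbering, and those for \(r=6\) from Proposition~\ref{prp:SporCc6}.

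Next I would compute \(s=\#\mathcal{F}_0(r)\) for the two base cases. Since the six coclass trees carry the entire infinite part of \(\mathcal{F}(r)\), the sporadic part \(\mathcal{F}_0(r)\) is their complement in the sense of Definition~\ref{dfn:sporadic}, hence finite, and its cardinality is the sum of the vertex counts tabulated in Proposition~\ref{prp:SporCc6} for even \(r\ge 6\) and in Proposition~\ref{prp:SporCc4} for \(r=4\): the \(13\) isolated metabelian vertices of types \(\mathrm{F}.7,\mathrm{F}.11,\mathrm{F}.12,\mathrm{F}.13\); the \(8\) metabelian roots of types \(\mathrm{G}.16,\mathrm{G}.19,\mathrm{H}.4\) together with their \(68\) children (\(8\) with \(\mathrm{GI}\)-action, \(22\) further metabelian, \(38\) non-metabelian, all terminal of depth one); the non-metabelian vertices of derived length \(3\) and types \(\mathrm{d}.19,\mathrm{d}.23,\mathrm{d}.25\) (\(66\) for \(r\ge 6\), and \(88+12\) for \(r=4\)); and the non-metabelian vertices of derived length \(3\) and type \(\mathrm{b}.10\) (\(179+23\) for \(r\ge 6\), \(268+58\) for \(r=4\)), the capable ones among the last two families contributing only themselves because their descendants are removed by the pruning convention. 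Summation gives \(13+8+68+66+179+23=357\) for even \(r\ge 6\) and \(13+8+68+88+12+268+58=515\) for \(r=4\); one also checks that the \(289\), resp.\ \(447\), vertices of depth \(0\) of \(\mathcal{F}_0(r)\) together with the six roots \(R_i^r\) exhaust all \(N_2=295\), resp.\ \(N_2=453\), step-size-\(2\) children of \(P_{2r-1}\), as recorded in those propositions.

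To reach \emph{every} even \(r\ge 4\) rather than only \(r\in\{4,6\}\), I would invoke the periodicity. By item~(4) of Theorem~\ref{thm:MainTrunk} the descendant numbers \((N_1/C_1,N_2/C_2)=(30/1,295/37)\) of \(P_{2r-1}\) are independent of the odd parameter \(j=r-1\ge 5\), and since the bounded parameters \(\rho;\alpha,\beta,\gamma,\delta\) of the presentations in \S~\ref{s:Presentations} range over the same finite set for all such \(r\), the classification of the \(295\) step-size-\(2\) children of \(P_{2r-1}\) into isomorphism types, action flags, derived lengths and nuclear ranks --- hence the whole constitution of \(\mathcal{F}_0(r)\) --- coincides with the case \(r=6\); for \(6\le r\le 19\) this also follows by iterating the co-periodicity isomorphism \(\mathcal{F}_0(r)\simeq\mathcal{F}_0(r+2)\) underlying Theorem~\ref{thm:SecondPeriod} together with the branch periodicity of Theorem~\ref{thm:FirstPeriod}. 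The sole exception is \(r=4\), where \(P_7\) carries the larger invariant \(N_2=453\) instead of \(295\) (Remark~\ref{rmk:MainTrunk}); this is precisely what makes \(\mathcal{F}_0(4)\) pre-periodic and produces the larger value \(515\). I expect the genuine work --- and the main obstacle --- to lie not in this bookkeeping but in the enumeration underlying Propositions~\ref{prp:SporCc6} and \ref{prp:SporCc4}: one must list all isomorphism types of the step-size-\(2\) children of \(P_{2r-1}\) with their nuclear ranks (to decide capability), derived lengths, action flags and transfer kernel types, and, for the three capable metabelian families \(\mathrm{G}.16,\mathrm{G}.19,\mathrm{H}.4\), the complete finite subtrees they generate; this is carried out with the \(p\)-group generation algorithm in MAGMA applied to the parametrized presentations of \S~\ref{s:Presentations}, and the present proof merely has to marshal its output into the stated totals.
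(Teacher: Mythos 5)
Your route is essentially the paper's: there, Theorem~\ref{thm:SporCc4} is proved in one combined proof with Propositions~\ref{prp:SporCc6} and \ref{prp:SporCc4}, by computing the sporadic parts \(\mathcal{F}_0(r)\) for even \(r\) up to \(20\) with MAGMA, listing the six roots \(R_i^r\) explicitly, and adding up the census (\(515=426+21+68\) for \(r=4\), \(357=268+21+68\) for even \(r\ge 6\)), with the metabelian skeleton of \(51=13+8+8+22\) vertices cross-checked against Nebelung. Your identification of \(t=6\) via Corollary~\ref{cor:MainTrunk}(2) with \(j=r-1\) odd, your summations \(13+8+68+66+179+23=357\) and \(13+8+68+88+12+268+58=515\), and the consistency check against \(N_2=295\), resp.\ \(453\), all agree with this, and you rightly locate the real content in the computer enumeration behind the two propositions.

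The one step that does not hold up is your justification of the clause \textquotedblleft for every even \(r\ge 6\)\textquotedblright. The argument that the classification of the \(295\) step-size-\(2\) children of \(P_{2r-1}\) into types, derived lengths, nuclear ranks and action flags is independent of \(r\) because the bounded parameters of the presentations of \S~\ref{s:Presentations} range over the same finite set is invalid for the dominant part of \(\mathcal{F}_0(r)\): Formula~\eqref{eqn:Presentation} presents only the \emph{metabelian} groups, so it says nothing about the \(268\) non-metabelian top vertices or the \(38\) non-metabelian children, and constancy of the mere count \(N_2=295\) (Theorem~\ref{thm:MainTrunk}(4)) does not yield constancy of the finer census. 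Your fallback, iterating \(\mathcal{F}_0(r)\simeq\mathcal{F}_0(r+2)\) \textquotedblleft underlying Theorem~\ref{thm:SecondPeriod}\textquotedblright, is circular: that theorem concerns the coclass trees \(\mathcal{T}^r{R_i^r}\), not the sporadic parts, and the co-periodicity of the sporadic parts is precisely what the present theorem (via Propositions~\ref{prp:SporCc6}, \ref{prp:SporCc4}) asserts. In the paper the uniformity for even \(r\ge 6\) rests on nothing subtler than the direct MAGMA computation of \(\mathcal{F}_0(r)\) for all even \(r\le 20\), together with Nebelung's results for the metabelian skeleton. A minor slip in the same spirit: the identifiers \((37,43,48,58,61,63)\) for \(r=6\) are not contained in Proposition~\ref{prp:SporCc6}; they, too, come from the computation.
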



\begin{proof}
(of Propositions
\ref{prp:SporCc6},
\ref{prp:SporCc4},
and Theorem
\ref{thm:SporCc4})
We have computed the sporadic parts \(\mathcal{F}_0(r)\) of coclass forests \(\mathcal{F}(r)\)
with even \(r\ge 4\) up to \(r\le 20\) by means of MAGMA
\cite{MAGMA}.
Except for the differences pointed out in the Propositions
\ref{prp:SporCc6}
and
\ref{prp:SporCc4},
they all share a common graph theoretic structure with \(\mathcal{F}_0(4)\).
The forest \(\mathcal{F}(4)\) contains \(6\) roots of infinite coclass trees with metabelian mainlines
(a unique root \(R_1^4\) of type \(\mathrm{b}\) and five roots \(R_2^4,\ldots,R_6^4\) of type \(\mathrm{d}\)), namely \\
\(R_1^4=P_9=G^{6,9}_0(0,0,0,0)\simeq P_7-\#2;33\), \quad \(R_2^4=G^{6,9}_0(0,-1,0,1)\simeq P_7-\#2;39\), \\
\(R_3^4=G^{6,9}_0(0,1,0,1)\simeq P_7-\#2;44\), \quad \(R_4^4=G^{6,9}_0(0,0,0,1)\simeq P_7-\#2;54\), \\
\(R_5^4=G^{6,9}_0(0,1,0,0)\simeq P_7-\#2;57\), \quad \(R_6^4=G^{6,9}_0(0,-1,0,0)\simeq P_7-\#2;59\), \\
which give rise to the periodic part of \(\mathcal{F}(4)\),
and \(51\) sporadic metabelian groups of type \(\mathrm{F}\), \(\mathrm{G}\) or \(\mathrm{H}\).
Among the groups of the sporadic part \(\mathcal{F}_0(4)\), there are
\(13\) isolated metabelian vertices with type \(\mathrm{F}\),
and \(8\) metabelian roots of finite trees with type \(\mathrm{G}\) or \(\mathrm{H}\) and tree depth \(1\),
each with a \textit{unique metabelian} child having \(\sigma\ge 1\).
The other \(60=8+6+5+5+8+8+8+12\) children with \(\sigma=0\),
of which \(22=3+2+2+2+3+3+3+4\) are metabelian
and \(38=5+4+3+3+5+5+5+8\) have derived length \(3\),
are \textit{omitted} in the forest diagram, Figure
\ref{fig:SporCc4}.
Additionally, \(\mathcal{F}_0(4)\), respectively \(\mathcal{F}_0(6)\),
contains \(426\), respectively \(268\), non-metabelian top vertices,
which gives a total information content \(s=\#\mathcal{F}_0(r)\)
of \(515=426+21+68\), respectively \(357=268+21+68\), representatives.
The difference is an excess of \(158=515-357\) vertices in \(\mathcal{F}_0(6)\).

The metabelian skeleton of both, \(\mathcal{F}_0(4)\) and \(\mathcal{F}_0(6)\),
consists of \(51=13+(8+8+22)\) vertices.
The results for metabelian groups are in accordance
with the fourth tree diagram \(e\ge 5\), \(e\equiv 1\pmod{2}\), in
\cite[fourth double page between pp. 191--192]{Ne}.
The metabelian groups in Table
\ref{tbl:SporCc4}
correpond to the representatives of isomorphism classes in
\cite[pp. 36--38 and 42--45]{Ne2}.
\end{proof}



\subsection{The unique mainline of type \(\mathrm{b}.10^\ast\) for even coclass \(r\ge 4\)}
\label{ss:b10Cc4}

\begin{proposition}
\label{prp:TKTb10TreeCc4}
\textbf{(Periodicity and descendant numbers.)} \\
The branches \(\mathcal{B}(i)\), \(i\ge n_\ast=9\),
of the coclass-\(4\) tree \(\mathcal{T}^4(\langle 2187,64\rangle-\#2;33)\)
with mainline vertices of transfer kernel type \(\mathrm{b}.10^\ast\), \(\varkappa\sim (0043)\),
are periodic with pre-period length \(\ell_\ast=1\) and with primitive period length \(\ell=2\),
that is, \(\mathcal{B}(i+2)\simeq\mathcal{B}(i)\) are isomorphic as digraphs,
for all \(i\ge p_\ast=n_\ast+\ell_\ast=10\).

The graph theoretic structure of the tree is determined uniquely by the numbers
\(N_1\) of immediate descendants and \(C_1\) of capable immediate descendants
of the mainline vertices \(m_n\) with logarithmic order \(n=\mathrm{lo}(m_n)\ge n_\ast=9\): \\
\((N_1,C_1)=(21,1)\) for the root \(m_9\) with \(n=9\), \\
\((N_1,C_1)=(30,1)\) for all mainline vertices \(m_n\) with even logarithmic order \(n\ge 10\), \\
\((N_1,C_1)=(27,1)\) for all mainline vertices \(m_n\) with odd logarithmic order \(n\ge 11\).
\end{proposition}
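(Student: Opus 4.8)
\emph{Strategy.} The plan is to combine the explicit parametrized power-commutator presentations of \S\ \ref{s:Presentations}, the main trunk description of \S\ \ref{s:MainTrunk}, the virtual periodicity of \cite{dS,EkLg,dSSg}, and a finite \(p\)-group generation computation. First I would identify the mainline. The tree root \(m_9=\langle 2187,64\rangle-\#2;33\) is the coclass-\(4\) main trunk vertex \(P_9=G_0^{6,9}(0,0,0,0)=R_1^4\), which by Theorem \ref{thm:MainTrunk}(3)--(4) carries transfer kernel type \(\mathrm{b}.10^\ast\), \(\varkappa\sim(0043)\), nuclear rank \(\nu=2\), and immediate descendant numbers \((N_1/C_1)=(21/1)\). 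Specializing the presentation \eqref{eqn:Presentation} to \(\alpha=\beta=\gamma=\delta=\rho=0\) with \(e=5\) (coclass \(r=4\) fixed) exhibits \(m_n:=G_0^{\,n-3,\,n}(0,0,0,0)\), \(n\ge 9\), as an infinite chain of immediate descendants of step size \(s=1\), all of type \(\mathrm{b}.10^\ast\); this is the mainline of \(\mathcal{T}^4(R_1^4)\). Since the only capable step-\(1\) child of each \(m_n\) is the next mainline vertex \(m_{n+1}\), every branch \(\mathcal{B}(n)=\mathcal{T}(m_n)\setminus\mathcal{T}(m_{n+1})\) has depth \(1\): it is a star with centre \(m_n\) and \(N_1(m_n)-1\) terminal leaves, so \(\#\mathcal{B}(n)=N_1(m_n)\) by Theorem \ref{thm:DescendantNumbers}(1), and the digraph \(\mathcal{B}(n)\) is determined by the single integer \(N_1(m_n)\) together with \(C_1(m_n)=1\).

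Next I would pin down the periodicity. By the virtual periodicity of \cite{dS,EkLg,dSSg} --- applicable since these coclass-\(4\) trees have bounded depth and width --- there are \(\ell\ge 1\), \(\ell_\ast\ge 0\) and a periodic root \(P=m_{p_\ast}\) with \(p_\ast=n_\ast+\ell_\ast\) such that \(\mathcal{B}(i+\ell)\simeq\mathcal{B}(i)\) for all \(i\ge p_\ast\), and since the depth equals \(1\) no depth-pruning is needed, so these are isomorphisms of complete branches. To locate the pre-period and the primitive period \(\ell\) I would compute \((N_1,C_1)(m_n)\) for the mainline vertices \(m_9,m_{10},m_{11},m_{12}\) (orders \(3^9,\ldots,3^{12}\)) with MAGMA's implementation of the \(p\)-group generation algorithm \cite{Nm2,Ob,HEO,GNO,MAGMA}, obtaining \((21,1),(30,1),(27,1),(30,1)\). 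Since these branches are stars, \(N_1(m_{12})=N_1(m_{10})\) gives \(\mathcal{B}(12)\simeq\mathcal{B}(10)\), so a period of \(2\) works from \(\mathcal{B}(10)\) onwards; \(N_1(m_{11})=27\ne 30=N_1(m_{10})\) rules out period \(1\), whence \(\ell=2\); and \(N_1(m_9)=21\ne 27=N_1(m_{11})\) shows \(\mathcal{B}(9)\not\simeq\mathcal{B}(11)\), so the pre-period is exactly the single branch \(\mathcal{B}(9)\), i.e.\ \(\ell_\ast=1\) and \(p_\ast=10\).

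Finally, the descendant numbers at all mainline vertices follow by transporting the computed values along the periodicity. A digraph isomorphism \(\psi:\mathcal{B}(i)\to\mathcal{B}(i+2)\) sends the branch root \(m_i\) --- the unique vertex of out-degree \(0\) --- to the branch root \(m_{i+2}\) and commutes with the parent operator by Proposition \ref{prp:TreeIso}; since in-degree is preserved, \(m_i\) and \(m_{i+2}\) have the same number \(N_1-1\) of (necessarily terminal) children inside their branches and the same number \(C_1=1\) of capable children, so \((N_1,C_1)(m_i)=(N_1,C_1)(m_{i+2})\) for all \(i\ge 10\); with the base cases this yields \((30,1)\) for even \(n\ge 10\), \((27,1)\) for odd \(n\ge 11\), and the exceptional \((21,1)\) at \(m_9\). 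The step I expect to be the main obstacle is certifying that the finite pattern read off for \(n\le 12\) is genuinely the eventual periodic one and not the prefix of a longer pre-period; this is precisely where boundedness of the depth is indispensable, since a depth-\(1\) branch is a star carrying no information beyond \(N_1(m_n)\) and the isomorphism types of its leaves, so that the general virtual periodicity together with stabilization of these finitely many data at \(n=10,11\) already forces \(\mathcal{B}(i+2)\simeq\mathcal{B}(i)\) for every \(i\ge 10\). The residual task is the routine commutator-calculus verification inside \eqref{eqn:Presentation} --- supplemented, for the non-metabelian children that by Remark \ref{rmk:Pruning} are all terminal, by the \(p\)-group generation algorithm --- that \(C_1(m_n)=1\) throughout, so that the depth stays equal to \(1\).
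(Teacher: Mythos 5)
Your overall route is the one the paper itself takes: identify the mainline \(m_n=G_0^{\,n-3,n}(0,0,0,0)\) starting from \(P_9\), compute descendant numbers with the \(p\)-group generation algorithm in MAGMA, observe that \(C_1(m_n)=1\) makes every branch a depth-\(1\) star so that no depth-pruning is needed, and invoke the virtual periodicity of du Sautoy and Eick--Leedham-Green \cite{dS,EkLg}. The genuine gap is in the step where you claim that the computed values \((N_1,C_1)(m_n)\) for \(n=9,\dots,12\) together with virtual periodicity already \emph{force} \(\mathcal{B}(i+2)\simeq\mathcal{B}(i)\) for all \(i\ge 10\). The virtual periodicity theorem only asserts the existence of \emph{some} primitive period length \(\ell\) and \emph{some} periodic root \(p_\ast\); it gives no effective control on either, so it does not tell you that \(p_\ast\le 10\) or that \(\ell\) divides \(2\). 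Consequently the pattern \(30,27,30\) at \(n=10,11,12\) could a priori be the prefix of a longer pre-period (nothing in your argument excludes \(N_1(m_{13})\ne 27\)), and the single coincidence \(\mathcal{B}(10)\simeq\mathcal{B}(12)\) — which for stars is just \(N_1(m_{10})=N_1(m_{12})\) — does not propagate to \(\mathcal{B}(11)\simeq\mathcal{B}(13)\) or beyond. Bounded depth reduces each branch to one integer, but it does not bound where the periodicity begins, which is exactly the issue you flagged and then did not actually resolve. The paper closes this by brute force: the branches are computed up to \(\mathcal{B}(31)\), i.e.\ the period-\(2\) pattern is verified over many periods before the general periodicity theorem is invoked, and the onset at \(p_\ast=10\) is asserted on that computational basis. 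To make your four computed branches suffice you would need an explicit bound on the pre-period and period (for instance extracted from the constructive proof in \cite{EkLg}), which you do not supply.

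A secondary point of the same nature: the statement \(C_1(m_n)=1\) for \emph{all} \(n\ge 9\), and the fact that all non-metabelian children in this tree are terminal, are themselves part of what must be proved; Remark \ref{rmk:Pruning} explicitly defers to the computations of this very section, so it cannot be cited here, and a commutator-calculus check inside the metabelian presentation \eqref{eqn:Presentation} says nothing about capable non-metabelian children. Your verification of these facts for finitely many \(n\) again only extends to all \(n\) via the periodicity isomorphisms, so it sits inside the same gap rather than repairing it.
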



\begin{proof}
(of Proposition
\ref{prp:TKTb10TreeCc4})
The statements concerning the numbers \(N_1(m_n)\) of immediate descendants
of the mainline vertices \(m_n\) with \(n\ge n_\ast=9\)
have been obtained by direct computation with MAGMA
\cite{MAGMA},
where the \(p\)-group generation algorithm by Newman and O'Brien
\cite{Nm2,Ob,HEO}
is implemented.
In detail, we proved that there are \\
\(4\), resp. \(6\), metabelian vertices with bicyclic centre \(\zeta=1^2\), resp. cyclic centre \(\zeta=1\), and  \\
\(5\), resp. \(6\), non-metabelian vertices with \(\zeta=1^2\), resp. \(\zeta=1\),  \\
together \(21\) vertices (\(10\) of them metabelian) in the pre-periodic branch \(\mathcal{B}(9)\), \\
and the primitive period \((\mathcal{B}(10),\mathcal{B}(11))\) of length \(\ell=2\) consists of \\
\(6\), resp. \(6\), metabelian vertices with \(\zeta=1^2\), resp. \(\zeta=1\), and \\
\(9\), resp. \(9\), non-metabelian vertices with \(\zeta=1^2\), resp. \(\zeta=1\), \\
together \(30\) vertices (\(12\) of them metabelian) in branch \(\mathcal{B}(10)\), and \\
\(4\), resp. \(8\), metabelian vertices with \(\zeta=1^2\), resp. \(\zeta=1\), and \\
\(5\), resp. \(10\), non-metabelian vertices with \(\zeta=1^2\), resp. \(\zeta=1\), \\
together \(27\) vertices (\(12\) of them metabelian) in branch \(\mathcal{B}(11)\).

The results concerning the metabelian skeleton confirm the corresponding statements in the dissertation of Nebelung
\cite[Thm. 5.1.16, pp. 178--179, and the fourth Figure, \(e\ge 5\), \(e\equiv 1\pmod{2}\), on the double page between pp. 191--192]{Ne}.
The tree \(\mathcal{T}^4{P_9}\) corresponds to
the infinite metabelian pro-\(3\) group \(S_{3,1}\) in
\cite[Cnj. 15 (b), p. 116]{Ek}.
Although every branch contains \(12\) metabelian vertices,
the primitive period length is \(\ell=2\) rather than \(\ell=1\),
even for the metabelian skeleton,
since the constitution \(12=6+6\) of branch \(\mathcal{B}(10)\)
is different from \(12=4+8\) for branch \(\mathcal{B}(11)\),
as proved above.

The claim of the virtual periodicity of branches
has been proved generally for any coclass tree by du Sautoy
\cite{dS},
and independently by Eick and Leedham-Green
\cite{EkLg}.
Here, the strict periodicity was confirmed by computation up to branch \(\mathcal{B}(31)\)
and undoubtedly sets in at \(p_\ast=10\).
\end{proof}



\begin{figure}[hb]
\caption{The unique coclass-\(4\) tree \(\mathcal{T}^4{P_9}\) with mainline of type \(\mathrm{b}.10^\ast\)}
\label{fig:TKTb10TreeCc4}

\input{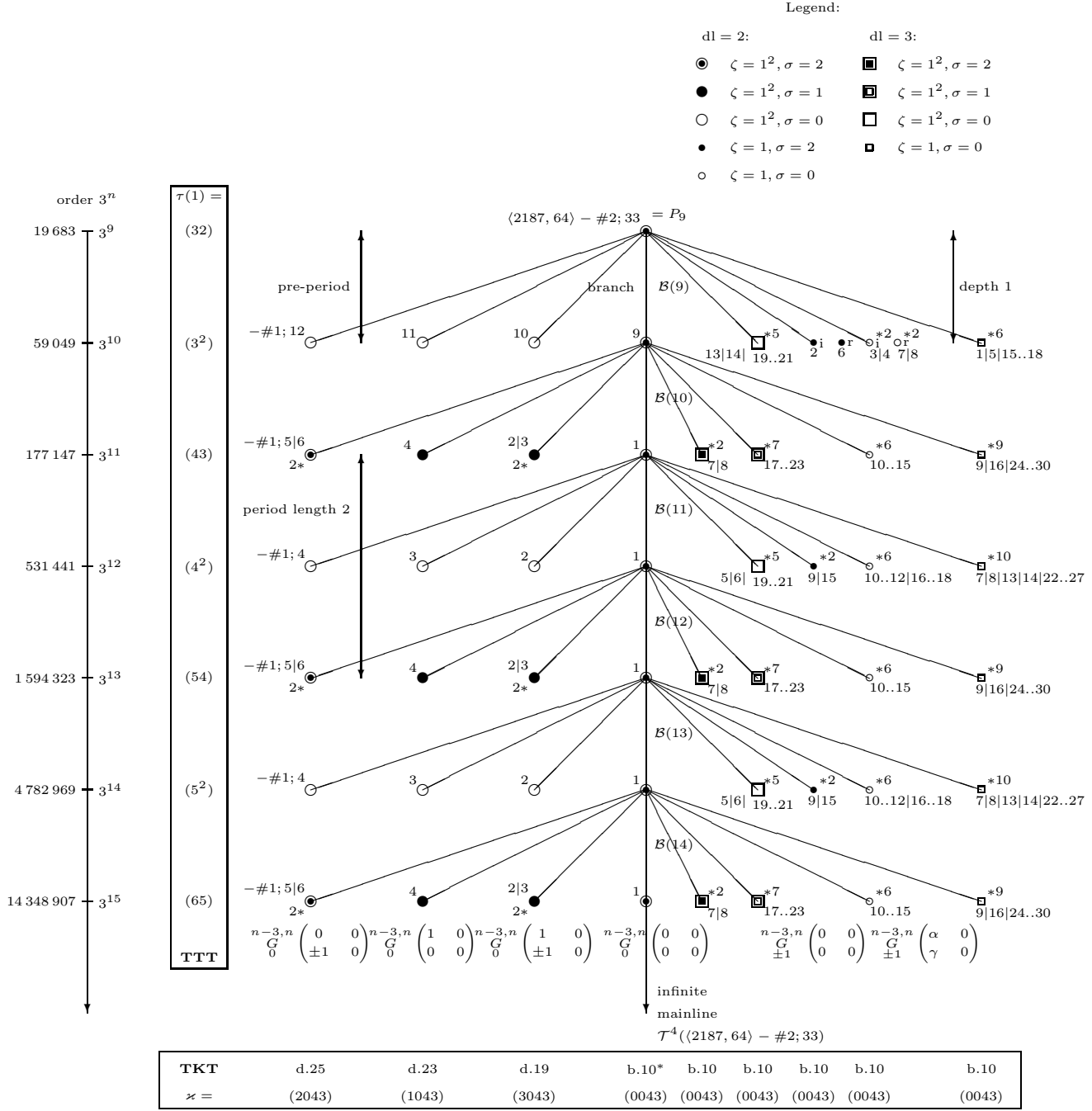}

\end{figure}


\begin{theorem}
\label{thm:TKTb10TreeCc4}
\textbf{(Graph theoretic and algebraic invariants.)} \\
The coclass-\(4\) tree \(\mathcal{T}:=\mathcal{T}^4{P_9}\) of finite \(3\)-groups \(G\) with coclass \(\mathrm{cc}(G)=4\)
which arises from the metabelian root \(P_9:=\langle 2187,64\rangle-\#2;33\)
has the following graph theoretic properties.
\begin{enumerate}
\item
The pre-period \((\mathcal{B}(9))\) of length \(\ell_\ast=1\) is irregular.
\item
The cardinality of the irregular branch is \(\#\mathcal{B}(9)=21\).
\item
The branches \(\mathcal{B}(i)\), \(i\ge p_\ast=10\), are periodic with
primitive period \((\mathcal{B}(10),\mathcal{B}(11))\) of length \(\ell=2\).
\item
The cardinalities of the regular branches are
\(\#\mathcal{B}(10)=30\) and \(\#\mathcal{B}(11)=27\).
\item
Depth, width, and information content of the tree are given by
\begin{equation}
\label{eqn:TKTb10TreeCc4}
\mathrm{dp}(\mathcal{T}^4{P_9})=1, \quad \mathrm{wd}(\mathcal{T}^4{P_9})=30, \quad \text{ and } \quad \mathrm{IC}(\mathcal{T}^4{P_9})=78.
\end{equation}
\end{enumerate}
The algebraic invariants of the groups represented by vertices forming
the pre-period \((\mathcal{B}(9))\) and the primitive period \((\mathcal{B}(10),\mathcal{B}(11))\) of the tree
are given in Table
\ref{tbl:TKTb10TreeCc4}.
The leading six branches \(\mathcal{B}(9),\ldots,\mathcal{B}(14)\) are drawn in Figure
\ref{fig:TKTb10TreeCc4}.
\end{theorem}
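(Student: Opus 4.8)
The plan is to derive every assertion from Proposition~\ref{prp:TKTb10TreeCc4}, which already supplies the descendant numbers $(N_1,C_1)$ of the mainline vertices $m_n$ ($n\ge n_\ast=9$) and the branch isomorphisms $\mathcal{B}(i+2)\simeq\mathcal{B}(i)$ for $i\ge p_\ast=10$, together with the counting machinery of \S~\ref{s:GraphStruc}. First I would settle the depth. Because $C_1(m_n)=1$ for every mainline vertex $m_n$ with $n\ge 9$, the only capable child of $m_n$ is the next mainline vertex $m_{n+1}$, so the remaining $N_1(m_n)-1\ge 20$ children are all terminal; and since, by the pruning convention (Remark~\ref{rmk:Pruning}), the coclass tree $\mathcal{T}^4{P_9}$ with metabelian mainline contains no capable non-metabelian vertices, no further descendants can occur. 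Hence every branch $\mathcal{B}(n)$ with $n\ge 9$ satisfies $\mathrm{dp}(\mathcal{B}(n))=1$, so $\mathrm{dp}(\mathcal{T}^4{P_9})=1$, which is the depth assertion of \eqref{eqn:TKTb10TreeCc4}.

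With depth $1$ established, Theorem~\ref{thm:DescendantNumbers}(1) gives $\#\mathcal{B}(e)=N_1(m_e)$, and inserting the tabulated values yields $\#\mathcal{B}(9)=21$, $\#\mathcal{B}(10)=30$, $\#\mathcal{B}(11)=27$, i.e.\ items (2) and (4). Item (3) follows from the isomorphisms of Proposition~\ref{prp:TKTb10TreeCc4}: the pair $(\mathcal{B}(10),\mathcal{B}(11))$ is a period, and it is \emph{primitive} of length $\ell=2$ rather than $1$ because $\#\mathcal{B}(10)=30\neq 27=\#\mathcal{B}(11)$ forces $\mathcal{B}(10)\not\simeq\mathcal{B}(11)$. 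Likewise $\#\mathcal{B}(9)=21$ agrees with neither period value, so $\mathcal{B}(9)$ is not isomorphic to any period branch; thus the pre-period $(\mathcal{B}(9))$ has length $\ell_\ast=1$ and is irregular, which is item (1).

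For the last line of \eqref{eqn:TKTb10TreeCc4}: since $d=\mathrm{dp}(\mathcal{T}^4{P_9})=1$, Corollary~\ref{cor:TreeWidth}(1) gives $\mathrm{wd}(\mathcal{T}^4{P_9})=\max\{N_1(m_{n-1})\mid 10\le n\le 12\}=\max\{21,30,27\}=30$, and Definition~\ref{dfn:InfoCont} with $n_\ast=9$, $p_\ast=10$, $\ell=2$ gives
\[
\mathrm{IC}(\mathcal{T}^4{P_9})=\#\mathcal{B}(9)+\bigl(\#\mathcal{B}(10)+\#\mathcal{B}(11)\bigr)=21+(30+27)=78 .
\]
Finally, the algebraic invariants collected in Table~\ref{tbl:TKTb10TreeCc4} --- nilpotency class and logarithmic order, transfer kernel type $\varkappa$, the components $\tau(1)$ and $\tau_2$ of the transfer target type, action flag $\sigma$, relation and nuclear ranks $\mu,\nu$, and $\#\mathrm{Aut}$ --- are obtained by explicit computation in MAGMA from the parametrized polycyclic power-commutator presentations of \S~\ref{s:Presentations}, carried out on the three branches $\mathcal{B}(9),\mathcal{B}(10),\mathcal{B}(11)$; their values on $\mathcal{B}(i)$ for $i\ge 12$ then follow from the branch isomorphisms, with the non-strict invariants governed by precisely the transformation laws of Theorem~\ref{thm:FirstPeriod}.

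I expect the genuine difficulty to lie not in this bookkeeping but in the inputs it rests on, namely Proposition~\ref{prp:TKTb10TreeCc4} (the $p$-group-generation / MAGMA enumeration that pins down the descendant numbers $21,30,27$ and the digraph isomorphisms, verified up to $\mathcal{B}(31)$) and in checking that the entries of Table~\ref{tbl:TKTb10TreeCc4} are correct on the first three branches and reproduce exactly under the periodicity; once those are granted, the depth, cardinalities, width, and information content are immediate.
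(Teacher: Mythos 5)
Your proposal is correct and follows essentially the same route as the paper: depth $1$ from $C_1(m_n)=1$, branch cardinalities from Theorem~\ref{thm:DescendantNumbers}(1), width from Corollary~\ref{cor:TreeWidth}(1), information content from Definition~\ref{dfn:InfoCont}, all fed by the descendant numbers of Proposition~\ref{prp:TKTb10TreeCc4}, with the table entries resting on the MAGMA computations. Your extra observation that the unequal cardinalities $21,30,27$ force the primitivity of the period and the irregularity of the pre-period is a small refinement the paper leaves implicit in the Proposition, but it does not change the argument.
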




\renewcommand{\arraystretch}{1.2}

\begin{table}[ht]
\caption{Data for \(3\)-groups \(G\) with \(9\le n=\mathrm{lo}(G)\le 12\) of the coclass tree \(\mathcal{T}^4{P_9}\)}
\label{tbl:TKTb10TreeCc4}
\begin{center}
\begin{tabular}{|c|c|l||c|c|c||c|c||c|c|l|c||c|c|}
\hline
 \(\#\) & \(m,n\) & \(\rho;\alpha,\beta,\gamma,\delta\) & dp & dl & \(\zeta\) & \(\mu\) & \(\nu\) & \(\tau(1)\) & \(\tau_2\) & Type                   & \(\varkappa\) & \(\sigma\) & \(\#\mathrm{Aut}\)  \\
\hline
  \(1\) & 6,9     & \(0;0,0,0,0\)                       & 0  & 2  & \(1^2\)   & \(6\)   & \(2\)   & \(32\)      & \(2^31\)   & \(\mathrm{b}.10^\ast\) & \((0043)\)    & \(2\)      & \(2^3\cdot 3^{14}\) \\
\hline
  \(1\) & 7,10    & \(0;0,0,0,0\)                       & 0  & 2  & \(1^2\)   & \(6\)   & \(1\)   & \(3^2\)     & \(32^21\)  & \(\mathrm{b}.10^\ast\) & \((0043)\)    & \(2\)      & \(2^2\cdot 3^{16}\) \\
  \(1\) & 7,10    & \(0;1,0,1,0\)                       & 1  & 2  & \(1^2\)   & \(5\)   & \(0\)   & \(3^2\)     & \(32^21\)  & \(\mathrm{d}.19\)      & \((3043)\)    & \(0\)      & \(3^{16}\)          \\
  \(1\) & 7,10    & \(0;1,0,0,0\)                       & 1  & 2  & \(1^2\)   & \(5\)   & \(0\)   & \(3^2\)     & \(32^21\)  & \(\mathrm{d}.23\)      & \((1043)\)    & \(0\)      & \(2\cdot 3^{16}\)   \\
  \(1\) & 7,10    & \(0;0,0,1,0\)                       & 1  & 2  & \(1^2\)   & \(5\)   & \(0\)   & \(3^2\)     & \(32^21\)  & \(\mathrm{d}.25\)      & \((2043)\)    & \(0\)      & \(2\cdot 3^{16}\)   \\
  \(1\) & 7,10    & \(1;0,0,0,0\)                       & 1  & 2  & \(1\)     & \(5\)   & \(0\)   & \(32\)      & \(32^21\)  & \(\mathrm{b}.10\)r     & \((0043)\)    & \(2\)      & \(2^3\cdot 3^{16}\) \\
  \(2\) & 7,10    & \(1;\alpha,0,\gamma,0\)             & 1  & 2  & \(1\)     & \(5\)   & \(0\)   & \(32\)      & \(32^21\)  & \(\mathrm{b}.10\)r     & \((0043)\)    & \(0\)      & \(2\cdot 3^{16}\)   \\
  \(1\) & 7,10    & \(-1;0,0,0,0\)                      & 1  & 2  & \(1\)     & \(5\)   & \(0\)   & \(32\)      & \(2^4\)    & \(\mathrm{b}.10\)i     & \((0043)\)    & \(2\)      & \(2^3\cdot 3^{16}\) \\
  \(2\) & 7,10    & \(-1;\alpha,0,\gamma,0\)            & 1  & 2  & \(1\)     & \(5\)   & \(0\)   & \(32\)      & \(2^4\)    & \(\mathrm{b}.10\)i     & \((0043)\)    & \(0\)      & \(2\cdot 3^{16}\)   \\
  \(3\) & 7,10    &                                     & 1  & 3  & \(1^2\)   & \(5\)   & \(0\)   & \(32\)      & \(2^31\)   & \(\mathrm{b}.10\)      & \((0043)\)    & \(0\)      & \(3^{15}\)          \\
  \(2\) & 7,10    &                                     & 1  & 3  & \(1^2\)   & \(5\)   & \(0\)   & \(32\)      & \(2^31\)   & \(\mathrm{b}.10\)      & \((0043)\)    & \(0\)      & \(2\cdot 3^{14}\)   \\
  \(2\) & 7,10    &                                     & 1  & 3  & \(1\)     & \(5\)   & \(0\)   & \(32\)      & \(2^31\)   & \(\mathrm{b}.10\)      & \((0043)\)    & \(0\)      & \(2\cdot 3^{15}\)   \\
  \(2\) & 7,10    &                                     & 1  & 3  & \(1\)     & \(5\)   & \(0\)   & \(32\)      & \(2^31\)   & \(\mathrm{b}.10\)      & \((0043)\)    & \(0\)      & \(3^{15}\)          \\
  \(2\) & 7,10    &                                     & 1  & 3  & \(1\)     & \(5\)   & \(0\)   & \(32\)      & \(2^31\)   & \(\mathrm{b}.10\)      & \((0043)\)    & \(0\)      & \(2\cdot 3^{14}\)   \\
\hline
  \(1\) & 8,11    & \(0;0,0,0,0\)                       & 0  & 2  & \(1^2\)   & \(6\)   & \(1\)   & \(43\)      & \(3^221\)  & \(\mathrm{b}.10^\ast\) & \((0043)\)    & \(2\)      & \(2^2\cdot 3^{18}\) \\
  \(2\) & 8,11    & \(0;1,0,\pm 1,0\)                   & 1  & 2  & \(1^2\)   & \(5\)   & \(0\)   & \(43\)      & \(3^221\)  & \(\mathrm{d}.19\)      & \((3043)\)    & \(1\)      & \(2\cdot 3^{18}\)   \\
  \(1\) & 8,11    & \(0;1,0,0,0\)                       & 1  & 2  & \(1^2\)   & \(5\)   & \(0\)   & \(43\)      & \(3^221\)  & \(\mathrm{d}.23\)      & \((1043)\)    & \(1\)      & \(2\cdot 3^{18}\)   \\
  \(2\) & 8,11    & \(0;0,0,\pm 1,0\)                   & 1  & 2  & \(1^2\)   & \(5\)   & \(0\)   & \(43\)      & \(3^221\)  & \(\mathrm{d}.25\)      & \((2043)\)    & \(2\)      & \(2^2\cdot 3^{18}\) \\
  \(3\) & 8,11    & \(\pm 1;\alpha,0,\gamma,0\)         & 1  & 2  & \(1\)     & \(5\)   & \(0\)   & \(3^2\)     & \(3^221\)  & \(\mathrm{b}.10\)      & \((0043)\)    & \(0\)      & \(2\cdot 3^{18}\)   \\
  \(3\) & 8,11    & \(\pm 1;\alpha,0,\gamma,0\)         & 1  & 2  & \(1\)     & \(5\)   & \(0\)   & \(3^2\)     & \(3^221\)  & \(\mathrm{b}.10\)      & \((0043)\)    & \(0\)      & \(3^{18}\)          \\
  \(6\) & 8,11    &                                     & 1  & 3  & \(1^2\)   & \(5\)   & \(0\)   & \(3^2\)     & \(32^21\)  & \(\mathrm{b}.10\)      & \((0043)\)    & \(1\)      & \(2\cdot 3^{17}\)   \\
  \(2\) & 8,11    &                                     & 1  & 3  & \(1^2\)   & \(5\)   & \(0\)   & \(3^2\)     & \(32^21\)  & \(\mathrm{b}.10\)      & \((0043)\)    & \(2\)      & \(2^2\cdot 3^{16}\) \\
  \(1\) & 8,11    &                                     & 1  & 3  & \(1^2\)   & \(5\)   & \(0\)   & \(3^2\)     & \(32^21\)  & \(\mathrm{b}.10\)      & \((0043)\)    & \(1\)      & \(2\cdot 3^{16}\)   \\
  \(6\) & 8,11    &                                     & 1  & 3  & \(1\)     & \(5\)   & \(0\)   & \(3^2\)     & \(32^21\)  & \(\mathrm{b}.10\)      & \((0043)\)    & \(0\)      & \(3^{17}\)          \\
  \(2\) & 8,11    &                                     & 1  & 3  & \(1\)     & \(5\)   & \(0\)   & \(3^2\)     & \(32^21\)  & \(\mathrm{b}.10\)      & \((0043)\)    & \(0\)      & \(2\cdot 3^{16}\)   \\
  \(1\) & 8,11    &                                     & 1  & 3  & \(1\)     & \(5\)   & \(0\)   & \(3^2\)     & \(32^21\)  & \(\mathrm{b}.10\)      & \((0043)\)    & \(0\)      & \(3^{16}\)          \\
\hline
  \(1\) & 9,12    & \(0;0,0,0,0\)                       & 0  & 2  & \(1^2\)   & \(6\)   & \(1\)   & \(4^2\)     & \(4321\)   & \(\mathrm{b}.10^\ast\) & \((0043)\)    & \(2\)      & \(2^2\cdot 3^{20}\) \\
  \(1\) & 9,12    & \(0;1,0,1,0\)                       & 1  & 2  & \(1^2\)   & \(5\)   & \(0\)   & \(4^2\)     & \(4321\)   & \(\mathrm{d}.19\)      & \((3043)\)    & \(0\)      & \(3^{20}\)          \\
  \(1\) & 9,12    & \(0;1,0,0,0\)                       & 1  & 2  & \(1^2\)   & \(5\)   & \(0\)   & \(4^2\)     & \(4321\)   & \(\mathrm{d}.23\)      & \((1043)\)    & \(0\)      & \(2\cdot 3^{20}\)   \\
  \(1\) & 9,12    & \(0;0,0,1,0\)                       & 1  & 2  & \(1^2\)   & \(5\)   & \(0\)   & \(4^2\)     & \(4321\)   & \(\mathrm{d}.25\)      & \((2043)\)    & \(0\)      & \(2\cdot 3^{20}\)   \\
  \(2\) & 9,12    & \(\pm 1;0,0,0,0\)                   & 1  & 2  & \(1\)     & \(5\)   & \(0\)   & \(43\)      & \(4321\)   & \(\mathrm{b}.10\)      & \((0043)\)    & \(2\)      & \(2^2\cdot 3^{20}\) \\
  \(4\) & 9,12    & \(\pm 1;\alpha,0,\gamma,0\)         & 1  & 2  & \(1\)     & \(5\)   & \(0\)   & \(43\)      & \(4321\)   & \(\mathrm{b}.10\)      & \((0043)\)    & \(0\)      & \(2\cdot 3^{20}\)   \\
  \(2\) & 9,12    & \(\pm 1;1,0,1,0\)                   & 1  & 2  & \(1\)     & \(5\)   & \(0\)   & \(43\)      & \(4321\)   & \(\mathrm{b}.10\)      & \((0043)\)    & \(0\)      & \(3^{20}\)          \\
  \(3\) & 9,12    &                                     & 1  & 3  & \(1^2\)   & \(5\)   & \(0\)   & \(43\)      & \(3^221\)  & \(\mathrm{b}.10\)      & \((0043)\)    & \(0\)      & \(3^{19}\)          \\
  \(2\) & 9,12    &                                     & 1  & 3  & \(1^2\)   & \(5\)   & \(0\)   & \(43\)      & \(3^221\)  & \(\mathrm{b}.10\)      & \((0043)\)    & \(0\)      & \(2\cdot 3^{18}\)   \\
  \(6\) & 9,12    &                                     & 1  & 3  & \(1\)     & \(5\)   & \(0\)   & \(43\)      & \(3^221\)  & \(\mathrm{b}.10\)      & \((0043)\)    & \(0\)      & \(3^{19}\)          \\
  \(4\) & 9,12    &                                     & 1  & 3  & \(1\)     & \(5\)   & \(0\)   & \(43\)      & \(3^221\)  & \(\mathrm{b}.10\)      & \((0043)\)    & \(0\)      & \(2\cdot 3^{18}\)   \\
\hline
\end{tabular}
\end{center}
\end{table}



\begin{remark}
\label{rmk:TKTb10TreeCc4}
The algebraic information in Table
\ref{tbl:TKTb10TreeCc4}
is visualized in Figure
\ref{fig:TKTb10TreeCc4}.
By periodic continuation, the figure shows more branches than the table
but less details concerning the exact order \(\#\mathrm{Aut}\) of the automorphism group.
\end{remark}



\begin{proof}
(of Theorem
\ref{thm:TKTb10TreeCc4})
According to Proposition
\ref{prp:TKTb10TreeCc4},
the logarithmic order of the tree root, respectively of the periodic root, is
\(n_\ast=9\), respectively \(p_\ast=n_\ast+\ell_\ast=10\).

Since \(C_1(m_n)=1\) for all mainline vertices \(m_n\) with \(n\ge n_\ast\),
according to Proposition
\ref{prp:TKTb10TreeCc4},
the unique capable child of \(m_n\) is \(m_{n+1}\),
and each branch has depth \(\mathrm{dp}(\mathcal{B}(n))=1\), for \(n\ge n_\ast\).
Consequently, the tree is also of depth \(\mathrm{dp}(\mathcal{T})=1\).

With the aid of Formula
\eqref{eqn:BranchDepth1}
in Theorem
\ref{thm:DescendantNumbers},
the claims (2) and (4) are consequences of Proposition
\ref{prp:TKTb10TreeCc4}:\\
\(\#\mathcal{B}(9)=N_1(m_9)=21\),
\(\#\mathcal{B}(10)=N_1(m_{10})=30\), and
\(\#\mathcal{B}(11)=N_1(m_{11})=27\).

According to Formula
\eqref{eqn:TreeWidth1}
in Corollary
\ref{cor:TreeWidth},
where \(n\) runs from \(n_\ast+1=10\) to \(n_\ast+\ell_\ast+\ell+0=12\),
the tree width is the maximum \(\mathrm{wd}(\mathcal{T})=30\)
of the expressions \(N_1(m_9)=21\), \(N_1(m_{10})=30\), and \(N_1(m_{11})=27\).

The information content of the tree is given by Formula
\eqref{eqn:InfoCont}
in the Definition
\ref{dfn:InfoCont}: \\
\(\mathrm{IC}(\mathcal{T})=\#\mathcal{B}(9)+(\#\mathcal{B}(10)+\#\mathcal{B}(11))=21+(30+27)=78\).

The algebraic invariants in Table
\ref{tbl:TKTb10TreeCc4},
that is,
depth \(\mathrm{dp}\),
derived length \(\mathrm{dl}\),
abelian type invariants of the centre \(\zeta\),
relation rank \(\mu\),
nuclear rank \(\nu\),
abelian quotient invariants \(\tau(1)\) of the first maximal subgroup,
respectively \(\tau_2\) of the commutator subgroup,
transfer kernel type \(\varkappa\),
action flag \(\sigma\), and
the factorized order \(\#\mathrm{Aut}\) of the automorphism group
have been computed by means of program scripts written for MAGMA
\cite{MAGMA}.

Each group is characterized by the parameters
of the normalized representative \(G_\rho^{m,n}(\alpha,\beta,\gamma,\delta)\) of its isomorphism class,
according to Formula
\eqref{eqn:Presentation},
and by its identifier \(\langle 3^n,i\rangle\) in the SmallGroups Database
\cite{BEO}.

The column with header \(\#\) contains
the number of groups with identical invariants (except the presentation),
for each row.
\end{proof}


\begin{corollary}
\label{cor:TKTb10TreeCc4}
\textbf{(Actions and relation ranks.)}
The algebraic invariants of the vertices of the structured coclass-\(4\) tree \(\mathcal{T}^4{P_9}\) are listed in Table
\ref{tbl:TKTb10TreeCc4}. In particular:
\begin{enumerate}
\item
The groups with \(V_4\)-action are
all mainline vertices \(\stackbin[0]{n-3,n}{G}\begin{pmatrix}0&0\\ 0&0\end{pmatrix}\), \(n\ge 9\),
the two terminal vertices \(\stackbin[0]{n-3,n}{G}\begin{pmatrix}0&0\\ \pm 1&0\end{pmatrix}\) with odd \(n\ge 11\),
the two terminal vertices \(\stackbin[\pm 1]{n-3,n}{G}\begin{pmatrix}0&0\\ 0&0\end{pmatrix}\) with even \(n\ge 10\),
and two terminal non-metabelian vertices with odd \(n\ge 11\).
\item
With respect to the kernel types, 
all mainline groups of type \(\mathrm{b}.10^\ast\), \(\varkappa=(0043)\),
the two leaves of type \(\mathrm{d}.25\), \(\varkappa\sim (2043)\), with every odd logarithmic order,
two distinguished metabelian leaves of type \(\mathrm{b}.10\) with every even logarithmic order,
and two distinguished non-metabelian leaves of type \(\mathrm{b}.10\) with every odd logarithmic order
possess a \(V_4\)-action.
\item
The relation rank is given by
\(\mu=6\) for the mainline vertices \(\stackbin[0]{n-3,n}{G}\begin{pmatrix}0&0\\ 0&0\end{pmatrix}\), \(n\ge 9\), and
\(\mu=5\) otherwise.
There do not occur any RI-actions.
\end{enumerate}
\end{corollary}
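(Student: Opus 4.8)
The plan is to derive Corollary~\ref{cor:TKTb10TreeCc4} directly from Theorem~\ref{thm:TKTb10TreeCc4} and the data compiled in Table~\ref{tbl:TKTb10TreeCc4}, exploiting the strict periodicity established in Proposition~\ref{prp:TKTb10TreeCc4}. Since the action flag $\sigma()$ and the relation rank $\mu()$ are strict invariants under the branch isomorphisms $\mathcal{B}(i+2)\simeq\mathcal{B}(i)$ guaranteed by Theorem~\ref{thm:FirstPeriod}, it suffices to verify all three assertions on the finite vertex set of the pre-periodic branch $\mathcal{B}(9)$ together with the primitive period $(\mathcal{B}(10),\mathcal{B}(11))$; every other vertex is the image of one of these under an iterated periodicity isomorphism. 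The asserted dichotomy between odd and even logarithmic order then persists, because the transformation law shifts $n=\mathrm{lo}()$ by $2$ and hence preserves its parity. Thus the whole statement reduces to inspecting the rows of Table~\ref{tbl:TKTb10TreeCc4}.

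For item~(3) this inspection is immediate: the column headed $\mu$ shows the value $6$ precisely in the rows carrying the presentation $G_0^{n-3,n}(0,0,0,0)$ -- the mainline vertices of type $\mathrm{b}.10^\ast$ -- and the value $5$ in every other row; moreover no entry of the $\sigma$ column is starred, so by Definition~\ref{dfn:Automorphisms} no vertex of $\mathcal{T}^4{P_9}$ admits a relator-inverting automorphism. For items~(1) and~(2) I would read off the rows with $\sigma=2$ and match each to a family of the parametrized presentation~\eqref{eqn:Presentation}: the mainline vertices $G_0^{n-3,n}(0,0,0,0)$ for all $n\ge 9$; the two leaves $G_0^{n-3,n}(0,0,\pm 1,0)$ of type $\mathrm{d}.25$, which are children of a mainline vertex of even order and hence occur only at odd logarithmic order; the two metabelian leaves $G_{\pm 1}^{n-3,n}(0,0,0,0)$ of type $\mathrm{b}.10$ appearing at even $n$; and the two non-metabelian leaves of type $\mathrm{b}.10$ at odd $n$. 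Translating $\sigma=2$ into ``GI-action and $V_4$-action'' and $\sigma=1$ into ``GI-action but no $V_4$-action'' via~\eqref{eqn:ActionFlag} then yields exactly the lists in~(1) and~(2), the correspondence between these presentation parameters and the vertices drawn in Figure~\ref{fig:TKTb10TreeCc4} being the one already fixed in the proof of Theorem~\ref{thm:TKTb10TreeCc4}.

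The one point requiring genuine care, rather than mere table-reading, is the implication $\sigma=2\Rightarrow V_4$-action: by Remark~\ref{rmk:Automorphisms}, divisibility of $\#\mathrm{Aut}(G)$ by $4$ (or even by $8$) does not force $\mathrm{Aut}(G)$ to contain a Klein four subgroup, since a cyclic $C_4$ or a (generalized) quaternion $2$-subgroup would also contribute such a factor. Hence the MAGMA computation underlying Table~\ref{tbl:TKTb10TreeCc4} must actually exhibit a subgroup $C_2\times C_2\le\mathrm{Aut}(G)$ -- equivalently two commuting involutions, one of them the generator-inverting automorphism -- for each vertex flagged $\sigma=2$, and conversely must confirm the absence of any such subgroup for the vertices flagged $\sigma=1$, not merely the oddness of $\#\mathrm{Aut}(G)/2$. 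I expect this Sylow-$2$ analysis of the automorphism groups to be the main obstacle; once it is carried out for the finitely many vertices of $\mathcal{B}(9)\cup\mathcal{B}(10)\cup\mathcal{B}(11)$, the strict periodicity propagates it through the entire tree and the corollary follows.
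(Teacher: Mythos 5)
Your proposal is correct and follows essentially the same route as the paper: the corollary is read off from Table~\ref{tbl:TKTb10TreeCc4} (whose entries, including the genuine \(C_2\times C_2\)-subgroup tests behind \(\sigma=2\) and the \(p\)-covering-group check excluding RI-actions, come from the MAGMA computations) and is propagated to all branches by the periodicity of Proposition~\ref{prp:TKTb10TreeCc4}, the parity of \(\mathrm{lo}()\) being preserved by the shift of \(2\). The one inaccuracy is your appeal to Theorem~\ref{thm:FirstPeriod} for the strict invariance of \(\sigma\) and \(\mu\): that theorem covers only the trees \(\mathcal{T}^4{R_i^4}\) with \(2\le i\le 6\), whose mainlines have a single total transfer kernel and purely periodic branches, whereas \(\mathcal{T}^4{P_9}=\mathcal{T}^4{R_1^4}\) has two total kernels and pre-period length \(\ell_\ast=1\), so the propagation of the algebraic invariants must instead rest on the computational verification underlying Proposition~\ref{prp:TKTb10TreeCc4} and Table~\ref{tbl:TKTb10TreeCc4} -- which is exactly what the paper's own proof invokes.
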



\begin{proof}
(of Corollary
\ref{cor:TKTb10TreeCc4})
The existence of an RI-action on \(G\) has been checked by means of
an algorithm involving the \(p\)-covering group of \(G\), written for MAGMA
\cite{MAGMA}.
The other claims follow immediately from Table
\ref{tbl:TKTb10TreeCc4},
continued indefinitely with the aid of the periodicity in Prop.
\ref{prp:TKTb10TreeCc4}. 
\end{proof}



\renewcommand{\arraystretch}{1.2}

\begin{table}[ht]
\caption{Data for \(3\)-groups \(G\) with \(9\le n=\mathrm{lo}(G)\le 12\) of the coclass tree \(\mathcal{T}^4{R_2^4}\)}
\label{tbl:TKTd19TreeCc4}
\begin{center}
\begin{tabular}{|c|c|l||c|c|c||c|c||c|c|l|c||c|c|}
\hline
 \(\#\) & \(m,n\)  & \(\rho;\alpha,\beta,\gamma,\delta\) & dp & dl & \(\zeta\) & \(\mu\) & \(\nu\) & \(\tau(1)\) & \(\tau_2\) & Type              & \(\varkappa\)    & \(\sigma\) & \(\#\mathrm{Aut}\) \\
\hline
  \(1\) & \(6,9\)  & \(0;0,-1,0,1\)                      & 0  & 2  & \(1^2\)   & \(5\)   & \(1\)   & \(32\)      & \(2^31\)   & \(\mathrm{d}.19^\ast\) & \((0343)\)  & \(1\)      & \(2\cdot 3^{14}\)  \\
\hline
  \(1\) & \(7,10\) & \(0;0,-1,0,1\)                      & 0  & 2  & \(1^2\)   & \(5\)   & \(1\)   & \(3^2\)     & \(32^21\)  & \(\mathrm{d}.19^\ast\) & \((0343)\)  & \(1^\ast\) & \(2\cdot 3^{16}\)  \\
  \(1\) & \(7,10\) & \(0;1,-1,1,1\)                      & 1  & 2  & \(1^2\)   & \(4\)   & \(0\)   & \(3^2\)     & \(32^21\)  & \(\mathrm{F}.7\)  & \((4343)\)       & \(0\)      & \(3^{16}\)         \\
  \(1\) & \(7,10\) & \(0;1,-1,0,1\)                      & 1  & 2  & \(1^2\)   & \(4\)   & \(0\)   & \(3^2\)     & \(32^21\)  & \(\mathrm{F}.12\) & \((1343)\)       & \(0\)      & \(3^{16}\)         \\
  \(1\) & \(7,10\) & \(0;0,-1,1,1\)                      & 1  & 2  & \(1^2\)   & \(4\)   & \(0\)   & \(3^2\)     & \(32^21\)  & \(\mathrm{F}.13\) & \((2343)\)       & \(0\)      & \(3^{16}\)         \\
  \(1\) & \(7,10\) & \(0;1,-1,-1,1\)                     & 1  & 2  & \(1^2\)   & \(5\)   & \(1\)   & \(3^2\)     & \(32^21\)  & \(\mathrm{H}.4\)  & \((3343)\)       & \(0\)      & \(3^{16}\)         \\
  \(6\) & \(7,10\) &                                     & 1  & 3  & \(1^2\)   & \(4\)   & \(0\)   & \(32\)      & \(2^31\)   & \(\mathrm{d}.19\) & \((0343)\)       & \(0\)      & \(3^{15}\)         \\
  \(2\) & \(7,10\) &                                     & 1  & 3  & \(1^2\)   & \(4\)   & \(0\)   & \(32\)      & \(2^31\)   & \(\mathrm{d}.19\) & \((0343)\)       & \(0\)      & \(3^{14}\)         \\
  \(9\) & \(8,11\) & \(\pm 1;\alpha,-1,\gamma,1\)        & 2  & 2  & \(1\)     & \(4\)   & \(0\)   & \(3^2\)     & \(3^221\)  & \(\mathrm{H}.4\)  & \((3343)\)       & \(0\)      & \(3^{18}\)         \\
 \(12\) & \(8,11\) &                                     & 2  & 3  & \(1\)     & \(4\)   & \(0\)   & \(3^2\)     & \(32^21\)  & \(\mathrm{H}.4\)  & \((3343)\)       & \(0\)      & \(3^{17}\)         \\
  \(4\) & \(8,11\) &                                     & 2  & 3  & \(1\)     & \(4\)   & \(0\)   & \(3^2\)     & \(32^21\)  & \(\mathrm{H}.4\)  & \((3343)\)       & \(0\)      & \(3^{16}\)         \\
\hline
  \(1\) & \(8,11\) & \(0;0,-1,0,1\)                      & 0  & 2  & \(1^2\)   & \(5\)   & \(1\)   & \(43\)      & \(3^221\)  & \(\mathrm{d}.19^\ast\) & \((0343)\)  & \(1\)      & \(2\cdot 3^{18}\)  \\
  \(2\) & \(8,11\) & \(0;\pm 1,-1,\pm 1,1\)              & 1  & 2  & \(1^2\)   & \(4\)   & \(0\)   & \(43\)      & \(3^221\)  & \(\mathrm{F}.7\)  & \((4343)\)       & \(1^\ast\) & \(2\cdot 3^{18}\)  \\
  \(2\) & \(8,11\) & \(0;\pm 1,-1,0,1\)                  & 1  & 2  & \(1^2\)   & \(4\)   & \(0\)   & \(43\)      & \(3^221\)  & \(\mathrm{F}.12\) & \((1343)\)       & \(1^\ast\) & \(2\cdot 3^{18}\)  \\
  \(2\) & \(8,11\) & \(0;0,-1,\pm 1,1\)                  & 1  & 2  & \(1^2\)   & \(4\)   & \(0\)   & \(43\)      & \(3^221\)  & \(\mathrm{F}.13\) & \((2343)\)       & \(1^\ast\) & \(2\cdot 3^{18}\)  \\
  \(2\) & \(8,11\) & \(0;\pm 1,-1,\mp 1,1\)              & 1  & 2  & \(1^2\)   & \(5\)   & \(1\)   & \(43\)      & \(3^221\)  & \(\mathrm{H}.4\)  & \((3343)\)       & \(1\)      & \(2\cdot 3^{18}\)  \\
 \(12\) & \(8,11\) &                                     & 1  & 3  & \(1^2\)   & \(4\)   & \(0\)   & \(3^2\)     & \(32^21\)  & \(\mathrm{d}.19\) & \((0343)\)       & \(1^\ast\) & \(2\cdot 3^{17}\)  \\
  \(4\) & \(8,11\) &                                     & 1  & 3  & \(1^2\)   & \(4\)   & \(0\)   & \(3^2\)     & \(32^21\)  & \(\mathrm{d}.19\) & \((0343)\)       & \(1^\ast\) & \(2\cdot 3^{16}\)  \\
  \(2\) & \(9,12\) & \(\pm 1;0,-1,0,1\)                  & 2  & 2  & \(1\)     & \(4\)   & \(0\)   & \(43\)      & \(4321\)   & \(\mathrm{H}.4\)  & \((3343)\)       & \(1^\ast\) & \(2\cdot 3^{20}\)  \\
  \(8\) & \(9,12\) & \(\pm 1;\alpha,-1,\gamma,1\)        & 2  & 2  & \(1\)     & \(4\)   & \(0\)   & \(43\)      & \(4321\)   & \(\mathrm{H}.4\)  & \((3343)\)       & \(0\)      & \(3^{20}\)         \\
 \(12\) & \(9,12\) &                                     & 2  & 3  & \(1\)     & \(4\)   & \(0\)   & \(43\)      & \(3^221\)  & \(\mathrm{H}.4\)  & \((3343)\)       & \(0\)      & \(3^{19}\)         \\
  \(4\) & \(9,12\) &                                     & 2  & 3  & \(1\)     & \(4\)   & \(0\)   & \(43\)      & \(3^221\)  & \(\mathrm{H}.4\)  & \((3343)\)       & \(0\)      & \(3^{18}\)         \\
\hline
\end{tabular}
\end{center}
\end{table}



\subsection{Two mainlines of type \(\mathrm{d}.19^\ast\) for even coclass \(r\ge 4\)}
\label{ss:d19Cc4}

\begin{proposition}
\label{prp:TKTd19Tree1Cc4}
\textbf{(Periodicity and descendant numbers.)} \\
The branches \(\mathcal{B}(i)\), \(i\ge n_\ast=9\),
of the first coclass-\(4\) tree \(\mathcal{T}^4(P_7-\#2;39)\)
with mainline vertices of transfer kernel type \(\mathrm{d}.19^\ast\), \(\varkappa\sim (0343)\),
are purely periodic with primitive length \(\ell=2\) and without pre-period, \(\ell_\ast=0\), that is,
\(\mathcal{B}(i+2)\simeq\mathcal{B}(i)\) are isomorphic as digraphs, for all \(i\ge p_\ast=n_\ast+\ell_\ast=9\).

The graph theoretic structure of the tree is determined uniquely by the numbers
\(N_1\) of immediate descendants and \(C_1\) of capable immediate descendants
of the mainline vertices \(m_n\) with logarithmic order \(n=\mathrm{lo}(m_n)\ge n_\ast=9\)
and of capable vertices \(v\) with depth \(1\) and \(\mathrm{lo}(v)\ge n_\ast+1=10\): \\
\((N_1,C_1)=(13,2)\) for mainline vertices \(m_n\) with odd logarithmic order \(n\ge 9\), \\
\((N_1,C_1)=(25,3)\) for mainline vertices \(m_n\) with even logarithmic order \(n\ge 10\), \\
\((N_1,C_1)=(25,0)\) for the capable vertex \(v\) of depth \(1\) and even logarithmic order \(\mathrm{lo}(v)\ge 10\), \\
\((N_1,C_1)=(13,0)\) for two capable vertices \(v\) of depth \(1\) and odd logarithmic order \(\mathrm{lo}(v)\ge 11\).
\end{proposition}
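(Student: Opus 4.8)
\textit{Proof proposal.} The plan is to follow the computational pattern of the proof of Proposition~\ref{prp:TKTb10TreeCc4}; the statement is in essence established by a direct enumeration with the $p$-group generation algorithm.

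First I would realise the root $R_2^4 = P_7 - \#2;39 \simeq G_0^{6,9}(0,-1,0,1)$ through its parametrized polycyclic power-commutator presentation from Formula~\eqref{eqn:Presentation}, confirm that it carries transfer kernel type $\mathrm{d}.19^*$, $\varkappa \sim (0343)$, and construct the mainline as the sequence $(m_n)_{n \geq 9}$ of successive unique capable children $m_{n+1} = v_1(m_n)$. Using the implementation of the $p$-group generation algorithm of Newman \cite{Nm2} and O'Brien \cite{Ob,HEO} in MAGMA \cite{MAGMA}, I would then enumerate, for each mainline vertex $m_n$ and for each capable vertex $v$ of depth $1$ occurring in a branch, the full list of immediate descendants of step size $s = 1$, classify them by derived length, abelian type of the centre $\zeta$, nuclear rank $\nu$, relation rank $\mu$, transfer kernel type $\varkappa$, action flag $\sigma$, and the abelian quotient invariants $\tau(1), \tau_2$, and read off the pair $(N_1, C_1)$ in each case. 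This is what yields the four values asserted in the statement: the alternating pattern $(13,2)$ versus $(25,3)$ on the mainline according to the parity of $n$, and $(25,0)$, respectively $(13,0)$ with multiplicity $2$, for the capable depth-$1$ vertices lying above a mainline vertex of odd, respectively even, logarithmic order.

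Since $C_1(v) = 0$ for every depth-$1$ capable $v$, each branch has depth exactly $2$, and hence so does the tree; Formula~\eqref{eqn:BranchDepth2} of Theorem~\ref{thm:DescendantNumbers} then expresses $\#\mathcal{B}(n)$ as the two-term sum built from these descendant numbers, producing the branch cardinalities recorded in Table~\ref{tbl:TKTd19TreeCc4}. For the periodicity, the virtual periodicity of the branches of any coclass tree of bounded depth is supplied by du Sautoy \cite{dS} and, independently, by Eick and Leedham-Green \cite{EkLg}; to sharpen this to the strict statement, with primitive length $\ell = 2$ and empty pre-period $\ell_* = 0$, I would verify directly that already the leading branches satisfy $\mathcal{B}(11) \simeq \mathcal{B}(9)$ and $\mathcal{B}(12) \simeq \mathcal{B}(10)$ as structured digraphs (matching not merely cardinalities, but the complete list of invariants in Table~\ref{tbl:TKTd19TreeCc4}), and push the computation well past the suspected periodic root to confirm that no later anomaly disturbs the pattern. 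Agreement of the metabelian skeleton with the relevant tree diagram in Nebelung's dissertation \cite{Ne} provides an independent check.

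The main obstacle is the combinatorial bookkeeping forced by depth $2$: in contrast to the depth-$1$ tree $\mathcal{T}^4 P_9$, here every mainline vertex carries, besides $m_{n+1}$, one or two further capable children of depth $1$ whose own descendants fill layer $n + 2$, so one must keep the odd- and even-parity cases rigorously apart, since they differ both in the number of these depth-$1$ capable vertices and in their values of $N_1$. A second delicate point is certifying $\ell_* = 0$ rather than $\ell_* > 0$: because the branch constitution alternates with the parity of $n$, the root branch $\mathcal{B}(9)$ is already of the \emph{odd} type and must be shown to be isomorphic to $\mathcal{B}(11)$ exactly, with no transitional irregularity of the kind that, for the main-trunk vertex $P_7$, pushes the onset of periodicity from $j = 3$ to $j = 4$ (cf.\ Remark~\ref{rmk:MainTrunk}).
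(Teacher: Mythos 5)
Your proposal is correct and follows essentially the same route as the paper: direct enumeration of immediate descendants of the mainline and depth-$1$ capable vertices with the $p$-group generation algorithm in MAGMA, appeal to du Sautoy and Eick--Leedham-Green for virtual periodicity, computational verification of the leading branches (the paper checks up to $\mathcal{B}(30)$) to certify strict periodicity with $p_\ast=9$, and a cross-check of the metabelian skeleton against Nebelung's dissertation.
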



\begin{proof}
(of Proposition
\ref{prp:TKTd19Tree1Cc4})
The statements concerning the numbers \(N_1(m_n)\) of immediate descendants
of the mainline vertices \(m_n\) with \(n\ge n_\ast=9\),
and \(N_1(v)\) of vertices with depth \(\mathrm{dp}(v)=1\)
and logarithmic order \(\mathrm{lo}(v)\ge n_\ast+1=10\),
have been obtained by direct computation with the \(p\)-group generation algorithm
\cite{Nm2,Ob,HEO}
in MAGMA
\cite{MAGMA}.
In detail, we proved that there is no pre-period, \(\ell_\ast=0\), 
and the primitive period \((\mathcal{B}(9),\mathcal{B}(10))\) of length \(\ell=2\) consists of \\
\(5\), resp. \(9\), metabelian vertices with \(\zeta=1^2\), resp. \(\zeta=1\), and \\
\(8\), resp. \(16\), non-metabelian vertices with \(\zeta=1^2\), resp. \(\zeta=1\), \\
(\(13=5+8\) children of \(m_9\), and \(25=9+16\) children of \(v_2(m_9)\) with depth \(1\)) \\
together \(38\) vertices (\(14\) of them metabelian) in branch \(\mathcal{B}(9)\), and \\
\(9\), resp. \(10\), metabelian vertices with \(\zeta=1^2\), resp. \(\zeta=1\), and \\
\(16\), resp. \(16\), non-metabelian vertices with \(\zeta=1^2\), resp. \(\zeta=1\), \\
(\(25=9+16\) children of \(m_{10}\), and \(26=2\cdot (5+8)\) children of \(v_{2,3}(m_{10})\), both with depth \(1\)) \\
together \(51\) vertices (\(19\) of them metabelian) in branch \(\mathcal{B}(10)\).

The results concerning the metabelian skeleton confirm the corresponding statements in the dissertation of Nebelung
\cite[Thm. 5.1.16, pp. 178--179, and the fourth Figure, \(e\ge 5\), \(e\equiv 1\pmod{2}\), on the double page between pp. 191--192]{Ne}.
The tree \(\mathcal{T}^4{R_2^4}\) corresponds to
the infinite metabelian pro-\(3\) group \(S_{3,4}\) in
\cite[Cnj. 15 (b), p. 116]{Ek}.

The claim of the virtual periodicity of branches
has been proved generally for any coclass tree in
\cite{dS}
and
\cite{EkLg}.
Here, the strict periodicity was confirmed by computation up to branch \(\mathcal{B}(30)\)
and undoubtedly sets in at \(p_\ast=9\).
\end{proof}



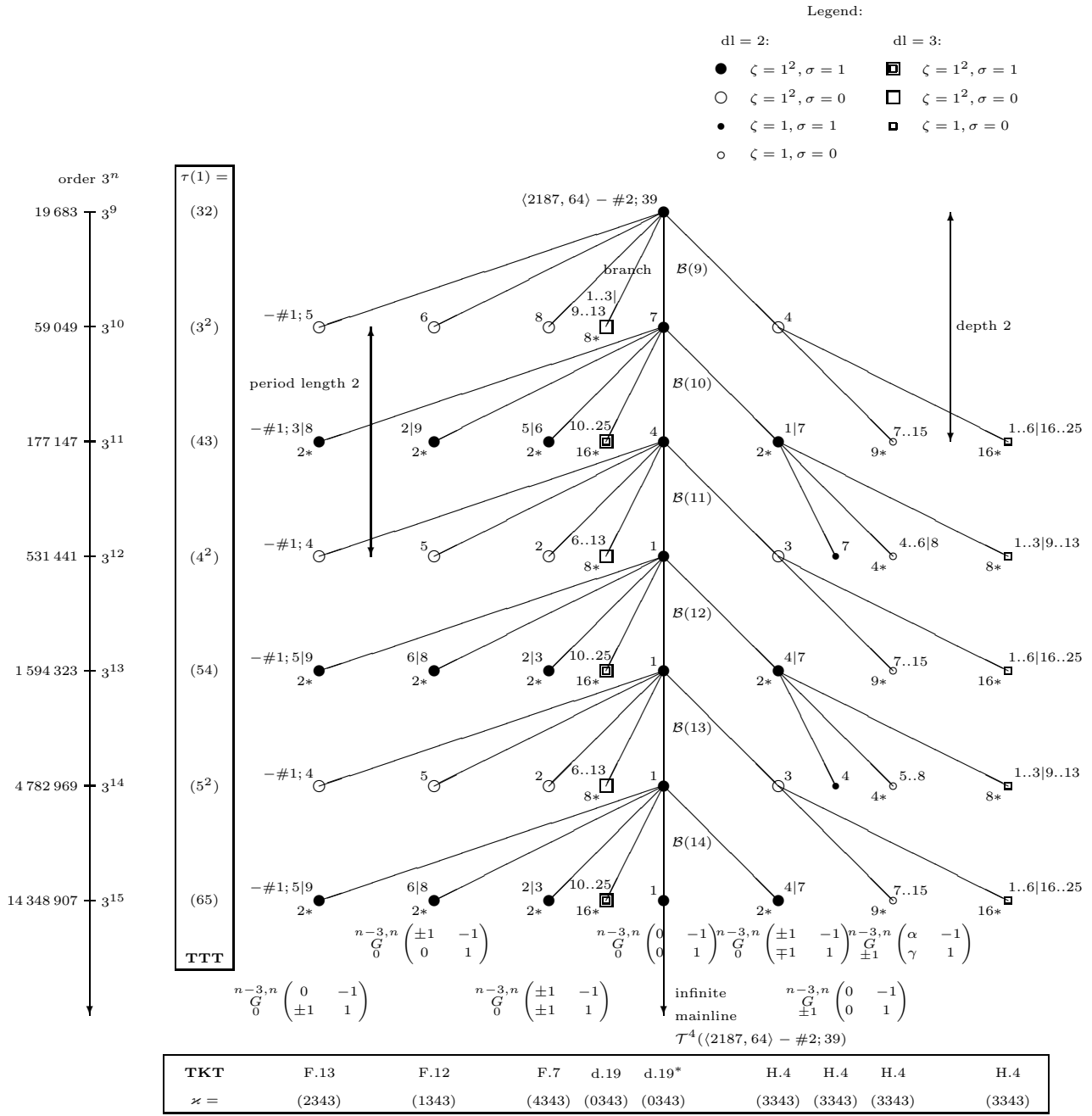
\begin{figure}[hb]
\caption{The first coclass-\(4\) tree \(\mathcal{T}^4(P_7-\#2;39)\) with mainline of type \(\mathrm{d}.19^\ast\)}
\label{fig:TKTd19Tree1Cc4}


{\tiny

\setlength{\unitlength}{0.9cm}
\begin{picture}(18,20)(-10,-16)

\put(-10,0.5){\makebox(0,0)[cb]{order \(3^n\)}}

\put(-10,0){\line(0,-1){12}}
\multiput(-10.1,0)(0,-2){7}{\line(1,0){0.2}}

\put(-10.2,0){\makebox(0,0)[rc]{\(19\,683\)}}
\put(-9.8,0){\makebox(0,0)[lc]{\(3^9\)}}
\put(-10.2,-2){\makebox(0,0)[rc]{\(59\,049\)}}
\put(-9.8,-2){\makebox(0,0)[lc]{\(3^{10}\)}}
\put(-10.2,-4){\makebox(0,0)[rc]{\(177\,147\)}}
\put(-9.8,-4){\makebox(0,0)[lc]{\(3^{11}\)}}
\put(-10.2,-6){\makebox(0,0)[rc]{\(531\,441\)}}
\put(-9.8,-6){\makebox(0,0)[lc]{\(3^{12}\)}}
\put(-10.2,-8){\makebox(0,0)[rc]{\(1\,594\,323\)}}
\put(-9.8,-8){\makebox(0,0)[lc]{\(3^{13}\)}}
\put(-10.2,-10){\makebox(0,0)[rc]{\(4\,782\,969\)}}
\put(-9.8,-10){\makebox(0,0)[lc]{\(3^{14}\)}}
\put(-10.2,-12){\makebox(0,0)[rc]{\(14\,348\,907\)}}
\put(-9.8,-12){\makebox(0,0)[lc]{\(3^{15}\)}}

\put(-10,-12){\vector(0,-1){2}}

\put(-8,0.5){\makebox(0,0)[cb]{\(\tau(1)=\)}}

\put(-8,0){\makebox(0,0)[cc]{\((32)\)}}
\put(-8,-2){\makebox(0,0)[cc]{\((3^2)\)}}
\put(-8,-4){\makebox(0,0)[cc]{\((43)\)}}
\put(-8,-6){\makebox(0,0)[cc]{\((4^2)\)}}
\put(-8,-8){\makebox(0,0)[cc]{\((54)\)}}
\put(-8,-10){\makebox(0,0)[cc]{\((5^2)\)}}
\put(-8,-12){\makebox(0,0)[cc]{\((65)\)}}

\put(-8,-13){\makebox(0,0)[cc]{\textbf{TTT}}}
\put(-8.5,-13.2){\framebox(1,14){}}

\put(2.5,3.5){\makebox(0,0)[lc]{Legend:}}

\put(1,3){\makebox(0,0)[lc]{\(\mathrm{dl}=2\):}}
\put(1,2.5){\circle*{0.2}}
\put(1,2){\circle{0.2}}
\put(1,1.5){\circle*{0.1}}
\put(1,1){\circle{0.1}}
\put(1.5,2.5){\makebox(0,0)[lc]{\(\zeta=1^2,\sigma=1\)}}
\put(1.5,2){\makebox(0,0)[lc]{\(\zeta=1^2,\sigma=0\)}}
\put(1.5,1.5){\makebox(0,0)[lc]{\(\zeta=1,\sigma=1\)}}
\put(1.5,1){\makebox(0,0)[lc]{\(\zeta=1,\sigma=0\)}}

\put(4,3){\makebox(0,0)[lc]{\(\mathrm{dl}=3\):}}
\put(3.9,2.4){\framebox(0.2,0.2){}}
\put(3.95,2.45){\framebox(0.1,0.1){}}
\put(3.9,1.9){\framebox(0.2,0.2){}}
\put(3.95,1.45){\framebox(0.1,0.1){}}
\put(4.5,2.5){\makebox(0,0)[lc]{\(\zeta=1^2,\sigma=1\)}}
\put(4.5,2){\makebox(0,0)[lc]{\(\zeta=1^2,\sigma=0\)}}
\put(4.5,1.5){\makebox(0,0)[lc]{\(\zeta=1,\sigma=0\)}}

\put(5,-2){\vector(0,1){2}}
\put(5.1,-2){\makebox(0,0)[lc]{depth \(2\)}}
\put(5,-2){\vector(0,-1){2}}

\put(-5.1,-4){\vector(0,1){2}}
\put(-5.3,-3){\makebox(0,0)[rc]{period length \(2\)}}
\put(-5.1,-4){\vector(0,-1){2}}

\multiput(0,0)(0,-2){7}{\circle*{0.2}}
\multiput(2,-4)(0,-4){3}{\circle*{0.2}}
\multiput(-2,-4)(0,-4){3}{\circle*{0.2}}
\multiput(-4,-4)(0,-4){3}{\circle*{0.2}}
\multiput(-6,-4)(0,-4){3}{\circle*{0.2}}
\multiput(-2,-2)(0,-4){3}{\circle{0.2}}
\multiput(-4,-2)(0,-4){3}{\circle{0.2}}
\multiput(-6,-2)(0,-4){3}{\circle{0.2}}
\multiput(2,-2)(0,-4){3}{\circle{0.2}}

\multiput(-1.1,-4.1)(0,-4){3}{\framebox(0.2,0.2){}}
\multiput(-1.05,-4.05)(0,-4){3}{\framebox(0.1,0.1){}}
\multiput(-1.1,-2.1)(0,-4){3}{\framebox(0.2,0.2){}}

\multiput(3,-6)(0,-4){2}{\circle*{0.1}}
\multiput(4,-4)(0,-2){5}{\circle{0.1}}

\multiput(5.95,-4.05)(0,-2){5}{\framebox(0.1,0.1){}}

\multiput(0,0)(0,-2){6}{\line(0,-1){2}}
\multiput(0,0)(0,-2){6}{\line(-1,-2){1}}
\multiput(0,0)(0,-2){6}{\line(-1,-1){2}}
\multiput(0,0)(0,-2){6}{\line(-2,-1){4}}
\multiput(0,0)(0,-2){6}{\line(-3,-1){6}}
\multiput(0,0)(0,-2){6}{\line(1,-1){2}}

\multiput(2,-4)(0,-4){2}{\line(1,-2){1}}
\multiput(2,-2)(0,-2){5}{\line(1,-1){2}}
\multiput(2,-2)(0,-2){5}{\line(2,-1){4}}

\multiput(-1.1,-2.1)(0,-4){3}{\makebox(0,0)[rt]{\(8\ast\)}}
\multiput(-6.1,-4.1)(0,-4){3}{\makebox(0,0)[rt]{\(2\ast\)}}
\multiput(-4.1,-4.1)(0,-4){3}{\makebox(0,0)[rt]{\(2\ast\)}}
\multiput(-2.1,-4.1)(0,-4){3}{\makebox(0,0)[rt]{\(2\ast\)}}
\multiput(-1.1,-4.1)(0,-4){3}{\makebox(0,0)[rt]{\(16\ast\)}}
\multiput(1.9,-4.1)(0,-4){3}{\makebox(0,0)[rt]{\(2\ast\)}}
\multiput(3.9,-4.1)(0,-4){3}{\makebox(0,0)[rt]{\(9\ast\)}}
\multiput(3.9,-6.1)(0,-4){2}{\makebox(0,0)[rt]{\(4\ast\)}}
\multiput(5.9,-4.1)(0,-4){3}{\makebox(0,0)[rt]{\(16\ast\)}}
\multiput(5.9,-6.1)(0,-4){2}{\makebox(0,0)[rt]{\(8\ast\)}}

\put(0,-12){\vector(0,-1){2}}
\put(0.2,-13.6){\makebox(0,0)[lc]{infinite}}
\put(0.2,-14){\makebox(0,0)[lc]{mainline}}
\put(0.2,-14.4){\makebox(0,0)[lc]{\(\mathcal{T}^4(\langle 2187,64\rangle-\#2;39)\)}}

\put(-0.1,0.1){\makebox(0,0)[rb]{\(\langle 2187,64\rangle-\#2;39\)}}

\put(-0.2,-1){\makebox(0,0)[rc]{branch}}
\put(0.5,-1){\makebox(0,0)[cc]{\(\mathcal{B}(9)\)}}
\put(-6.1,-1.9){\makebox(0,0)[rb]{\(-\#1;5\)}}
\put(-4.1,-1.9){\makebox(0,0)[rb]{\(6\)}}
\put(-2.1,-1.9){\makebox(0,0)[rb]{\(8\)}}
\put(-0.8,-1.6){\makebox(0,0)[rb]{\(1..3\vert\)}}
\put(-1.0,-1.8){\makebox(0,0)[rb]{\(9..13\)}}
\put(-0.1,-1.9){\makebox(0,0)[rb]{\(7\)}}
\put(2.1,-1.9){\makebox(0,0)[lb]{\(4\)}}

\put(0.5,-3){\makebox(0,0)[cc]{\(\mathcal{B}(10)\)}}
\put(-6.1,-3.9){\makebox(0,0)[rb]{\(-\#1;3\vert 8\)}}
\put(-4.2,-3.9){\makebox(0,0)[rb]{\(2\vert 9\)}}
\put(-2.1,-3.9){\makebox(0,0)[rb]{\(5\vert 6\)}}
\put(-0.9,-3.8){\makebox(0,0)[rb]{\(10..25\)}}
\put(-0.1,-3.9){\makebox(0,0)[rb]{\(4\)}}
\put(2.1,-3.9){\makebox(0,0)[lb]{\(1\vert 7\)}}

\put(4,-3.9){\makebox(0,0)[lb]{\(7..15\)}}
\put(6,-3.9){\makebox(0,0)[lb]{\(1..6\vert 16..25\)}}

\put(0.5,-5){\makebox(0,0)[cc]{\(\mathcal{B}(11)\)}}
\put(-6.1,-5.9){\makebox(0,0)[rb]{\(-\#1;4\)}}
\put(-4.1,-5.9){\makebox(0,0)[rb]{\(5\)}}
\put(-2.1,-5.9){\makebox(0,0)[rb]{\(2\)}}
\put(-1.0,-5.8){\makebox(0,0)[rb]{\(6..13\)}}
\put(-0.1,-5.9){\makebox(0,0)[rb]{\(1\)}}
\put(2.1,-5.9){\makebox(0,0)[lb]{\(3\)}}

\put(3.1,-5.9){\makebox(0,0)[lb]{\(7\)}}
\put(4.1,-5.9){\makebox(0,0)[lb]{\(4..6\vert 8\)}}
\put(6.1,-5.9){\makebox(0,0)[lb]{\(1..3\vert 9..13\)}}

\put(0.5,-7){\makebox(0,0)[cc]{\(\mathcal{B}(12)\)}}
\put(-6.1,-7.9){\makebox(0,0)[rb]{\(-\#1;5\vert 9\)}}
\put(-4.1,-7.9){\makebox(0,0)[rb]{\(6\vert 8\)}}
\put(-2.1,-7.9){\makebox(0,0)[rb]{\(2\vert 3\)}}
\put(-0.9,-7.8){\makebox(0,0)[rb]{\(10..25\)}}
\put(-0.1,-7.9){\makebox(0,0)[rb]{\(1\)}}
\put(2.1,-7.9){\makebox(0,0)[lb]{\(4\vert 7\)}}

\put(4,-7.9){\makebox(0,0)[lb]{\(7..15\)}}
\put(6,-7.9){\makebox(0,0)[lb]{\(1..6\vert 16..25\)}}

\put(0.5,-9){\makebox(0,0)[cc]{\(\mathcal{B}(13)\)}}
\put(-6.1,-9.9){\makebox(0,0)[rb]{\(-\#1;4\)}}
\put(-4.1,-9.9){\makebox(0,0)[rb]{\(5\)}}
\put(-2.1,-9.9){\makebox(0,0)[rb]{\(2\)}}
\put(-1.0,-9.8){\makebox(0,0)[rb]{\(6..13\)}}
\put(-0.1,-9.9){\makebox(0,0)[rb]{\(1\)}}
\put(2.1,-9.9){\makebox(0,0)[lb]{\(3\)}}

\put(3.1,-9.9){\makebox(0,0)[lb]{\(4\)}}
\put(4.1,-9.9){\makebox(0,0)[lb]{\(5..8\)}}
\put(6.1,-9.9){\makebox(0,0)[lb]{\(1..3\vert 9..13\)}}

\put(0.5,-11){\makebox(0,0)[cc]{\(\mathcal{B}(14)\)}}
\put(-6.1,-11.9){\makebox(0,0)[rb]{\(-\#1;5\vert 9\)}}
\put(-4.1,-11.9){\makebox(0,0)[rb]{\(6\vert 8\)}}
\put(-2.1,-11.9){\makebox(0,0)[rb]{\(2\vert 3\)}}
\put(-0.9,-11.8){\makebox(0,0)[rb]{\(10..25\)}}
\put(-0.1,-11.9){\makebox(0,0)[rb]{\(1\)}}
\put(2.1,-11.9){\makebox(0,0)[lb]{\(4\vert 7\)}}

\put(4,-11.9){\makebox(0,0)[lb]{\(7..15\)}}
\put(6,-11.9){\makebox(0,0)[lb]{\(1..6\vert 16..25\)}}

\put(-0.1,-12.4){\makebox(0,0)[ct]{\(\stackbin[0]{n-3,n}{G}\begin{pmatrix}0&-1\\ 0&1\end{pmatrix}\)}}
\put(-2.1,-13.4){\makebox(0,0)[ct]{\(\stackbin[0]{n-3,n}{G}\begin{pmatrix}\pm 1&-1\\ \pm 1&1\end{pmatrix}\)}}
\put(-4.2,-12.4){\makebox(0,0)[ct]{\(\stackbin[0]{n-3,n}{G}\begin{pmatrix}\pm 1&-1\\ 0&1\end{pmatrix}\)}}
\put(-6.3,-13.4){\makebox(0,0)[ct]{\(\stackbin[0]{n-3,n}{G}\begin{pmatrix}0&-1\\ \pm 1&1\end{pmatrix}\)}}
\put(2.1,-12.4){\makebox(0,0)[ct]{\(\stackbin[0]{n-3,n}{G}\begin{pmatrix}\pm 1&-1\\ \mp 1&1\end{pmatrix}\)}}
\put(3.2,-13.4){\makebox(0,0)[ct]{\(\stackbin[\pm 1]{n-3,n}{G}\begin{pmatrix}0&-1\\ 0&1\end{pmatrix}\)}}
\put(4.3,-12.4){\makebox(0,0)[ct]{\(\stackbin[\pm 1]{n-3,n}{G}\begin{pmatrix}\alpha&-1\\ \gamma&1\end{pmatrix}\)}}

\put(-8,-15){\makebox(0,0)[cc]{\textbf{TKT}}}
\put(0,-15){\makebox(0,0)[cc]{d.19\({}^\ast\)}}
\put(-1,-15){\makebox(0,0)[cc]{d.19}}
\put(-2,-15){\makebox(0,0)[cc]{F.7}}
\put(-4,-15){\makebox(0,0)[cc]{F.12}}
\put(-6,-15){\makebox(0,0)[cc]{F.13}}
\put(2,-15){\makebox(0,0)[cc]{H.4}}
\put(3,-15){\makebox(0,0)[cc]{H.4}}
\put(4,-15){\makebox(0,0)[cc]{H.4}}
\put(6,-15){\makebox(0,0)[cc]{H.4}}
\put(-8,-15.5){\makebox(0,0)[cc]{\(\varkappa=\)}}
\put(0,-15.5){\makebox(0,0)[cc]{\((0343)\)}}
\put(-1,-15.5){\makebox(0,0)[cc]{\((0343)\)}}
\put(-2,-15.5){\makebox(0,0)[cc]{\((4343)\)}}
\put(-4,-15.5){\makebox(0,0)[cc]{\((1343)\)}}
\put(-6,-15.5){\makebox(0,0)[cc]{\((2343)\)}}
\put(2,-15.5){\makebox(0,0)[cc]{\((3343)\)}}
\put(3,-15.5){\makebox(0,0)[cc]{\((3343)\)}}
\put(4,-15.5){\makebox(0,0)[cc]{\((3343)\)}}
\put(6,-15.5){\makebox(0,0)[cc]{\((3343)\)}}
\put(-8.7,-15.7){\framebox(15.4,1){}}

\end{picture}

}

\end{figure}



\begin{figure}[ht]
\caption{The second coclass-\(4\) tree \(\mathcal{T}^4(P_7-\#2;44)\) with mainline of type \(\mathrm{d}.19^\ast\)}
\label{fig:TKTd19Tree2Cc4}


{\tiny

\setlength{\unitlength}{0.9cm}
\begin{picture}(18,20)(-10,-16)

\put(-10,0.5){\makebox(0,0)[cb]{order \(3^n\)}}

\put(-10,0){\line(0,-1){12}}
\multiput(-10.1,0)(0,-2){7}{\line(1,0){0.2}}

\put(-10.2,0){\makebox(0,0)[rc]{\(19\,683\)}}
\put(-9.8,0){\makebox(0,0)[lc]{\(3^9\)}}
\put(-10.2,-2){\makebox(0,0)[rc]{\(59\,049\)}}
\put(-9.8,-2){\makebox(0,0)[lc]{\(3^{10}\)}}
\put(-10.2,-4){\makebox(0,0)[rc]{\(177\,147\)}}
\put(-9.8,-4){\makebox(0,0)[lc]{\(3^{11}\)}}
\put(-10.2,-6){\makebox(0,0)[rc]{\(531\,441\)}}
\put(-9.8,-6){\makebox(0,0)[lc]{\(3^{12}\)}}
\put(-10.2,-8){\makebox(0,0)[rc]{\(1\,594\,323\)}}
\put(-9.8,-8){\makebox(0,0)[lc]{\(3^{13}\)}}
\put(-10.2,-10){\makebox(0,0)[rc]{\(4\,782\,969\)}}
\put(-9.8,-10){\makebox(0,0)[lc]{\(3^{14}\)}}
\put(-10.2,-12){\makebox(0,0)[rc]{\(14\,348\,907\)}}
\put(-9.8,-12){\makebox(0,0)[lc]{\(3^{15}\)}}

\put(-10,-12){\vector(0,-1){2}}

\put(-8,0.5){\makebox(0,0)[cb]{\(\tau(1)=\)}}

\put(-8,0){\makebox(0,0)[cc]{\((32)\)}}
\put(-8,-2){\makebox(0,0)[cc]{\((3^2)\)}}
\put(-8,-4){\makebox(0,0)[cc]{\((43)\)}}
\put(-8,-6){\makebox(0,0)[cc]{\((4^2)\)}}
\put(-8,-8){\makebox(0,0)[cc]{\((54)\)}}
\put(-8,-10){\makebox(0,0)[cc]{\((5^2)\)}}
\put(-8,-12){\makebox(0,0)[cc]{\((65)\)}}

\put(-8,-13){\makebox(0,0)[cc]{\textbf{TTT}}}
\put(-8.5,-13.2){\framebox(1,14){}}

\put(2.5,3.5){\makebox(0,0)[lc]{Legend:}}

\put(1,3){\makebox(0,0)[lc]{\(\mathrm{dl}=2\):}}
\put(1,2.5){\circle*{0.2}}
\put(1,2){\circle{0.2}}
\put(1,1.5){\circle*{0.1}}
\put(1,1){\circle{0.1}}
\put(1.5,2.5){\makebox(0,0)[lc]{\(\zeta=1^2,\sigma=1\)}}
\put(1.5,2){\makebox(0,0)[lc]{\(\zeta=1^2,\sigma=0\)}}
\put(1.5,1.5){\makebox(0,0)[lc]{\(\zeta=1,\sigma=1\)}}
\put(1.5,1){\makebox(0,0)[lc]{\(\zeta=1,\sigma=0\)}}

\put(4,3){\makebox(0,0)[lc]{\(\mathrm{dl}=3\):}}
\put(3.9,2.4){\framebox(0.2,0.2){}}
\put(3.95,2.45){\framebox(0.1,0.1){}}
\put(3.9,1.9){\framebox(0.2,0.2){}}
\put(3.95,1.45){\framebox(0.1,0.1){}}
\put(4.5,2.5){\makebox(0,0)[lc]{\(\zeta=1^2,\sigma=1\)}}
\put(4.5,2){\makebox(0,0)[lc]{\(\zeta=1^2,\sigma=0\)}}
\put(4.5,1.5){\makebox(0,0)[lc]{\(\zeta=1,\sigma=0\)}}

\put(5,-2){\vector(0,1){2}}
\put(5.1,-2){\makebox(0,0)[lc]{depth \(2\)}}
\put(5,-2){\vector(0,-1){2}}

\put(-5.1,-4){\vector(0,1){2}}
\put(-5.3,-3){\makebox(0,0)[rc]{period length \(2\)}}
\put(-5.1,-4){\vector(0,-1){2}}

\multiput(0,0)(0,-2){7}{\circle*{0.2}}
\multiput(2,-4)(0,-4){3}{\circle*{0.2}}
\multiput(-2,-4)(0,-4){3}{\circle*{0.2}}
\multiput(-4,-4)(0,-4){3}{\circle*{0.2}}
\multiput(-6,-4)(0,-4){3}{\circle*{0.2}}
\multiput(-2,-2)(0,-4){3}{\circle{0.2}}
\multiput(-4,-2)(0,-4){3}{\circle{0.2}}
\multiput(-6,-2)(0,-4){3}{\circle{0.2}}
\multiput(2,-2)(0,-4){3}{\circle{0.2}}

\multiput(-1.1,-4.1)(0,-4){3}{\framebox(0.2,0.2){}}
\multiput(-1.05,-4.05)(0,-4){3}{\framebox(0.1,0.1){}}
\multiput(-1.1,-2.1)(0,-4){3}{\framebox(0.2,0.2){}}

\multiput(3,-6)(0,-4){2}{\circle*{0.1}}
\multiput(4,-4)(0,-2){5}{\circle{0.1}}

\multiput(5.95,-4.05)(0,-2){5}{\framebox(0.1,0.1){}}

\multiput(0,0)(0,-2){6}{\line(0,-1){2}}
\multiput(0,0)(0,-2){6}{\line(-1,-2){1}}
\multiput(0,0)(0,-2){6}{\line(-1,-1){2}}
\multiput(0,0)(0,-2){6}{\line(-2,-1){4}}
\multiput(0,0)(0,-2){6}{\line(-3,-1){6}}
\multiput(0,0)(0,-2){6}{\line(1,-1){2}}

\multiput(2,-4)(0,-4){2}{\line(1,-2){1}}
\multiput(2,-2)(0,-2){5}{\line(1,-1){2}}
\multiput(2,-2)(0,-2){5}{\line(2,-1){4}}

\multiput(-1.1,-2.1)(0,-4){3}{\makebox(0,0)[rt]{\(8\ast\)}}
\multiput(-6.1,-4.1)(0,-4){3}{\makebox(0,0)[rt]{\(2\ast\)}}
\multiput(-4.1,-4.1)(0,-4){3}{\makebox(0,0)[rt]{\(2\ast\)}}
\multiput(-2.1,-4.1)(0,-4){3}{\makebox(0,0)[rt]{\(2\ast\)}}
\multiput(-1.1,-4.1)(0,-4){3}{\makebox(0,0)[rt]{\(16\ast\)}}
\multiput(1.9,-4.1)(0,-4){3}{\makebox(0,0)[rt]{\(2\ast\)}}
\multiput(3.9,-4.1)(0,-4){3}{\makebox(0,0)[rt]{\(9\ast\)}}
\multiput(3.9,-6.1)(0,-4){2}{\makebox(0,0)[rt]{\(4\ast\)}}
\multiput(5.9,-4.1)(0,-4){3}{\makebox(0,0)[rt]{\(16\ast\)}}
\multiput(5.9,-6.1)(0,-4){2}{\makebox(0,0)[rt]{\(8\ast\)}}

\put(0,-12){\vector(0,-1){2}}
\put(0.2,-13.6){\makebox(0,0)[lc]{infinite}}
\put(0.2,-14){\makebox(0,0)[lc]{mainline}}
\put(0.2,-14.4){\makebox(0,0)[lc]{\(\mathcal{T}^4(\langle 2187,64\rangle-\#2;44)\)}}

\put(-0.1,0.1){\makebox(0,0)[rb]{\(\langle 2187,64\rangle-\#2;44\)}}

\put(-0.2,-1){\makebox(0,0)[rc]{branch}}
\put(0.5,-1){\makebox(0,0)[cc]{\(\mathcal{B}(9)\)}}
\put(-6.1,-1.9){\makebox(0,0)[rb]{\(-\#1;5\)}}
\put(-4.1,-1.9){\makebox(0,0)[rb]{\(4\)}}
\put(-2.1,-1.9){\makebox(0,0)[rb]{\(2\)}}
\put(-1.0,-1.8){\makebox(0,0)[rb]{\(6..13\)}}
\put(-0.1,-1.9){\makebox(0,0)[rb]{\(1\)}}
\put(2.1,-1.9){\makebox(0,0)[lb]{\(3\)}}

\put(0.5,-3){\makebox(0,0)[cc]{\(\mathcal{B}(10)\)}}
\put(-6.1,-3.9){\makebox(0,0)[rb]{\(-\#1;2\vert 9\)}}
\put(-4.2,-3.9){\makebox(0,0)[rb]{\(3\vert 8\)}}
\put(-2.1,-3.9){\makebox(0,0)[rb]{\(5\vert 6\)}}
\put(-1.0,-3.8){\makebox(0,0)[rb]{\(10..25\)}}
\put(-0.1,-3.9){\makebox(0,0)[rb]{\(4\)}}
\put(2.1,-3.9){\makebox(0,0)[lb]{\(1\vert 7\)}}

\put(4,-3.9){\makebox(0,0)[lb]{\(7..15\)}}
\put(6,-3.9){\makebox(0,0)[lb]{\(1..6\vert 16..25\)}}

\put(0.5,-5){\makebox(0,0)[cc]{\(\mathcal{B}(11)\)}}
\put(-6.1,-5.9){\makebox(0,0)[rb]{\(-\#1;3\)}}
\put(-4.1,-5.9){\makebox(0,0)[rb]{\(2\)}}
\put(-2.1,-5.9){\makebox(0,0)[rb]{\(5\)}}
\put(-1.0,-5.8){\makebox(0,0)[rb]{\(6..13\)}}
\put(-0.1,-5.9){\makebox(0,0)[rb]{\(4\)}}
\put(2.1,-5.9){\makebox(0,0)[lb]{\(1\)}}

\put(3.1,-5.9){\makebox(0,0)[lb]{\(7\)}}
\put(4.1,-5.9){\makebox(0,0)[lb]{\(4..6\vert 8\)}}
\put(6.1,-5.9){\makebox(0,0)[lb]{\(1..3\vert 9..13\)}}

\put(0.5,-7){\makebox(0,0)[cc]{\(\mathcal{B}(12)\)}}
\put(-6.1,-7.9){\makebox(0,0)[rb]{\(-\#1;2\vert 9\)}}
\put(-4.1,-7.9){\makebox(0,0)[rb]{\(3\vert 8\)}}
\put(-2.1,-7.9){\makebox(0,0)[rb]{\(5\vert 6\)}}
\put(-1.0,-7.8){\makebox(0,0)[rb]{\(10..25\)}}
\put(-0.1,-7.9){\makebox(0,0)[rb]{\(4\)}}
\put(2.1,-7.9){\makebox(0,0)[lb]{\(1\vert 7\)}}

\put(4,-7.9){\makebox(0,0)[lb]{\(7..15\)}}
\put(6,-7.9){\makebox(0,0)[lb]{\(1..6\vert 16..25\)}}

\put(0.5,-9){\makebox(0,0)[cc]{\(\mathcal{B}(13)\)}}
\put(-6.1,-9.9){\makebox(0,0)[rb]{\(-\#1;3\)}}
\put(-4.1,-9.9){\makebox(0,0)[rb]{\(2\)}}
\put(-2.1,-9.9){\makebox(0,0)[rb]{\(5\)}}
\put(-1.0,-9.8){\makebox(0,0)[rb]{\(6..13\)}}
\put(-0.1,-9.9){\makebox(0,0)[rb]{\(4\)}}
\put(2.1,-9.9){\makebox(0,0)[lb]{\(1\)}}

\put(3.1,-9.9){\makebox(0,0)[lb]{\(7\)}}
\put(4.1,-9.9){\makebox(0,0)[lb]{\(4..6\vert 8\)}}
\put(6.1,-9.9){\makebox(0,0)[lb]{\(1..3\vert 9..13\)}}

\put(0.5,-11){\makebox(0,0)[cc]{\(\mathcal{B}(14)\)}}
\put(-6.1,-11.9){\makebox(0,0)[rb]{\(-\#1;2\vert 9\)}}
\put(-4.1,-11.9){\makebox(0,0)[rb]{\(3\vert 8\)}}
\put(-2.1,-11.9){\makebox(0,0)[rb]{\(5\vert 6\)}}
\put(-1.0,-11.8){\makebox(0,0)[rb]{\(10..25\)}}
\put(-0.1,-11.9){\makebox(0,0)[rb]{\(4\)}}
\put(2.1,-11.9){\makebox(0,0)[lb]{\(1\vert 7\)}}

\put(4,-11.9){\makebox(0,0)[lb]{\(7..15\)}}
\put(6,-11.9){\makebox(0,0)[lb]{\(1..6\vert 16..25\)}}

\put(0,-12.4){\makebox(0,0)[ct]{\(\stackbin[0]{n-3,n}{G}\begin{pmatrix}0&1\\ 0&1\end{pmatrix}\)}}
\put(-2,-12.4){\makebox(0,0)[ct]{\(\stackbin[0]{n-3,n}{G}\begin{pmatrix}\pm 1&1\\ \mp 1&1\end{pmatrix}\)}}
\put(-4.1,-12.4){\makebox(0,0)[ct]{\(\stackbin[0]{n-3,n}{G}\begin{pmatrix}\pm 1&1\\ 0&1\end{pmatrix}\)}}
\put(-6.2,-12.4){\makebox(0,0)[ct]{\(\stackbin[0]{n-3,n}{G}\begin{pmatrix}0&1\\ \pm 1&1\end{pmatrix}\)}}
\put(2,-12.4){\makebox(0,0)[ct]{\(\stackbin[0]{n-3,n}{G}\begin{pmatrix}\pm 1&1\\ \pm 1&1\end{pmatrix}\)}}
\put(3,-13.4){\makebox(0,0)[ct]{\(\stackbin[\pm 1]{n-3,n}{G}\begin{pmatrix}0&1\\ 0&1\end{pmatrix}\)}}
\put(4,-12.4){\makebox(0,0)[ct]{\(\stackbin[\pm 1]{n-3,n}{G}\begin{pmatrix}\alpha&1\\ \gamma&1\end{pmatrix}\)}}

\put(-8,-15){\makebox(0,0)[cc]{\textbf{TKT}}}
\put(0,-15){\makebox(0,0)[cc]{d.19\({}^\ast\)}}
\put(-1,-15){\makebox(0,0)[cc]{d.19}}
\put(-2,-15){\makebox(0,0)[cc]{F.7}}
\put(-4,-15){\makebox(0,0)[cc]{F.12}}
\put(-6,-15){\makebox(0,0)[cc]{F.13}}
\put(2,-15){\makebox(0,0)[cc]{H.4}}
\put(3,-15){\makebox(0,0)[cc]{H.4}}
\put(4,-15){\makebox(0,0)[cc]{H.4}}
\put(6,-15){\makebox(0,0)[cc]{H.4}}
\put(-8,-15.5){\makebox(0,0)[cc]{\(\varkappa=\)}}
\put(0,-15.5){\makebox(0,0)[cc]{\((0443)\)}}
\put(-1,-15.5){\makebox(0,0)[cc]{\((0443)\)}}
\put(-2,-15.5){\makebox(0,0)[cc]{\((3443)\)}}
\put(-4,-15.5){\makebox(0,0)[cc]{\((1443)\)}}
\put(-6,-15.5){\makebox(0,0)[cc]{\((2443)\)}}
\put(2,-15.5){\makebox(0,0)[cc]{\((4443)\)}}
\put(3,-15.5){\makebox(0,0)[cc]{\((4443)\)}}
\put(4,-15.5){\makebox(0,0)[cc]{\((4443)\)}}
\put(6,-15.5){\makebox(0,0)[cc]{\((4443)\)}}
\put(-8.7,-15.7){\framebox(15.4,1){}}

\end{picture}

}

\end{figure}



\begin{theorem}
\label{thm:TKTd19Tree1Cc4}
\textbf{(Graph theoretic and algebraic invariants.)} \\
The coclass-\(4\) tree \(\mathcal{T}:=\mathcal{T}^4{R_2^4}\) of \(3\)-groups \(G\) with coclass \(\mathrm{cc}(G)=4\)
which arises from the metabelian root \(R_2^4:=\langle 2187,64\rangle-\#2;39\)
has the following abstract graph theoretic properties.
\begin{enumerate}
\item
The branches \(\mathcal{B}(i)\), \(i\ge n_\ast=9\), are purely periodic with
primitive period \((\mathcal{B}(9),\mathcal{B}(10))\) of length \(\ell=2\).
\item
The cardinalities of the periodic branches are
\(\#\mathcal{B}(9)=38\) and \(\#\mathcal{B}(10)=51\).
\item
Depth, width, and information content of the tree are given by
\begin{equation}
\label{eqn:TKTd19Tree1Cc4}
\mathrm{dp}(\mathcal{T}^4{R_2^4})=2, \quad \mathrm{wd}(\mathcal{T}^4{R_2^4})=50, \quad \text{ and } \quad \mathrm{IC}(\mathcal{T}^4{R_2^4})=89.
\end{equation}
\end{enumerate}
The algebraic invariants of the vertices forming the primitive period \((\mathcal{B}(9),\mathcal{B}(10))\) of the tree are given in Table
\ref{tbl:TKTd19TreeCc4}.
The six leading branches \(\mathcal{B}(9),\ldots,\mathcal{B}(14)\) are drawn in Figure
\ref{fig:TKTd19Tree1Cc4}.
\end{theorem}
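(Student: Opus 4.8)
The plan is to obtain all of the graph theoretic assertions as formal consequences of Proposition \ref{prp:TKTd19Tree1Cc4} together with the general counting machinery of \S\ \ref{s:GraphStruc}. Part (1) is simply a restatement of the periodicity already established in Proposition \ref{prp:TKTd19Tree1Cc4}: the branches $\mathcal{B}(i)$, $i\ge n_\ast=9$, are purely periodic of primitive length $\ell=2$ with empty pre-period, so that $p_\ast=n_\ast+\ell_\ast=9$. That $\ell=2$ is genuinely primitive is confirmed a posteriori by part (2), since $\#\mathcal{B}(9)=38\ne 51=\#\mathcal{B}(10)$ excludes period length $1$.

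First I would fix the depth of the tree. By Proposition \ref{prp:TKTd19Tree1Cc4} every mainline vertex $m_n$ has $C_1(m_n)\in\{2,3\}$ capable children, but each capable child of positive depth has $C_1=0$; hence, apart from the next mainline vertex $m_{n+1}$, no vertex below $m_n$ of depth $\ge 1$ is capable, so every branch $\mathcal{B}(n)$ has depth exactly $2$ and $\mathrm{dp}(\mathcal{T})=2$. Thus Theorem \ref{thm:DescendantNumbers}(2) applies, and part (2) follows by evaluating Formula \eqref{eqn:BranchDepth2}, $\#\mathcal{B}(e)=N_1(m_e)+\sum_{i=2}^{C_1(m_e)}N_1(v_i)$, at $e=9$ and $e=10$. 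For $e=9$ (odd), Proposition \ref{prp:TKTd19Tree1Cc4} gives $N_1(m_9)=13$, $C_1(m_9)=2$, and the unique off-mainline capable child $v_2(m_9)$ has logarithmic order $10$ and $N_1=25$, so $\#\mathcal{B}(9)=13+25=38$. For $e=10$ (even), one has $N_1(m_{10})=25$, $C_1(m_{10})=3$, and the two off-mainline capable children $v_2(m_{10})$, $v_3(m_{10})$ each have logarithmic order $11$ and $N_1=13$, so $\#\mathcal{B}(10)=25+13+13=51$; this also matches the census obtained directly in the proof of Proposition \ref{prp:TKTd19Tree1Cc4}.

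For part (3), the width follows from Corollary \ref{cor:TreeWidth}(2): with $n_\ast=9$, $\ell_\ast=0$, $\ell=2$ the parameter $n$ ranges over $\{11,12\}$, and $N_1(m_{n-1})+\sum_{i=2}^{C_1(m_{n-2})}N_1(v_i(m_{n-2}))$ equals $25+25=50$ at $n=11$ and $13+13+13=39$ at $n=12$; together with $N_1(m_9)=13$ this gives $\mathrm{wd}(\mathcal{T})=\max\{13,50,39\}=50$. The information content is read off from Definition \ref{dfn:InfoCont}: since $p_\ast=n_\ast$ the pre-period sum is empty, whence $\mathrm{IC}(\mathcal{T})=\#\mathcal{B}(9)+\#\mathcal{B}(10)=38+51=89$. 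Finally, the detailed algebraic data of Table \ref{tbl:TKTd19TreeCc4} — depth, derived length, centre type $\zeta$, relation rank $\mu$, nuclear rank $\nu$, the invariants $\tau(1)$ and $\tau_2$, transfer kernel type $\varkappa$, action flag $\sigma$, and the factorised order $\#\mathrm{Aut}$ — are computed, vertex by vertex of the two periodic branches $\mathcal{B}(9)$ and $\mathcal{B}(10)$, with MAGMA \cite{MAGMA} from the parametrised presentations of Formula \eqref{eqn:Presentation}, exactly as in the proof of Theorem \ref{thm:TKTb10TreeCc4}.

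The genuine work is hidden in Proposition \ref{prp:TKTd19Tree1Cc4}, whose proof must verify that the virtual periodicity of du Sautoy \cite{dS} and Eick and Leedham-Green \cite{EkLg} is in fact strict for this tree, and must determine the descendant numbers $N_1$, $C_1$ of the mainline vertices and of the depth-$1$ capable vertices by $p$-group generation. Relative to that, the only point in Theorem \ref{thm:TKTd19Tree1Cc4} requiring care is the bookkeeping of which off-mainline capable vertex of depth $1$ contributes to which branch — equivalently, the verification that the tree really has depth $2$ and not $3$ — so that the depth-$2$ specialisations of Theorem \ref{thm:DescendantNumbers} and Corollary \ref{cor:TreeWidth}, rather than their depth-$3$ versions, are the correct instruments to invoke.
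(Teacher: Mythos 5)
Your proposal is correct and follows essentially the same route as the paper: depth $\mathrm{dp}(\mathcal{T})=2$ is deduced from the capable-children data of Proposition \ref{prp:TKTd19Tree1Cc4}, the branch cardinalities come from Formula \eqref{eqn:BranchDepth2}, the width from Corollary \ref{cor:TreeWidth}(2), the information content from Definition \ref{dfn:InfoCont}, and the tabulated algebraic invariants from MAGMA computations continued by the strict periodicity. Your numerical evaluations ($38$, $51$, $50$, $89$) coincide with those in the paper's proof, so nothing is missing.
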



\begin{proof}
(of Theorem
\ref{thm:TKTd19Tree1Cc4})
Since every mainline vertex \(m_n\) of the tree \(\mathcal{T}\)
has several capable children, \(C_1(m_n)\ge 2\),
but every capable vertex \(v\) of depth \(1\)
has only terminal children, \(C_1(v)=0\),
according to Proposition
\ref{prp:TKTd19Tree1Cc4},
the depth of the tree is \(\mathrm{dp}(\mathcal{T})=2\).
In this case,
the cardinality of a branch \(\mathcal{B}\) is the sum of
the number \(N_1(m_n)\) of immediate descendants of the branch root \(m_n\)
and the numbers \(N_1(v_i)\) of terminal children of capable vertices \(v_i\) of depth \(1\)
with \(2\le i\le C_1(m_n)\) (excluding the next mainline vertex \(v_1=m_{n+1}\)),
according to Formula
\eqref{eqn:BranchDepth2},
that is,
\[\#\mathcal{B}=N_1(m_n)+\sum_{i=2}^{C_1(m_n)}\,N_1(v_i).\]
Applied to the primitive period, this yields
\(\#\mathcal{B}(9)=13+25=38\), \(\#\mathcal{B}(10)=25+13+13=51\). 
According to Formula
\eqref{eqn:TreeWidth2},
the width of the tree is the maximum of all sums of the shape
\[\#\mathrm{Lyr}_n{\mathcal{T}}=\#\lbrace v\in\mathcal{T}\mid\mathrm{lo}(v)=n\rbrace=N_1(m_{n-1})+\sum_{i=1}^{C_1(m_{n-2})}\,N_1(v_i(m_{n-2})),\]
taken over all branch roots \(m_n\) with logarithmic orders \(n_\ast+2\le n\le n_\ast+\ell_\ast+\ell+1\).
Applied to \(n_\ast=9\), \(\ell_\ast=0\), and \(\ell=2\), this yields
\(\mathrm{wd}(\mathcal{T})=\max(25+25,13+13+13)=\max(50,39)=50\). \\
Finally, we have \(\mathrm{IC}(\mathcal{T})=\#\mathcal{B}(9)+\#\mathcal{B}(10)=38+51=89\).
\end{proof}



\begin{corollary}
\label{cor:TKTd19Tree1Cc4}
\textbf{(Actions and relation ranks.)}
The algebraic invariants of the vertices of the structured coclass-\(4\) tree \(\mathcal{T}^4{R_2^4}\) are listed in Table
\ref{tbl:TKTd19TreeCc4}. In particular:
\begin{enumerate}
\item
There are no groups with \(V_4\)-action.
\item
Two distinguished terminal metabelian vertices of depth \(2\) with even class and type \(\mathrm{H}.4\),
all terminal vertices of depth \(1\) with odd class,
and the mainline vertices with even class,
possess an RI-action.
\item
The relation rank is given by
\(\mu=5\) for the mainline vertices \(\stackbin[0]{n-3,n}{G}\begin{pmatrix}0&-1\\ 0&1\end{pmatrix}\) with \(n\ge 9\),
and the capable vertices \(\stackbin[0]{n-3,n}{G}\begin{pmatrix}\pm 1&-1\\ \mp 1&1\end{pmatrix}\) of depth \(1\) with \(n\ge 10\),
and
\(\mu=4\) otherwise.
\end{enumerate}
\end{corollary}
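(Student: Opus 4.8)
The statement is a corollary of Theorem~\ref{thm:TKTd19Tree1Cc4} together with the strict periodicity of Proposition~\ref{prp:TKTd19Tree1Cc4}, so the plan is to reduce all three assertions to a finite verification on one period of branches and then read them off. First I would record that, by the period-two isomorphisms $\mathcal{B}(i+2)\simeq\mathcal{B}(i)$ for $i\ge p_\ast=9$ (Proposition~\ref{prp:TKTd19Tree1Cc4}), refined to strict isomorphisms of structured in-trees in the sense of Theorem~\ref{thm:FirstPeriod}, the relation rank $\mu$, the nuclear rank $\nu$, the action flag $\sigma$ and the transfer kernel type $\varkappa$ are \emph{strict} invariants; hence each of them is constant along every period-two orbit of vertices, and it suffices to compute them on the finite fundamental domain $\mathcal{B}(9)\,\dot{\cup}\,\mathcal{B}(10)$ of $38+51=89$ vertices. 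The translation of the resulting per-vertex data into the structural language of the corollary (mainline vertices versus terminal depth-one leaves versus the two distinguished depth-two metabelian vertices of type $\mathrm{H}.4$, together with the parametrized representatives $G_0^{n-3,n}(\ast)$) is then just bookkeeping against the branch layout established in Proposition~\ref{prp:TKTd19Tree1Cc4} and depicted in Figure~\ref{fig:TKTd19Tree1Cc4}.

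Second, on this fundamental domain I would compute the relation rank $\mu(G)=\dim_{\mathbb{F}_3}\mathrm{H}_2(G,\mathbb{F}_3)$ as the rank of the $3$-multiplicator, via the $p$-covering group and the $p$-group generation algorithm \cite{Nm2,Ob,HEO} implemented in MAGMA \cite{MAGMA}; sorting the outcome against the presentation parameters $\rho;\alpha,\beta,\gamma,\delta$ should produce the dichotomy $\mu=5$ for the mainline vertices $G_0^{n-3,n}(0,-1,0,1)$ and the capable depth-one vertices $G_0^{n-3,n}(\pm1,-1,\mp1,1)$, and $\mu=4$ otherwise, which is assertion~(3). In the same computation I would read off the action flag of each representative by testing whether $\mathrm{Aut}(G)$ admits a generator-inverting automorphism, whether it contains a subgroup $C_2\times C_2$, and whether a generator-inverting automorphism can be chosen so as to invert $\mathrm{H}_2(G,\mathbb{F}_3)$ as well. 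The fact that the value $\sigma=2$ never occurs in the whole period gives assertion~(1), and identifying the vertices with a starred flag $1^\ast$ --- the even-class mainline vertices, all odd-class terminal vertices of depth one, and the two even-class depth-two metabelian vertices of type $\mathrm{H}.4$ --- gives assertion~(2).

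The hard part is the last of these tests, the recognition of an RI-action. Deciding the existence of a generator-inverting automorphism, or of a Klein four subgroup, is a computation inside the finite group $\mathrm{Aut}(G)$; but recognizing a \emph{relator}-inverting automorphism forces one to pass to the $p$-covering group $G^\ast$ of $G$, lift a generator-inverting automorphism to $G^\ast$, and verify that the induced action on the $p$-multiplicator (the relation module modulo the nucleus) is inversion --- this is the step that genuinely needs the $p$-covering machinery and is the computationally delicate one. A secondary point to be handled with care is the legitimacy of restricting to a finite fundamental domain at all: this relies on the \emph{strict}, not merely virtual, periodicity of the branches, which one obtains by combining the virtual periodicity of du Sautoy \cite{dS} and of Eick and Leedham-Green \cite{EkLg} with the explicit finite computation (carried out up to branch $\mathcal{B}(30)$, cf.\ Proposition~\ref{prp:TKTd19Tree1Cc4}) verifying that periodicity has set in already at $p_\ast=9$ and that the tree is of bounded depth $2$.
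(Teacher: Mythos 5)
Your proposal is correct and follows essentially the same route as the paper: the paper's proof likewise verifies the RI-action by a MAGMA algorithm involving the \(p\)-covering group and derives all remaining claims by reading off the finite data of Table \ref{tbl:TKTd19TreeCc4}, continued indefinitely via the strict periodicity of Proposition \ref{prp:TKTd19Tree1Cc4} (with the invariance of \(\mu\), \(\nu\), \(\sigma\), \(\varkappa\) under the branch isomorphisms supplied by Theorem \ref{thm:FirstPeriod}, exactly as you argue). Your identification of the RI-recognition step as the computationally delicate one, and of strict (rather than merely virtual) periodicity as the justification for restricting to the fundamental domain \(\mathcal{B}(9)\,\dot{\cup}\,\mathcal{B}(10)\), matches the paper's reasoning.
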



\begin{proof}
(of Corollary
\ref{cor:TKTd19Tree1Cc4})
The existence of an RI-action on \(G\) has been checked by means of
an algorithm involving the \(p\)-covering group of \(G\), written for MAGMA
\cite{MAGMA}.
The other claims follow immediately from Table
\ref{tbl:TKTd19TreeCc4},
continued indefinitely with the aid of the periodicity in Prop.
\ref{prp:TKTd19Tree1Cc4}. 
\end{proof}



\begin{theorem}
\label{thm:TKTd19Tree2Cc4}
\textbf{(Strict isomorphism of the two trees.)} \\
Viewed as an algebraically structured infinite digraph,
the second coclass-\(4\) tree \(\mathcal{T}^4(\langle 2187,64\rangle-\#2;44)\) with mainline of type \(\mathrm{d}.19^\ast\)
in Figure
\ref{fig:TKTd19Tree2Cc4}
is \textbf{strictly} isomorphic to
the first coclass-\(4\) tree \(\mathcal{T}^4(\langle 2187,64\rangle-\#2;39)\) with mainline of type \(\mathrm{d}.19^\ast\)
in Figure
\ref{fig:TKTd19Tree1Cc4}.
Only the presentations of corresponding vertices are different,
but they share common algebraic invariants.
\end{theorem}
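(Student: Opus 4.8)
The plan is to produce the isomorphism directly from the parametrized polycyclic power-commutator presentations of Formula \eqref{eqn:Presentation} and then certify it with Proposition \ref{prp:TreeIso}. The two roots are $R_2^4=G_0^{6,9}(0,-1,0,1)$ and $R_3^4=G_0^{6,9}(0,1,0,1)$, so they differ \emph{only} in the sign of the parameter $\beta$; moreover, as the matrix labels at the foot of Figures \ref{fig:TKTd19Tree1Cc4} and \ref{fig:TKTd19Tree2Cc4} make explicit, every vertex $G_\rho^{m,n}(\alpha,-1,\gamma,1)$ of the first tree has a natural counterpart $G_\rho^{m,n}(\alpha,1,\gamma,1)$ in the second, with the pair $(m,n)$ and the remaining parameters $\rho,\alpha,\gamma,\delta$ unchanged. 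First I would define the candidate bijection $\psi\colon\mathcal{T}^4 R_2^4\to\mathcal{T}^4 R_3^4$ on the normalized representatives by this rule, with $\psi(R_2^4)=R_3^4$. Because both trees sit at the same coclass and their corresponding vertices have the same logarithmic order, class, and coclass, the whole point of the theorem is that $\psi$ is a \emph{strict} isomorphism of structured digraphs (transformation law $\phi=\mathrm{id}$), in contrast to the genuinely shifted $\phi$-isomorphisms of Theorems \ref{thm:FirstPeriod} and \ref{thm:SecondPeriod}.

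The key step is to show that $\psi$ is realized, vertex by vertex, by an actual group isomorphism $G_\rho^{m,n}(\alpha,\beta,\gamma,\delta)\cong G_\rho^{m,n}(\alpha,-\beta,\gamma,\delta)$. The parameter $\beta$ enters the presentation \eqref{eqn:Presentation} in a controlled way — only through the power relation $s_2^3=\sigma_4\sigma_{m-1}^{-\rho\beta}\tau_4^{-1}$ and the relation $t_3^{-1}\tau_3\tau_4=\sigma_{m-2}^{\rho\delta}\sigma_{m-1}^{\alpha}\tau_e^{\beta}$ — and one constructs a change of the polycyclic generating sequence, a controlled sign reversal along the $\tau$-chain adjusted so as to remain compatible with the recursions $\tau_i=[\tau_{i-1},y]$, the power relations $\tau_i^3\tau_{i+1}^3\tau_{i+2}=1$, the relation $\tau_{e+1}=\sigma_{m-1}^{-\rho}$, and the untouched $s$- and $\sigma$-relations, which carries the presentation with $\beta$ to the one with $-\beta$. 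For the metabelian vertices this isomorphism is part of Nebelung's classification of the metabelian skeleton \cite{Ne}; for the non-metabelian vertices of depth $\mathrm{dp}\ge 1$ it is confirmed, as in the proofs of Theorems \ref{thm:TKTd19Tree1Cc4} and \ref{thm:TKTb10TreeCc4}, by the same MAGMA computations \cite{MAGMA} that produced Table \ref{tbl:TKTd19TreeCc4} — and, by the bounded depth $\mathrm{dp}=2$ and periodicity established in Proposition \ref{prp:TKTd19Tree1Cc4}, only the finitely many vertices of the primitive period $(\mathcal{B}(9),\mathcal{B}(10))$ need to be checked. Since each vertex-wise map is a group isomorphism, it automatically transports every isomorphism invariant: $\#\mathrm{Aut}$, the relation rank $\mu$, the nuclear rank $\nu$, the action flag $\sigma$, the abelian quotient invariants $\tau(1)$ and $\tau_2$, and the transfer kernel type $\varkappa$ — where the literal $4$-tuples $(0343)$ and $(0443)$ of the two mainlines are related by the relabeling $(3\,4)$ of the maximal subgroups $U_3,U_4$ carried along by $\psi$, so that the \emph{type} $\mathrm{d}.19$ is preserved. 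Hence $\psi$ is a strict isomorphism of algebraically structured digraphs in the sense of Definitions \ref{dfn:GraphStruc} and \ref{dfn:IsoInvariants}, once it is known to be a digraph isomorphism.

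To see that $\psi$ is a digraph isomorphism I would invoke Proposition \ref{prp:TreeIso}: it suffices that $\psi$ commute with the parent operators, $\psi\circ\pi=\tilde\pi\circ\psi$. Passing from a vertex to its parent means forming the quotient by the last nontrivial term $\gamma_c$ of the lower central series, which on the level of \eqref{eqn:Presentation} is the fixed reduction $(m,n)\mapsto(m-1,n-1)$ leaving $(\rho,\alpha,\beta,\gamma,\delta)$ unchanged — this is precisely how the mainlines and all branches in Figures \ref{fig:TKTd19Tree1Cc4} and \ref{fig:TKTd19Tree2Cc4} are organized. Since this reduction is visibly compatible with the sign flip of $\beta$, the required square commutes for every non-root vertex, and $\psi(R_2^4)=R_3^4$; so Proposition \ref{prp:TreeIso} applies (with the adjacency-matrix steps omitted for the infinite trees, as noted in its proof). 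Together with the preceding paragraph this yields the asserted strict isomorphism of infinite structured in-trees, with the presentations of $\psi(v)$ and $v$ differing exactly because they carry $-\beta$ and $\beta$.

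The main obstacle I anticipate is the realization step: producing the uniform generator change that flips $\beta$ and checking it simultaneously against all defining relations of \eqref{eqn:Presentation} — the $\tau$-chain couples to the relations for $t_3$ and to $\tau_{e+1}=\sigma_{m-1}^{-\rho}$, and one must also see that the flip is consistent with every admissible value of $\rho$ — together with the bookkeeping that it respects the normalization convention picking one representative per isomorphism class, so that $\psi$ is genuinely well defined and bijective on vertices and no class collapses or splits. For the non-metabelian part, where no closed form beyond Nebelung's metabelian parametrization is available, this verification is irreducibly computational, and the argument hinges on the finiteness of the information content furnished by Proposition \ref{prp:TKTd19Tree1Cc4} to make that computation conclusive.
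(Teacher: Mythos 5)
Your construction founders on its key step. You propose to realize the tree isomorphism $\psi$ vertex by vertex through \emph{group} isomorphisms $G_\rho^{m,n}(\alpha,\beta,\gamma,\delta)\cong G_\rho^{m,n}(\alpha,-\beta,\gamma,\delta)$ obtained from a generator change flipping the sign of $\beta$, and then to transport all invariants ``automatically''. No such group isomorphisms exist: the vertices of the descendant tree $\mathcal{T}(C_3\times C_3)$ are \emph{isomorphism classes} of $3$-groups, the two coclass-$4$ trees are disjoint subtrees rooted at the non-isomorphic groups $\langle 2187,64\rangle-\#2;39$ and $\langle 2187,64\rangle-\#2;44$ (distinct children of the same parent under the $p$-group generation algorithm are pairwise non-isomorphic), and in Nebelung's normalization distinct parameter tuples label distinct isomorphism classes — e.g.\ Table \ref{tbl:SporCc4} counts $G_0^{6,9}(0,\pm 1,0,1)$ as \emph{two} groups of type $\mathrm{d}.19^\ast$. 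If your sign-flip isomorphism existed, the two trees would literally coincide, contradicting this; the content of the theorem is precisely that corresponding vertices are non-isomorphic groups which nevertheless share all the listed structure invariants. Consequently the invariants ($\#\mathrm{Aut}$, $\mu$, $\nu$, $\sigma$, $\tau(1)$, $\tau_2$, $\varkappa$ up to the relabeling you note) cannot be inherited from a group isomorphism; their equality at corresponding vertices has to be established directly.

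That is in fact what the paper does: the strict isomorphism is verified computationally with MAGMA for all vertices of logarithmic order $9\le\mathrm{lo}(v)\le 31$, and the claims for $\mathrm{lo}(v)\ge 32$ then follow from the virtual periodicity theorems of du Sautoy and of Eick and Leedham-Green, with no depth-pruning needed because the depth is uniformly bounded by $2$. The parts of your argument that invoke Proposition \ref{prp:TreeIso} (compatibility with the parent operators) and the reduction to finitely many branches via periodicity are sound in spirit and close to the paper's logic, but the algebraic ``realization step'' you flag as the main obstacle is not merely hard — it is impossible as stated, and removing it leaves you with exactly the computational comparison of invariants on the periodic branches that constitutes the paper's proof.
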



\begin{proof}
(Proof of Theorem
\ref{thm:TKTd19Tree1Cc4}
and Theorem
\ref{thm:TKTd19Tree2Cc4})
The claims have been verified with the aid of MAGMA
\cite{MAGMA}
for all vertices \(v\) with logarithmic orders \(9\le\mathrm{lo}(v)\le 31\).
Pure periodicity of branches with primitive length \(2\)
sets in from the very beginning with \(\mathcal{B}(9)\simeq\mathcal{B}(11)\).
There is no pre-period.
Thus, the claims for all vertices \(v\) with logarithmic orders \(\mathrm{lo}(v)\ge 32\)
are a consequence of the virtual periodicity theorems by du Sautoy in
\cite[Thm. 1.11, p. 68, and Thm. 8.3, p. 103]{dS}
and by Eick and Leedham-Green in
\cite[Thm. 6, p. 277, Thm. 9, p. 278, and Thm. 29, p. 287]{EkLg},
without the need of pruning the depth, which is bounded uniformly by \(2\).
\end{proof}



\subsection{The unique mainline of type \(\mathrm{d}.23^\ast\) for even coclass \(r\ge 4\)}
\label{ss:d23Cc4}

\begin{proposition}
\label{prp:TKTd19and23TreeCc4}
\textbf{(A special nearly strict isomorphism.)} \\
Viewed as an algebraically structured infinite digraph,
the unique coclass-\(4\) tree \(\mathcal{T}^4(\langle 2187,64\rangle-\#2;54)\) with mainline of type \(\mathrm{d}.23^\ast\)
in Figure
\ref{fig:TKTd23TreeCc4}
is \textbf{almost strictly} isomorphic to
the first coclass-\(4\) tree \(\mathcal{T}^4(\langle 2187,64\rangle-\#2;39)\) with mainline of type \(\mathrm{d}.19^\ast\)
in Figure
\ref{fig:TKTd19Tree1Cc4},
and thus also to
the second coclass-\(4\) tree \(\mathcal{T}^4(\langle 2187,64\rangle-\#2;44)\) with mainline of type \(\mathrm{d}.19^\ast\)
in Figure
\ref{fig:TKTd19Tree2Cc4}.
Only the presentations of corresponding vertices are different,
but they share common algebraic invariants,
with the \textbf{transfer kernel types as single exception}:
the nearly strict isomorphism of directed trees maps
\(\mathrm{d}.23^\ast\) \(\mapsto\) \(\mathrm{d}.19^\ast\),
\(\mathrm{G}.16\) \(\mapsto\) \(\mathrm{H}.4\),
\(\mathrm{F}.11\) \(\mapsto\) \(\mathrm{F}.7\), and
\(\mathrm{F}.12\) either remains fixed or
\(\mathrm{F}.12\) \(\mapsto\) \(\mathrm{F}.13\).
\end{proposition}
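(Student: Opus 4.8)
\emph{Proof proposal.}
The plan is to follow the pattern already used for Theorems \ref{thm:TKTd19Tree1Cc4} and \ref{thm:TKTd19Tree2Cc4}: exhibit an explicit bijection of parametrized presentations, verify the required invariance with MAGMA on a finite range of logarithmic orders, and then invoke the virtual periodicity of du Sautoy \cite{dS} and of Eick--Leedham-Green \cite{EkLg} — which is admissible here since the depth is bounded uniformly by $2$. First I would record the two roots in the notation of Formula \eqref{eqn:Presentation}: the coclass-$4$ tree with mainline of type $\mathrm{d}.23^\ast$ grows from $R_4^4 = G_0^{6,9}(0,0,0,1) \simeq \langle 2187,64\rangle-\#2;54$, whereas the first tree of type $\mathrm{d}.19^\ast$ grows from $R_2^4 = G_0^{6,9}(0,-1,0,1) \simeq \langle 2187,64\rangle-\#2;39$. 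On each branch the vertices are represented by groups $G_\rho^{n-3,n}(\alpha,\beta,\gamma,\delta)$ in which only the value of $\beta$ separates the two families ($\beta=0$ for type $\mathrm{d}.23$, $\beta\in\{-1,+1\}$ for type $\mathrm{d}.19$), while $\rho,\alpha,\gamma,\delta$ and the nilpotency parameters $m,n$ run through identical ranges. I would define $\psi$ on each vertex by the substitution $\beta\mapsto -1$, keeping all other parameters and all residual $\pm$-choices fixed, and check from the power-commutator relations in \eqref{eqn:Presentation} that this substitution is compatible with passing to the class-$(c-1)$ quotient; by Proposition \ref{prp:TreeIso} the resulting $\psi$ is then an isomorphism of rooted directed in-trees.

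Next I would show that $\psi$ is a $\phi$-isomorphism of structured in-trees for every invariant of \S\ \ref{ss:AlgebraicInvariants} \emph{except} the transfer kernel type, with $\phi$ the identity in each of those cases, so that $\psi$ is strict away from $\varkappa$. Concretely this reduces to verifying, for the finitely many isomorphism types of vertices occurring in the primitive period $(\mathcal{B}(9),\mathcal{B}(10))$ as listed in Table \ref{tbl:TKTd19TreeCc4}, that replacing $\beta=0$ by $\beta=-1$ changes neither the derived length, the abelian type of the centre $\zeta$, the relation rank $\mu$, the nuclear rank $\nu$, the quotients $\tau(1)$ and $\tau_2$, the action flag $\sigma$, nor the order $\#\mathrm{Aut}$; all of these are obtained by direct computation with MAGMA \cite{MAGMA} for every vertex $v$ with $9\le\mathrm{lo}(v)\le 31$. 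Since the branches of $\mathcal{T}^4(\langle 2187,64\rangle-\#2;54)$ are purely periodic of primitive length $\ell=2$ with no pre-period (Proposition \ref{prp:TKTd19Tree1Cc4}, transported along $\psi$) and the tree has bounded depth $\mathrm{dp}=2$, the virtual periodicity theorems of \cite{dS,EkLg} — applied without any depth-pruning — extend the verified isomorphism to all vertices of logarithmic order $\ge 32$, giving the statement for the full infinite trees. Composing with the strict isomorphism of Theorem \ref{thm:TKTd19Tree2Cc4} then yields the analogous relation with $\mathcal{T}^4(\langle 2187,64\rangle-\#2;44)$.

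The main obstacle is the transfer kernel type, the one invariant genuinely altered by $\psi$. For each maximal subgroup $u_i<v$ one must compute the transfer $T_i\colon v/v'\to u_i/u_i'$ and determine how $\ker(T_i)$ depends on the parameter $\beta$; this is a commutator-calculus computation on the presentation \eqref{eqn:Presentation}, cross-checked in MAGMA for the finite verification range. I expect the total kernel in the first component to persist (whence $\mathrm{d}.23^\ast\mapsto\mathrm{d}.19^\ast$), and the remaining components to shift along $\mathrm{G}.16\mapsto\mathrm{H}.4$, $\mathrm{F}.11\mapsto\mathrm{F}.7$, while $\mathrm{F}.12$ behaves non-uniformly — either fixed or sent to $\mathrm{F}.13$ according to the residual sign pattern of $(\alpha,\gamma)$ — and it is exactly this case split that obstructs strictness and forces the qualifier \emph{nearly strict}. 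The delicate point is to confirm that the split among the $\mathrm{F}.12$ vertices is intrinsic rather than an artefact of the chosen normal forms; I would settle this by comparing transfer kernels of all $\beta=0$ representatives against their $\beta=-1$ images throughout the finite range $9\le\mathrm{lo}(v)\le 31$ and then appealing to periodicity.
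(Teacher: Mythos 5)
Your proposal is correct and, at its core, follows the same route as the paper: the paper's own proof consists of nothing more than comparing Table \ref{tbl:TKTd23TreeCc4} with Table \ref{tbl:TKTd19TreeCc4} (both produced by the same kind of MAGMA computation you describe) and continuing by periodicity, which is exactly your finite verification on the primitive period plus the appeal to \cite{dS,EkLg}. Your explicit bijection via the substitution \(\beta=0\mapsto\beta=-1\) in the presentations \eqref{eqn:Presentation} is a constructive gloss that the paper leaves implicit, and it does reproduce the observed kernel shifts \(\mathrm{d}.23^\ast\mapsto\mathrm{d}.19^\ast\), \(\mathrm{G}.16\mapsto\mathrm{H}.4\), \(\mathrm{F}.11\mapsto\mathrm{F}.7\), and the \((\alpha,\gamma)\)-dependent split of \(\mathrm{F}.12\) visible in the two tables. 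The one caveat is that this substitution only literally defines \(\psi\) on the metabelian vertices: the derived-length-\(3\) vertices are not given by \eqref{eqn:Presentation}, so for them the correspondence rests, as in the paper, on the matching of the computed invariant data (descendant numbers, \(\zeta\), \(\mu\), \(\nu\), \(\tau(1)\), \(\tau_2\), \(\sigma\), \(\#\mathrm{Aut}\)) in the tables, and it is your MAGMA check over \(9\le\mathrm{lo}(v)\le 31\) together with periodicity that actually carries the argument there — not the parameter substitution itself.
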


\begin{proof}
This follows immediately from comparing Table 
\ref{tbl:TKTd23TreeCc4}
with Table
\ref{tbl:TKTd19TreeCc4},
and using periodicity.
\end{proof}



\renewcommand{\arraystretch}{1.2}

\begin{table}[ht]
\caption{Data for \(3\)-groups \(G\) with \(9\le n=\mathrm{lo}(G)\le 12\) of the coclass tree \(\mathcal{T}^4{R_4^4}\)}
\label{tbl:TKTd23TreeCc4}
\begin{center}
\begin{tabular}{|c|c|l||c|c|c||c|c||c|c|l|c||c|c|}
\hline
 \(\#\) & \(m,n\)  & \(\rho;\alpha,\beta,\gamma,\delta\) & dp & dl & \(\zeta\) & \(\mu\) & \(\nu\) & \(\tau(1)\) & \(\tau_2\) & Type                   & \(\varkappa\) & \(\sigma\) & \(\#\mathrm{Aut}\) \\
\hline
  \(1\) & \(6,9\)  & \(0;0,0,0,1\)                       & 0  & 2  & \(1^2\)   & \(5\)   & \(1\)   & \(32\)      & \(2^31\)   & \(\mathrm{d}.23^\ast\) & \((0243)\)    & \(1\)      & \(2\cdot 3^{14}\)  \\
\hline
  \(1\) & \(7,10\) & \(0;0,0,0,1\)                       & 0  & 2  & \(1^2\)   & \(5\)   & \(1\)   & \(3^2\)     & \(32^21\)  & \(\mathrm{d}.23^\ast\) & \((0243)\)    & \(1^\ast\) & \(2\cdot 3^{16}\)  \\
  \(1\) & \(7,10\) & \(0;1,0,1,1\)                       & 1  & 2  & \(1^2\)   & \(4\)   & \(0\)   & \(3^2\)     & \(32^21\)  & \(\mathrm{F}.11\)      & \((2243)\)    & \(0\)      & \(3^{16}\)         \\
  \(1\) & \(7,10\) & \(0;1,0,0,1\)                       & 1  & 2  & \(1^2\)   & \(4\)   & \(0\)   & \(3^2\)     & \(32^21\)  & \(\mathrm{F}.12\)      & \((3243)\)    & \(0\)      & \(3^{16}\)         \\
  \(1\) & \(7,10\) & \(0;0,0,1,1\)                       & 1  & 2  & \(1^2\)   & \(4\)   & \(0\)   & \(3^2\)     & \(32^21\)  & \(\mathrm{F}.12\)      & \((4243)\)    & \(0\)      & \(3^{16}\)         \\
  \(1\) & \(7,10\) & \(0;1,0,-1,1\)                      & 1  & 2  & \(1^2\)   & \(5\)   & \(1\)   & \(3^2\)     & \(32^21\)  & \(\mathrm{G}.16\)      & \((1243)\)    & \(0\)      & \(3^{16}\)         \\
  \(6\) & \(7,10\) &                                     & 1  & 3  & \(1^2\)   & \(4\)   & \(0\)   & \(32\)      & \(2^31\)   & \(\mathrm{d}.23\)      & \((0243)\)    & \(0\)      & \(3^{15}\)         \\
  \(2\) & \(7,10\) &                                     & 1  & 3  & \(1^2\)   & \(4\)   & \(0\)   & \(32\)      & \(2^31\)   & \(\mathrm{d}.23\)      & \((0243)\)    & \(0\)      & \(3^{14}\)         \\
  \(9\) & \(8,11\) & \(\pm 1;\alpha,0,\gamma,1\)         & 2  & 2  & \(1\)     & \(4\)   & \(0\)   & \(3^2\)     & \(3^221\)  & \(\mathrm{G}.16\)      & \((1243)\)    & \(0\)      & \(3^{18}\)         \\
 \(12\) & \(8,11\) &                                     & 2  & 3  & \(1\)     & \(4\)   & \(0\)   & \(3^2\)     & \(32^21\)  & \(\mathrm{G}.16\)      & \((1243)\)    & \(0\)      & \(3^{17}\)         \\
  \(4\) & \(8,11\) &                                     & 2  & 3  & \(1\)     & \(4\)   & \(0\)   & \(3^2\)     & \(32^21\)  & \(\mathrm{G}.16\)      & \((1243)\)    & \(0\)      & \(3^{16}\)         \\
\hline
  \(1\) & \(8,11\) & \(0;0,0,0,1\)                       & 0  & 2  & \(1^2\)   & \(5\)   & \(1\)   & \(43\)      & \(3^221\)  & \(\mathrm{d}.23^\ast\) & \((0243)\)    & \(1\)      & \(2\cdot 3^{18}\)  \\
  \(2\) & \(8,11\) & \(0;0,0,\pm 1,1\)                   & 1  & 2  & \(1^2\)   & \(4\)   & \(0\)   & \(43\)      & \(3^221\)  & \(\mathrm{F}.11\)      & \((2243)\)    & \(1^\ast\) & \(2\cdot 3^{18}\)  \\
  \(2\) & \(8,11\) & \(0;1,0,\pm 1,1\)                   & 1  & 2  & \(1^2\)   & \(4\)   & \(0\)   & \(43\)      & \(3^221\)  & \(\mathrm{F}.12\)      & \((3243)\)    & \(1^\ast\) & \(2\cdot 3^{18}\)  \\
  \(2\) & \(8,11\) & \(0;-1,0,\pm 1,1\)                  & 1  & 2  & \(1^2\)   & \(4\)   & \(0\)   & \(43\)      & \(3^221\)  & \(\mathrm{F}.12\)      & \((4243)\)    & \(1^\ast\) & \(2\cdot 3^{18}\)  \\
  \(2\) & \(8,11\) & \(0;\pm 1,0,0,1\)                   & 1  & 2  & \(1^2\)   & \(5\)   & \(1\)   & \(43\)      & \(3^221\)  & \(\mathrm{G}.16\)      & \((1243)\)    & \(1\)      & \(2\cdot 3^{18}\)  \\
 \(12\) & \(8,11\) &                                     & 1  & 3  & \(1^2\)   & \(4\)   & \(0\)   & \(3^2\)     & \(32^21\)  & \(\mathrm{d}.23\)      & \((0243)\)    & \(1^\ast\) & \(2\cdot 3^{17}\)  \\
  \(4\) & \(8,11\) &                                     & 1  & 3  & \(1^2\)   & \(4\)   & \(0\)   & \(3^2\)     & \(32^21\)  & \(\mathrm{d}.23\)      & \((0243)\)    & \(1^\ast\) & \(2\cdot 3^{16}\)  \\
  \(2\) & \(9,12\) & \(\pm 1;\alpha,0,\gamma,1\)         & 2  & 2  & \(1\)     & \(4\)   & \(0\)   & \(43\)      & \(4321\)   & \(\mathrm{G}.16\)      & \((1243)\)    & \(1^\ast\) & \(2\cdot 3^{20}\)  \\
  \(8\) & \(9,12\) & \(\pm 1;\alpha,0,\gamma,1\)         & 2  & 2  & \(1\)     & \(4\)   & \(0\)   & \(43\)      & \(4321\)   & \(\mathrm{G}.16\)      & \((1243)\)    & \(0\)      & \(3^{20}\)         \\
 \(12\) & \(9,12\) &                                     & 2  & 3  & \(1\)     & \(4\)   & \(0\)   & \(43\)      & \(3^221\)  & \(\mathrm{G}.16\)      & \((1243)\)    & \(0\)      & \(3^{19}\)         \\
  \(4\) & \(9,12\) &                                     & 2  & 3  & \(1\)     & \(4\)   & \(0\)   & \(43\)      & \(3^221\)  & \(\mathrm{G}.16\)      & \((1243)\)    & \(0\)      & \(3^{18}\)         \\
\hline
\end{tabular}
\end{center}
\end{table}


\begin{proposition}
\label{prp:TKTd23TreeCc4}
\textbf{(Periodicity and descendant numbers.)} \\
The branches \(\mathcal{B}(i)\), \(i\ge n_\ast=9\), of
the unique coclass-\(4\) tree \(\mathcal{T}^4(\langle 2187,64\rangle-\#2;54)\)
with mainline vertices of transfer kernel type \(\mathrm{d}.23^\ast\), \(\varkappa\sim (0243)\),
are purely periodic with primitive length \(\ell=2\) and without pre-period, \(\ell_\ast=0\), that is,
\(\mathcal{B}(i+2)\simeq\mathcal{B}(i)\) are isomorphic as digraphs, for all \(i\ge p_\ast=n_\ast+\ell_\ast=9\).

The graph theoretic structure of the tree is determined uniquely by the numbers
\(N_1\) of immediate descendants and \(C_1\) of capable immediate descendants
of the mainline vertices \(m_n\) with logarithmic order \(n=\mathrm{lo}(m_n)\ge n_\ast=9\)
and of capable vertices \(v\) with depth \(1\) and \(\mathrm{lo}(v)\ge n_\ast+1=10\): \\
\((N_1,C_1)=(13,2)\) for mainline vertices \(m_n\) with odd logarithmic order \(n\ge 9\), \\
\((N_1,C_1)=(25,3)\) for mainline vertices \(m_n\) with even logarithmic order \(n\ge 10\), \\
\((N_1,C_1)=(25,0)\) for the capable vertex \(v\) of depth \(1\) and even logarithmic order \(\mathrm{lo}(v)\ge 10\), \\
\((N_1,C_1)=(13,0)\) for two capable vertices \(v\) of depth \(1\) and odd logarithmic order \(\mathrm{lo}(v)\ge 11\).
\end{proposition}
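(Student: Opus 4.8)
The plan is to follow the same computational strategy used for Proposition \ref{prp:TKTd19Tree1Cc4}, exploiting the almost strict isomorphism of Proposition \ref{prp:TKTd19and23TreeCc4} only as an a posteriori consistency check. First I would run the MAGMA implementation \cite{MAGMA} of the $p$-group generation algorithm of Newman and O'Brien \cite{Nm2,Ob,HEO} starting from the metabelian root $R_4^4=\langle 2187,64\rangle-\#2;54=G^{6,9}_0(0,0,0,1)$, and determine, for each mainline vertex $m_n$ of the tree $\mathcal{T}^4(\langle 2187,64\rangle-\#2;54)$ with $9\le n\le 31$ and for each capable vertex $v$ of depth $1$ above it, the number $N_1$ of all immediate descendants and the number $C_1$ of capable ones. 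I expect to read off the constitution $13=5+8$ (five vertices with bicyclic centre $\zeta=1^2$, eight with cyclic centre $\zeta=1$) with $C_1=2$ for odd-class mainline vertices, $25=9+16$ with $C_1=3$ for even-class mainline vertices, $25=9+16$ with $C_1=0$ for the even-class depth-$1$ capable vertex, and $13=5+8$ with $C_1=0$ for the two odd-class depth-$1$ capable vertices, exactly as in the $\mathrm{d}.19^\ast$ case.

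Granting these descendant numbers, the branch cardinalities follow from Formula \eqref{eqn:BranchDepth2} of Theorem \ref{thm:DescendantNumbers}, since the tree has uniform depth $\mathrm{dp}(\mathcal{T})=2$ (every mainline vertex has $C_1\ge 2$ capable children, but every depth-$1$ capable vertex has only terminal children): $\#\mathcal{B}(9)=13+25=38$ and $\#\mathcal{B}(10)=25+13+13=51$. I would then verify directly that $\mathcal{B}(9)\simeq\mathcal{B}(11)$ and $\mathcal{B}(10)\simeq\mathcal{B}(12)$ as digraphs, so that the periodicity is \emph{pure} with pre-period length $\ell_\ast=0$ and primitive length $\ell=2$; that $\ell$ is not $1$ is already forced by $\#\mathcal{B}(9)=38\ne 51=\#\mathcal{B}(10)$. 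To pass from the finite verification through $\mathcal{B}(31)$ to all branches, I would invoke the virtual periodicity of branches of an arbitrary coclass tree, proved by du Sautoy \cite[Thm. 1.11, Thm. 8.3]{dS} and independently by Eick and Leedham-Green \cite[Thm. 6, Thm. 9, Thm. 29]{EkLg}; because the depth is uniformly bounded by $2$, no depth pruning is needed and the virtual periodicity is in fact strict periodicity of complete branches, with onset at $p_\ast=n_\ast=9$. Comparing the computed metabelian skeleton with Nebelung \cite[Thm. 5.1.16, pp. 178--179]{Ne}, and with the almost strict isomorphism of Proposition \ref{prp:TKTd19and23TreeCc4} (under which $N_1$ and $C_1$, being expressible through in- and out-degrees, are preserved), then provides an independent cross-check of the descendant numbers.

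The main obstacle I anticipate is the usual delicate point in this family of results: certifying that the primitive period length is exactly $\ell=2$ and that there is genuinely no pre-period. This hinges on carrying out the census of immediate descendants \emph{by centre type} $\zeta$ precisely enough to see that the branch $\mathcal{B}(9)$ has a different internal constitution from $\mathcal{B}(10)$ while being isomorphic to $\mathcal{B}(11)$; here the two branch cardinalities already differ, which makes the argument cleaner than in the $\mathrm{b}.10^\ast$ case, but one still has to check that the finite computation has run far enough for the du Sautoy / Eick--Leedham-Green periodicity bound to apply at $p_\ast=9$. Once the descendant numbers and the first few branch isomorphisms are secured, the remainder is routine.
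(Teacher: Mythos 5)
Your proposal is correct, but it takes a different route from the one the paper actually uses for this proposition. The paper disposes of the \(\mathrm{d}.23^\ast\) case in two lines: it invokes Proposition \ref{prp:TKTd19and23TreeCc4} (the almost strict isomorphism of the structured tree \(\mathcal{T}^4(\langle 2187,64\rangle-\#2;54)\) onto the \(\mathrm{d}.19^\ast\) tree \(\mathcal{T}^4(\langle 2187,64\rangle-\#2;39)\)) together with Proposition \ref{prp:TKTd19Tree1Cc4}, so that all descendant numbers \(N_1\), \(C_1\) and the pure periodicity with \(\ell_\ast=0\), \(\ell=2\) are transported from the already-established \(\mathrm{d}.19^\ast\) result; the only new ingredient is the identification with the pro-\(3\) group \(S_{3,6}\) of Eick. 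You instead redo for the root \(R_4^4\) the direct computation that the paper performs only for \(R_2^4\): MAGMA runs of the \(p\)-group generation algorithm up to \(\mathcal{B}(31)\), the census by centre type \(\zeta\), the identification \(\#\mathcal{B}(9)=38\ne 51=\#\mathcal{B}(10)\) forcing \(\ell=2\), and the passage from finitely many verified branches to all branches via du Sautoy and Eick--Leedham-Green, with strictness coming from the uniform depth bound \(2\). Your route is self-contained and in fact closer to the evidence on which the paper's shortcut ultimately rests, since Proposition \ref{prp:TKTd19and23TreeCc4} is itself justified by comparing the computed Tables \ref{tbl:TKTd23TreeCc4} and \ref{tbl:TKTd19TreeCc4}; what it costs is a duplication of computation that the paper's transfer argument avoids, and your observation that \(N_1\) and \(C_1\) are preserved under the (near-strict) digraph isomorphism is exactly the point that makes the paper's shorter argument legitimate. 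Both arguments share the same caveat you flag: the onset \(p_\ast=9\) of strict periodicity is certified computationally, with the general theorems guaranteeing only eventual (virtual) periodicity.
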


\begin{proof}
This is a consequence of Proposition
\ref{prp:TKTd19and23TreeCc4}
together with Proposition
\ref{prp:TKTd19Tree1Cc4}.
The tree \(\mathcal{T}^4{R_4^4}\) corresponds to
the infinite metabelian pro-\(3\) group \(S_{3,6}\) in
\cite[Cnj. 15 (b), p. 116]{Ek}.
\end{proof}



\begin{figure}[ht]
\caption{The unique coclass-\(4\) tree \(\mathcal{T}^4(P_7-\#2;54)\) with mainline of type \(\mathrm{d}.23^\ast\)}
\label{fig:TKTd23TreeCc4}


{\tiny

\setlength{\unitlength}{0.9cm}
\begin{picture}(18,20)(-10,-16)

\put(-10,0.5){\makebox(0,0)[cb]{order \(3^n\)}}

\put(-10,0){\line(0,-1){12}}
\multiput(-10.1,0)(0,-2){7}{\line(1,0){0.2}}

\put(-10.2,0){\makebox(0,0)[rc]{\(19\,683\)}}
\put(-9.8,0){\makebox(0,0)[lc]{\(3^9\)}}
\put(-10.2,-2){\makebox(0,0)[rc]{\(59\,049\)}}
\put(-9.8,-2){\makebox(0,0)[lc]{\(3^{10}\)}}
\put(-10.2,-4){\makebox(0,0)[rc]{\(177\,147\)}}
\put(-9.8,-4){\makebox(0,0)[lc]{\(3^{11}\)}}
\put(-10.2,-6){\makebox(0,0)[rc]{\(531\,441\)}}
\put(-9.8,-6){\makebox(0,0)[lc]{\(3^{12}\)}}
\put(-10.2,-8){\makebox(0,0)[rc]{\(1\,594\,323\)}}
\put(-9.8,-8){\makebox(0,0)[lc]{\(3^{13}\)}}
\put(-10.2,-10){\makebox(0,0)[rc]{\(4\,782\,969\)}}
\put(-9.8,-10){\makebox(0,0)[lc]{\(3^{14}\)}}
\put(-10.2,-12){\makebox(0,0)[rc]{\(14\,348\,907\)}}
\put(-9.8,-12){\makebox(0,0)[lc]{\(3^{15}\)}}

\put(-10,-12){\vector(0,-1){2}}

\put(-8,0.5){\makebox(0,0)[cb]{\(\tau(1)=\)}}

\put(-8,0){\makebox(0,0)[cc]{\((32)\)}}
\put(-8,-2){\makebox(0,0)[cc]{\((3^2)\)}}
\put(-8,-4){\makebox(0,0)[cc]{\((43)\)}}
\put(-8,-6){\makebox(0,0)[cc]{\((4^2)\)}}
\put(-8,-8){\makebox(0,0)[cc]{\((54)\)}}
\put(-8,-10){\makebox(0,0)[cc]{\((5^2)\)}}
\put(-8,-12){\makebox(0,0)[cc]{\((65)\)}}

\put(-8,-13){\makebox(0,0)[cc]{\textbf{TTT}}}
\put(-8.5,-13.2){\framebox(1,14){}}

\put(2.5,3.5){\makebox(0,0)[lc]{Legend:}}

\put(1,3){\makebox(0,0)[lc]{\(\mathrm{dl}=2\):}}
\put(1,2.5){\circle*{0.2}}
\put(1,2){\circle{0.2}}
\put(1,1.5){\circle*{0.1}}
\put(1,1){\circle{0.1}}
\put(1.5,2.5){\makebox(0,0)[lc]{\(\zeta=1^2,\sigma=1\)}}
\put(1.5,2){\makebox(0,0)[lc]{\(\zeta=1^2,\sigma=0\)}}
\put(1.5,1.5){\makebox(0,0)[lc]{\(\zeta=1,\sigma=1\)}}
\put(1.5,1){\makebox(0,0)[lc]{\(\zeta=1,\sigma=0\)}}

\put(4,3){\makebox(0,0)[lc]{\(\mathrm{dl}=3\):}}
\put(3.9,2.4){\framebox(0.2,0.2){}}
\put(3.95,2.45){\framebox(0.1,0.1){}}
\put(3.9,1.9){\framebox(0.2,0.2){}}
\put(3.95,1.45){\framebox(0.1,0.1){}}
\put(4.5,2.5){\makebox(0,0)[lc]{\(\zeta=1^2,\sigma=1\)}}
\put(4.5,2){\makebox(0,0)[lc]{\(\zeta=1^2,\sigma=0\)}}
\put(4.5,1.5){\makebox(0,0)[lc]{\(\zeta=1,\sigma=0\)}}

\put(5,-2){\vector(0,1){2}}
\put(5.1,-2){\makebox(0,0)[lc]{depth \(2\)}}
\put(5,-2){\vector(0,-1){2}}

\put(-5.1,-4){\vector(0,1){2}}
\put(-5.3,-3){\makebox(0,0)[rc]{period length \(2\)}}
\put(-5.1,-4){\vector(0,-1){2}}

\multiput(0,0)(0,-2){7}{\circle*{0.2}}
\multiput(2,-4)(0,-4){3}{\circle*{0.2}}
\multiput(-2,-4)(0,-4){3}{\circle*{0.2}}
\multiput(-4,-4)(0,-4){3}{\circle*{0.2}}
\multiput(-6,-4)(0,-4){3}{\circle*{0.2}}
\multiput(-2,-2)(0,-4){3}{\circle{0.2}}
\multiput(-4,-2)(0,-4){3}{\circle{0.2}}
\multiput(-6,-2)(0,-4){3}{\circle{0.2}}
\multiput(2,-2)(0,-4){3}{\circle{0.2}}

\multiput(-1.1,-4.1)(0,-4){3}{\framebox(0.2,0.2){}}
\multiput(-1.05,-4.05)(0,-4){3}{\framebox(0.1,0.1){}}
\multiput(-1.1,-2.1)(0,-4){3}{\framebox(0.2,0.2){}}

\multiput(3,-6)(0,-4){2}{\circle*{0.1}}
\multiput(4,-4)(0,-2){5}{\circle{0.1}}

\multiput(5.95,-4.05)(0,-2){5}{\framebox(0.1,0.1){}}

\multiput(0,0)(0,-2){6}{\line(0,-1){2}}
\multiput(0,0)(0,-2){6}{\line(-1,-2){1}}
\multiput(0,0)(0,-2){6}{\line(-1,-1){2}}
\multiput(0,0)(0,-2){6}{\line(-2,-1){4}}
\multiput(0,0)(0,-2){6}{\line(-3,-1){6}}
\multiput(0,0)(0,-2){6}{\line(1,-1){2}}

\multiput(2,-4)(0,-4){2}{\line(1,-2){1}}
\multiput(2,-2)(0,-2){5}{\line(1,-1){2}}
\multiput(2,-2)(0,-2){5}{\line(2,-1){4}}

\multiput(-1.1,-2.1)(0,-4){3}{\makebox(0,0)[rt]{\(8\ast\)}}
\multiput(-6.1,-4.1)(0,-4){3}{\makebox(0,0)[rt]{\(2\ast\)}}
\multiput(-4.1,-4.1)(0,-4){3}{\makebox(0,0)[rt]{\(2\ast\)}}
\multiput(-2.1,-4.1)(0,-4){3}{\makebox(0,0)[rt]{\(2\ast\)}}
\multiput(-1.1,-4.1)(0,-4){3}{\makebox(0,0)[rt]{\(16\ast\)}}
\multiput(1.9,-4.1)(0,-4){3}{\makebox(0,0)[rt]{\(2\ast\)}}
\multiput(3.9,-4.1)(0,-4){3}{\makebox(0,0)[rt]{\(9\ast\)}}
\multiput(3.9,-6.1)(0,-4){2}{\makebox(0,0)[rt]{\(4\ast\)}}
\multiput(5.9,-4.1)(0,-4){3}{\makebox(0,0)[rt]{\(16\ast\)}}
\multiput(5.9,-6.1)(0,-4){2}{\makebox(0,0)[rt]{\(8\ast\)}}

\put(0,-12){\vector(0,-1){2}}
\put(0.2,-13.6){\makebox(0,0)[lc]{infinite}}
\put(0.2,-14){\makebox(0,0)[lc]{mainline}}
\put(0.2,-14.4){\makebox(0,0)[lc]{\(\mathcal{T}^4(\langle 2187,64\rangle-\#2;54)\)}}

\put(-0.1,0.1){\makebox(0,0)[rb]{\(\langle 2187,64\rangle-\#2;54\)}}

\put(-0.2,-1){\makebox(0,0)[rc]{branch}}
\put(0.5,-1){\makebox(0,0)[cc]{\(\mathcal{B}(9)\)}}
\put(-6.1,-1.9){\makebox(0,0)[rb]{\(-\#1;7\)}}
\put(-4.1,-1.9){\makebox(0,0)[rb]{\(6\)}}
\put(-2.1,-1.9){\makebox(0,0)[rb]{\(4\)}}
\put(-0.8,-1.6){\makebox(0,0)[rb]{\(1..3\vert\)}}
\put(-1.0,-1.8){\makebox(0,0)[rb]{\(9..13\)}}
\put(-0.1,-1.9){\makebox(0,0)[rb]{\(8\)}}
\put(2.1,-1.9){\makebox(0,0)[lb]{\(5\)}}

\put(0.5,-3){\makebox(0,0)[cc]{\(\mathcal{B}(10)\)}}
\put(-6.1,-3.9){\makebox(0,0)[rb]{\(-\#1;4\vert 6\)}}
\put(-4.2,-3.9){\makebox(0,0)[rb]{\(2\vert 8\)}}
\put(-2.1,-3.9){\makebox(0,0)[rb]{\(1\vert 9\)}}
\put(-1.0,-3.8){\makebox(0,0)[rb]{\(10..25\)}}
\put(-0.1,-3.9){\makebox(0,0)[rb]{\(5\)}}
\put(2.1,-3.9){\makebox(0,0)[lb]{\(3\vert 7\)}}

\put(4,-3.9){\makebox(0,0)[lb]{\(7..15\)}}
\put(6,-3.9){\makebox(0,0)[lb]{\(1..6\vert 16..25\)}}

\put(0.5,-5){\makebox(0,0)[cc]{\(\mathcal{B}(11)\)}}
\put(-6.1,-5.9){\makebox(0,0)[rb]{\(-\#1;2\)}}
\put(-4.1,-5.9){\makebox(0,0)[rb]{\(3\)}}
\put(-2.1,-5.9){\makebox(0,0)[rb]{\(4\)}}
\put(-1.0,-5.8){\makebox(0,0)[rb]{\(6..13\)}}
\put(-0.1,-5.9){\makebox(0,0)[rb]{\(1\)}}
\put(2.1,-5.9){\makebox(0,0)[lb]{\(5\)}}

\put(3.1,-5.9){\makebox(0,0)[lb]{\(7\)}}
\put(4.1,-5.9){\makebox(0,0)[lb]{\(4..6\vert 8\)}}
\put(6.1,-5.9){\makebox(0,0)[lb]{\(1..3\vert 9..13\)}}

\put(0.5,-7){\makebox(0,0)[cc]{\(\mathcal{B}(12)\)}}
\put(-6.1,-7.9){\makebox(0,0)[rb]{\(-\#1;2\vert 3\)}}
\put(-4.1,-7.9){\makebox(0,0)[rb]{\(4\vert 7\)}}
\put(-2.1,-7.9){\makebox(0,0)[rb]{\(5\vert 9\)}}
\put(-1.0,-7.8){\makebox(0,0)[rb]{\(10..25\)}}
\put(-0.1,-7.9){\makebox(0,0)[rb]{\(1\)}}
\put(2.1,-7.9){\makebox(0,0)[lb]{\(6\vert 8\)}}

\put(4,-7.9){\makebox(0,0)[lb]{\(7..15\)}}
\put(6,-7.9){\makebox(0,0)[lb]{\(1..6\vert 16..25\)}}

\put(0.5,-9){\makebox(0,0)[cc]{\(\mathcal{B}(13)\)}}
\put(-6.1,-9.9){\makebox(0,0)[rb]{\(-\#1;2\)}}
\put(-4.1,-9.9){\makebox(0,0)[rb]{\(3\)}}
\put(-2.1,-9.9){\makebox(0,0)[rb]{\(4\)}}
\put(-1.0,-9.8){\makebox(0,0)[rb]{\(6..13\)}}
\put(-0.1,-9.9){\makebox(0,0)[rb]{\(1\)}}
\put(2.1,-9.9){\makebox(0,0)[lb]{\(5\)}}

\put(3.1,-9.9){\makebox(0,0)[lb]{\(7\)}}
\put(4.1,-9.9){\makebox(0,0)[lb]{\(4..6\vert 8\)}}
\put(6.1,-9.9){\makebox(0,0)[lb]{\(1..3\vert 9..13\)}}

\put(0.5,-11){\makebox(0,0)[cc]{\(\mathcal{B}(14)\)}}
\put(-6.1,-11.9){\makebox(0,0)[rb]{\(-\#1;2\vert 3\)}}
\put(-4.1,-11.9){\makebox(0,0)[rb]{\(4\vert 7\)}}
\put(-2.1,-11.9){\makebox(0,0)[rb]{\(5\vert 9\)}}
\put(-1.0,-11.8){\makebox(0,0)[rb]{\(10..25\)}}
\put(-0.1,-11.9){\makebox(0,0)[rb]{\(1\)}}
\put(2.1,-11.9){\makebox(0,0)[lb]{\(6\vert 8\)}}

\put(4,-11.9){\makebox(0,0)[lb]{\(7..15\)}}
\put(6,-11.9){\makebox(0,0)[lb]{\(1..6\vert 16..25\)}}

\put(-0.1,-12.4){\makebox(0,0)[ct]{\(\stackbin[0]{n-3,n}{G}\begin{pmatrix}0&0\\ 0&1\end{pmatrix}\)}}
\put(-2.1,-12.4){\makebox(0,0)[ct]{\(\stackbin[0]{n-3,n}{G}\begin{pmatrix}0&0\\ \pm 1&1\end{pmatrix}\)}}
\put(-4.2,-12.4){\makebox(0,0)[ct]{\(\stackbin[0]{n-3,n}{G}\begin{pmatrix}\pm 1&0\\ 1&1\end{pmatrix}\)}}
\put(-6.3,-12.4){\makebox(0,0)[ct]{\(\stackbin[0]{n-3,n}{G}\begin{pmatrix}\pm 1&0\\ -1&1\end{pmatrix}\)}}
\put(2,-12.4){\makebox(0,0)[ct]{\(\stackbin[0]{n-3,n}{G}\begin{pmatrix}\pm 1&0\\ 0&1\end{pmatrix}\)}}
\put(4,-12.4){\makebox(0,0)[ct]{\(\stackbin[\pm 1]{n-3,n}{G}\begin{pmatrix}\alpha&0\\ \gamma&1\end{pmatrix}\)}}

\put(-8,-15){\makebox(0,0)[cc]{\textbf{TKT}}}
\put(0,-15){\makebox(0,0)[cc]{d.23\({}^\ast\)}}
\put(-1,-15){\makebox(0,0)[cc]{d.23}}
\put(-2,-15){\makebox(0,0)[cc]{F.11}}
\put(-4,-15){\makebox(0,0)[cc]{F.12}}
\put(-6,-15){\makebox(0,0)[cc]{F.12}}
\put(2,-15){\makebox(0,0)[cc]{G.16}}
\put(3,-15){\makebox(0,0)[cc]{G.16}}
\put(4,-15){\makebox(0,0)[cc]{G.16}}
\put(6,-15){\makebox(0,0)[cc]{G.16}}
\put(-8,-15.5){\makebox(0,0)[cc]{\(\varkappa=\)}}
\put(0,-15.5){\makebox(0,0)[cc]{\((0243)\)}}
\put(-1,-15.5){\makebox(0,0)[cc]{\((0243)\)}}
\put(-2,-15.5){\makebox(0,0)[cc]{\((2243)\)}}
\put(-4,-15.5){\makebox(0,0)[cc]{\((3243)\)}}
\put(-6,-15.5){\makebox(0,0)[cc]{\((4243)\)}}
\put(2,-15.5){\makebox(0,0)[cc]{\((1243)\)}}
\put(3,-15.5){\makebox(0,0)[cc]{\((1243)\)}}
\put(4,-15.5){\makebox(0,0)[cc]{\((1243)\)}}
\put(6,-15.5){\makebox(0,0)[cc]{\((1243)\)}}
\put(-8.7,-15.7){\framebox(15.4,1){}}

\end{picture}

}

\end{figure}
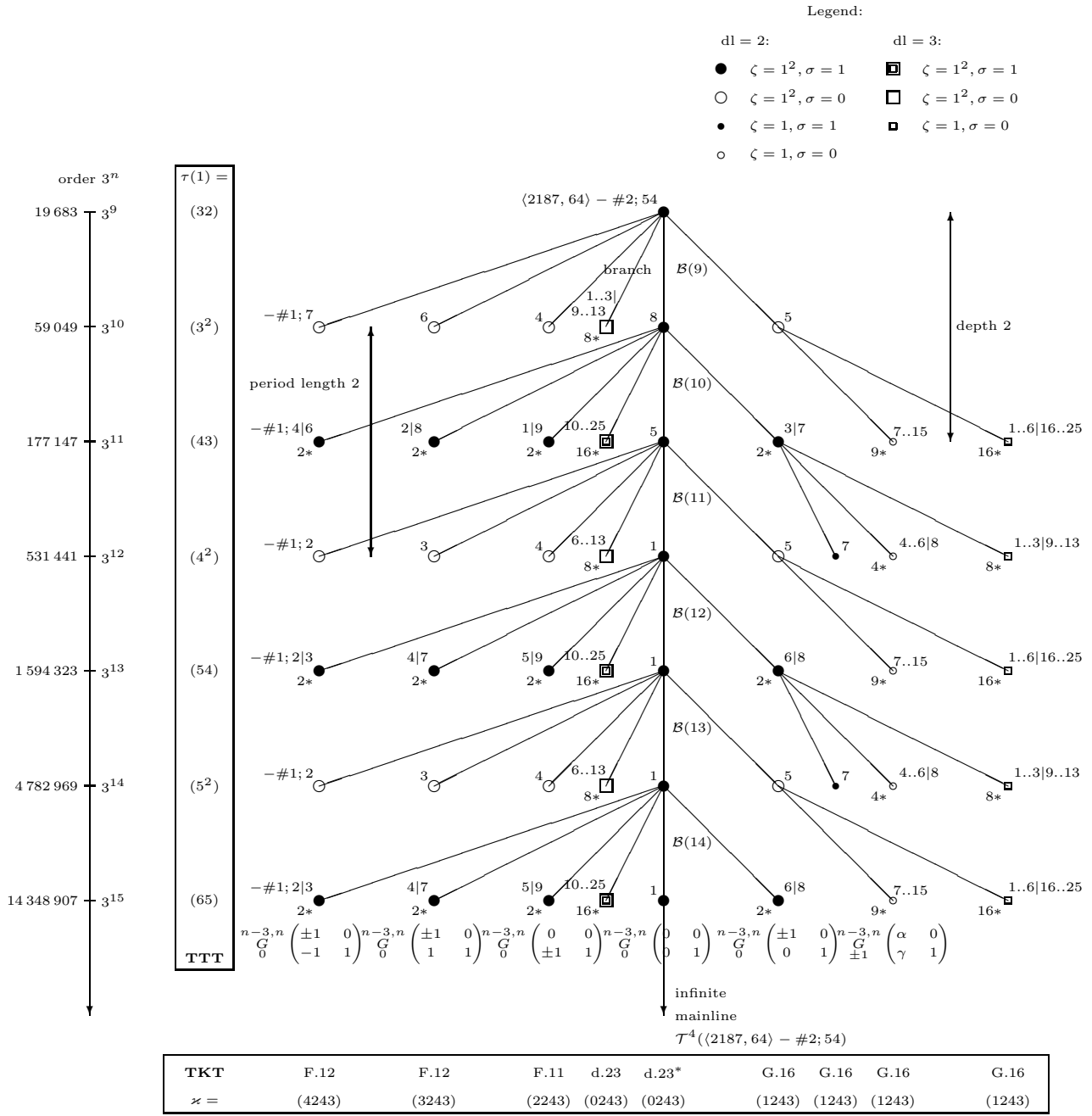



\begin{theorem}
\label{thm:TKTd23TreeCc4}
\textbf{(Graph theoretic and algebraic invariants.)} \\
The coclass-\(4\) tree \(\mathcal{T}:=\mathcal{T}^4{R_4^4}\) of \(3\)-groups \(G\) with coclass \(\mathrm{cc}(G)=4\)
which arises from the metabelian root \(R_4^4:=\langle 2187,64\rangle-\#2;54\)
has the following abstract graph theoretic properties.
\begin{enumerate}
\item
The branches \(\mathcal{B}(i)\), \(i\ge n_\ast=9\), are purely periodic with
primitive period \((\mathcal{B}(9),\mathcal{B}(10))\) of length \(\ell=2\).
\item
The cardinalities of the periodic branches are
\(\#\mathcal{B}(9)=38\) and \(\#\mathcal{B}(10)=51\).
\item
Depth, width, and information content of the tree are given by
\begin{equation}
\label{eqn:TKTd23TreeCc4}
\mathrm{dp}(\mathcal{T}^4{R_4^4})=2, \quad \mathrm{wd}(\mathcal{T}^4{R_4^4})=50, \quad \text{ and } \quad \mathrm{IC}(\mathcal{T}^4{R_4^4})=89.
\end{equation}
\end{enumerate}
The algebraic invariants of the vertices forming the primitive period \((\mathcal{B}(9),\mathcal{B}(10))\) of the tree are given in Table
\ref{tbl:TKTd23TreeCc4}.
The six leading branches \(\mathcal{B}(9),\ldots,\mathcal{B}(14)\) are drawn in Figure
\ref{fig:TKTd23TreeCc4}.
\end{theorem}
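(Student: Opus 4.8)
The plan is to run the machinery already used in the proof of Theorem~\ref{thm:TKTd19Tree1Cc4}, now fed by Proposition~\ref{prp:TKTd23TreeCc4}. First I would record that, since every mainline vertex $m_n$ of $\mathcal{T}:=\mathcal{T}^4{R_4^4}$ has $C_1(m_n)\in\lbrace 2,3\rbrace\ge 2$ capable children whereas every capable vertex $v$ of depth $1$ has $C_1(v)=0$, the tree is of depth $\mathrm{dp}(\mathcal{T})=2$; equivalently, this is forced by Proposition~\ref{prp:TKTd19and23TreeCc4}, which produces a nearly strict isomorphism of $\mathcal{T}^4{R_4^4}$ onto $\mathcal{T}^4{R_2^4}$ and hence transfers the bare graph-theoretic structure verbatim from Theorem~\ref{thm:TKTd19Tree1Cc4}. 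Claim~(1) is then precisely the periodicity assertion of Proposition~\ref{prp:TKTd23TreeCc4}.

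With depth~$2$ in hand, claim~(2) is to follow from Formula~\eqref{eqn:BranchDepth2} of Theorem~\ref{thm:DescendantNumbers}: writing $v_2(m_9)$ for the capable depth-$1$ child of $m_9$ and $v_2(m_{10}),v_3(m_{10})$ for those of $m_{10}$, Proposition~\ref{prp:TKTd23TreeCc4} gives $\#\mathcal{B}(9)=N_1(m_9)+N_1(v_2(m_9))=13+25=38$ and $\#\mathcal{B}(10)=N_1(m_{10})+N_1(v_2(m_{10}))+N_1(v_3(m_{10}))=25+13+13=51$. Claim~(3) then drops out: Formula~\eqref{eqn:TreeWidth2} of Corollary~\ref{cor:TreeWidth}, with $n_\ast=9$, $\ell_\ast=0$, $\ell=2$ and $n$ ranging over $\lbrace 11,12\rbrace$, yields $\mathrm{wd}(\mathcal{T})=\max\left(N_1(m_9),\,25+25,\,13+13+13\right)=\max(13,50,39)=50$, and Definition~\ref{dfn:InfoCont} yields $\mathrm{IC}(\mathcal{T})=\#\mathcal{B}(9)+\#\mathcal{B}(10)=38+51=89$.

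Finally, the entries of Table~\ref{tbl:TKTd23TreeCc4} and the six branches drawn in Figure~\ref{fig:TKTd23TreeCc4} are to be produced exactly as in the proof of Theorem~\ref{thm:TKTb10TreeCc4}: each vertex is represented by a normalized $G_\rho^{m,n}(\alpha,\beta,\gamma,\delta)$ of Formula~\eqref{eqn:Presentation}, and the depth, derived length, centre type $\zeta$, relation rank $\mu$, nuclear rank $\nu$, the transfer target components $\tau(1)$ and $\tau_2$, the transfer kernel type $\varkappa$, the action flag $\sigma$, and the factorized order $\#\mathrm{Aut}$ are read off from MAGMA computations~\cite{MAGMA}. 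The one place where care will be needed is the width count: one must verify that the maximal layer is the one straddling a branch root $m_n$ with even $n$, where $N_1(m_{n-1})$ and its companion capable depth-$1$ vertex each contribute $25$, rather than an odd layer, where the three relevant descendant numbers are only $13$ each --- this is exactly what separates $\mathrm{wd}=50$ from the value $39$ one would otherwise obtain. Beyond this bookkeeping I anticipate no genuine obstacle, since the periodicity underlying the whole argument has already been pinned down in Proposition~\ref{prp:TKTd23TreeCc4}: by explicit computation up to $\mathcal{B}(30)$, and beyond that by the virtual-periodicity theorems of du Sautoy~\cite{dS} and Eick--Leedham-Green~\cite{EkLg}, the uniform bound $\mathrm{dp}=2$ upgrading virtual to strict periodicity.
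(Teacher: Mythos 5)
Your proposal is correct and follows essentially the same route as the paper: the paper proves this theorem by invoking the digraph isomorphism of Proposition \ref{prp:TKTd19and23TreeCc4} and declaring the proof to coincide literally with that of Theorem \ref{thm:TKTd19Tree1Cc4}, which is exactly the computation you reproduce (depth \(2\) from \(C_1(m_n)\ge 2\) and \(C_1(v)=0\), branch cardinalities \(13+25=38\) and \(25+13+13=51\) via Formula \eqref{eqn:BranchDepth2}, width \(\max(50,39)=50\) via Formula \eqref{eqn:TreeWidth2}, and \(\mathrm{IC}=89\), with the table and figure data coming from the MAGMA computations underlying Proposition \ref{prp:TKTd23TreeCc4}).
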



\begin{proof}
Since the tree \(\mathcal{T}^4(\langle 2187,64\rangle-\#2;54)\)
is isomorphic to the tree \(\mathcal{T}^4(\langle 2187,64\rangle-\#2;39)\)
as an abstract digraph,
the proof literally coincides with the proof of Theorem
\ref{thm:TKTd19Tree1Cc4}.
\end{proof}


\begin{corollary}
\label{cor:TKTd23TreeCc4}
\textbf{(Actions and relation ranks.)}
The algebraic invariants of the vertices of the structured coclass-\(4\) tree \(\mathcal{T}^4{R_4^4}\) are listed in Table
\ref{tbl:TKTd23TreeCc4}. In particular:
\begin{enumerate}
\item
There are no groups with \(V_4\)-action.
\item
Two distinguished terminal metabelian vertices of depth \(2\) with even class and type \(\mathrm{G}.16\),
all terminal vertices of depth \(1\) with odd class,
and the mainline vertices with even class,
possess an RI-action.
\item
The relation rank is given by
\(\mu=5\) for the mainline vertices \(\stackbin[0]{n-3,n}{G}\begin{pmatrix}0&0\\ 0&1\end{pmatrix}\) with \(n\ge 9\),
and the capable vertices \(\stackbin[0]{n-3,n}{G}\begin{pmatrix}\pm 1&0\\ 0&1\end{pmatrix}\) of depth \(1\) with \(n\ge 10\),
and
\(\mu=4\) otherwise.
\end{enumerate}
\end{corollary}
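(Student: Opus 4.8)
The plan is to reduce the whole corollary to a finite inspection of Table \ref{tbl:TKTd23TreeCc4} and then to propagate the outcome by periodicity. By Proposition \ref{prp:TKTd23TreeCc4} the tree \(\mathcal{T}^4{R_4^4}\) is purely periodic with primitive period \((\mathcal{B}(9),\mathcal{B}(10))\) of length \(\ell=2\) and with empty pre-period, and by Theorem \ref{thm:FirstPeriod} the branch isomorphisms \(\mathcal{B}(i)\simeq\mathcal{B}(i+2)\) are strict for the invariants \(\varkappa\), \(\mu\), \(\nu\) and \(\sigma\) (the last including the starring that records an RI-action). Hence it will be enough to verify the three assertions for the isomorphism classes tabulated in Table \ref{tbl:TKTd23TreeCc4}, namely the two periodic branches together with the mainline continuation, and then to apply the periodicity isomorphisms to reach arbitrary logarithmic order \(n\).

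For claim (1) I would simply read the \(\sigma\)-column of Table \ref{tbl:TKTd23TreeCc4}: every entry lies in \(\{0,1,1^\ast\}\), so no vertex carries the flag \(\sigma=2\), which by Formula \eqref{eqn:ActionFlag} is precisely the statement that no vertex is a group with \(V_4\)-action; strict invariance of \(\sigma\) under the periodicity then settles the whole tree. For claim (3) I would read the \(\mu\)-column: the value \(\mu=5\) appears exactly for the \(\mathrm{d}.23^\ast\) mainline vertices \(\stackbin[0]{n-3,n}{G}\begin{pmatrix}0&0\\ 0&1\end{pmatrix}\) (all capable, \(\nu=1\)) and for the depth-one capable vertices \(\stackbin[0]{n-3,n}{G}\begin{pmatrix}\pm1&0\\ 0&1\end{pmatrix}\) of type \(\mathrm{G}.16\) with \(\nu=1\), while \(\mu=4\) on every remaining (necessarily terminal, \(\nu=0\)) vertex; periodicity again extends this to all \(n\). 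Since \(\mu\) is the rank of the \(p\)-multiplicator, this is consistent with the nuclear-rank data already recorded, \(\mu\) and \(\nu\) being obtained together from the \(p\)-covering group.

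The only genuinely non-routine input is claim (2): having an RI-action is not visible from the transfer data, so for each relevant group \(G\) one must check that a generator-inverting automorphism \(\sigma\) can be chosen to invert \(\mathrm{H}_2(G,\mathbb{F}_3)\) as well. I would carry this out with the \(p\)-covering-group algorithm — build \(G^\ast\), lift a GI-automorphism, and test its action on the \(p\)-multiplicator — exactly as the starred entries \(1^\ast\) in Table \ref{tbl:TKTd23TreeCc4} were computed. Scanning those starred rows then exhibits the asserted vertices: the \(\mathrm{d}.23^\ast\) mainline vertices with even logarithmic order (hence even class), the depth-one terminal vertices of odd class, and the two distinguished depth-two metabelian \(\mathrm{G}.16\) vertices of even class that carry \(\sigma=1^\ast\) in contrast to their eight siblings with \(\sigma=0\). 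Finally I would note that, by Proposition \ref{prp:TKTd19and23TreeCc4}, \(\mathcal{T}^4{R_4^4}\) is almost strictly isomorphic to \(\mathcal{T}^4{R_2^4}\) with the RI-action status preserved, so this corollary is a relabelling of the one already established for the \(\mathrm{d}.19^\ast\) tree. The main obstacle is therefore not internal to the corollary but lies in the \(p\)-covering-group computations underpinning Table \ref{tbl:TKTd23TreeCc4}, and in confirming that the periodicity genuinely stabilizes — which is secured by the bounded depth \(\mathrm{dp}(\mathcal{T})=2\) together with the virtual-periodicity theorems cited in Proposition \ref{prp:TKTd23TreeCc4}.
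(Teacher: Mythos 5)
Your proposal matches the paper's own treatment: the paper (for this corollary and its siblings, e.g.\ Corollary \ref{cor:TKTd19Tree1Cc4}) verifies the RI-action by a MAGMA computation with the \(p\)-covering group and reads off the remaining claims on \(\sigma\), \(\mu\) and the vertex types directly from Table \ref{tbl:TKTd23TreeCc4}, extended to all \(n\) by the strict periodicity of Proposition \ref{prp:TKTd23TreeCc4}. Your additional remark that the result also transfers from the \(\mathrm{d}.19^\ast\) tree via the almost strict isomorphism of Proposition \ref{prp:TKTd19and23TreeCc4} is a harmless cross-check consistent with the paper.
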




\renewcommand{\arraystretch}{1.2}

\begin{table}[ht]
\caption{Data for \(3\)-groups \(G\) with \(9\le n=\mathrm{lo}(G)\le 12\) of the coclass tree \(\mathcal{T}^4{R_5^4}\)}
\label{tbl:TKTd25TreeCc4}
\begin{center}
\begin{tabular}{|c|c|l||c|c|c||c|c||c|c|l|c||c|c|}
\hline
 \(\#\) & \(m,n\)  & \(\rho;\alpha,\beta,\gamma,\delta\) & dp & dl & \(\zeta\) & \(\mu\) & \(\nu\) & \(\tau(1)\) & \(\tau_2\) & Type                   & \(\varkappa\) & \(\sigma\) & \(\#\mathrm{Aut}\)  \\
\hline
  \(1\) & \(6,9\)  & \(0;0,1,0,0\)                       & 0  & 2  & \(1^2\)   & \(5\)   & \(1\)   & \(32\)      & \(2^31\)   & \(\mathrm{d}.25^\ast\) & \((0143)\)    & \(2\)      & \(2^2\cdot 3^{14}\) \\
\hline
  \(1\) & \(7,10\) & \(0;0,1,0,0\)                       & 0  & 2  & \(1^2\)   & \(5\)   & \(1\)   & \(3^2\)     & \(32^21\)  & \(\mathrm{d}.25^\ast\) & \((0143)\)    & \(2^\ast\) & \(2^2\cdot 3^{16}\) \\
  \(1\) & \(7,10\) & \(0;1,1,0,0\)                       & 1  & 2  & \(1^2\)   & \(4\)   & \(0\)   & \(3^2\)     & \(32^21\)  & \(\mathrm{F}.11\)      & \((1143)\)    & \(0\)      & \(3^{16}\)          \\
  \(1\) & \(7,10\) & \(0;1,1,1,0\)                       & 1  & 2  & \(1^2\)   & \(4\)   & \(0\)   & \(3^2\)     & \(32^21\)  & \(\mathrm{F}.13\)      & \((3143)\)    & \(0\)      & \(3^{16}\)          \\
  \(1\) & \(7,10\) & \(0;0,1,1,0\)                       & 1  & 2  & \(1^2\)   & \(5\)   & \(1\)   & \(3^2\)     & \(32^21\)  & \(\mathrm{G}.19\)      & \((2143)\)    & \(0\)      & \(3^{16}\)          \\
  \(3\) & \(7,10\) &                                     & 1  & 3  & \(1^2\)   & \(4\)   & \(0\)   & \(32\)      & \(2^31\)   & \(\mathrm{d}.25\)      & \((0143)\)    & \(0\)      & \(3^{15}\)          \\
  \(2\) & \(7,10\) &                                     & 1  & 3  & \(1^2\)   & \(4\)   & \(0\)   & \(32\)      & \(2^31\)   & \(\mathrm{d}.25\)      & \((0143)\)    & \(0\)      & \(3^{14}\)          \\
  \(6\) & \(8,11\) & \(\pm 1;\alpha,1,\gamma,0\)         & 2  & 2  & \(1\)     & \(4\)   & \(0\)   & \(3^2\)     & \(3^221\)  & \(\mathrm{G}.19\)      & \((2143)\)    & \(0\)      & \(3^{18}\)          \\
  \(7\) & \(8,11\) &                                     & 2  & 3  & \(1\)     & \(4\)   & \(0\)   & \(3^2\)     & \(32^21\)  & \(\mathrm{G}.19\)      & \((2143)\)    & \(0\)      & \(3^{17}\)          \\
  \(2\) & \(8,11\) &                                     & 2  & 3  & \(1\)     & \(4\)   & \(0\)   & \(3^2\)     & \(32^21\)  & \(\mathrm{G}.19\)      & \((2143)\)    & \(0\)      & \(3^{16}\)          \\
\hline
  \(1\) & \(8,11\) & \(0;0,1,0,0\)                       & 0  & 2  & \(1^2\)   & \(5\)   & \(1\)   & \(43\)      & \(3^221\)  & \(\mathrm{d}.25^\ast\) & \((0143)\)    & \(2\)      & \(2^2\cdot 3^{18}\) \\
  \(1\) & \(8,11\) & \(0;1,1,0,0\)                       & 1  & 2  & \(1^2\)   & \(4\)   & \(0\)   & \(43\)      & \(3^221\)  & \(\mathrm{F}.11\)      & \((1143)\)    & \(1^\ast\) & \(2\cdot 3^{18}\)   \\
  \(2\) & \(8,11\) & \(0;1,1,\pm 1,0\)                   & 1  & 2  & \(1^2\)   & \(4\)   & \(0\)   & \(43\)      & \(3^221\)  & \(\mathrm{F}.13\)      & \((3143)\)    & \(1^\ast\) & \(2\cdot 3^{18}\)   \\
  \(2\) & \(8,11\) & \(0;0,1,\pm 1,0\)                   & 1  & 2  & \(1^2\)   & \(5\)   & \(1\)   & \(43\)      & \(3^221\)  & \(\mathrm{G}.19\)      & \((2143)\)    & \(2\)      & \(2^2\cdot 3^{18}\) \\
  \(7\) & \(8,11\) &                                     & 1  & 3  & \(1^2\)   & \(4\)   & \(0\)   & \(3^2\)     & \(32^21\)  & \(\mathrm{d}.25\)      & \((0143)\)    & \(1^\ast\) & \(2\cdot 3^{17}\)   \\
  \(2\) & \(8,11\) &                                     & 1  & 3  & \(1^2\)   & \(4\)   & \(0\)   & \(3^2\)     & \(32^21\)  & \(\mathrm{d}.25\)      & \((0143)\)    & \(2^\ast\) & \(2^2\cdot 3^{16}\) \\
  \(2\) & \(9,12\) & \(\pm 1;0,1,\mp 1,0\)               & 2  & 2  & \(1\)     & \(4\)   & \(0\)   & \(43\)      & \(4321\)   & \(\mathrm{G}.19\)      & \((2143)\)    & \(2^\ast\) & \(2^2\cdot 3^{20}\) \\
  \(6\) & \(9,12\) & \(\pm 1;\alpha,1,\gamma,0\)         & 2  & 2  & \(1\)     & \(4\)   & \(0\)   & \(43\)      & \(4321\)   & \(\mathrm{G}.19\)      & \((2143)\)    & \(0\)      & \(3^{20}\)          \\
  \(8\) & \(9,12\) &                                     & 2  & 3  & \(1\)     & \(4\)   & \(0\)   & \(43\)      & \(3^221\)  & \(\mathrm{G}.19\)      & \((2143)\)    & \(0\)      & \(3^{19}\)          \\
  \(2\) & \(9,12\) &                                     & 2  & 3  & \(1\)     & \(4\)   & \(0\)   & \(43\)      & \(3^221\)  & \(\mathrm{G}.19\)      & \((2143)\)    & \(0\)      & \(3^{18}\)          \\
\hline
\end{tabular}
\end{center}
\end{table}



\subsection{Two mainlines of type \(\mathrm{d}.25^\ast\) for even coclass \(r\ge 4\)}
\label{ss:d25Cc4}

\begin{proposition}
\label{prp:TKTd25Tree1Cc4}
\textbf{(Periodicity and descendant numbers.)} \\
The branches \(\mathcal{B}(i)\), \(i\ge n_\ast=9\), of
the first coclass-\(4\) tree \(\mathcal{T}^4(\langle 2187,64\rangle-\#2;57)\)
with mainline vertices of transfer kernel type \(\mathrm{d}.25^\ast\), \(\varkappa\sim (0143)\),
are purely periodic with primitive length \(\ell=2\) and without pre-period, \(\ell_\ast=0\), that is,
\(\mathcal{B}(i+2)\simeq\mathcal{B}(i)\) are isomorphic as graphs, for all \(i\ge 9\).

The structure of the tree is determined uniquely by the numbers
\(N_1\) of immediate descendants and \(C_1\) of capable immediate descendants
for mainline vertices and for capable vertices of depth \(1\): \\
\((N_1,C_1)=(9,2)\) for mainline vertices \(m_n\) of odd logarithmic order \(n=\mathrm{lo}(m_n)\ge n_\ast=9\), \\
\((N_1,C_1)=(15,3)\) for mainline vertices \(m_n\) of even logarithmic order \(n=\mathrm{lo}(m_n)\ge 10\), \\
\((N_1,C_1)=(15,0)\) for a capable vertex \(v\) of depth \(1\) and even logarithmic order \(\mathrm{lo}(v)\ge n_\ast+1=10\), \\
\((N_1,C_1)=(9,0)\) for two capable vertices \(v\) of depth \(1\) and odd logarithmic order \(\mathrm{lo}(v)\ge 11\).
\end{proposition}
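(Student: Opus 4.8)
(of Proposition~\ref{prp:TKTd25Tree1Cc4})
The plan is to follow exactly the scheme already used for Propositions~\ref{prp:TKTb10TreeCc4}, \ref{prp:TKTd19Tree1Cc4} and \ref{prp:TKTd23TreeCc4}. Since the descendant numbers announced here differ from those of the \(\mathrm{d}.19^\ast\) and \(\mathrm{d}.23^\ast\) trees, the present tree is \emph{not} abstractly isomorphic to any of the previously treated ones, so a computational argument is unavoidable. First I would apply the \(p\)-group generation algorithm of Newman and O'Brien \cite{Nm2,Ob,HEO}, as implemented in MAGMA \cite{MAGMA}, to the metabelian root \(R_5^4=\langle 2187,64\rangle-\#2;57\), recursively constructing the vertices of the coclass-\(4\) tree \(\mathcal{T}^4 R_5^4\) up to a safe logarithmic order. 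From this output I would read off, for every mainline vertex \(m_n\) with \(n\ge n_\ast=9\) and for every capable vertex \(v\) of depth \(1\) with \(\mathrm{lo}(v)\ge 10\), the numbers \(N_1\) of all immediate descendants and \(C_1\) of capable immediate descendants, thereby confirming the four cases \((9,2)\), \((15,3)\), \((15,0)\) and \((9,0)\) asserted in the statement.

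Since \(C_1(m_n)\ge 2\) along the mainline but \(C_1(v)=0\) for every capable vertex \(v\) of depth \(1\), the tree has depth \(\mathrm{dp}(\mathcal{T}^4 R_5^4)=2\), and the cardinalities of the two branches forming the candidate primitive period follow from Formula~\eqref{eqn:BranchDepth2} of Theorem~\ref{thm:DescendantNumbers}, namely \(\#\mathcal{B}(9)=9+15=24\) and \(\#\mathcal{B}(10)=15+9+9=33\). Next I would record the finer composition of \(\mathcal{B}(9)\) and \(\mathcal{B}(10)\), classifying each vertex by derived length (metabelian versus \(\mathrm{dl}=3\)), by the abelian type of its centre (\(\zeta=1^2\) versus \(\zeta=1\)), by transfer kernel type and by action flag, as summarised in Table~\ref{tbl:TKTd25TreeCc4}; then cross-check the resulting metabelian skeleton against the corresponding diagram in Nebelung's dissertation \cite[Thm. 5.1.16, pp. 178--179, and the fourth Figure, \(e\ge 5\), \(e\equiv 1\pmod{2}\)]{Ne}, and identify the tree with the appropriate infinite metabelian pro-\(3\) group \(S_{3,j}\) of \cite[Cnj. 15 (b), p. 116]{Ek}.

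To pass from the finite computation to the claim for \emph{all} branches, I would invoke the virtual periodicity theorems of du Sautoy \cite{dS} and of Eick and Leedham-Green \cite{EkLg}: because the depth of \(\mathcal{T}^4 R_5^4\) is uniformly bounded by \(2\), no depth-pruning is necessary and virtual periodicity becomes strict periodicity of complete branches. A direct MAGMA verification that \(\mathcal{B}(i+2)\simeq\mathcal{B}(i)\) holds up to branch \(\mathcal{B}(30)\) then guarantees it for every \(i\ge 9\); combined with the observation that \(\#\mathcal{B}(9)\ne\#\mathcal{B}(10)\), which already excludes primitive length \(1\), this yields \(\ell_\ast=0\) and primitive period length exactly \(\ell=2\), setting in at \(p_\ast=9\).

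I expect the main obstacle to be purely computational rather than conceptual: reliably enumerating the non-metabelian descendants at depths \(1\) and \(2\) (the groups of derived length \(3\) and logarithmic order up to \(12\)) and correctly bookkeeping their invariants is delicate, and one must be certain that the search for capable vertices is exhaustive, so that the depth bound \(\mathrm{dp}=2\) --- on which the entire branch-cardinality reasoning rests --- is genuinely established and not merely an artefact of a truncated computation.
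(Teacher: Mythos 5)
Your proposal is correct and follows essentially the same route as the paper: a MAGMA computation with the \(p\)-group generation algorithm at the root \(R_5^4\) to obtain the descendant numbers \((N_1,C_1)\) and the branch compositions, a cross-check of the metabelian skeleton against Nebelung's dissertation and Eick's pro-\(3\) group correspondence, and the virtual periodicity theorems of du Sautoy and Eick--Leedham-Green, which become strict because the depth is uniformly bounded by \(2\), with the onset \(p_\ast=9\) confirmed computationally. The paper's own proof adds only the concrete vertex counts (e.g.\ \(24=9+15\) and \(33=15+2\cdot 9\), split by centre type and derived length), exactly as you anticipate.
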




\begin{figure}[ht]
\caption{The first coclass-\(4\) tree \(\mathcal{T}^4(P_7-\#2;57)\) with mainline of type \(\mathrm{d}.25^\ast\)}
\label{fig:TKTd25Tree1Cc4}


{\tiny

\setlength{\unitlength}{0.8cm}
\begin{picture}(18,20.5)(-10,-16)

\put(-10,0.5){\makebox(0,0)[cb]{order \(3^n\)}}

\put(-10,0){\line(0,-1){12}}
\multiput(-10.1,0)(0,-2){7}{\line(1,0){0.2}}

\put(-10.2,0){\makebox(0,0)[rc]{\(19\,683\)}}
\put(-9.8,0){\makebox(0,0)[lc]{\(3^9\)}}
\put(-10.2,-2){\makebox(0,0)[rc]{\(59\,049\)}}
\put(-9.8,-2){\makebox(0,0)[lc]{\(3^{10}\)}}
\put(-10.2,-4){\makebox(0,0)[rc]{\(177\,147\)}}
\put(-9.8,-4){\makebox(0,0)[lc]{\(3^{11}\)}}
\put(-10.2,-6){\makebox(0,0)[rc]{\(531\,441\)}}
\put(-9.8,-6){\makebox(0,0)[lc]{\(3^{12}\)}}
\put(-10.2,-8){\makebox(0,0)[rc]{\(1\,594\,323\)}}
\put(-9.8,-8){\makebox(0,0)[lc]{\(3^{13}\)}}
\put(-10.2,-10){\makebox(0,0)[rc]{\(4\,782\,969\)}}
\put(-9.8,-10){\makebox(0,0)[lc]{\(3^{14}\)}}
\put(-10.2,-12){\makebox(0,0)[rc]{\(14\,348\,907\)}}
\put(-9.8,-12){\makebox(0,0)[lc]{\(3^{15}\)}}

\put(-10,-12){\vector(0,-1){2}}

\put(-8,0.5){\makebox(0,0)[cb]{\(\tau(1)=\)}}

\put(-8,0){\makebox(0,0)[cc]{\((32)\)}}
\put(-8,-2){\makebox(0,0)[cc]{\((3^2)\)}}
\put(-8,-4){\makebox(0,0)[cc]{\((43)\)}}
\put(-8,-6){\makebox(0,0)[cc]{\((4^2)\)}}
\put(-8,-8){\makebox(0,0)[cc]{\((54)\)}}
\put(-8,-10){\makebox(0,0)[cc]{\((5^2)\)}}
\put(-8,-12){\makebox(0,0)[cc]{\((65)\)}}

\put(-8,-13){\makebox(0,0)[cc]{\textbf{TTT}}}
\put(-8.5,-13.2){\framebox(1,14){}}

\put(2.5,4){\makebox(0,0)[lc]{Legend:}}

\put(1,3.5){\makebox(0,0)[lc]{\(\mathrm{dl}=2\):}}
\put(1,3){\circle{0.2}}
\put(1,3){\circle*{0.1}}
\put(1,2.5){\circle*{0.2}}
\put(1,2){\circle{0.2}}
\put(1,1.5){\circle*{0.1}}
\put(1,1){\circle{0.1}}
\put(1.5,3){\makebox(0,0)[lc]{\(\zeta=1^2,\sigma=2\)}}
\put(1.5,2.5){\makebox(0,0)[lc]{\(\zeta=1^2,\sigma=1\)}}
\put(1.5,2){\makebox(0,0)[lc]{\(\zeta=1^2,\sigma=0\)}}
\put(1.5,1.5){\makebox(0,0)[lc]{\(\zeta=1,\sigma=2\)}}
\put(1.5,1){\makebox(0,0)[lc]{\(\zeta=1,\sigma=0\)}}

\put(4,3.5){\makebox(0,0)[lc]{\(\mathrm{dl}=3\):}}
\put(3.9,2.9){\framebox(0.2,0.2){\vrule height 3pt width 3pt}}
\put(3.9,2.4){\framebox(0.2,0.2){}}
\put(3.95,2.45){\framebox(0.1,0.1){}}
\put(3.9,1.9){\framebox(0.2,0.2){}}
\put(3.95,1.45){\framebox(0.1,0.1){}}
\put(4.5,3){\makebox(0,0)[lc]{\(\zeta=1^2,\sigma=2\)}}
\put(4.5,2.5){\makebox(0,0)[lc]{\(\zeta=1^2,\sigma=1\)}}
\put(4.5,2){\makebox(0,0)[lc]{\(\zeta=1^2,\sigma=0\)}}
\put(4.5,1.5){\makebox(0,0)[lc]{\(\zeta=1,\sigma=0\)}}

\put(5,-2){\vector(0,1){2}}
\put(5.1,-2){\makebox(0,0)[lc]{depth \(2\)}}
\put(5,-2){\vector(0,-1){2}}

\put(-5.1,-4){\vector(0,1){2}}
\put(-5.3,-3){\makebox(0,0)[rc]{period length \(2\)}}
\put(-5.1,-4){\vector(0,-1){2}}

\multiput(0,0)(0,-2){7}{\circle{0.2}}
\multiput(0,0)(0,-2){7}{\circle*{0.1}}
\multiput(2,-4)(0,-4){3}{\circle{0.2}}
\multiput(2,-4)(0,-4){3}{\circle*{0.1}}
\multiput(-4,-4)(0,-4){3}{\circle*{0.2}}
\multiput(-6,-4)(0,-4){3}{\circle*{0.2}}
\multiput(-4,-2)(0,-4){3}{\circle{0.2}}
\multiput(-6,-2)(0,-4){3}{\circle{0.2}}
\multiput(2,-2)(0,-4){3}{\circle{0.2}}

\multiput(-1.1,-4.1)(0,-4){3}{\framebox(0.2,0.2){\vrule height 3pt width 3pt}}
\multiput(-2.1,-4.1)(0,-4){3}{\framebox(0.2,0.2){}}
\multiput(-2.05,-4.05)(0,-4){3}{\framebox(0.1,0.1){}}
\multiput(-2.1,-2.1)(0,-4){3}{\framebox(0.2,0.2){}}

\multiput(3,-6)(0,-4){2}{\circle*{0.1}}

\multiput(4,-4)(0,-2){5}{\circle{0.1}}

\multiput(5.95,-4.05)(0,-2){5}{\framebox(0.1,0.1){}}

\multiput(0,0)(0,-2){6}{\line(0,-1){2}}
\multiput(0,-2)(0,-4){3}{\line(-1,-2){1}}
\multiput(0,0)(0,-2){6}{\line(-1,-1){2}}
\multiput(0,0)(0,-2){6}{\line(-2,-1){4}}
\multiput(0,0)(0,-2){6}{\line(-3,-1){6}}
\multiput(0,0)(0,-2){6}{\line(1,-1){2}}

\multiput(2,-4)(0,-4){2}{\line(1,-2){1}}
\multiput(2,-2)(0,-2){5}{\line(1,-1){2}}
\multiput(2,-2)(0,-2){5}{\line(2,-1){4}}

\multiput(-2.1,-2.1)(0,-4){3}{\makebox(0,0)[rt]{\(5\ast\)}}
\multiput(-6.1,-4.1)(0,-4){3}{\makebox(0,0)[rt]{\(2\ast\)}}
\multiput(-2.1,-4.1)(0,-4){3}{\makebox(0,0)[rt]{\(7\ast\)}}
\multiput(-1.1,-4.1)(0,-4){3}{\makebox(0,0)[rt]{\(2\ast\)}}
\multiput(1.9,-4.1)(0,-4){3}{\makebox(0,0)[rt]{\(2\ast\)}}
\multiput(3.9,-4.1)(0,-4){3}{\makebox(0,0)[rt]{\(6\ast\)}}
\multiput(3.9,-6.1)(0,-4){2}{\makebox(0,0)[rt]{\(3\ast\)}}
\multiput(5.9,-4.1)(0,-4){3}{\makebox(0,0)[rt]{\(9\ast\)}}
\multiput(5.9,-6.1)(0,-4){2}{\makebox(0,0)[rt]{\(5\ast\)}}

\put(0,-12){\vector(0,-1){2}}
\put(0.2,-13.6){\makebox(0,0)[lc]{infinite}}
\put(0.2,-14){\makebox(0,0)[lc]{mainline}}
\put(0.2,-14.4){\makebox(0,0)[lc]{\(\mathcal{T}^4(\langle 2187,64\rangle-\#2;57)\)}}

\put(-0.1,0.1){\makebox(0,0)[rb]{\(\langle 2187,64\rangle-\#2;57\)}}

\put(-0.2,-1){\makebox(0,0)[rc]{branch}}
\put(0.5,-1){\makebox(0,0)[cc]{\(\mathcal{B}(9)\)}}
\put(-6.1,-1.9){\makebox(0,0)[rb]{\(-\#1;2\)}}
\put(-4.1,-1.9){\makebox(0,0)[rb]{\(3\)}}
\put(-2.1,-1.8){\makebox(0,0)[rb]{\(5..9\)}}
\put(-0.1,-1.9){\makebox(0,0)[rb]{\(1\)}}
\put(2.1,-1.9){\makebox(0,0)[lb]{\(4\)}}

\put(0.5,-3){\makebox(0,0)[cc]{\(\mathcal{B}(10)\)}}
\put(-6.1,-3.9){\makebox(0,0)[rb]{\(-\#1;2\vert 4\)}}
\put(-4.2,-3.9){\makebox(0,0)[rb]{\(1\)}}
\put(-2.1,-3.8){\makebox(0,0)[rb]{\(9..15\)}}
\put(-1.1,-3.9){\makebox(0,0)[rb]{\(7\vert 8\)}}
\put(-0.1,-3.9){\makebox(0,0)[rb]{\(5\)}}
\put(2.1,-3.9){\makebox(0,0)[lb]{\(3\vert 6\)}}

\put(4,-3.9){\makebox(0,0)[lb]{\(2..7\)}}
\put(6,-3.9){\makebox(0,0)[lb]{\(1\vert 8..15\)}}

\put(0.5,-5){\makebox(0,0)[cc]{\(\mathcal{B}(11)\)}}
\put(-6.1,-5.9){\makebox(0,0)[rb]{\(-\#1;3\)}}
\put(-4.1,-5.9){\makebox(0,0)[rb]{\(1\)}}
\put(-2.1,-5.8){\makebox(0,0)[rb]{\(5..9\)}}
\put(-0.1,-5.9){\makebox(0,0)[rb]{\(4\)}}
\put(2.1,-5.9){\makebox(0,0)[lb]{\(2\)}}

\put(3.1,-5.9){\makebox(0,0)[lb]{\(6\)}}
\put(4.1,-5.9){\makebox(0,0)[lb]{\(3..5\)}}
\put(6.1,-5.9){\makebox(0,0)[lb]{\(1\vert 2\vert 7..9\)}}

\put(0.5,-7){\makebox(0,0)[cc]{\(\mathcal{B}(12)\)}}
\put(-6.1,-7.9){\makebox(0,0)[rb]{\(-\#1;2\vert 4\)}}
\put(-4.1,-7.9){\makebox(0,0)[rb]{\(1\)}}
\put(-2.1,-7.8){\makebox(0,0)[rb]{\(9..15\)}}
\put(-1.1,-7.9){\makebox(0,0)[rb]{\(7\vert 8\)}}
\put(-0.1,-7.9){\makebox(0,0)[rb]{\(5\)}}
\put(2.1,-7.9){\makebox(0,0)[lb]{\(3\vert 6\)}}

\put(4,-7.9){\makebox(0,0)[lb]{\(2..7\)}}
\put(6,-7.9){\makebox(0,0)[lb]{\(1\vert 8..15\)}}

\put(0.5,-9){\makebox(0,0)[cc]{\(\mathcal{B}(13)\)}}
\put(-6.1,-9.9){\makebox(0,0)[rb]{\(-\#1;2\)}}
\put(-4.1,-9.9){\makebox(0,0)[rb]{\(3\)}}
\put(-2.1,-9.8){\makebox(0,0)[rb]{\(5..9\)}}
\put(-0.1,-9.9){\makebox(0,0)[rb]{\(1\)}}
\put(2.1,-9.9){\makebox(0,0)[lb]{\(4\)}}

\put(3.1,-9.9){\makebox(0,0)[lb]{\(6\)}}
\put(4.1,-9.9){\makebox(0,0)[lb]{\(3..5\)}}
\put(6.1,-9.9){\makebox(0,0)[lb]{\(1\vert 2\vert 7..9\)}}

\put(0.5,-11){\makebox(0,0)[cc]{\(\mathcal{B}(14)\)}}
\put(-6.1,-11.9){\makebox(0,0)[rb]{\(-\#1;2\vert 3\)}}
\put(-4.1,-11.9){\makebox(0,0)[rb]{\(4\)}}
\put(-2.1,-11.8){\makebox(0,0)[rb]{\(9..15\)}}
\put(-1.1,-11.9){\makebox(0,0)[rb]{\(7\vert 8\)}}
\put(-0.1,-11.9){\makebox(0,0)[rb]{\(1\)}}
\put(2.1,-11.9){\makebox(0,0)[lb]{\(5\vert 6\)}}

\put(4,-11.9){\makebox(0,0)[lb]{\(2..7\)}}
\put(6,-11.9){\makebox(0,0)[lb]{\(1\vert 8..15\)}}

\put(-0.3,-12.4){\makebox(0,0)[ct]{\(\stackbin[0]{n-3,n}{G}\begin{pmatrix}0&1\\ 0&0\end{pmatrix}\)}}
\put(-3.9,-12.4){\makebox(0,0)[ct]{\(\stackbin[0]{n-3,n}{G}\begin{pmatrix}1&1\\ 0&0\end{pmatrix}\)}}
\put(-6.1,-12.4){\makebox(0,0)[ct]{\(\stackbin[0]{n-3,n}{G}\begin{pmatrix}1&1\\ \pm 1&0\end{pmatrix}\)}}
\put(2,-12.4){\makebox(0,0)[ct]{\(\stackbin[0]{n-3,n}{G}\begin{pmatrix}0&1\\ \pm 1&0\end{pmatrix}\)}}
\put(4.3,-12.4){\makebox(0,0)[ct]{\(\stackbin[\pm 1]{n-3,n}{G}\begin{pmatrix}\alpha&1\\ \gamma&0\end{pmatrix}\)}}

\put(-8,-15){\makebox(0,0)[cc]{\textbf{TKT}}}
\put(0,-15){\makebox(0,0)[cc]{d.25\({}^\ast\)}}
\put(-1,-15){\makebox(0,0)[cc]{d.25}}
\put(-2,-15){\makebox(0,0)[cc]{d.25}}
\put(-4,-15){\makebox(0,0)[cc]{F.11}}
\put(-6,-15){\makebox(0,0)[cc]{F.13}}
\put(2,-15){\makebox(0,0)[cc]{G.19}}
\put(3,-15){\makebox(0,0)[cc]{G.19}}
\put(4,-15){\makebox(0,0)[cc]{G.19}}
\put(6,-15){\makebox(0,0)[cc]{G.19}}
\put(-8,-15.5){\makebox(0,0)[cc]{\(\varkappa=\)}}
\put(0,-15.5){\makebox(0,0)[cc]{\((0143)\)}}
\put(-1,-15.5){\makebox(0,0)[cc]{\((0143)\)}}
\put(-2,-15.5){\makebox(0,0)[cc]{\((0143)\)}}
\put(-4,-15.5){\makebox(0,0)[cc]{\((1143)\)}}
\put(-6,-15.5){\makebox(0,0)[cc]{\((3143)\)}}
\put(2,-15.5){\makebox(0,0)[cc]{\((2143)\)}}
\put(3,-15.5){\makebox(0,0)[cc]{\((2143)\)}}
\put(4,-15.5){\makebox(0,0)[cc]{\((2143)\)}}
\put(6,-15.5){\makebox(0,0)[cc]{\((2143)\)}}
\put(-8.7,-15.7){\framebox(15.4,1){}}

\end{picture}

}

\end{figure}



\begin{theorem}
\label{thm:TKTd25Tree1Cc4}
\textbf{(Graph theoretic and algebraic invariants.)} \\
The coclass-\(4\) tree \(\mathcal{T}:=\mathcal{T}^4{R_5^4}\) of \(3\)-groups \(G\) with coclass \(\mathrm{cc}(G)=4\)
which arises from the metabelian root \(R_5^4:=\langle 2187,64\rangle-\#2;57\)
has the following abstract graph theoretic properties.
\begin{enumerate}
\item
The branches \(\mathcal{B}(i)\), \(i\ge 9\), are purely periodic with
primitive period \((\mathcal{B}(9),\mathcal{B}(10))\) of length \(\ell=2\).
\item
The cardinalities of the periodic branches are
\(\#\mathcal{B}(9)=24\) and \(\#\mathcal{B}(10)=33\).
\item
Depth, width, and information content of the tree are given by
\begin{equation}
\label{eqn:}
\mathrm{dp}(\mathcal{T}^4{R_5^4})=2, \quad \mathrm{wd}(\mathcal{T}^4{R_5^4})=30, \quad \text{ and } \quad \mathrm{IC}(\mathcal{T}^4{R_5^4})=57.
\end{equation}
\end{enumerate}
The algebraic invariants of the vertices forming the primitive period \((\mathcal{B}(9),\mathcal{B}(10))\) of the tree
are presented in Table
\ref{tbl:TKTd25TreeCc4}.
The leading six branches \(\mathcal{B}(9),\ldots,\mathcal{B}(14)\) are drawn in Figure
\ref{fig:TKTd25Tree1Cc4}.
\end{theorem}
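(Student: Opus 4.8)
The plan is to reuse, almost verbatim, the argument of Theorem~\ref{thm:TKTd19Tree1Cc4}, substituting the descendant numbers furnished by Proposition~\ref{prp:TKTd25Tree1Cc4}. That proposition already gives pure periodicity of the branches with primitive length $\ell=2$ and empty pre-period $\ell_\ast=0$, so the tree root and the periodic root coincide at logarithmic order $n_\ast=p_\ast=9$; this is claim~(1). First I would pin down the depth: since each mainline vertex $m_n$ with $n\ge 9$ has $C_1(m_n)\ge 2$ while each capable vertex of depth $1$ has $C_1=0$, no depth-$2$ vertex is capable, so $\mathrm{dp}(\mathcal{T})=2$, which licenses the depth-$2$ counting formulas.

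Next I would compute the branch cardinalities from Formula~\eqref{eqn:BranchDepth2} of Theorem~\ref{thm:DescendantNumbers}, namely $\#\mathcal{B}=N_1(m_n)+\sum_{i=2}^{C_1(m_n)}N_1(v_i(m_n))$, the sum running over the non-mainline capable children of the branch root. For $\mathcal{B}(9)$ the root $m_9$ has odd logarithmic order, so $N_1(m_9)=9$, and its unique non-mainline capable child has even logarithmic order $10$ with $N_1=15$, giving $\#\mathcal{B}(9)=9+15=24$; for $\mathcal{B}(10)$ the root $m_{10}$ has even logarithmic order, so $N_1(m_{10})=15$, and its two non-mainline capable children have odd logarithmic order $11$ with $N_1=9$ each, giving $\#\mathcal{B}(10)=15+9+9=33$. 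The width then drops out of Corollary~\ref{cor:TreeWidth}(2): for $n$ equal to $n_\ast+2=11$ and $n_\ast+\ell_\ast+\ell+1=12$ the layer sums are $15+15=30$ and $9+9+9=27$, so together with $N_1(m_9)=9$ one obtains $\mathrm{wd}(\mathcal{T})=\max(9,30,27)=30$. Finally Definition~\ref{dfn:InfoCont} gives $\mathrm{IC}(\mathcal{T})=\#\mathcal{B}(9)+\#\mathcal{B}(10)=24+33=57$, the pre-period being empty.

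For the tabulated algebraic data I would imitate the proof of Theorem~\ref{thm:TKTb10TreeCc4}: for each parametrized representative $G_\rho^{m,n}(\alpha,\beta,\gamma,\delta)$ of Formula~\eqref{eqn:Presentation} occurring in the primitive period $(\mathcal{B}(9),\mathcal{B}(10))$, let MAGMA~\cite{MAGMA} compute its depth, derived length, centre type $\zeta$, relation rank $\mu$, nuclear rank $\nu$, the abelian quotient invariants $\tau(1)$ and $\tau_2$, the transfer kernel type $\varkappa$, the action flag $\sigma$, the factorized order of $\mathrm{Aut}(G)$, and the SmallGroups identifier, then collate the outcome into Table~\ref{tbl:TKTd25TreeCc4}; periodic continuation along the mainline reproduces Figure~\ref{fig:TKTd25Tree1Cc4}.

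The one genuinely nontrivial ingredient is Proposition~\ref{prp:TKTd25Tree1Cc4} itself, i.e.\ the MAGMA verification of the descendant numbers $(N_1,C_1)=(9,2)$ and $(15,3)$ along the mainline and $(15,0),(9,0)$ at depth $1$, together with strict (not merely virtual) periodicity; within that, the subtle point to isolate is confirming that the period has primitive length exactly $2$ and not $1$, which here is immediate from $\#\mathcal{B}(9)=24\ne 33=\#\mathcal{B}(10)$. Everything above that proposition is formal bookkeeping through the graph-theoretic results of \S~\ref{s:GraphStruc}.
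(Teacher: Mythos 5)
Your proposal is correct and follows essentially the same route as the paper: the paper's combined proof of Proposition \ref{prp:TKTd25Tree1Cc4}, Theorem \ref{thm:TKTd25Tree1Cc4} and Corollary \ref{cor:TKTd25Tree1Cc4} likewise transplants the argument of Theorem \ref{thm:TKTd19Tree1Cc4}, obtaining \(\#\mathcal{B}(9)=9+15=24\), \(\#\mathcal{B}(10)=15+9+9=33\), \(\mathrm{wd}=\max(30,27)=30\), \(\mathrm{IC}=57\) from Formula \eqref{eqn:BranchDepth2}, Corollary \ref{cor:TreeWidth} and Definition \ref{dfn:InfoCont}, with the descendant numbers and the tabulated invariants supplied by MAGMA computations exactly as you describe. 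Your added remark that primitivity of length \(2\) is forced by \(\#\mathcal{B}(9)\ne\#\mathcal{B}(10)\) is a harmless (and correct) supplement.
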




\begin{figure}[ht]
\caption{The second coclass tree \(\mathcal{T}^4(P_7-\#2;59)\) with mainline of type \(\mathrm{d}.25^\ast\)}
\label{fig:TKTd25Tree2Cc4}


{\tiny

\setlength{\unitlength}{0.8cm}
\begin{picture}(18,20.5)(-10,-16)

\put(-10,0.5){\makebox(0,0)[cb]{order \(3^n\)}}

\put(-10,0){\line(0,-1){12}}
\multiput(-10.1,0)(0,-2){7}{\line(1,0){0.2}}

\put(-10.2,0){\makebox(0,0)[rc]{\(19\,683\)}}
\put(-9.8,0){\makebox(0,0)[lc]{\(3^9\)}}
\put(-10.2,-2){\makebox(0,0)[rc]{\(59\,049\)}}
\put(-9.8,-2){\makebox(0,0)[lc]{\(3^{10}\)}}
\put(-10.2,-4){\makebox(0,0)[rc]{\(177\,147\)}}
\put(-9.8,-4){\makebox(0,0)[lc]{\(3^{11}\)}}
\put(-10.2,-6){\makebox(0,0)[rc]{\(531\,441\)}}
\put(-9.8,-6){\makebox(0,0)[lc]{\(3^{12}\)}}
\put(-10.2,-8){\makebox(0,0)[rc]{\(1\,594\,323\)}}
\put(-9.8,-8){\makebox(0,0)[lc]{\(3^{13}\)}}
\put(-10.2,-10){\makebox(0,0)[rc]{\(4\,782\,969\)}}
\put(-9.8,-10){\makebox(0,0)[lc]{\(3^{14}\)}}
\put(-10.2,-12){\makebox(0,0)[rc]{\(14\,348\,907\)}}
\put(-9.8,-12){\makebox(0,0)[lc]{\(3^{15}\)}}

\put(-10,-12){\vector(0,-1){2}}

\put(-8,0.5){\makebox(0,0)[cb]{\(\tau(1)=\)}}

\put(-8,0){\makebox(0,0)[cc]{\((32)\)}}
\put(-8,-2){\makebox(0,0)[cc]{\((3^2)\)}}
\put(-8,-4){\makebox(0,0)[cc]{\((43)\)}}
\put(-8,-6){\makebox(0,0)[cc]{\((4^2)\)}}
\put(-8,-8){\makebox(0,0)[cc]{\((54)\)}}
\put(-8,-10){\makebox(0,0)[cc]{\((5^2)\)}}
\put(-8,-12){\makebox(0,0)[cc]{\((65)\)}}

\put(-8,-13){\makebox(0,0)[cc]{\textbf{TTT}}}
\put(-8.5,-13.2){\framebox(1,14){}}

\put(2.5,4){\makebox(0,0)[lc]{Legend:}}

\put(1,3.5){\makebox(0,0)[lc]{\(\mathrm{dl}=2\):}}
\put(1,3){\circle{0.2}}
\put(1,3){\circle*{0.1}}
\put(1,2.5){\circle*{0.2}}
\put(1,2){\circle{0.2}}
\put(1,1.5){\circle*{0.1}}
\put(1,1){\circle{0.1}}
\put(1.5,3){\makebox(0,0)[lc]{\(\zeta=1^2,\sigma=2\)}}
\put(1.5,2.5){\makebox(0,0)[lc]{\(\zeta=1^2,\sigma=1\)}}
\put(1.5,2){\makebox(0,0)[lc]{\(\zeta=1^2,\sigma=0\)}}
\put(1.5,1.5){\makebox(0,0)[lc]{\(\zeta=1,\sigma=2\)}}
\put(1.5,1){\makebox(0,0)[lc]{\(\zeta=1,\sigma=0\)}}

\put(4,3.5){\makebox(0,0)[lc]{\(\mathrm{dl}=3\):}}
\put(3.9,2.9){\framebox(0.2,0.2){\vrule height 3pt width 3pt}}
\put(3.9,2.4){\framebox(0.2,0.2){}}
\put(3.95,2.45){\framebox(0.1,0.1){}}
\put(3.9,1.9){\framebox(0.2,0.2){}}
\put(3.95,1.45){\framebox(0.1,0.1){}}
\put(4.5,3){\makebox(0,0)[lc]{\(\zeta=1^2,\sigma=2\)}}
\put(4.5,2.5){\makebox(0,0)[lc]{\(\zeta=1^2,\sigma=1\)}}
\put(4.5,2){\makebox(0,0)[lc]{\(\zeta=1^2,\sigma=0\)}}
\put(4.5,1.5){\makebox(0,0)[lc]{\(\zeta=1,\sigma=0\)}}

\put(5,-2){\vector(0,1){2}}
\put(5.1,-2){\makebox(0,0)[lc]{depth \(2\)}}
\put(5,-2){\vector(0,-1){2}}

\put(-5.1,-4){\vector(0,1){2}}
\put(-5.3,-3){\makebox(0,0)[rc]{period length \(2\)}}
\put(-5.1,-4){\vector(0,-1){2}}

\multiput(0,0)(0,-2){7}{\circle{0.2}}
\multiput(0,0)(0,-2){7}{\circle*{0.1}}
\multiput(2,-4)(0,-4){3}{\circle{0.2}}
\multiput(2,-4)(0,-4){3}{\circle*{0.1}}
\multiput(-4,-4)(0,-4){3}{\circle*{0.2}}
\multiput(-6,-4)(0,-4){3}{\circle*{0.2}}
\multiput(-4,-2)(0,-4){3}{\circle{0.2}}
\multiput(-6,-2)(0,-4){3}{\circle{0.2}}
\multiput(2,-2)(0,-4){3}{\circle{0.2}}

\multiput(-1.1,-4.1)(0,-4){3}{\framebox(0.2,0.2){\vrule height 3pt width 3pt}}
\multiput(-2.1,-4.1)(0,-4){3}{\framebox(0.2,0.2){}}
\multiput(-2.05,-4.05)(0,-4){3}{\framebox(0.1,0.1){}}
\multiput(-2.1,-2.1)(0,-4){3}{\framebox(0.2,0.2){}}

\multiput(3,-6)(0,-4){2}{\circle*{0.1}}

\multiput(4,-4)(0,-2){5}{\circle{0.1}}

\multiput(5.95,-4.05)(0,-2){5}{\framebox(0.1,0.1){}}

\multiput(0,0)(0,-2){6}{\line(0,-1){2}}
\multiput(0,-2)(0,-4){3}{\line(-1,-2){1}}
\multiput(0,0)(0,-2){6}{\line(-1,-1){2}}
\multiput(0,0)(0,-2){6}{\line(-2,-1){4}}
\multiput(0,0)(0,-2){6}{\line(-3,-1){6}}
\multiput(0,0)(0,-2){6}{\line(1,-1){2}}

\multiput(2,-4)(0,-4){2}{\line(1,-2){1}}
\multiput(2,-2)(0,-2){5}{\line(1,-1){2}}
\multiput(2,-2)(0,-2){5}{\line(2,-1){4}}

\multiput(-2.1,-2.1)(0,-4){3}{\makebox(0,0)[rt]{\(5\ast\)}}
\multiput(-6.1,-4.1)(0,-4){3}{\makebox(0,0)[rt]{\(2\ast\)}}
\multiput(-2.1,-4.1)(0,-4){3}{\makebox(0,0)[rt]{\(7\ast\)}}
\multiput(-1.1,-4.1)(0,-4){3}{\makebox(0,0)[rt]{\(2\ast\)}}
\multiput(1.9,-4.1)(0,-4){3}{\makebox(0,0)[rt]{\(2\ast\)}}
\multiput(3.9,-4.1)(0,-4){3}{\makebox(0,0)[rt]{\(6\ast\)}}
\multiput(3.9,-6.1)(0,-4){2}{\makebox(0,0)[rt]{\(3\ast\)}}
\multiput(5.9,-4.1)(0,-4){3}{\makebox(0,0)[rt]{\(9\ast\)}}
\multiput(5.9,-6.1)(0,-4){2}{\makebox(0,0)[rt]{\(5\ast\)}}

\put(0,-12){\vector(0,-1){2}}
\put(0.2,-13.6){\makebox(0,0)[lc]{infinite}}
\put(0.2,-14){\makebox(0,0)[lc]{mainline}}
\put(0.2,-14.4){\makebox(0,0)[lc]{\(\mathcal{T}^4(\langle 2187,64\rangle-\#2;59)\)}}

\put(-0.1,0.1){\makebox(0,0)[rb]{\(\langle 2187,64\rangle-\#2;59\)}}

\put(-0.2,-1){\makebox(0,0)[rc]{branch}}
\put(0.5,-1){\makebox(0,0)[cc]{\(\mathcal{B}(9)\)}}
\put(-6.1,-1.9){\makebox(0,0)[rb]{\(-\#1;4\)}}
\put(-4.1,-1.9){\makebox(0,0)[rb]{\(3\)}}
\put(-2.1,-1.9){\makebox(0,0)[rb]{\(1\vert 2\vert 7..9\)}}
\put(-0.1,-1.9){\makebox(0,0)[rb]{\(6\)}}
\put(2.1,-1.9){\makebox(0,0)[lb]{\(5\)}}

\put(0.5,-3){\makebox(0,0)[cc]{\(\mathcal{B}(10)\)}}
\put(-6.1,-3.9){\makebox(0,0)[rb]{\(-\#1;3\vert 4\)}}
\put(-4.2,-3.9){\makebox(0,0)[rb]{\(1\)}}
\put(-2.1,-3.8){\makebox(0,0)[rb]{\(9..15\)}}
\put(-1.1,-3.9){\makebox(0,0)[rb]{\(7\vert 8\)}}
\put(-0.1,-3.9){\makebox(0,0)[rb]{\(5\)}}
\put(2.1,-3.9){\makebox(0,0)[lb]{\(2\vert 6\)}}

\put(4,-3.9){\makebox(0,0)[lb]{\(2..7\)}}
\put(6,-3.9){\makebox(0,0)[lb]{\(1\vert 8..15\)}}

\put(0.5,-5){\makebox(0,0)[cc]{\(\mathcal{B}(11)\)}}
\put(-6.1,-5.9){\makebox(0,0)[rb]{\(-\#1;1\)}}
\put(-4.1,-5.9){\makebox(0,0)[rb]{\(4\)}}
\put(-2.1,-5.8){\makebox(0,0)[rb]{\(5..9\)}}
\put(-0.1,-5.9){\makebox(0,0)[rb]{\(2\)}}
\put(2.1,-5.9){\makebox(0,0)[lb]{\(3\)}}

\put(3.1,-5.9){\makebox(0,0)[lb]{\(6\)}}
\put(4.1,-5.9){\makebox(0,0)[lb]{\(3..5\)}}
\put(6.1,-5.9){\makebox(0,0)[lb]{\(1\vert 2\vert 7..9\)}}

\put(0.5,-7){\makebox(0,0)[cc]{\(\mathcal{B}(12)\)}}
\put(-6.1,-7.9){\makebox(0,0)[rb]{\(-\#1;1\vert 3\)}}
\put(-4.1,-7.9){\makebox(0,0)[rb]{\(4\)}}
\put(-2.1,-7.8){\makebox(0,0)[rb]{\(9..15\)}}
\put(-1.1,-7.9){\makebox(0,0)[rb]{\(7\vert 8\)}}
\put(-0.1,-7.9){\makebox(0,0)[rb]{\(2\)}}
\put(2.1,-7.9){\makebox(0,0)[lb]{\(5\vert 6\)}}

\put(4,-7.9){\makebox(0,0)[lb]{\(2..7\)}}
\put(6,-7.9){\makebox(0,0)[lb]{\(1\vert 8..15\)}}

\put(0.5,-9){\makebox(0,0)[cc]{\(\mathcal{B}(13)\)}}
\put(-6.1,-9.9){\makebox(0,0)[rb]{\(-\#1;1\)}}
\put(-4.1,-9.9){\makebox(0,0)[rb]{\(4\)}}
\put(-2.1,-9.8){\makebox(0,0)[rb]{\(5..9\)}}
\put(-0.1,-9.9){\makebox(0,0)[rb]{\(2\)}}
\put(2.1,-9.9){\makebox(0,0)[lb]{\(3\)}}

\put(3.1,-9.9){\makebox(0,0)[lb]{\(6\)}}
\put(4.1,-9.9){\makebox(0,0)[lb]{\(3..5\)}}
\put(6.1,-9.9){\makebox(0,0)[lb]{\(1\vert 2\vert 7..9\)}}

\put(0.5,-11){\makebox(0,0)[cc]{\(\mathcal{B}(14)\)}}
\put(-6.1,-11.9){\makebox(0,0)[rb]{\(-\#1;1\vert 3\)}}
\put(-4.1,-11.9){\makebox(0,0)[rb]{\(4\)}}
\put(-2.1,-11.8){\makebox(0,0)[rb]{\(9..15\)}}
\put(-1.1,-11.9){\makebox(0,0)[rb]{\(7\vert 8\)}}
\put(-0.1,-11.9){\makebox(0,0)[rb]{\(2\)}}
\put(2.1,-11.9){\makebox(0,0)[lb]{\(5\vert 6\)}}

\put(4,-11.9){\makebox(0,0)[lb]{\(2..7\)}}
\put(6,-11.9){\makebox(0,0)[lb]{\(1\vert 8..15\)}}

\put(-0.4,-12.4){\makebox(0,0)[ct]{\(\stackbin[0]{n-3,n}{G}\begin{pmatrix}0&-1\\ 0&0\end{pmatrix}\)}}
\put(-3.6,-12.4){\makebox(0,0)[ct]{\(\stackbin[0]{n-3,n}{G}\begin{pmatrix}1&-1\\ 0&0\end{pmatrix}\)}}
\put(-6.1,-12.4){\makebox(0,0)[ct]{\(\stackbin[0]{n-3,n}{G}\begin{pmatrix}1&-1\\ \pm 1&0\end{pmatrix}\)}}
\put(2.1,-12.4){\makebox(0,0)[ct]{\(\stackbin[0]{n-3,n}{G}\begin{pmatrix}0&-1\\ \pm 1&0\end{pmatrix}\)}}
\put(4.6,-12.4){\makebox(0,0)[ct]{\(\stackbin[\pm 1]{n-3,n}{G}\begin{pmatrix}\alpha&-1\\ \gamma&0\end{pmatrix}\)}}

\put(-8,-15){\makebox(0,0)[cc]{\textbf{TKT}}}
\put(0,-15){\makebox(0,0)[cc]{d.25\({}^\ast\)}}
\put(-1,-15){\makebox(0,0)[cc]{d.25}}
\put(-2,-15){\makebox(0,0)[cc]{d.25}}
\put(-4,-15){\makebox(0,0)[cc]{F.11}}
\put(-6,-15){\makebox(0,0)[cc]{F.13}}
\put(2,-15){\makebox(0,0)[cc]{G.19}}
\put(3,-15){\makebox(0,0)[cc]{G.19}}
\put(4,-15){\makebox(0,0)[cc]{G.19}}
\put(6,-15){\makebox(0,0)[cc]{G.19}}
\put(-8,-15.5){\makebox(0,0)[cc]{\(\varkappa=\)}}
\put(0,-15.5){\makebox(0,0)[cc]{\((0143)\)}}
\put(-1,-15.5){\makebox(0,0)[cc]{\((0143)\)}}
\put(-2,-15.5){\makebox(0,0)[cc]{\((0143)\)}}
\put(-4,-15.5){\makebox(0,0)[cc]{\((1143)\)}}
\put(-6,-15.5){\makebox(0,0)[cc]{\((3143)\)}}
\put(2,-15.5){\makebox(0,0)[cc]{\((2143)\)}}
\put(3,-15.5){\makebox(0,0)[cc]{\((2143)\)}}
\put(4,-15.5){\makebox(0,0)[cc]{\((2143)\)}}
\put(6,-15.5){\makebox(0,0)[cc]{\((2143)\)}}
\put(-8.7,-15.7){\framebox(15.4,1){}}

\end{picture}

}

\end{figure}



\begin{corollary}
\label{cor:TKTd25Tree1Cc4}
\textbf{(Actions and relation ranks.)} \\
The algebraic invariants of the vertices of the structured coclass-\(4\) tree \(\mathcal{T}^4{R_5^4}\) are listed in Table
\ref{tbl:TKTd25TreeCc4}. In particular:
\begin{enumerate}
\item
All mainline vertices,
two capable metabelian vertices of depth \(1\) with odd class and type \(\mathrm{G}.19\),
two distinguished terminal metabelian vertices of depth \(2\) with even class and type \(\mathrm{G}.19\), and
two distinguished terminal non-metabelian vertices of depth \(1\) with odd class and type \(\mathrm{d}.25\)
possess a \(V_4\)-action.
\item
Two distinguished terminal metabelian vertices of depth \(2\) with even class and type \(\mathrm{G}.19\),
all terminal vertices of depth \(1\) with odd class,
and the mainline vertices with even class,
possess an RI-action.
\item
The relation rank is given by
\(\mu=5\) for the mainline vertices \(\stackbin[0]{n-3,n}{G}\begin{pmatrix}0&1\\ 0&0\end{pmatrix}\) with \(n\ge 9\),
and the capable vertices \(\stackbin[0]{n-3,n}{G}\begin{pmatrix}0&1\\ \pm 1&0\end{pmatrix}\) of depth \(1\) with \(n\ge 10\),
and
\(\mu=4\) otherwise.
\end{enumerate}
\end{corollary}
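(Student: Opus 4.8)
The plan is to reduce all three assertions to a finite check over the primitive period $(\mathcal{B}(9),\mathcal{B}(10))$ and then transport the conclusions to every later branch by strict periodicity. By Theorem \ref{thm:FirstPeriod} the action flag $\sigma$ (including its starred RI-marking) and the relation rank $\mu$ are \emph{strict} invariants under the branch isomorphisms $\mathcal{B}(n)\to\mathcal{B}(n+2)$, and by Proposition \ref{prp:TKTd25Tree1Cc4} one has $\mathcal{B}(i+2)\simeq\mathcal{B}(i)$ for all $i\ge 9$. Hence it suffices to establish the entries displayed in Table \ref{tbl:TKTd25TreeCc4} for $9\le n\le 12$ and to note that the pattern repeats verbatim thereafter. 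The values $\mathrm{dp}$, $\mathrm{dl}$, $\zeta$, $\mu$, $\nu$, $\tau(1)$, $\tau_2$, $\varkappa$, and the unstarred part of $\sigma$ are obtained, exactly as in the proof of Theorem \ref{thm:TKTd25Tree1Cc4}, by direct computation with MAGMA \cite{MAGMA} on the normalized presentations $G_\rho^{m,n}(\alpha,\beta,\gamma,\delta)$ of Formula \eqref{eqn:Presentation}.

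For item (1) I would scan the $\sigma$-column of Table \ref{tbl:TKTd25TreeCc4} for the value $2$, which encodes the presence of a bicyclic subgroup $C_2\times C_2\le\mathrm{Aut}(G)$: it occurs exactly for the mainline vertices $\stackbin[0]{n-3,n}{G}\begin{pmatrix}0&1\\ 0&0\end{pmatrix}$ of type $\mathrm{d}.25^\ast$, for the two capable metabelian depth-$1$ vertices of type $\mathrm{G}.19$ sitting over the mainline at odd nilpotency class (equivalently, odd logarithmic order), for the two distinguished terminal metabelian depth-$2$ vertices of type $\mathrm{G}.19$ at even class, and for the two distinguished terminal non-metabelian depth-$1$ vertices of type $\mathrm{d}.25$ at odd class. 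Periodic continuation, licensed by Proposition \ref{prp:TKTd25Tree1Cc4}, reproduces precisely the list in (1). Item (3) is read off the $\mu$-column in the same way: $\mu=5$ for the mainline vertices $\stackbin[0]{n-3,n}{G}\begin{pmatrix}0&1\\ 0&0\end{pmatrix}$ and for the capable depth-$1$ vertices $\stackbin[0]{n-3,n}{G}\begin{pmatrix}0&1\\ \pm 1&0\end{pmatrix}$ (those with $\nu=1$), and $\mu=4$ for all remaining vertices, and this repeats in every branch.

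The one genuinely non-tabular point is the RI-action in item (2): whether a $\sigma$-group additionally admits a relator-inverting automorphism is not determined by the coarse invariants recorded in the table. Here I would run the dedicated MAGMA script that constructs the $3$-covering group $G^\ast$ of $G$, lifts the generator-inverting automorphism to $G^\ast$, and tests whether some lift acts as inversion on $\mathrm{H}_2(G,\mathbb{F}_3)$ (equivalently, on the $p$-multiplicator part of $G^\ast$). Carrying this out for the finitely many vertices of $\mathcal{B}(9)$ and $\mathcal{B}(10)$ produces the starred entries $2^\ast$ and $1^\ast$ in the $\sigma$-column and identifies the RI-vertices as the mainline vertices of even class, all terminal depth-$1$ vertices of odd class, and the two distinguished terminal metabelian depth-$2$ vertices of type $\mathrm{G}.19$ with even class; since the RI-property is part of the strict invariant $\sigma$, Proposition \ref{prp:TKTd25Tree1Cc4} then extends it to the whole tree. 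The main obstacle is exactly the correctness and completeness of this $3$-covering-group computation — in particular making sure that \emph{every} lift of the GI-automorphism is inspected, since a single lift failing to invert $\mathrm{H}_2$ does not preclude another lift succeeding — after which the remaining work is merely bookkeeping against the periodic table.
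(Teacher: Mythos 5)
Your proposal is correct and follows essentially the same route as the paper: the paper likewise establishes the table entries (including $\sigma$ and $\mu$) by MAGMA computation on the presentations, verifies the RI-action separately via an algorithm involving the $p$-covering group, and then extends everything to the whole tree by the strict periodicity of Proposition \ref{prp:TKTd25Tree1Cc4} together with the strict invariance of $\sigma$ and $\mu$ from Theorem \ref{thm:FirstPeriod}. Your extra caution about inspecting every lift of the GI-automorphism in the covering-group test is a sensible implementation detail, but it does not change the argument.
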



\begin{proof}
(of Proposition
\ref{prp:TKTd25Tree1Cc4},
Theorem
\ref{thm:TKTd25Tree1Cc4},
and Corollary
\ref{cor:TKTd25Tree1Cc4}
)
The proofs are very similar to those of
Proposition
\ref{prp:TKTd19Tree1Cc4},
Theorem
\ref{thm:TKTd19Tree1Cc4},
and Corollary
\ref{cor:TKTd19Tree1Cc4}.
The differences are only the concrete numerical values of the invariants involved in the calculations: \\
\(\#\mathcal{B}(9)=9+15=24\), \(\#\mathcal{B}(10)=15+9+9=33\),
\(\mathrm{wd}(\mathcal{T})=\max(15+15,9+9+9)=\max(30,27)=30\), and
\(\mathrm{IC}(\mathcal{T})=\#\mathcal{B}(9)+\#\mathcal{B}(10)=24+33=57\). \\
In detail, we proved that there is no pre-period, \(\ell_\ast=0\), 
and the primitive period \((\mathcal{B}(9),\mathcal{B}(10))\) of length \(\ell=2\) consists of \\
\(4\), resp. \(6\), metabelian vertices with \(\zeta=1^2\), resp. \(\zeta=1\), and \\
\(5\), resp. \(9\), non-metabelian vertices with \(\zeta=1^2\), resp. \(\zeta=1\), \\
(\(9=4+5\) children of \(m_9\), and \(15=6+9\) children of \(v_2(m_9)\) with depth \(1\)) \\
together \(24\) vertices (\(10\) of them metabelian) in branch \(\mathcal{B}(9)\), and \\
\(6\), resp. \(8\), metabelian vertices with \(\zeta=1^2\), resp. \(\zeta=1\), and \\
\(9\), resp. \(10\), non-metabelian vertices with \(\zeta=1^2\), resp. \(\zeta=1\), \\
(\(15=6+9\) children of \(m_{10}\), and \(18=2\cdot (4+5)\) children of \(v_{2,3}(m_{10})\), both with depth \(1\)) \\
together \(33\) vertices (\(14\) of them metabelian) in branch \(\mathcal{B}(10)\). \\
The tree \(\mathcal{T}^4{R_5^4}\) corresponds to
the infinite metabelian pro-\(3\) group \(S_{3,2}\) in
\cite[Cnj. 15 (b), p. 116]{Ek}.
\end{proof}


\begin{theorem}
\label{thm:TKTd25Tree2Cc4}
\textbf{(Strict isomorphism of the two trees.)} \\
Viewed as an algebraically structured infinite digraph,
the second coclass-\(4\) tree \(\mathcal{T}^4(\langle 2187,64\rangle-\#2;59)\) with mainline of type \(\mathrm{d}.25^\ast\)
in Figure
\ref{fig:TKTd25Tree2Cc4}
is \textbf{strictly} isomorphic to
the first coclass-\(4\) tree \(\mathcal{T}^4(\langle 2187,64\rangle-\#2;57)\) with mainline  of type \(\mathrm{d}.25^\ast\)
in Figure
\ref{fig:TKTd25Tree1Cc4}.
Only the presentations of corresponding vertices are different,
but they share common algebraic invariants.
\end{theorem}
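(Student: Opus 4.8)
The plan is to mimic the proof of Theorem \ref{thm:TKTd19Tree2Cc4}, exploiting the observation that the two coclass-\(4\) trees here differ only by the sign of the relational parameter \(\beta\) in Nebelung's parametrized presentation \eqref{eqn:Presentation}. First I would record the roots explicitly, \(R_5^4=G_0^{6,9}(0,1,0,0)\simeq\langle 2187,64\rangle-\#2;57\) and \(R_6^4=G_0^{6,9}(0,-1,0,0)\simeq\langle 2187,64\rangle-\#2;59\), and inspect Figures \ref{fig:TKTd25Tree1Cc4} and \ref{fig:TKTd25Tree2Cc4}: along corresponding branches the vertices of \(\mathcal{T}^4{R_6^4}\) are obtained from those of \(\mathcal{T}^4{R_5^4}\) by the substitution \(\beta\mapsto-\beta\) (with the induced normalization of \(\alpha,\gamma,\delta\) read off from the parameter boxes at the bottom of the two figures). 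This substitution is the natural candidate for the vertex bijection \(\psi\); one first has to check that it is well defined on isomorphism classes, i.e.\ that it neither identifies non-isomorphic groups nor separates isomorphic ones, which is done with the \texttt{isomorphism} test in MAGMA \cite{MAGMA}.

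Next I would establish that \(\psi\) is an isomorphism of the underlying rooted in-trees. By Proposition \ref{prp:TKTd25Tree1Cc4}, applied to both trees, each has no pre-period, is purely periodic of primitive length \(\ell=2\) from \(n_\ast=9\) onwards, has uniform depth \(2\), and has the same descendant numbers \((N_1,C_1)\) at mainline vertices and at capable vertices of depth \(1\). Feeding these into Theorem \ref{thm:DescendantNumbers}(2) shows that the branches satisfy \(\#\mathcal{B}(9)=24\) and \(\#\mathcal{B}(10)=33\) for both trees and that their branch-layer decompositions coincide, so \(\psi\) may be defined branch by branch, matching each vertex with the one reached from the branch root by the same sequence of parent edges. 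Since \(\psi\) by construction respects distance to the mainline and the parent operator acts by truncating the lower central series, we have \(\psi\circ\pi=\tilde{\pi}\circ\psi\), and Proposition \ref{prp:TreeIso} then yields that \(\psi\) is an isomorphism of rooted directed in-trees.

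It remains to verify that \(\psi\) is a \emph{strict} isomorphism of structured digraphs in the sense of Definition \ref{dfn:GraphStruc}, i.e.\ that corresponding vertices carry identical values of in-degree, out-degree, relation rank \(\mu\), nuclear rank \(\nu\), action flag \(\sigma\), transfer kernel type \(\varkappa\), logarithmic order, nilpotency class, \(\#\mathrm{Aut}\), \(\tau(1)\) and \(\tau_2\); here the transformation law \(\phi\) is the identity, since both trees sit at the same coclass \(r=4\). The data for the primitive period \((\mathcal{B}(9),\mathcal{B}(10))\) is already tabulated in Table \ref{tbl:TKTd25TreeCc4}, whose entries are visibly symmetric under \(\beta\mapsto-\beta\); I would confirm the vertex-by-vertex coincidence of all invariants by running the MAGMA scripts based on the \(p\)-group generation algorithm \cite{Nm2,Ob,HEO} for every vertex of logarithmic order \(9\le\mathrm{lo}(v)\le 31\) in both trees. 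Because the depth is uniformly bounded by \(2\), no depth-pruning is needed, so the virtual periodicity theorems of du Sautoy \cite[Thm. 1.11, Thm. 8.3]{dS} and of Eick and Leedham-Green \cite[Thm. 6, Thm. 9, Thm. 29]{EkLg} promote this finite verification to a strict statement for all \(\mathrm{lo}(v)\ge 32\).

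The main obstacle, exactly as for Theorem \ref{thm:TKTd19Tree2Cc4}, is not any single computation but certifying that the finite data genuinely pins down the infinite trees: one must confirm that the periodicity has truly \emph{set in} rather than merely been observed, which is why the branches are computed as far as \(\mathcal{B}(31)\) — beyond the point where, by Lemma \ref{lem:LayerCardinality}, the periodicity of every branch-layer summand contributing to a tree layer has demonstrably stabilized — and one must check that the substitution \(\beta\mapsto-\beta\) lifts to a genuine bijection of tree vertices before invoking Proposition \ref{prp:TreeIso}. Once these two points are secured, the strict isomorphism \(\mathcal{T}^4(\langle 2187,64\rangle-\#2;59)\simeq\mathcal{T}^4(\langle 2187,64\rangle-\#2;57)\) follows, and only the presentations of corresponding vertices differ.
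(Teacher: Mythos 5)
Your proposal is correct and takes essentially the same route as the paper: a finite MAGMA verification of the leading branches (the paper computes the d.25\({}^\ast\) trees only up to \(\mathrm{lo}(V)\le 17\), observing that pure periodicity sets in with \(\mathcal{B}(9)\simeq\mathcal{B}(11)\)), followed by the virtual periodicity theorems of du Sautoy and Eick--Leedham-Green, which yield the claim for all larger orders without depth-pruning since the depth is uniformly bounded by \(2\). Your explicit \(\beta\mapsto-\beta\) substitution is a pleasant way to organize the vertex correspondence on the metabelian skeleton, but it does not constitute a genuinely different argument.
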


\begin{proof}
(Proof of Thm.
\ref{thm:TKTd25Tree1Cc4}
and Thm
\ref{thm:TKTd25Tree2Cc4}.)
The claims have been verified with the aid of MAGMA
\cite{MAGMA}
for all vertices \(V\) with logarithmic orders \(9\le\mathrm{lo}(V)\le 17\).
Pure periodicity of branches sets in with \(\mathcal{B}(9)\simeq\mathcal{B}(11)\).
Thus, the claims for all vertices \(V\) with logarithmic orders \(\mathrm{lo}(V)\ge 18\)
are a consequence of the periodicity theorems by du Sautoy in
\cite{dS}
and by Eick and Leedham-Green in
\cite{EkLg},
without the need of pruning the depth, which is bounded uniformly by \(2\).
\end{proof}


\renewcommand{\arraystretch}{1.2}

\begin{table}[hb]
\caption{Data for sporadic \(3\)-groups \(G\) with \(11\le n=\mathrm{lo}(G)\le 12\) of the forest \(\mathcal{F}(5)\)}
\label{tbl:SporCc5}
\begin{center}
\begin{tabular}{|c|c|l||c|c|c||c|c||c|c|l|c||c|c|}
\hline
 \(\#\) & \(m,n\)  & \(\rho;\alpha,\beta,\gamma,\delta\) & dp & dl & \(\zeta\) & \(\mu\) & \(\nu\) & \(\tau(1)\) & \(\tau_2\) & Type                   & \(\varkappa\) & \(\sigma\) & \(\#\mathrm{Aut}\)  \\
\hline
  \(1\) & \(6,9\)  & \(0;0,0,0,0\) (\(P_9\))             & 0  & 2  & \(1^2\)   & \(6\)   & \(2\)   & \(32\)      & \(2^31\)   & \(\mathrm{b}.10^\ast\) & \((0043)\)    & \(2\)      & \(2^3\cdot 3^{14}\) \\
\hline
  \(1\) & \(7,11\) & \(0;0,0,0,0\) (\(P_{11}\))          & 0  & 2  & \(1^2\)   & \(6\)   & \(2\)   & \(3^2\)     & \(32^3\)   & \(\mathrm{b}.10^\ast\) & \((0043)\)    & \(2\)      & \(2^3\cdot 3^{18}\) \\
  \(1\) & \(7,11\) & \(0;0,1,0,1\)                       & 0  & 2  & \(1^2\)   & \(5\)   & \(1\)   & \(3^2\)     & \(32^3\)   & \(\mathrm{d}.19^\ast\) & \((0343)\)    & \(0\)      & \(3^{18}\)          \\
  \(1\) & \(7,11\) & \(0;0,0,0,1\)                       & 0  & 2  & \(1^2\)   & \(5\)   & \(1\)   & \(3^2\)     & \(32^3\)   & \(\mathrm{d}.23^\ast\) & \((0243)\)    & \(0\)      & \(2\cdot 3^{18}\)   \\
  \(1\) & \(7,11\) & \(0;0,1,0,0\)                       & 0  & 2  & \(1^2\)   & \(5\)   & \(1\)   & \(3^2\)     & \(32^3\)   & \(\mathrm{d}.25^\ast\) & \((0143)\)    & \(0\)      & \(2\cdot 3^{18}\)   \\ 
  \(1\) & \(7,11\) & \(0;1,1,-1,1\)                      & 0  & 2  & \(1^2\)   & \(4\)   & \(0\)   & \(3^2\)     & \(32^3\)   & \(\mathrm{F}.7\)       & \((3443)\)    & \(0\)      & \(3^{18}\)          \\
  \(2\) & \(7,11\) & \(0;1,\pm 1,0,0\)                   & 0  & 2  & \(1^2\)   & \(4\)   & \(0\)   & \(3^2\)     & \(32^3\)   & \(\mathrm{F}.11\)      & \((1143)\)    & \(0\)      & \(2\cdot 3^{18}\)   \\
  \(2\) & \(7,11\) & \(0;\pm(1,0,1),1\)                  & 0  & 2  & \(1^2\)   & \(4\)   & \(0\)   & \(3^2\)     & \(32^3\)   & \(\mathrm{F}.12\)      & \((1343)\)    & \(0\)      & \(3^{18}\)          \\
  \(2\) & \(7,11\) & \(0;1,\pm(1,1),0\)                  & 0  & 2  & \(1^2\)   & \(4\)   & \(0\)   & \(3^2\)     & \(32^3\)   & \(\mathrm{F}.13\)      & \((3143)\)    & \(0\)      & \(3^{18}\)          \\
  \(1\) & \(7,11\) & \(0;1,0,0,1\)                       & 0  & 2  & \(1^2\)   & \(5\)   & \(1\)   & \(3^2\)     & \(32^3\)   & \(\mathrm{G}.16\)      & \((1243)\)    & \(0\)      & \(2\cdot 3^{18}\)   \\
  \(1\) & \(7,11\) & \(0;0,1,1,0\)                       & 0  & 2  & \(1^2\)   & \(5\)   & \(1\)   & \(3^2\)     & \(32^3\)   & \(\mathrm{G}.19\)      & \((2143)\)    & \(0\)      & \(2\cdot 3^{18}\)   \\
  \(2\) & \(7,11\) & \(0;1,\pm(1,1),1\)                  & 0  & 2  & \(1^2\)   & \(5\)   & \(1\)   & \(3^2\)     & \(32^3\)   & \(\mathrm{H}.4\)       & \((3343)\)    & \(0\)      & \(2\cdot 3^{18}\)   \\
\hline
 \(12\) & \(8,12\) &                                     & 1  & 2  & \(1\)     & \(4\)   & \(0\)   & \(3^2\)     & \(3^22^2\) &                        &               & \(0\)      & \(2\cdot 3^{20}\)   \\
 \(12\) & \(8,12\) &                                     & 1  & 2  & \(1\)     & \(4\)   & \(0\)   & \(3^2\)     & \(3^22^2\) &                        &               & \(0\)      & \(3^{20}\)          \\
\hline
  \(8\) & \(8,12\) &                                     & 1  & 3  & \(1\)     & \(4\)   & \(0\)   & \(3^2\)     & \(32^3\)   &                        &               & \(0\)      & \(2\cdot 3^{19}\)   \\
 \(20\) & \(8,12\) &                                     & 1  & 3  & \(1\)     & \(4\)   & \(0\)   & \(3^2\)     & \(32^3\)   &                        &               & \(0\)      & \(3^{19}\)          \\
  \(8\) & \(8,12\) &                                     & 1  & 3  & \(1\)     & \(4\)   & \(0\)   & \(3^2\)     & \(32^3\)   &                        &               & \(0\)      & \(3^{18}\)          \\
\hline
\end{tabular}
\end{center}
\end{table}



\section{Sporadic and periodic \(3\)-groups \(G\) of odd coclass \(\mathrm{cc}(G)\ge 5\)}
\label{s:PeriodicSporadic5}
\noindent
Although formulated for the particular coclass \(r=5\),
all results on sporadic and periodic groups in this section
are valid for any odd coclass \(r\ge 5\).
The exemplary (co-periodic) sporadic part \(\mathcal{F}_0(5)\) of the coclass forest \(\mathcal{F}(5)\)
is presented in the following Proposition
\ref{prp:SporCc5}.

\begin{proposition}
\label{prp:SporCc5}
The sporadic part \(\mathcal{F}_0(5)\) of the coclass-\(5\) forest \(\mathcal{F}(5)\) consists of
\begin{itemize}
\item
\(7\) \((1+2+2+2)\) isolated metabelian vertices with types \(\mathrm{F}.7\), \(\mathrm{F}.11\), \(\mathrm{F}.12\), \(\mathrm{F}.13\),
\item
\(4\) \((1+1+2)\) metabelian roots of finite trees with types \(\mathrm{G}.16\), \(\mathrm{G}.19\), \(\mathrm{H}.4\),
together with their \(24\) metabelian and \(36\) non-metabelian children, all with depth \(\mathrm{dp}=1\),
\item
\(34\) \((16+9+9)\) isolated vertices with \(\mathrm{dl}=3\) and types \(\mathrm{d}.19\), \(\mathrm{d}.23\), \(\mathrm{d}.25\),
\item
\(89\) isolated vertices with \(\mathrm{dl}=3\) and type \(\mathrm{b}.10\),
\item
\(13\) capable vertices with \(\mathrm{dl}=3\) and type \(\mathrm{b}.10\), \\
whose children do not belong to \(\mathcal{F}_0(5)\), by definition.
\end{itemize}
The action flag of all vertices is \(\sigma=0\), and consequently none of them has an RI- or \(V_4\)-action.

Together with the \(4\) metabelian roots of coclass-\(5\) trees,
the \(7+4+34+89+13=11+136\) vertices of depth \(\mathrm{dp}=0\)
are exactly the \(N_2=151\) children of step size \(s=2\) of \(P_9=\langle 2187,64\rangle-\#2;33\),
and the \(4+(4+13)\) capable vertices among them correspond to the invariant  \(C_2=21\) of \(P_9\).
\end{proposition}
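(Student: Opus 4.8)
The plan is to reduce the assertion to a finite machine computation, pinned down at both ends by the rigid invariants of the main trunk established in Theorem~\ref{thm:MainTrunk}. As explained at the beginning of \S~\ref{s:MainTrunk}, the minimal logarithmic order of a metabelian $3$-group in $\mathcal{F}(5)$ is $2\cdot 5+1=11$, so every vertex of $\mathcal{F}_0(5)$ of depth $\mathrm{dp}=0$, as well as every root of an infinite coclass-$5$ tree, is an immediate descendant of step size $s=2$ of the main trunk vertex $P_9=\langle 2187,64\rangle-\#2;33$, which has class $5$, coclass $4$, and logarithmic order $9$. By Theorem~\ref{thm:MainTrunk}(4), applied with the even value $j=4$, there are exactly $N_2=151$ such descendants (of logarithmic order $11$), of which $C_2=21$ are capable; these two numbers form the backbone against which the census is checked.

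First I would enumerate these $151$ groups with the $p$-group generation algorithm \cite{Nm2,Ob,HEO} as implemented in MAGMA \cite{MAGMA}, using the ANUPQ package \cite{GNO} for relative identifiers, and compute for each the transfer kernel type $\varkappa$, the derived length $\mathrm{dl}$, the nuclear rank $\nu$, the action flag $\sigma$, and the abelian quotient invariants $\tau(1)$, $\tau_2$, exactly as in the proof of Theorem~\ref{thm:SporCc4}. Next, by Corollary~\ref{cor:MainTrunk}, I would split off the $4$ coclass-settled capable vertices that are the roots $R_j^5$, $1\le j\le 4$, of the four infinite coclass-$5$ trees with mainline types $\mathrm{b}.10$, $\mathrm{d}.19$, $\mathrm{d}.23$, $\mathrm{d}.25$; by Definition~\ref{dfn:sporadic} their descendants belong to those trees and not to $\mathcal{F}_0(5)$. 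The remaining $147$ top vertices I would sort by derived length: the $11$ metabelian ones of depth $0$ are the $7$ isolated groups of types $\mathrm{F}.7,\mathrm{F}.11,\mathrm{F}.12,\mathrm{F}.13$ and the $4$ roots of types $\mathrm{G}.16,\mathrm{G}.19,\mathrm{H}.4$; for the latter I would verify by nuclear-rank computations that each descendant tree has depth exactly $1$ and enumerate the $24$ metabelian and $36$ non-metabelian terminal children. The remaining $136$ top vertices have derived length $3$, namely $34$ isolated of types $\mathrm{d}.19,\mathrm{d}.23,\mathrm{d}.25$ and $89+13$ of type $\mathrm{b}.10$, the $13$ capable ones of type $\mathrm{b}.10$ being pruned by definition.

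The bookkeeping is then closed by two numerical identities serving as independent consistency checks: the $7+4+34+89+13=147$ top vertices of depth $0$ together with the $4$ coclass-tree roots recover $4+147=151=N_2(P_9)$, and the capable top vertices, namely the $4$ coclass-tree roots, the $4$ finite $\mathrm{G}/\mathrm{H}$ roots, and the $13$ capable $\mathrm{b}.10$ vertices, recover $4+4+13=21=C_2(P_9)$. I would also confirm that every vertex of $\mathcal{F}_0(5)$ has $\sigma=0$, and hence no RI- or $V_4$-action; this is consistent with the last assertion of Theorem~\ref{thm:MainTrunk}(4), since for even $j$ the only step-size-$2$ $\sigma$-child of $P_{2j+1}$ is $P_{2j+3}=P_{11}$, which is a root of a coclass-$5$ tree rather than a sporadic vertex. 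Finally, the metabelian skeleton would be matched against the corresponding figure for even CF-invariant in Nebelung's dissertation \cite{Ne} and against the pro-$3$ groups of \cite[Cnj.~15]{Ek}, exactly as in Theorem~\ref{thm:SporCc4}; the resulting invariants are recorded in Table~\ref{tbl:SporCc5}.

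The main obstacle is not conceptual but one of reliability and completeness of the computation: one must be certain that the $p$-group generation algorithm produces \emph{all} step-size-$2$ descendants of $P_9$ without omission, that the capability and nuclear-rank determinations are correct so that no vertex is wrongly pruned or wrongly retained, and that the delicate separation between the four roots of infinite coclass-$5$ trees and the genuinely sporadic top vertices is carried out correctly. The exact values $N_2=151$ and $C_2=21$ supplied by Theorem~\ref{thm:MainTrunk} are precisely what makes this verification trustworthy.
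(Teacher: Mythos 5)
Your proposal is correct and follows essentially the same route as the paper: a finite MAGMA computation of all step-size-\(2\) descendants of \(P_9\) via the \(p\)-group generation algorithm, separation of the four coclass-\(5\) tree roots from the sporadic top vertices, classification of the remainder by derived length and transfer kernel type, with the totals cross-checked against \(N_2=151\), \(C_2=21\) from Theorem~\ref{thm:MainTrunk} and the metabelian skeleton matched against Nebelung's diagrams and Eick's pro-\(3\) groups. Your additional consistency remark on \(\sigma\) via Theorem~\ref{thm:MainTrunk}(4) is a nice touch but does not change the substance of the argument.
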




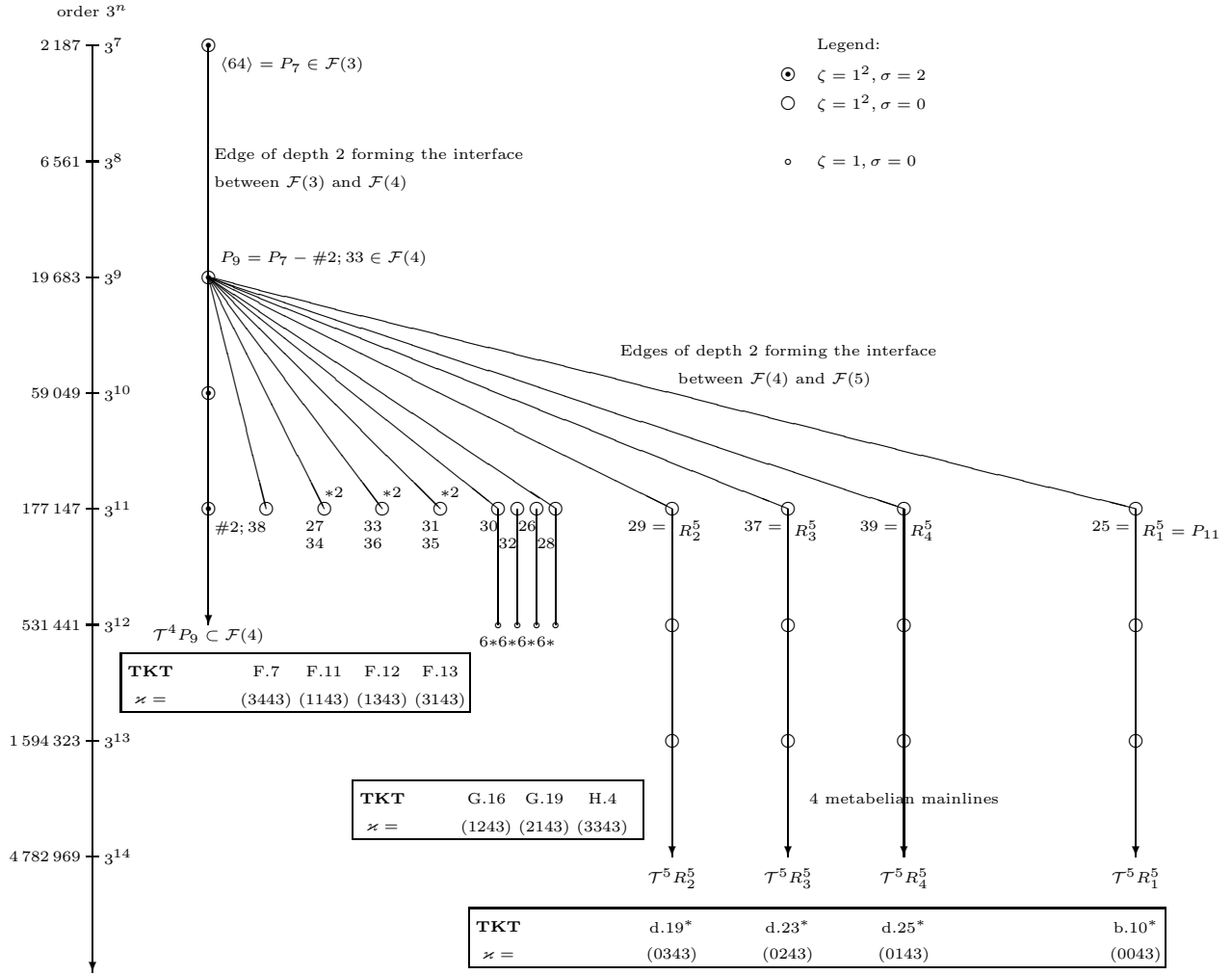
\begin{figure}[ht]
\caption{Metabelian interface between the coclass forests \(\mathcal{F}(4)\) and \(\mathcal{F}(5)\)}
\label{fig:SporCc5}


{\tiny

\begin{center}

\setlength{\unitlength}{0.8cm}
\begin{picture}(19,17)(-3,-14)

\put(-2,2.5){\makebox(0,0)[cb]{order \(3^n\)}}

\put(-2,2){\line(0,-1){14}}
\multiput(-2.1,2)(0,-2){8}{\line(1,0){0.2}}

\put(-2.2,2){\makebox(0,0)[rc]{\(2\,187\)}}
\put(-1.8,2){\makebox(0,0)[lc]{\(3^7\)}}
\put(-2.2,0){\makebox(0,0)[rc]{\(6\,561\)}}
\put(-1.8,0){\makebox(0,0)[lc]{\(3^8\)}}
\put(-2.2,-2){\makebox(0,0)[rc]{\(19\,683\)}}
\put(-1.8,-2){\makebox(0,0)[lc]{\(3^9\)}}
\put(-2.2,-4){\makebox(0,0)[rc]{\(59\,049\)}}
\put(-1.8,-4){\makebox(0,0)[lc]{\(3^{10}\)}}
\put(-2.2,-6){\makebox(0,0)[rc]{\(177\,147\)}}
\put(-1.8,-6){\makebox(0,0)[lc]{\(3^{11}\)}}
\put(-2.2,-8){\makebox(0,0)[rc]{\(531\,441\)}}
\put(-1.8,-8){\makebox(0,0)[lc]{\(3^{12}\)}}
\put(-2.2,-10){\makebox(0,0)[rc]{\(1\,594\,323\)}}
\put(-1.8,-10){\makebox(0,0)[lc]{\(3^{13}\)}}
\put(-2.2,-12){\makebox(0,0)[rc]{\(4\,782\,969\)}}
\put(-1.8,-12){\makebox(0,0)[lc]{\(3^{14}\)}}

\put(-2,-12){\vector(0,-1){2}}

\put(10.5,2){\makebox(0,0)[lc]{Legend:}}
\put(10,1.5){\circle{0.2}}
\put(10,1.5){\circle*{0.1}}
\put(10,1){\circle{0.2}}
\put(10,0){\circle{0.1}}
\put(10.5,1.5){\makebox(0,0)[lc]{\(\zeta=1^2,\sigma=2\)}}
\put(10.5,1){\makebox(0,0)[lc]{\(\zeta=1^2,\sigma=0\)}}
\put(10.5,0){\makebox(0,0)[lc]{\(\zeta=1,\sigma=0\)}}

\put(0,2){\circle{0.2}}
\put(0,2){\circle*{0.1}}

\put(0,2){\line(0,-1){4}}
\put(0.1,0){\makebox(0,0)[lb]{Edge of depth \(2\) forming the interface}}
\put(0.1,-0.5){\makebox(0,0)[lb]{between \(\mathcal{F}(3)\) and \(\mathcal{F}(4)\)}}

\multiput(0,-2)(0,-2){3}{\circle{0.2}}
\multiput(0,-2)(0,-2){3}{\circle*{0.1}}

\put(0,-2){\vector(0,-1){6}}
\put(0,-8.0){\makebox(0,0)[ct]{\(\mathcal{T}^4P_9\subset\mathcal{F}(4)\)}}

\put(0,-2){\line(1,-4){1}}
\put(0,-2){\line(1,-2){2}}
\put(0,-2){\line(3,-4){3}}
\put(0,-2){\line(1,-1){4}}
\put(0,-2){\line(5,-4){5}}
\put(0,-2){\line(3,-2){6}}
\put(0,-2){\line(2,-1){8}}
\put(0,-2){\line(5,-2){10}}
\put(0,-2){\line(3,-1){12}}
\put(0,-2){\line(4,-1){16}}
\put(7.1,-3.4){\makebox(0,0)[lb]{Edges of depth \(2\) forming the interface}}
\put(8.1,-3.9){\makebox(0,0)[lb]{between \(\mathcal{F}(4)\) and \(\mathcal{F}(5)\)}}

\multiput(1,-6)(1,0){4}{\circle{0.2}}

\multiput(5,-6)(0.33,0){4}{\circle{0.2}}


\multiput(8,-6)(2,0){2}{\circle{0.2}}
\multiput(8,-8)(2,0){2}{\circle{0.2}}
\multiput(8,-10)(2,0){2}{\circle{0.2}}

\multiput(12,-6)(0,-2){3}{\circle{0.2}}

\multiput(16,-6)(0,-2){3}{\circle{0.2}}

\put(8,-6){\vector(0,-1){6}}
\put(10,-6){\vector(0,-1){6}}
\put(12,-6){\vector(0,-1){6}}
\put(16,-6){\vector(0,-1){6}}

\put(12,-11){\makebox(0,0)[cc]{\(4\) metabelian mainlines}}

\put(8,-12.2){\makebox(0,0)[ct]{\(\mathcal{T}^5R_2^5\)}}
\put(10,-12.2){\makebox(0,0)[ct]{\(\mathcal{T}^5R_3^5\)}}
\put(12,-12.2){\makebox(0,0)[ct]{\(\mathcal{T}^5R_4^5\)}}
\put(16,-12.2){\makebox(0,0)[ct]{\(\mathcal{T}^5R_1^5\)}}

\multiput(5,-6)(0.33,0){4}{\line(0,-1){2}}
\multiput(5,-8)(0.33,0){4}{\circle{0.1}}

\put(0.2,1.8){\makebox(0,0)[lt]{\(\langle 64\rangle=P_7\in\mathcal{F}(3)\)}}
\put(0.2,-1.8){\makebox(0,0)[lb]{\(P_9=P_7-\#2;33\in\mathcal{F}(4)\)}}

\put(1,-6.2){\makebox(0,0)[rt]{\(\#2;38\)}}
\put(2,-5.8){\makebox(0,0)[lb]{\(\ast 2\)}}
\put(2,-6.2){\makebox(0,0)[rt]{\(27\)}}
\put(2,-6.5){\makebox(0,0)[rt]{\(34\)}}
\put(3,-5.8){\makebox(0,0)[lb]{\(\ast 2\)}}
\put(3,-6.2){\makebox(0,0)[rt]{\(33\)}}
\put(3,-6.5){\makebox(0,0)[rt]{\(36\)}}
\put(4,-5.8){\makebox(0,0)[lb]{\(\ast 2\)}}
\put(4,-6.2){\makebox(0,0)[rt]{\(31\)}}
\put(4,-6.5){\makebox(0,0)[rt]{\(35\)}}

\put(5.00,-6.2){\makebox(0,0)[rt]{\(30\)}}
\put(5.00,-8.2){\makebox(0,0)[rt]{\(6\ast\)}}
\put(5.66,-6.2){\makebox(0,0)[rt]{\(26\)}}
\put(5.66,-8.2){\makebox(0,0)[rt]{\(6\ast\)}}

\put(5.33,-6.5){\makebox(0,0)[rt]{\(32\)}}
\put(5.33,-8.2){\makebox(0,0)[rt]{\(6\ast\)}}
\put(5.99,-6.5){\makebox(0,0)[rt]{\(28\)}}
\put(5.99,-8.2){\makebox(0,0)[rt]{\(6\ast\)}}

\put(7.9,-6.2){\makebox(0,0)[rt]{\(29=\)}}
\put(8.1,-6.2){\makebox(0,0)[lt]{\(R_2^5\)}}
\put(9.9,-6.2){\makebox(0,0)[rt]{\(37=\)}}
\put(10.1,-6.2){\makebox(0,0)[lt]{\(R_3^5\)}}
\put(11.9,-6.2){\makebox(0,0)[rt]{\(39=\)}}
\put(12.1,-6.2){\makebox(0,0)[lt]{\(R_4^5\)}}
\put(15.9,-6.2){\makebox(0,0)[rt]{\(25=\)}}
\put(16.1,-6.2){\makebox(0,0)[lt]{\(R_1^5=P_{11}\)}}

\put(-1,-8.8){\makebox(0,0)[cc]{\textbf{TKT}}}
\put(1,-8.8){\makebox(0,0)[cc]{F.7}}
\put(2,-8.8){\makebox(0,0)[cc]{F.11}}
\put(3,-8.8){\makebox(0,0)[cc]{F.12}}
\put(4,-8.8){\makebox(0,0)[cc]{F.13}}
\put(-1,-9.3){\makebox(0,0)[cc]{\(\varkappa=\)}}
\put(1,-9.3){\makebox(0,0)[cc]{\((3443)\)}}
\put(2,-9.3){\makebox(0,0)[cc]{\((1143)\)}}
\put(3,-9.3){\makebox(0,0)[cc]{\((1343)\)}}
\put(4,-9.3){\makebox(0,0)[cc]{\((3143)\)}}
\put(-1.5,-9.5){\framebox(6,1){}}

\put(3,-11){\makebox(0,0)[cc]{\textbf{TKT}}}
\put(4.8,-11){\makebox(0,0)[cc]{G.16}}
\put(5.8,-11){\makebox(0,0)[cc]{G.19}}
\put(6.8,-11){\makebox(0,0)[cc]{H.4}}
\put(3,-11.5){\makebox(0,0)[cc]{\(\varkappa=\)}}
\put(4.8,-11.5){\makebox(0,0)[cc]{\((1243)\)}}
\put(5.8,-11.5){\makebox(0,0)[cc]{\((2143)\)}}
\put(6.8,-11.5){\makebox(0,0)[cc]{\((3343)\)}}
\put(2.5,-11.7){\framebox(5,1){}}

\put(5,-13.2){\makebox(0,0)[cc]{\textbf{TKT}}}
\put(8,-13.2){\makebox(0,0)[cc]{d.19\({}^\ast\)}}
\put(10,-13.2){\makebox(0,0)[cc]{d.23\({}^\ast\)}}
\put(12,-13.2){\makebox(0,0)[cc]{d.25\({}^\ast\)}}
\put(16,-13.2){\makebox(0,0)[cc]{b.10\({}^\ast\)}}
\put(5,-13.7){\makebox(0,0)[cc]{\(\varkappa=\)}}
\put(8,-13.7){\makebox(0,0)[cc]{\((0343)\)}}
\put(10,-13.7){\makebox(0,0)[cc]{\((0243)\)}}
\put(12,-13.7){\makebox(0,0)[cc]{\((0143)\)}}
\put(16,-13.7){\makebox(0,0)[cc]{\((0043)\)}}
\put(4.5,-13.9){\framebox(12,1){}}

\end{picture}

\end{center}

}

\end{figure}



\noindent
Figure
\ref{fig:SporCc5}
sketches an outline of the \textit{metabelian skeleton} of the coclass forest \(\mathcal{F}(5)\) in its top region.
The vertices \(P_7=\langle 2187,64\rangle\in\mathcal{F}(3)\)
and \(P_9=\langle 2187,64\rangle-\#2;33\in\mathcal{F}(4)\),
with the crucial bifurcation from \(\mathcal{F}(4)\) to \(\mathcal{F}(5)\),
belong to the infinite main trunk (\S\
\ref{s:MainTrunk}).


\begin{theorem}
\label{thm:SporCc5}
The coclass-\(r\) forest \(\mathcal{F}(r)\) with any odd \(r\ge 5\) is the disjoint union of
its finite sporadic part \(\mathcal{F}_0(r)\) with total information content
\begin{equation}
\label{eqn:SporadicIC5}
s=\#\mathcal{F}_0(r)=207
\end{equation}
and \(t=4\) infinite coclass-\(r\) trees \(\mathcal{T}^r(R_i^r)\)
with roots \(R_i^r:=P_{2r-1}-\#2;n_i\), where \((n_i)_{1\le i\le 4}=(25,29,37,39)\) for \(r=5\).
The algebraic invariants for groups with centre \(\zeta=1^2\),
and in cumulative form for \(\zeta=1\), are given for \(r=5\) in Table
\ref{tbl:SporCc5},
where the parent vertex \(P_{2r-1}=P_9\) on the maintrunk is also included,
but the \(136\) non-metabelian top vertices of depth \(\mathrm{dp}=0\) are excluded.
\end{theorem}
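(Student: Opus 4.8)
The plan is to prove Theorem~\ref{thm:SporCc5} by combining the structural decomposition that the coclass theorems provide for $\mathcal{G}(3,r,C_3\times C_3)$ with an explicit census of the step-size-$2$ descendants of the main trunk vertex $P_{2r-1}$, whose relevant invariants are already locked into a periodic pattern by Theorem~\ref{thm:MainTrunk}(4). The counting part splits into two logarithmic orders only, $2r+1$ and $2r+2$, so the combinatorics is entirely ``local'' around $P_{2r-1}$.

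First I would fix an odd $r\ge 5$, so that $j:=r-1\ge 4$ is even, and invoke the coclass theorems of Leedham-Green~\cite{Lg} and Shalev~\cite{Sv} to write $\mathcal{G}(3,r,C_3\times C_3)$ as the disjoint union of a finite sporadic part and finitely many coclass trees. Corollary~\ref{cor:MainTrunk}(1) then pins down, among these trees, exactly the four with \emph{metabelian} mainlines: they arise as step-$2$ children of $P_{2j+1}=P_{2r-1}$, of types $\mathrm{b}.10$ (with root $P_{2j+3}=P_{2r+1}$, which for $r=5$ is $P_{11}=P_9-\#2;25$), $\mathrm{d}.19$, $\mathrm{d}.23$ and $\mathrm{d}.25$; since $j\ge 4$ we are in the periodic regime of Theorem~\ref{thm:MainTrunk}(4) and there is no pre-periodic exception on the odd side, in contrast with the even-coclass case where $\mathcal{F}(4)$ is enlarged. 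By the pruning convention of Definition~\ref{dfn:Forests} and Remark~\ref{rmk:Pruning}, passing from $\mathcal{G}(r)$ to $\mathcal{F}(r)$ deletes precisely the coclass trees with non-metabelian mainlines, so $\mathcal{F}(r)$ retains exactly the $t=4$ trees $\mathcal{T}^r(R_i^r)$ with the stated roots $R_i^r=P_{2r-1}-\#2;n_i$, and $\mathcal{F}_0(r)$ is their complement in $\mathcal{F}(r)$ in the sense of Definition~\ref{dfn:sporadic}.

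Next comes the census of $\mathcal{F}_0(r)$. By Theorem~\ref{thm:MainTrunk}(4), for even $j\ge 4$ the vertex $P_{2r-1}$ has exactly $N_2=151$ children of step size $s=2$, of which $C_2=21$ are capable; these $151$ vertices are precisely the depth-$0$ vertices of $\mathcal{F}(r)$ at logarithmic order $2r+1$. Four of them are the coclass-tree roots $R_i^r$ identified above (and these are $4$ of the $21$ capable ones). The remaining $147$ lie in $\mathcal{F}_0(r)$; following Proposition~\ref{prp:SporCc5} I would sort them by transfer kernel type and derived length into $7$ isolated metabelian vertices of type $\mathrm{F}$, $4$ metabelian roots of finite trees of type $\mathrm{G}$ or $\mathrm{H}$ (the next $4$ capable vertices), $34$ non-metabelian isolated vertices of type $\mathrm{d}$, $89$ non-metabelian isolated vertices of type $\mathrm{b}.10$, and $13$ non-metabelian capable vertices of type $\mathrm{b}.10$ (whose children are removed by pruning; these are the last $13$ capable vertices, so that $4+4+13=21=C_2$). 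Each of the $4$ type-$\mathrm{G}/\mathrm{H}$ roots carries a finite tree of depth~$1$ contributing $24$ metabelian and $36$ non-metabelian children at logarithmic order $2r+2$, whence $\#\mathcal{F}_0(r)=(7+4+34+89+13)+(24+36)=147+60=207$. The assertion $\sigma=0$ throughout (hence no RI- or $V_4$-action) and the individual entries of Table~\ref{tbl:SporCc5} would then be produced by direct MAGMA~\cite{MAGMA} computation using the parametrized presentations~\eqref{eqn:Presentation} with parameters in $\{-1,0,1\}$, together with the main-trunk presentation $P_{2r-1}=G_0^{r,2r-1}(0,0,0,0)$.

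The main obstacle is uniformity in $r$: a literal computation only settles finitely many odd values. I expect to resolve this exactly as in the proofs of Theorems~\ref{thm:SporCc4} and~\ref{thm:SecondPeriod} — verify the census by MAGMA for odd $5\le r\le 19$ (or further), observe that the structure of $\mathcal{F}_0(r)$ stabilizes at once, and then propagate to all odd $r\ge 5$ by combining the periodicity of the step-$2$ descendant data of $P_{2r-1}$ in Theorem~\ref{thm:MainTrunk}(4) with the virtual periodicity of branches of du Sautoy~\cite{dS} and of Eick and Leedham-Green~\cite{EkLg}, where no depth-pruning is needed since every sporadic vertex has depth at most~$1$. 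The delicate point — and the reason the fully general statement is ultimately tied to Conjecture~\ref{cnj:SecondPeriod} — is excluding the emergence of additional sporadic vertices of larger derived length, or of capable type-$\mathrm{d}$ vertices, for very large $r$; a clean unconditional argument would require a class-shifting isomorphism between the configurations of presentations~\eqref{eqn:Presentation} around $P_{2r-1}$ for successive even $j$, in the spirit of the proof of Theorem~\ref{thm:FirstPeriod}.
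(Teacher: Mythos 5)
Your proposal follows essentially the same route as the paper: identify the four metabelian-mainline tree roots among the \(N_2=151\) step-size-\(2\) children of \(P_{2r-1}\) via Theorem \ref{thm:MainTrunk}(4) and Corollary \ref{cor:MainTrunk}, count the remaining \(147\) top vertices plus the \(60\) children of the four \(\mathrm{G}/\mathrm{H}\) roots to obtain \(207\), and certify the table entries and the stability of this structure by MAGMA computations over a range of odd \(r\) (the paper computes up to \(r\le 21\), with the metabelian skeleton cross-checked against Nebelung). The only caveat is your suggested propagation in \(r\) via du Sautoy/Eick--Leedham-Green periodicity, which acts along the class direction within one coclass tree rather than across coclasses; the paper makes no such appeal and, like you, ultimately rests the uniformity claim on the observed common structure of all computed \(\mathcal{F}_0(r)\).
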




\begin{proof}
(of Proposition
\ref{prp:SporCc5}
and Theorem
\ref{thm:SporCc5})
We have computed the sporadic parts \(\mathcal{F}_0(r)\) of coclass forests \(\mathcal{F}(r)\)
with odd \(r\ge 5\) up to \(r\le 21\) by means of MAGMA
\cite{MAGMA}.
They all share a common graph theoretic structure with \(\mathcal{F}_0(5)\).
The forest \(\mathcal{F}(5)\) contains \(4\) roots of coclass trees with metabelian mainlines
(a unique root \(R_1^5\) of type \(\mathrm{b}\) and three roots \(R_2^5,\ldots,R_4^5\) of type \(\mathrm{d}\)), namely \\
\(R_1^5=P_{11}=G^{7,11}_0(0,0,0,0)\simeq P_9-\#2;25\), \quad \(R_2^5=G^{7,11}_0(0,1,0,1)\simeq P_7-\#2;29\), \\
\(R_3^5=G^{7,11}_0(0,0,0,1)\simeq P_9-\#2;37\), \quad \(R_4^5=G^{7,11}_0(0,1,0,0)\simeq P_9-\#2;39\), \\
which give rise to the periodic part of \(\mathcal{F}(5)\),
and \(35\) sporadic metabelian groups of type \(\mathrm{F}\), \(\mathrm{G}\) or \(\mathrm{H}\).
Among the groups of the sporadic part \(\mathcal{F}_0(5)\), there are
\(7\) isolated metabelian vertices with type \(\mathrm{F}\),
and \(4\) metabelian roots of finite trees with type \(\mathrm{G}\) or \(\mathrm{H}\) and tree depth \(1\).
Among the \(60=15+15+15+15\) children,
there are \(24=6+6+6+6\) metabelian,
and \(36=9+9+9+9\) have derived length \(3\).
The latter are \textit{omitted} in the forest diagram, Figure
\ref{fig:SporCc5}.
Additionally, \(\mathcal{F}_0(5)\) contains \(136\) non-metabelian top vertices,
which gives a total information content
\(s=\#\mathcal{F}_0(5)\) of \(207=(11+136)+60\) representatives.

The metabelian skeleton consists of \(35=11+24\) vertices.
The results for metabelian groups are in accordance with the third tree diagram \(e\ge 4\), \(e\equiv 0\pmod{2}\), in
\cite[third page between pp. 191--192]{Ne}.
The metabelian groups in Table
\ref{tbl:SporCc5}
correspond to the representatives of isomorphism classes in
\cite[pp. 34--35]{Ne2}.
\end{proof}



\subsection{The unique mainline of type \(\mathrm{b}.10^\ast\) for odd coclass \(r\ge 5\)}
\label{ss:b10Cc5}

\begin{proposition}
\label{prp:TKTb10TreeCc5}
\textbf{(Periodicity and descendant numbers.)} \\
The branches \(\mathcal{B}(i)\), \(i\ge n_\ast=11\),
of the coclass-\(5\) tree \(\mathcal{T}^5(\langle 2187,64\rangle-\#2;33-\#2;25)\)
with mainline vertices of transfer kernel type \(\mathrm{b}.10^\ast\), \(\varkappa\sim (0043)\),
are periodic with pre-period length \(\ell_\ast=1\) and with primitive period length \(\ell=2\),
that is, \(\mathcal{B}(i+2)\simeq\mathcal{B}(i)\) are isomorphic as digraphs,
for all \(i\ge p_\ast=n_\ast+\ell_\ast=12\).

The graph theoretic structure of the tree is determined uniquely by the numbers
\(N_1\) of immediate descendants and \(C_1\) of capable immediate descendants
of the mainline vertices \(m_n\) with logarithmic order \(n=\mathrm{lo}(m_n)\ge n_\ast=11\): \\
\((N_1,C_1)=(30,1)\) for the root \(m_{11}\) with \(n=11\), \\
\((N_1,C_1)=(24,1)\) for all mainline vertices \(m_n\) with even logarithmic order \(n\ge 12\), \\
\((N_1,C_1)=(40,1)\) for all mainline vertices \(m_n\) with odd logarithmic order \(n\ge 13\).
\end{proposition}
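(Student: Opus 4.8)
The plan is to establish this proposition by the same computational method used in Proposition~\ref{prp:TKTb10TreeCc4} for the coclass-$4$ tree with mainline of type $\mathrm{b}.10^\ast$, transported to coclass $5$ via the co-periodicity isomorphism of Theorem~\ref{thm:SecondPeriod}. First I would invoke Theorem~\ref{thm:SecondPeriod} with $r=4$: it provides a $\phi$-isomorphism of infinite structured in-trees $\psi:\mathcal{T}^4(P_9)\to\mathcal{T}^6(R_1^6)$, but more importantly its proof records that the leading branches of each $\mathcal{T}^r(R_1^r)$, $4\le r\le 21$, have been computed explicitly with MAGMA. Since $R_1^5=P_{11}=\langle 2187,64\rangle-\#2;33-\#2;25$ is exactly the coclass-$5$ member of the main trunk family $(P_{2r-1})_{r\ge 2}$ (Theorem~\ref{thm:MainTrunk}), the tree in question is $\mathcal{T}^5(R_1^5)$, and the descendant numbers $(N_1,C_1)$ of its mainline vertices are among the data produced in that computation.

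The core of the argument is then a direct MAGMA computation, analogous to the one in the proof of Proposition~\ref{prp:TKTb10TreeCc4}, using the implementation of the $p$-group generation algorithm of Newman and O'Brien~\cite{Nm2,Ob,HEO}. The steps I would carry out are: (1) confirm that $C_1(m_n)=1$ for every mainline vertex $m_n$ with $n\ge n_\ast=11$, so that each branch has depth $1$ and Formula~\eqref{eqn:BranchDepth1} applies; (2) compute $N_1(m_{11})=30$ for the root, which is the pre-periodic branch $\mathcal{B}(11)$; (3) compute $N_1(m_n)=24$ for mainline vertices of even logarithmic order $n\ge 12$ and $N_1(m_n)=40$ for those of odd logarithmic order $n\ge 13$, recording the split of each count into metabelian versus non-metabelian vertices and into those with bicyclic centre $\zeta=1^2$ versus cyclic centre $\zeta=1$, exactly as was done for the coclass-$4$ analogue; (4) verify that $\mathcal{B}(i+2)\simeq\mathcal{B}(i)$ as digraphs for all $i\ge p_\ast=12$, checking this explicitly up to some large branch (the coclass-$4$ proof went to $\mathcal{B}(31)$), and observe that the pre-period has length $\ell_\ast=1$ because $\mathcal{B}(11)$ with its $30$ vertices is genuinely different from $\mathcal{B}(12)$ and $\mathcal{B}(13)$. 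The strict periodicity for $i\ge p_\ast$ beyond the computed range then follows from the virtual periodicity theorems of du Sautoy~\cite{dS} and of Eick and Leedham-Green~\cite{EkLg}, since the depth is uniformly bounded by $1$ and no pruning is required.

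I would also cross-check the metabelian skeleton against Nebelung's dissertation~\cite[Thm. 5.1.16, pp. 178--179, and the figure for $e\ge 4$, $e\equiv 0\pmod 2$, on the double page between pp. 191--192]{Ne}, and identify $\mathcal{T}^5(R_1^5)$ with the appropriate infinite metabelian pro-$3$ group $S_{r,j}$ in Eick~\cite[Cnj. 15 (b), p. 116]{Ek}, mirroring the remarks attached to Proposition~\ref{prp:TKTb10TreeCc4}. A subtle point worth spelling out, as in the coclass-$4$ case, is why the primitive period length is $\ell=2$ rather than $\ell=1$: although consecutive regular branches may contain the same total number of metabelian vertices, the constitution of that number (the $\zeta=1^2$ versus $\zeta=1$ split) differs between even and odd logarithmic order, forcing $\ell=2$ even for the metabelian skeleton.

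The main obstacle will not be conceptual but purely computational: the branches at odd logarithmic order have $N_1=40$ immediate descendants, and verifying the digraph isomorphism $\mathcal{B}(i+2)\simeq\mathcal{B}(i)$ requires generating and comparing, up to a sufficiently high logarithmic order (orders $3^{15}$ and beyond), all children together with their algebraic invariants; the $p$-group generation algorithm becomes expensive at these orders. The safeguard is that, once pure periodicity is confirmed on an initial segment and the depth is seen to be bounded by $1$, the theorems of~\cite{dS,EkLg} close the gap rigorously, so only finitely much computation is actually needed.
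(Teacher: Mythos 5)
Your proposal matches the paper's own proof in essence: a direct MAGMA computation with the \(p\)-group generation algorithm of the descendant numbers \(N_1,C_1\) (split by derived length and centre type), strict periodicity checked explicitly on an initial segment (the paper goes to \(\mathcal{B}(33)\)) and closed off by the virtual periodicity theorems of du Sautoy and Eick--Leedham-Green, plus the cross-checks against Nebelung and the identification with an infinite metabelian pro-\(3\) group in Eick's work (there \(S_{4,1}\), cited from Cnj.~15~(a), not (b), since the coclass is odd). The appeal to Theorem~\ref{thm:SecondPeriod} is unnecessary scaffolding, and the remark about equal metabelian totals forcing \(\ell=2\) does not apply here (the counts \(10\) and \(15\) already differ), but neither affects the correctness of the argument.
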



\begin{proof}
(of Proposition
\ref{prp:TKTb10TreeCc5})
The statements concerning the numbers \(N_1(m_n)\) of immediate descendants
of the mainline vertices \(m_n\) with \(n\ge n_\ast=11\)
have been obtained by direct computation with the \(p\)-group generation algorithm
\cite{Nm2,Ob,HEO}
in MAGMA
\cite{MAGMA}.
In detail, we proved that there are \\
\(6\), resp. \(6\), metabelian vertices with bicyclic centre \(\zeta=1^2\), resp. cyclic centre \(\zeta=1\), and  \\
\(9\), resp. \(9\), non-metabelian vertices with \(\zeta=1^2\), resp. \(\zeta=1\),  \\
together \(30\) vertices (\(12\) of them metabelian) in the pre-periodic branch \(\mathcal{B}(11)\), \\
and the primitive period \((\mathcal{B}(12),\mathcal{B}(13))\) of length \(\ell=2\) consists of \\
\(4\), resp. \(6\), metabelian vertices with \(\zeta=1^2\), resp. \(\zeta=1\), and \\
\(5\), resp. \(9\), non-metabelian vertices with \(\zeta=1^2\), resp. \(\zeta=1\), \\
together \(24\) vertices (\(10\) of them metabelian) in branch \(\mathcal{B}(12)\), and \\
\(6\), resp. \(9\), metabelian vertices with \(\zeta=1^2\), resp. \(\zeta=1\), and \\
\(9\), resp. \(16\), non-metabelian vertices with \(\zeta=1^2\), resp. \(\zeta=1\), \\
together \(40\) vertices (\(15\) of them metabelian) in branch \(\mathcal{B}(13)\).

The results concerning the metabelian skeleton confirm the corresponding statements in the dissertation of Nebelung
\cite[Thm. 5.1.16, pp. 178--179, and the third Figure, \(e\ge 4\), \(e\equiv 0\pmod{2}\), on the third page between pp. 191--192]{Ne}.
The tree \(\mathcal{T}^5{P_{11}}\) corresponds to
the infinite metabelian pro-\(3\) group \(S_{4,1}\) in
\cite[Cnj. 15 (a), p. 116]{Ek}.

The claim of the virtual periodicity of branches
has been proved generally for any coclass tree in
\cite{dS}
and
\cite{EkLg}.
Here, the strict periodicity was confirmed by computation up to branch \(\mathcal{B}(33)\)
and clearly sets in at \(p_\ast=12\).
\end{proof}



\begin{figure}[hb]
\caption{The unique coclass-\(5\) tree \(\mathcal{T}^5{P_{11}}\) with mainline of type \(\mathrm{b}.10^\ast\)}
\label{fig:TKTb10TreeCc5}

\input{Figure13}

\end{figure}


\begin{theorem}
\label{thm:TKTb10TreeCc5}
\textbf{(Graph theoretic and algebraic invariants.)} \\
The coclass-\(5\) tree \(\mathcal{T}:=\mathcal{T}^5{P_{11}}\) of finite \(3\)-groups \(G\) with coclass \(\mathrm{cc}(G)=5\)
which arises from the metabelian root \(P_{11}:=\langle 2187,64\rangle-\#2;33-\#2;25\)
has the following graph theoretic properties.
\begin{enumerate}
\item
The pre-period \((\mathcal{B}(11))\) of length \(\ell_\ast=1\) is irregular.
\item
The cardinality of the irregular branch is \(\#\mathcal{B}(11)=30\).
\item
The branches \(\mathcal{B}(i)\), \(i\ge p_\ast=12\), are periodic with
primitive period \((\mathcal{B}(12),\mathcal{B}(13))\) of length \(\ell=2\).
\item
The cardinalities of the regular branches are
\(\#\mathcal{B}(12)=24\) and \(\#\mathcal{B}(13)=40\).
\item
Depth, width, and information content of the tree are given by
\begin{equation}
\label{eqn:TKTb10TreeCc5}
\mathrm{dp}(\mathcal{T}^5{P_{11}})=1, \quad \mathrm{wd}(\mathcal{T}^5{P_{11}})=40, \quad \text{ and } \quad \mathrm{IC}(\mathcal{T}^5{P_{11}})=94.
\end{equation}
\end{enumerate}
The algebraic invariants of the groups represented by vertices forming
the pre-period \((\mathcal{B}(11))\) and the primitive period \((\mathcal{B}(12),\mathcal{B}(13))\) of the tree
are given in Table
\ref{tbl:TKTb10TreeCc5}.
The six leading branches \(\mathcal{B}(11),\ldots,\mathcal{B}(16)\) are drawn in Figure
\ref{fig:TKTb10TreeCc5}.
\end{theorem}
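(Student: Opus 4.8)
The plan is to deduce all five graph-theoretic claims directly from Proposition \ref{prp:TKTb10TreeCc5} by feeding the descendant numbers it supplies into the general formulas of \S\ \ref{s:GraphStruc}, in exact parallel with the proof of the coclass-$4$ analogue, Theorem \ref{thm:TKTb10TreeCc4}. First I would note that Proposition \ref{prp:TKTb10TreeCc5} gives $C_1(m_n)=1$ for every mainline vertex $m_n$ with $n\ge n_\ast=11$. Hence the unique capable child of $m_n$ is the next mainline vertex $m_{n+1}$, each branch $\mathcal{B}(n)$ has depth $\mathrm{dp}(\mathcal{B}(n))=1$, and therefore $\mathrm{dp}(\mathcal{T}^5{P_{11}})=1$; this is the first assertion of claim (5) and places us in item (1) of Theorem \ref{thm:DescendantNumbers}.

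With the tree of depth one, Formula \eqref{eqn:BranchDepth1} reads $\#\mathcal{B}(n)=N_1(m_n)$, so the values $N_1(m_{11})=30$, $N_1(m_{12})=24$, $N_1(m_{13})=40$ of Proposition \ref{prp:TKTb10TreeCc5} give $\#\mathcal{B}(11)=30$, $\#\mathcal{B}(12)=24$, $\#\mathcal{B}(13)=40$, which are claims (2) and (4). The periodicity part of Proposition \ref{prp:TKTb10TreeCc5} --- branches repeat with primitive length $2$ starting from $p_\ast=12$ --- yields claim (3), and it forces the pre-period to consist of the single irregular branch $\mathcal{B}(11)$ (claim (1)), since $\#\mathcal{B}(11)=30\ne 24=\#\mathcal{B}(12)$; the primitive period length really is $2$ and not $1$ because $\#\mathcal{B}(12)=24\ne 40=\#\mathcal{B}(13)$ (indeed already the centre-type constitutions $10=4+6$ for $\mathcal{B}(12)$ and $15=6+9$ for $\mathcal{B}(13)$ differ, as proved in the proposition).

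For the width I would apply Formula \eqref{eqn:TreeWidth1} of Corollary \ref{cor:TreeWidth} with $n_\ast=11$, $\ell_\ast=1$, $\ell=2$, so that $n$ ranges from $12$ to $14$ and $\mathrm{wd}(\mathcal{T}^5{P_{11}})=\max\{N_1(m_{n-1}):12\le n\le 14\}=\max(30,24,40)=40$. For the information content, Definition \ref{dfn:InfoCont} with periodic root at $p_\ast=n_\ast+\ell_\ast=12$ gives $\mathrm{IC}(\mathcal{T}^5{P_{11}})=\#\mathcal{B}(11)+\bigl(\#\mathcal{B}(12)+\#\mathcal{B}(13)\bigr)=30+(24+40)=94$, completing claim (5). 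The algebraic invariants collected in Table \ref{tbl:TKTb10TreeCc5} (depth, derived length, centre $\zeta$, $\mu$, $\nu$, $\tau(1)$, $\tau_2$, $\varkappa$, $\sigma$, and the factorized $\#\mathrm{Aut}$) are then obtained for each isomorphism class by running the MAGMA \cite{MAGMA} scripts on the normalized representatives $G_\rho^{m,n}(\alpha,\beta,\gamma,\delta)$ of Formula \eqref{eqn:Presentation}, and the figure displays their periodic continuation.

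The calculation itself is routine bookkeeping; the only point that requires genuine care is the one already discharged in Proposition \ref{prp:TKTb10TreeCc5}, namely the verification that the periodicity is \emph{strict} --- not merely virtual in the sense of \cite{dS,EkLg} --- and that it sets in precisely at $p_\ast=12$, which relies on the bounded depth of the tree and on the explicit computation of branches up to $\mathcal{B}(33)$.
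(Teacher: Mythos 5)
Your proposal is correct and follows essentially the same route as the paper's own proof: deduce depth $1$ from $C_1(m_n)=1$, read off branch cardinalities via Formula \eqref{eqn:BranchDepth1}, obtain width from Formula \eqref{eqn:TreeWidth1} with $n$ running from $12$ to $14$, compute $\mathrm{IC}=30+(24+40)=94$ from Definition \ref{dfn:InfoCont}, and refer the algebraic data of Table \ref{tbl:TKTb10TreeCc5} to the MAGMA computations. Your added remark that the differing cardinalities (and centre constitutions) force $\ell_\ast=1$ and $\ell=2$ is a harmless refinement of what the paper already settles in Proposition \ref{prp:TKTb10TreeCc5}.
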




\renewcommand{\arraystretch}{1.2}

\begin{table}[ht]
\caption{Data for \(3\)-groups \(G\) with \(11\le n=\mathrm{lo}(G)\le 14\) of the coclass tree \(\mathcal{T}^5{P_{11}}\)}
\label{tbl:TKTb10TreeCc5}
\begin{center}
\begin{tabular}{|c|c|l||c|c|c||c|c||c|c|l|c||c|c|}
\hline
 \(\#\) & \(m,n\)   & \(\rho;\alpha,\beta,\gamma,\delta\) & dp & dl & \(\zeta\) & \(\mu\) & \(\nu\) & \(\tau(1)\) & \(\tau_2\) & Type                   & \(\varkappa\) & \(\sigma\) & \(\#\mathrm{Aut}\)  \\
\hline
  \(1\) & \(7,11\)  & \(0;0,0,0,0\)                       & 0  & 2  & \(1^2\)   & \(6\)   & \(2\)   & \(3^2\)     & \(32^3\)   & \(\mathrm{b}.10^\ast\) & \((0043)\)    & \(2^\ast\) & \(2^3\cdot 3^{18}\) \\
\hline
  \(1\) & \(8,12\)  & \(0;0,0,0,0\)                       & 0  & 2  & \(1^2\)   & \(6\)   & \(1\)   & \(43\)      & \(3^22^2\) & \(\mathrm{b}.10^\ast\) & \((0043)\)    & \(2\)      & \(2^2\cdot 3^{20}\) \\
  \(2\) & \(8,12\)  & \(0;1,0,\pm 1,0\)                   & 1  & 2  & \(1^2\)   & \(5\)   & \(0\)   & \(43\)      & \(3^22^2\) & \(\mathrm{d}.19\)      & \((3043)\)    & \(1^\ast\) & \(2\cdot 3^{20}\)   \\
  \(1\) & \(8,12\)  & \(0;1,0,0,0\)                       & 1  & 2  & \(1^2\)   & \(5\)   & \(0\)   & \(43\)      & \(3^22^2\) & \(\mathrm{d}.23\)      & \((1043)\)    & \(1^\ast\) & \(2\cdot 3^{20}\)   \\
  \(2\) & \(8,12\)  & \(0;0,0,\pm 1,0\)                   & 1  & 2  & \(1^2\)   & \(5\)   & \(0\)   & \(43\)      & \(3^22^2\) & \(\mathrm{d}.25\)      & \((2043)\)    & \(2^\ast\) & \(2^2\cdot 3^{20}\) \\
  \(3\) & \(8,12\)  & \(\pm 1;\alpha,0,\gamma,0\)         & 1  & 2  & \(1\)     & \(5\)   & \(0\)   & \(3^2\)     & \(3^22^2\) & \(\mathrm{b}.10\)      & \((0043)\)    & \(2^\ast\) & \(2^2\cdot 3^{20}\) \\
  \(3\) & \(8,12\)  & \(\pm 1;\alpha,0,\gamma,0\)         & 1  & 2  & \(1\)     & \(5\)   & \(0\)   & \(3^2\)     & \(3^22^2\) & \(\mathrm{b}.10\)      & \((0043)\)    & \(1^\ast\) & \(2\cdot 3^{20}\)   \\
  \(6\) & \(8,12\)  &                                     & 1  & 3  & \(1^2\)   & \(5\)   & \(0\)   & \(3^2\)     & \(32^3\)   & \(\mathrm{b}.10\)      & \((0043)\)    & \(1^\ast\) & \(2\cdot 3^{19}\)   \\
  \(2\) & \(8,12\)  &                                     & 1  & 3  & \(1^2\)   & \(5\)   & \(0\)   & \(3^2\)     & \(32^3\)   & \(\mathrm{b}.10\)      & \((0043)\)    & \(2^\ast\) & \(2^2\cdot 3^{18}\) \\
  \(1\) & \(8,12\)  &                                     & 1  & 3  & \(1^2\)   & \(5\)   & \(0\)   & \(3^2\)     & \(32^3\)   & \(\mathrm{b}.10\)      & \((0043)\)    & \(1^\ast\) & \(2\cdot 3^{18}\)   \\
  \(2\) & \(8,12\)  &                                     & 1  & 3  & \(1\)     & \(5\)   & \(0\)   & \(3^2\)     & \(32^3\)   & \(\mathrm{b}.10\)      & \((0043)\)    & \(2^\ast\) & \(2^2\cdot 3^{19}\) \\
  \(5\) & \(8,12\)  &                                     & 1  & 3  & \(1\)     & \(5\)   & \(0\)   & \(3^2\)     & \(32^3\)   & \(\mathrm{b}.10\)      & \((0043)\)    & \(1^\ast\) & \(2\cdot 3^{19}\)   \\
  \(2\) & \(8,12\)  &                                     & 1  & 3  & \(1\)     & \(5\)   & \(0\)   & \(3^2\)     & \(32^3\)   & \(\mathrm{b}.10\)      & \((0043)\)    & \(1^\ast\) & \(2\cdot 3^{18}\)   \\
\hline
  \(1\) & \(9,13\)  & \(0;0,0,0,0\)                       & 0  & 2  & \(1^2\)   & \(6\)   & \(1\)   & \(4^2\)     & \(432^2\)  & \(\mathrm{b}.10^\ast\) & \((0043)\)    & \(2^\ast\) & \(2^2\cdot 3^{22}\) \\
  \(1\) & \(9,13\)  & \(0;1,0,1,0\)                       & 1  & 2  & \(1^2\)   & \(5\)   & \(0\)   & \(4^2\)     & \(432^2\)  & \(\mathrm{d}.19\)      & \((3043)\)    & \(0\)      & \(3^{22}\)          \\
  \(1\) & \(9,13\)  & \(0;1,0,0,0\)                       & 1  & 2  & \(1^2\)   & \(5\)   & \(0\)   & \(4^2\)     & \(432^2\)  & \(\mathrm{d}.23\)      & \((1043)\)    & \(0\)      & \(2\cdot 3^{22}\)   \\
  \(1\) & \(9,13\)  & \(0;0,0,1,0\)                       & 1  & 2  & \(1^2\)   & \(5\)   & \(0\)   & \(4^2\)     & \(432^2\)  & \(\mathrm{d}.25\)      & \((2043)\)    & \(0\)      & \(2\cdot 3^{22}\)   \\
  \(3\) & \(9,13\)  & \(\pm 1;\alpha,0,\gamma,0\)         & 1  & 2  & \(1\)     & \(5\)   & \(0\)   & \(43\)      & \(432^2\)  & \(\mathrm{b}.10\)      & \((0043)\)    & \(0\)      & \(2\cdot 3^{22}\)   \\
  \(3\) & \(9,13\)  & \(\pm 1;\alpha,0,\gamma,0\)         & 1  & 2  & \(1\)     & \(5\)   & \(0\)   & \(43\)      & \(432^2\)  & \(\mathrm{b}.10\)      & \((0043)\)    & \(0\)      & \(3^{22}\)          \\
  \(3\) & \(9,13\)  &                                     & 1  & 3  & \(1^2\)   & \(5\)   & \(0\)   & \(43\)      & \(3^22^2\) & \(\mathrm{b}.10\)      & \((0043)\)    & \(0\)      & \(3^{21}\)   \\
  \(2\) & \(9,13\)  &                                     & 1  & 3  & \(1^2\)   & \(5\)   & \(0\)   & \(43\)      & \(3^22^2\) & \(\mathrm{b}.10\)      & \((0043)\)    & \(0\)      & \(2\cdot 3^{20}\)   \\
  \(6\) & \(9,13\)  &                                     & 1  & 3  & \(1\)     & \(5\)   & \(0\)   & \(43\)      & \(3^22^2\) & \(\mathrm{b}.10\)      & \((0043)\)    & \(0\)      & \(3^{21}\)          \\
  \(2\) & \(9,13\)  &                                     & 1  & 3  & \(1\)     & \(5\)   & \(0\)   & \(43\)      & \(3^22^2\) & \(\mathrm{b}.10\)      & \((0043)\)    & \(0\)      & \(2\cdot 3^{20}\)   \\
  \(1\) & \(9,13\)  &                                     & 1  & 3  & \(1\)     & \(5\)   & \(0\)   & \(43\)      & \(3^22^2\) & \(\mathrm{b}.10\)      & \((0043)\)    & \(0\)      & \(3^{20}\)          \\
\hline
  \(1\) & \(10,14\) & \(0;0,0,0,0\)                       & 0  & 2  & \(1^2\)   & \(6\)   & \(1\)   & \(54\)      & \(4^22^2\) & \(\mathrm{b}.10^\ast\) & \((0043)\)    & \(2\)      & \(2^2\cdot 3^{24}\) \\
  \(2\) & \(10,14\) & \(0;1,0,\pm 1,0\)                   & 1  & 2  & \(1^2\)   & \(5\)   & \(0\)   & \(54\)      & \(4^22^2\) & \(\mathrm{d}.19\)      & \((3043)\)    & \(1^\ast\) & \(2\cdot 3^{24}\)   \\
  \(1\) & \(10,14\) & \(0;1,0,0,0\)                       & 1  & 2  & \(1^2\)   & \(5\)   & \(0\)   & \(54\)      & \(4^22^2\) & \(\mathrm{d}.23\)      & \((1043)\)    & \(1^\ast\) & \(2\cdot 3^{24}\)   \\
  \(2\) & \(10,14\) & \(0;0,0,\pm 1,0\)                   & 1  & 2  & \(1^2\)   & \(5\)   & \(0\)   & \(54\)      & \(4^22^2\) & \(\mathrm{d}.25\)      & \((2043)\)    & \(2^\ast\) & \(2^2\cdot 3^{24}\) \\
  \(9\) & \(10,14\) & \(\pm 1;\alpha,0,\gamma,0\)         & 1  & 2  & \(1\)     & \(5\)   & \(0\)   & \(4^2\)     & \(4^22^2\) & \(\mathrm{b}.10\)      & \((0043)\)    & \(1^\ast\) & \(2\cdot 3^{24}\)   \\
  \(6\) & \(10,14\) &                                     & 1  & 3  & \(1^2\)   & \(5\)   & \(0\)   & \(4^2\)     & \(432^2\)  & \(\mathrm{b}.10\)      & \((0043)\)    & \(1^\ast\) & \(2\cdot 3^{23}\)   \\
  \(2\) & \(10,14\) &                                     & 1  & 3  & \(1^2\)   & \(5\)   & \(0\)   & \(4^2\)     & \(432^2\)  & \(\mathrm{b}.10\)      & \((0043)\)    & \(2^\ast\) & \(2^2\cdot 3^{22}\) \\
  \(1\) & \(10,14\) &                                     & 1  & 3  & \(1^2\)   & \(5\)   & \(0\)   & \(4^2\)     & \(432^2\)  & \(\mathrm{b}.10\)      & \((0043)\)    & \(1^\ast\) & \(2\cdot 3^{22}\)   \\
 \(12\) & \(10,14\) &                                     & 1  & 3  & \(1\)     & \(5\)   & \(0\)   & \(4^2\)     & \(432^2\)  & \(\mathrm{b}.10\)      & \((0043)\)    & \(1^\ast\) & \(2\cdot 3^{23}\)   \\
  \(4\) & \(10,14\) &                                     & 1  & 3  & \(1\)     & \(5\)   & \(0\)   & \(4^2\)     & \(432^2\)  & \(\mathrm{b}.10\)      & \((0043)\)    & \(1^\ast\) & \(2\cdot 3^{22}\)   \\
\hline
\end{tabular}
\end{center}
\end{table}



\begin{remark}
\label{rmk:TKTb10TreeCc5}
The algebraic information in Table
\ref{tbl:TKTb10TreeCc5}
is visualized in Figure
\ref{fig:TKTb10TreeCc5}.
By periodic continuation, the figure shows more branches than the table
but less details concerning the exact order \(\#\mathrm{Aut}\) of the automorphism group.
\end{remark}



\begin{proof}
(of Theorem
\ref{thm:TKTb10TreeCc5})
According to Proposition
\ref{prp:TKTb10TreeCc5},
the logarithmic order of the tree root, respectively of the periodic root, is
\(n_\ast=11\), respectively \(p_\ast=n_\ast+\ell_\ast=12\).

Since \(C_1(m_n)=1\) for all mainline vertices \(m_n\) with \(n\ge n_\ast\),
according to Proposition
\ref{prp:TKTb10TreeCc5},
the unique capable child of \(m_n\) is \(m_{n+1}\),
and each branch has depth \(\mathrm{dp}(\mathcal{B}(n))=1\), for \(n\ge n_\ast\).
Consequently, the tree is also of depth \(\mathrm{dp}(\mathcal{T})=1\).

With the aid of Formula
\eqref{eqn:BranchDepth1}
in Theorem
\ref{thm:DescendantNumbers},
the claims (2) and (4) are consequences of Proposition
\ref{prp:TKTb10TreeCc5}:\\
\(\#\mathcal{B}(11)=N_1(m_{11})=30\),
\(\#\mathcal{B}(12)=N_1(m_{12})=24\), and
\(\#\mathcal{B}(13)=N_1(m_{13})=40\).

According to Formula
\eqref{eqn:TreeWidth1}
in Corollary
\ref{cor:TreeWidth},
where \(n\) runs from \(n_\ast+1=12\) to \(n_\ast+\ell_\ast+\ell+0=14\),
the tree width is the maximum \(\mathrm{wd}(\mathcal{T})=40\)
of the expressions \(N_1(m_{11})=30\), \(N_1(m_{12})=24\), and \(N_1(m_{13})=40\).

The information content of the tree is given by Formula
\eqref{eqn:InfoCont}
in the Definition
\ref{dfn:InfoCont}: \\
\(\mathrm{IC}(\mathcal{T})=\#\mathcal{B}(11)+(\#\mathcal{B}(12)+\#\mathcal{B}(13))=30+(24+40)=94\).

The algebraic invariants in Table
\ref{tbl:TKTb10TreeCc5},
that is,
depth \(\mathrm{dp}\),
derived length \(\mathrm{dl}\),
abelian type invariants of the centre \(\zeta\),
relation rank \(\mu\),
nuclear rank \(\nu\),
abelian quotient invariants \(\tau(1)\) of the first maximal subgroup,
respectively \(\tau_2\) of the commutator subgroup,
transfer kernel type \(\varkappa\),
action flag \(\sigma\), and
the factorized order \(\#\mathrm{Aut}\) of the automorphism group
have been computed by means of program scripts written for MAGMA
\cite{MAGMA}.

Each group is characterized by the parameters
of the normalized representative \(G_\rho^{m,n}(\alpha,\beta,\gamma,\delta)\) of its isomorphism class,
according to Formula
\eqref{eqn:Presentation},
and by its identifier \(\langle 3^n,i\rangle\) in the SmallGroups Database
\cite{BEO}.

The column with header \(\#\) contains
the number of groups with identical invariants (except the presentation),
for each row.
\end{proof}


\begin{corollary}
\label{cor:TKTb10TreeCc5}
\textbf{(Actions and relation ranks.)}
The algebraic invariants of the vertices of the structured coclass-\(5\) tree \(\mathcal{T}^5{P_{11}}\) are listed in Table
\ref{tbl:TKTb10TreeCc5}.
In particular:
\begin{enumerate}
\item
The groups with \(V_4\)-action are
all mainline vertices \(\stackbin[0]{n-4,n}{G}\begin{pmatrix}0&0\\ 0&0\end{pmatrix}\), \(n\ge 11\),
the two terminal vertices \(\stackbin[0]{n-4,n}{G}\begin{pmatrix}0&0\\ \pm 1&0\end{pmatrix}\) with even \(n\ge 12\),
two terminal non-metabelian vertices with \(\zeta=1^2\) and even \(n\ge 12\),
three pre-periodic terminal vertices \(\stackbin[\pm 1]{8,12}{G}\begin{pmatrix}0&0\\ 0&0\end{pmatrix}\), and
two pre-periodic terminal non-metabelian vertices with \(\zeta=1\) and \(n=12\).
\item
With respect to the kernel types, 
all mainline groups of type \(\mathrm{b}.10^\ast\), \(\varkappa=(0043)\),
the two leaves of type \(\mathrm{d}.25\), \(\varkappa\sim (2043)\), with every even logarithmic order,
two distinguished non-metabelian leaves of type \(\mathrm{b}.10\) with every even logarithmic order,
and five pre-periodic leaves of type \(\mathrm{b}.10\) with \(n=12\) possess a \(V_4\)-action.
\item
All terminal vertices of depth \(1\) with odd class
and the mainline vertices with even class
possess an RI-action.
\item
The relation rank is given by
\(\mu=6\) for the mainline vertices \(\stackbin[0]{n-4,n}{G}\begin{pmatrix}0&0\\ 0&0\end{pmatrix}\), \(n\ge 11\), and
\(\mu=5\) otherwise.
\end{enumerate}
\end{corollary}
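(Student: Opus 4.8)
The plan is to reduce everything to the finite data displayed in Table~\ref{tbl:TKTb10TreeCc5} and to propagate it along the whole tree by the strict periodicity already established. First I would invoke Proposition~\ref{prp:TKTb10TreeCc5}, which gives $\mathcal{B}(i+2)\simeq\mathcal{B}(i)$ as structured digraphs for all $i\ge p_\ast=12$, together with the fact (contained in Theorem~\ref{thm:FirstPeriod}, applied with coclass $r=5$) that the action flag $\sigma$, the transfer kernel type $\varkappa$, the relation rank $\mu$, the nuclear rank $\nu$ and the in/out degrees are \emph{strict} invariants under these periodicity isomorphisms. Consequently it suffices to verify each of the four assertions on the three branches $\mathcal{B}(11)$, $\mathcal{B}(12)$, $\mathcal{B}(13)$, i.e. on the pre-period and one primitive period, since every vertex of $\mathcal{T}^5 P_{11}$ is carried by an iterated strict isomorphism to a vertex of logarithmic order at most $13$ sharing the same $\sigma$, $\varkappa$ and $\mu$.

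For claims (1), (2) and (4) I would then read off the relevant columns of Table~\ref{tbl:TKTb10TreeCc5}. By Definition~\ref{dfn:Automorphisms} a group has $V_4$-action precisely when $\sigma=2$ (with or without a star), so the list in (1) is obtained by collecting all rows with $\sigma\in\{2,2^\ast\}$ and matching their presentation parameters $\rho;\alpha,\beta,\gamma,\delta$ (respectively their depth, derived length, and centre type $\zeta$) against the parametrised families named in the corollary: the mainline family $\rho=0$, $\alpha=\beta=\gamma=\delta=0$; the two terminal $\mathrm{d}.25$-leaves; the two non-metabelian leaves with $\zeta=1^2$ of even logarithmic order; the three pre-periodic leaves $G_\rho^{8,12}(0,0,0,0)$ with $\rho=\pm 1$; and the two pre-periodic non-metabelian leaves with $\zeta=1$ at $n=12$. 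Claim (2) is a translation of the very same rows into the language of transfer kernel types, using the ``Type'' and ``$\varkappa$'' columns. Claim (4) is immediate: the column headed $\mu$ shows the value $6$ exactly in the mainline rows $\rho=0$, $(\alpha,\beta,\gamma,\delta)=(0,0,0,0)$ and the value $5$ in every other row, and both patterns persist under periodic continuation because $\mu$ is strict.

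Claim (3), the classification of RI-actions, cannot be decided from the graph structure alone: an RI-action requires the generator-inverting automorphism to act as inversion also on $\mathrm{H}_2(G,\mathbb{F}_3)$, which is recorded in the starred values $1^\ast$, $2^\ast$ of the action flag. To establish it I would run the algorithm involving the $3$-covering group (equivalently the $3$-multiplicator) of each representative, already used in the proof of Corollary~\ref{cor:TKTb10TreeCc4}, on the finitely many vertices with $11\le\mathrm{lo}(G)\le 13$; the starred entries of Table~\ref{tbl:TKTb10TreeCc5} are precisely its output, and one then reads off that the starred rows are exactly the mainline vertices of even nilpotency class (odd logarithmic order) and the depth-$1$ terminal vertices of odd nilpotency class (even logarithmic order), again extended indefinitely by the periodicity of $\sigma$.

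The main obstacle I anticipate is not the bookkeeping but the justification that the finite computation genuinely pins down the infinite family, and here two points need care. First, the strict periodicity isomorphisms of Proposition~\ref{prp:TKTb10TreeCc5} must be known to transport the full automorphism-theoretic data and not merely the abstract digraph; this is legitimate only because $\sigma$ (hence the RI-property) and $\mu$ appear among the strict invariants of Theorem~\ref{thm:FirstPeriod}. Second, as warned in Remark~\ref{rmk:Automorphisms}, the presence of a bicyclic $C_2\times C_2$ in $\mathrm{Aut}(G)$ must be distinguished from that of a cyclic $C_4$ or a (generalized) quaternion subgroup even when $8\mid\#\mathrm{Aut}(G)$, so the $V_4$-detection step has to inspect the $2$-subgroup structure of $\mathrm{Aut}(G)$ rather than just its factorized order. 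Once these subtleties are handled in MAGMA for the representatives of $\mathcal{B}(11)\cup\mathcal{B}(12)\cup\mathcal{B}(13)$, the corollary follows.
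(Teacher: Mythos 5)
Your proposal is essentially the paper's own proof: read claims (1), (2) and (4) off Table~\ref{tbl:TKTb10TreeCc5}, extend them indefinitely by the branch periodicity of Proposition~\ref{prp:TKTb10TreeCc5}, and settle the RI-claim (3) by the MAGMA algorithm involving the \(3\)-covering group applied to the finitely many representatives, which is exactly what the paper does. The only quibble is your appeal to Theorem~\ref{thm:FirstPeriod} for the strict invariance of \(\sigma\), \(\varkappa\), \(\mu\) along the periodicity: that theorem covers only the coclass-\(5\) trees with a single total transfer kernel (roots \(R_j^5\), \(j=2,3,4\)), not the tree \(\mathcal{T}^5{P_{11}}\) of type \(\mathrm{b}.10^\ast\) with two total kernels, so for this tree the transport of the algebraic invariants rests (as in the paper) on the explicit computations underlying Proposition~\ref{prp:TKTb10TreeCc5} and Table~\ref{tbl:TKTb10TreeCc5} rather than on that theorem.
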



\begin{proof}
(of Corollary
\ref{cor:TKTb10TreeCc5})
The existence of an RI-action on \(G\) has been checked by means of
an algorithm involving the \(p\)-covering group of \(G\), written for MAGMA
\cite{MAGMA}.
The other claims follow immediately from Table
\ref{tbl:TKTb10TreeCc5},
continued indefinitely with the aid of the periodicity in Proposition
\ref{prp:TKTb10TreeCc5}. 
\end{proof}



\begin{figure}[hb]
\caption{The unique coclass-\(5\) tree \(\mathcal{T}^5(P_9-\#2;29)\) with mainline of type \(\mathrm{d}.19^\ast\)}
\label{fig:TKTd19TreeCc5}

\input{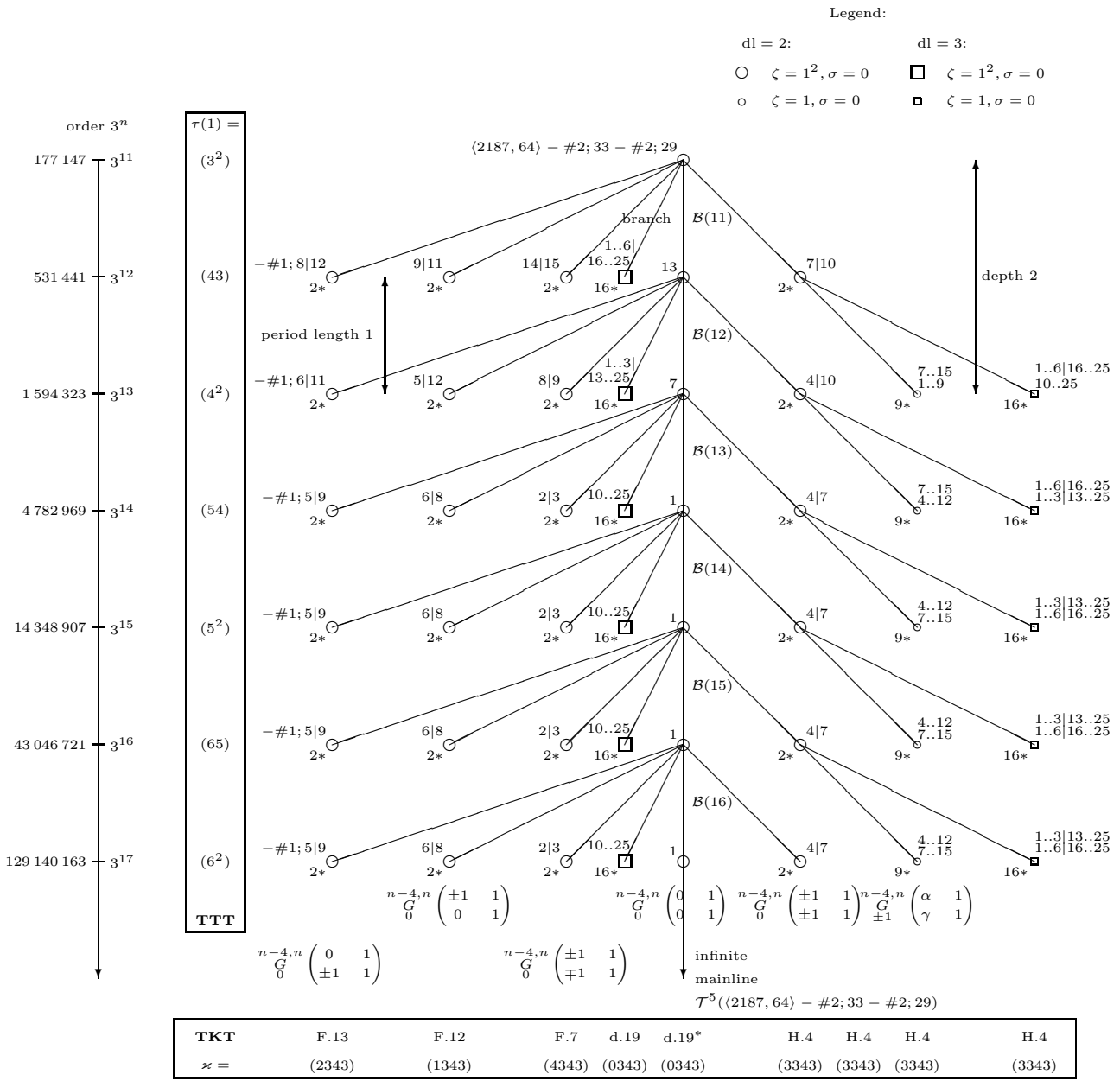}

\end{figure}



\subsection{The unique mainline of type \(\mathrm{d}.19^\ast\) for odd coclass \(r\ge 5\)}
\label{ss:d19Cc5}

\begin{proposition}
\label{prp:TKTd19TreeCc5}
\textbf{(Periodicity and descendant numbers.)} \\
The branches \(\mathcal{B}(i)\), \(i\ge n_\ast=11\), of
the unique coclass-\(5\) tree \(\mathcal{T}^5(P_9-\#2;29)\)
with mainline vertices of transfer kernel type \(\mathrm{d}.19^\ast\), \(\varkappa\sim (0343)\),
are purely periodic with primitive length \(\ell=1\) and without pre-period, \(\ell_\ast=0\), that is,
\(\mathcal{B}(i+1)\simeq\mathcal{B}(i)\) are isomorphic as structured digraphs, for all \(i\ge p_\ast=n_\ast+\ell_\ast=11\).

The graph theoretic structure of the tree is determined uniquely by the numbers
\(N_1\) of immediate descendants and \(C_1\) of capable immediate descendants
for mainline vertices \(m_n\) with logarithmic order \(n=\mathrm{lo}(m_n)\ge n_\ast=11\)
and for capable vertices \(v\) with depth \(1\) and \(\mathrm{lo}(v)\ge n_\ast+1=12\): \\
\((N_1,C_1)=(25,3)\) for all mainline vertices \(m_n\) of any logarithmic order \(n\ge 11\), \\
\((N_1,C_1)=(25,0)\) for two capable vertices \(v\) of depth \(1\) and any logarithmic order \(\mathrm{lo}(v)\ge 12\).
\end{proposition}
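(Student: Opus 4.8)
The plan is to mirror the proof of Proposition \ref{prp:TKTd19Tree1Cc4}, replacing the coclass-$4$ root $P_7-\#2;39$ by the coclass-$5$ root $P_9-\#2;29$ and the starting logarithmic order $n_\ast=9$ by $n_\ast=11$. First I would run the $p$-group generation algorithm \cite{Nm2,Ob,HEO}, as implemented in MAGMA \cite{MAGMA}, to determine, for each mainline vertex $m_n$ with $n\ge 11$ and each capable vertex $v$ of depth $1$ with $\mathrm{lo}(v)\ge 12$, the number $N_1$ of immediate descendants and the number $C_1$ of capable immediate descendants. The expected output is $(N_1,C_1)=(25,3)$ for every $m_n$ and $(N_1,C_1)=(25,0)$ for the two capable depth-$1$ vertices $v_2,v_3$ hanging off each $m_n$ (the next mainline vertex $v_1=m_{n+1}$ being the third capable child of $m_n$, counted in $C_1(m_n)=3$ but carrying the continuation of the mainline rather than a depth-$2$ subtree). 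Since $C_1(v)=0$ for every capable depth-$1$ vertex, each branch has depth exactly $2$, so $\mathrm{dp}(\mathcal{T})=2$ is uniformly bounded, which is precisely the hypothesis needed to pass from virtual to strict periodicity and which will later feed into Formula \eqref{eqn:BranchDepth2} of Theorem \ref{thm:DescendantNumbers} for the branch cardinalities.

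The decisive feature, by contrast with all the even-coclass analogues and with the $\mathrm{b}.10^\ast$ tree of Proposition \ref{prp:TKTb10TreeCc5}, is that the descendant numbers along the mainline do \emph{not} alternate between an odd-order and an even-order value but remain constantly equal to $(25,3)$. Consequently I expect the branches to be \emph{strictly} periodic already with primitive length $\ell=1$ and without pre-period, that is, $\mathcal{B}(i+1)\simeq\mathcal{B}(i)$ as structured digraphs for all $i\ge 11$. To make this rigorous I would (i) verify by explicit MAGMA computation that $\mathcal{B}(11)\simeq\mathcal{B}(12)\simeq\cdots\simeq\mathcal{B}(N)$ for a sufficiently large bound, say $N=30$, labelling the metabelian vertices by their normalized parametrized pc-presentations $G_\rho^{m,n}(\alpha,\beta,\gamma,\delta)$ of Formula \eqref{eqn:Presentation} and the non-metabelian ones by their relative ANUPQ identifiers, while checking that all attached algebraic invariants (and the step-size labelling of the edges) repeat with shift $1$; and (ii) invoke the virtual periodicity theorems of du Sautoy \cite{dS} and of Eick and Leedham-Green \cite{EkLg} to conclude that the isomorphism type of $\mathcal{B}(i)$ is then determined for all $i$, no depth pruning being needed since the depth is bounded by $2$. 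Agreement of the metabelian skeleton with the third figure ($e\ge 4$, $e\equiv 0\pmod 2$) of Nebelung's dissertation \cite{Ne} serves as an independent check.

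The step I expect to be the main obstacle is not the raw computation but the certification that the observed stabilisation at $\ell=1$ is genuine rather than the accidental coincidence of two consecutive branches inside a longer period of length $2$. This forces me to confirm the primitive period length separately on two layers: on the metabelian skeleton, where Nebelung's classification already isolates the $e\equiv 0\pmod 2$ behaviour, and on the derived-length-$3$ non-metabelian part, where I must check that the entire adjacency structure together with the full list of vertex invariants repeats under the shift $i\mapsto i+1$ — so that no hidden alternation analogous to the coclass-$4$ case (where mainline vertices toggle between $(13,2)$ and $(25,3)$) survives among the non-metabelian leaves. Once this is settled, the remaining assertions of the proposition are immediate from the tabulated descendant numbers, continued indefinitely by the established periodicity.
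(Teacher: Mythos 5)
Your proposal matches the paper's own proof in essence: both establish the descendant numbers $(N_1,C_1)$ by direct computation with the $p$-group generation algorithm in MAGMA, confirm strict periodicity with primitive length $\ell=1$ computationally up to branch $\mathcal{B}(30)$, cross-check the metabelian skeleton against Nebelung's third figure ($e\ge 4$, $e\equiv 0\pmod 2$), and fall back on the virtual periodicity theorems of du Sautoy and Eick--Leedham-Green (with depth uniformly bounded by $2$, so no pruning is needed). Your extra care about distinguishing genuine period $1$ from an accidental coincidence inside a longer period is a sensible refinement of the same computational argument, not a different route.
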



\begin{proof}
(of Proposition
\ref{prp:TKTd19TreeCc5})
The statements concerning the numbers \(N_1(m_n)\) of immediate descendants
of the mainline vertices \(m_n\) with \(n\ge n_\ast=11\),
and \(N_1(v)\) of vertices with depth \(\mathrm{dp}(v)=1\)
and logarithmic order \(\mathrm{lo}(v)\ge n_\ast+1=12\),
have been obtained by direct computation with the \(p\)-group generation algorithm
\cite{Nm2,Ob,HEO}
in MAGMA
\cite{MAGMA}.
In detail, we proved that there is no pre-period, \(\ell_\ast=0\), 
and the primitive period \((\mathcal{B}(11))\) of length \(\ell=1\) consists of \\
\(9\), resp. \(18\), metabelian vertices with \(\zeta=1^2\), resp. \(\zeta=1\), and \\
\(16\), resp. \(32\), non-metabelian vertices with \(\zeta=1^2\), resp. \(\zeta=1\), \\
(i.e. \(25=9+16\) children of \(m_{11}\), and \(50=2\cdot (9+16)\) children of \(v_{2,3}(m_{11})\), both with depth \(1\)) \\
together \(75\) vertices (\(27\) of them metabelian) in branch \(\mathcal{B}(11)\).

The results concerning the metabelian skeleton confirm the corresponding statements in the dissertation of Nebelung
\cite[Thm. 5.1.16, pp. 178--179, and the third Figure, \(e\ge 4\), \(e\equiv 0\pmod{2}\), on the third page between pp. 191--192]{Ne}.
The tree \(\mathcal{T}^5{R_2^5}\) corresponds to
the infinite metabelian pro-\(3\) group \(S_{4,4}\) in
\cite[Cnj. 15 (a), p. 116]{Ek}.

The claim of the virtual periodicity of branches
has been proved generally for any coclass tree in
\cite{dS}
and
\cite{EkLg}.
Here, the strict periodicity was confirmed by computation up to branch \(\mathcal{B}(30)\)
and certainly sets in at \(p_\ast=11\).
\end{proof}

\begin{theorem}
\label{thm:TKTd19TreeCc5}
\textbf{(Graph theoretic and algebraic invariants.)} \\
The coclass-\(5\) tree \(\mathcal{T}:=\mathcal{T}^5{R_2^5}\) of \(3\)-groups \(G\) with coclass \(\mathrm{cc}(G)=5\)
which arises from the metabelian root \(R_2^5:=P_9-\#2;29\)
has the following abstract graph theoretic properties.
\begin{enumerate}
\item
The branches \(\mathcal{B}(i)\), \(i\ge 11\), are purely periodic with
primitive period \((\mathcal{B}(11))\) of length \(\ell=1\).
\item
The cardinality of the periodic branch is
\(\#\mathcal{B}(11)=75\).
\item
Depth, width, and information content of the tree are given by
\begin{equation}
\label{eqn:TKTd19TreeCc5}
\mathrm{dp}(\mathcal{T}^5{R_2^5})=2, \quad \mathrm{wd}(\mathcal{T}^5{R_2^5})=75, \quad \text{ and } \quad \mathrm{IC}(\mathcal{T}^5{R_2^5})=75.
\end{equation}
\end{enumerate}
The algebraic invariants of the vertices forming the root and the primitive period \((\mathcal{B}(11))\) of the tree
are presented in Table
\ref{tbl:TKTd19TreeCc5}.
The leading six branches \(\mathcal{B}(11),\ldots,\mathcal{B}(16)\) are drawn in Figure
\ref{fig:TKTd19TreeCc5}.
\end{theorem}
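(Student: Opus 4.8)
The plan is to run the same argument that establishes the graph-theoretic part of Theorem \ref{thm:TKTd19Tree1Cc4}, now fed with the descendant numbers supplied by Proposition \ref{prp:TKTd19TreeCc5}. First I would observe that claim (1) is literally the periodicity assertion of that proposition: the pre-period is empty, $\ell_\ast=0$, and the primitive period has length $\ell=1$, so the periodic root coincides with the tree root and $p_\ast=n_\ast=11$. Next I would read off the depth from the capable-descendant data: every mainline vertex $m_n$ with $n\ge 11$ has $C_1(m_n)=3\ge 2$ capable children, whereas every capable vertex of depth $1$ has $C_1=0$, i.e.\ only terminal children; hence every branch, and therefore the tree, has depth exactly $2$.

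Then I would compute the remaining graph-theoretic invariants using the counting machinery of \S\ \ref{s:GraphStruc}. Formula \eqref{eqn:BranchDepth2} of Theorem \ref{thm:DescendantNumbers}, applied with $N_1(m_{11})=25$ and with the two capable non-mainline children $v_2,v_3$ of $m_{11}$ each having $N_1=25$, gives
\[
\#\mathcal{B}(11)=N_1(m_{11})+\sum_{i=2}^{C_1(m_{11})}N_1(v_i)=25+25+25=75,
\]
and by pure periodicity of primitive length $1$ this is the only branch cardinality that occurs, which is claim (2). For the width and information content in claim (3) I would invoke Formula \eqref{eqn:TreeWidth2} of Corollary \ref{cor:TreeWidth}: for a tree of depth $2$ the layer cardinality is $N_1(m_{n-1})+\sum_{i=2}^{C_1(m_{n-2})}N_1(v_i(m_{n-2}))$, and with $\ell_\ast=0$, $\ell=1$ this takes the single value $25+25+25=75$ (the trivial root layer aside), so $\mathrm{wd}(\mathcal{T})=75$; finally Definition \ref{dfn:InfoCont} with $\ell_\ast=0$, $\ell=1$ reduces to $\mathrm{IC}(\mathcal{T})=\#\mathcal{B}(11)=75$.

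For the algebraic invariants collected in Table \ref{tbl:TKTd19TreeCc5} — depth, derived length, centre type $\zeta$, relation rank $\mu$, nuclear rank $\nu$, the abelian quotient invariants $\tau(1)$ and $\tau_2$, transfer kernel type $\varkappa$, action flag $\sigma$, and the factorized order $\#\mathrm{Aut}$ — I would rely on direct MAGMA \cite{MAGMA} computation for the vertices of the root and of the single periodic branch $\mathcal{B}(11)$, each vertex being pinned down by the parameters of its normalized representative $G_\rho^{m,n}(\alpha,\beta,\gamma,\delta)$ from Formula \eqref{eqn:Presentation}. These data then propagate to the whole infinite tree under the structured-digraph isomorphisms of Proposition \ref{prp:TKTd19TreeCc5}, interpreted as $\phi$-isomorphisms in the sense of Definition \ref{dfn:IsoInvariants} with the evident class-shift transformation laws, and the periodic continuation is exactly what is drawn for the six leading branches in Figure \ref{fig:TKTd19TreeCc5}.

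The genuine obstacle is not in this theorem at all but upstream, in Proposition \ref{prp:TKTd19TreeCc5}: one has to be sure that the counts $(N_1,C_1)=(25,3)$ along the mainline and $(25,0)$ for the capable depth-$1$ vertices are correct and, more importantly, that \emph{strict} rather than merely virtual periodicity of primitive length $1$ actually sets in already at $\mathcal{B}(11)$. That conclusion rests on the explicit MAGMA enumeration verified up to branch $\mathcal{B}(30)$, combined with the virtual-periodicity theorems of du Sautoy \cite{dS} and of Eick and Leedham-Green \cite{EkLg} and the fact that the depth is uniformly bounded by $2$, so that no pruning is needed. Once Proposition \ref{prp:TKTd19TreeCc5} is granted, the present theorem is a short bookkeeping exercise with the formulas above.
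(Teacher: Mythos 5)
Your proposal is correct and follows essentially the same route as the paper: claim (1) and the depth are read off from Proposition \ref{prp:TKTd19TreeCc5}, the branch cardinality, width and information content are obtained from Formula \eqref{eqn:BranchDepth2}, Formula \eqref{eqn:TreeWidth2} and Definition \ref{dfn:InfoCont} with $n_\ast=11$, $\ell_\ast=0$, $\ell=1$, yielding $75$ in each case, and the tabulated algebraic invariants rest on the MAGMA computations propagated by periodicity. Your closing remark that the real content lies upstream in the proposition (strict periodicity of length $1$ verified computationally up to $\mathcal{B}(30)$, backed by the du Sautoy and Eick--Leedham-Green theorems with uniformly bounded depth) accurately reflects where the paper places the burden of proof.
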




\renewcommand{\arraystretch}{1.2}

\begin{table}[ht]
\caption{Data for \(3\)-groups \(G\) with \(11\le n=\mathrm{lo}(G)\le 13\) of the coclass tree \(\mathcal{T}^5{R_2^5}\)}
\label{tbl:TKTd19TreeCc5}
\begin{center}
\begin{tabular}{|c|c|l||c|c|c||c|c||c|c|l|c||c|c|}
\hline
 \(\#\) & \(m,n\)  & \(\rho;\alpha,\beta,\gamma,\delta\) & dp & dl & \(\zeta\) & \(\mu\) & \(\nu\) & \(\tau(1)\) & \(\tau_2\) & Type                   & \(\varkappa\) & \(\sigma\) & \(\#\mathrm{Aut}\) \\
\hline
  \(1\) & \(7,11\) & \(0;0,1,0,1\)                       & 0  & 2  & \(1^2\)   & \(5\)   & \(1\)   & \(3^2\)     & \(32^3\)   & \(\mathrm{d}.19^\ast\) & \((0343)\) & \(0\)    & \(3^{18}\)         \\
\hline
  \(1\) & \(8,12\) & \(0;0,1,0,1\)                       & 0  & 2  & \(1^2\)   & \(5\)   & \(1\)   & \(43\)      & \(3^22^2\) & \(\mathrm{d}.19^\ast\) & \((0343)\) & \(0\)    & \(3^{20}\)         \\
  \(2\) & \(8,12\) & \(0;\pm 1,1,\mp 1,1\)               & 1  & 2  & \(1^2\)   & \(4\)   & \(0\)   & \(43\)      & \(32^21\)  & \(\mathrm{F}.7\)       & \((4343)\)    & \(0\)      & \(3^{20}\)         \\
  \(2\) & \(8,12\) & \(0;\pm 1,1,0,1\)                   & 1  & 2  & \(1^2\)   & \(4\)   & \(0\)   & \(43\)      & \(32^21\)  & \(\mathrm{F}.12\)      & \((1343)\)    & \(0\)      & \(3^{20}\)         \\
  \(2\) & \(8,12\) & \(0;0,1,\pm 1,1\)                   & 1  & 2  & \(1^2\)   & \(4\)   & \(0\)   & \(43\)      & \(32^21\)  & \(\mathrm{F}.13\)      & \((2343)\)    & \(0\)      & \(3^{20}\)         \\
  \(2\) & \(8,12\) & \(0;\pm 1,1,\pm 1,1\)               & 1  & 2  & \(1^2\)   & \(5\)   & \(1\)   & \(43\)      & \(32^21\)  & \(\mathrm{H}.4\)       & \((3343)\)    & \(0\)      & \(3^{20}\)         \\
 \(12\) & \(8,12\) &                                     & 1  & 3  & \(1^2\)   & \(4\)   & \(0\)   & \(3^2\)     & \(32^3\)   & \(\mathrm{d}.19\)      & \((0343)\)    & \(0\)      & \(3^{19}\)         \\
  \(4\) & \(8,12\) &                                     & 1  & 3  & \(1^2\)   & \(4\)   & \(0\)   & \(3^2\)     & \(32^3\)   & \(\mathrm{d}.19\)      & \((0343)\)    & \(0\)      & \(3^{18}\)         \\
  \(9\) & \(9,13\) & \(\pm 1;\alpha,1,\gamma,1\)         & 2  & 2  & \(1\)     & \(4\)   & \(0\)   & \(43\)     & \(432^2\)   & \(\mathrm{H}.4\)       & \((3343)\)    & \(0\)      & \(3^{22}\)         \\
 \(12\) & \(9,13\) &                                     & 2  & 3  & \(1\)     & \(4\)   & \(0\)   & \(43\)     & \(3^22^2\)  & \(\mathrm{H}.4\)       & \((3343)\)    & \(0\)      & \(3^{21}\)         \\
  \(4\) & \(9,13\) &                                     & 2  & 3  & \(1\)     & \(4\)   & \(0\)   & \(43\)     & \(3^22^2\)  & \(\mathrm{H}.4\)       & \((3343)\)    & \(0\)      & \(3^{20}\)         \\
\hline
\end{tabular}
\end{center}
\end{table}



\begin{proof}
(Proof of Theorem
\ref{thm:TKTd19TreeCc5})
Since every mainline vertex \(m_n\) of the tree \(\mathcal{T}\)
has three capable children, \(C_1(m_n)=3\),
but every capable vertex \(v\) of depth \(1\)
has only terminal children, \(C_1(v)=0\),
according to Proposition
\ref{prp:TKTd19TreeCc5},
the depth of the tree is \(\mathrm{dp}(\mathcal{T})=2\).
In this case,
the cardinality of a branch \(\mathcal{B}\) is the sum of
the number \(N_1(m_n)\) of immediate descendants of the branch root \(m_n\)
and the numbers \(N_1(v_i)\) of terminal children of capable vertices \(v_i\) of depth \(1\)
with \(2\le i\le C_1(m_n)\) (excluding the next mainline vertex \(v_1=m_{n+1}\)),
according to Formula
\eqref{eqn:BranchDepth2},
that is,
\[\#\mathcal{B}=N_1(m_n)+\sum_{i=2}^{C_1(m_n)}\,N_1(v_i).\]
Applied to the primitive period, this yields
\(\#\mathcal{B}(11)=25+25+25=75\). 
According to Formula
\eqref{eqn:TreeWidth2},
the width of the tree is the maximum of all sums of the shape
\[\#\mathrm{Lyr}_n{\mathcal{T}}=\#\lbrace v\in\mathcal{T}\mid\mathrm{lo}(v)=n\rbrace=N_1(m_{n-1})+\sum_{i=1}^{C_1(m_{n-2})}\,N_1(v_i(m_{n-2})),\]
taken over all branch roots \(m_n\) with logarithmic orders \(n_\ast+2\le n\le n_\ast+\ell_\ast+\ell+1\).
Applied to \(n_\ast=11\), \(\ell_\ast=0\), and \(\ell=1\), this yields
\(\mathrm{wd}(\mathcal{T})=\max(25+25+25)=\max(75)=75\). \\
Finally, we have \(\mathrm{IC}(\mathcal{T})=\#\mathcal{B}(11)=75\).
\end{proof}



\begin{corollary}
\label{cor:TKTd19TreeCc5}
\textbf{(Actions and relation ranks.)} \\
The algebraic invariants of the vertices of the structured coclass-\(5\) tree \(\mathcal{T}^5{R_2^5}\) are listed in Table
\ref{tbl:TKTd19TreeCc5}.
In particular:
\begin{enumerate}
\item
There are no groups with GI-action, let alone with RI- or \(V_4\)-action.
\item
The relation rank is given by
\(\mu=5\) for the mainline vertices \(\stackbin[0]{n-4,n}{G}\begin{pmatrix}0&1\\ 0&1\end{pmatrix}\) with \(n\ge 11\),
and the capable vertices \(\stackbin[0]{n-4,n}{G}\begin{pmatrix}\pm 1&1\\ \pm 1&1\end{pmatrix}\) of depth \(1\) with \(n\ge 12\), and
\(\mu=4\) otherwise.
\end{enumerate}
\end{corollary}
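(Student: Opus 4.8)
The plan is to proceed exactly as in the proofs of Corollary \ref{cor:TKTd19Tree1Cc4} and Corollary \ref{cor:TKTb10TreeCc5}, reducing both assertions to a finite computation with the help of the periodicity already established in Proposition \ref{prp:TKTd19TreeCc5}. First I would invoke that proposition: the branches $\mathcal{B}(i)$, $i\ge 11$, are purely periodic of primitive length $\ell=1$ with no pre-period, and the periodicity maps are \emph{strict} isomorphisms of structured digraphs. Hence every isomorphism class of vertex of $\mathcal{T}^5 R_2^5$ is a strict-isomorphic image of a vertex of the single branch $\mathcal{B}(11)$ or of the root, and since the action flag $\sigma$, the relation rank $\mu$, the nuclear rank $\nu$, the transfer kernel type $\varkappa$, the centre type $\zeta$ and the derived length are all strict invariants, it suffices to verify claims (1) and (2) for the root together with the vertices of $\mathcal{B}(11)$, all of which are tabulated in Table \ref{tbl:TKTd19TreeCc5}.

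For claim (1) I would simply appeal to the column $\sigma$ of Table \ref{tbl:TKTd19TreeCc5}: every entry is $\sigma=0$, so no vertex of the generic branch (and no root) admits a generator-inverting automorphism or a $V_4$-action, and an RI-action is excluded a fortiori, since by Definition \ref{dfn:Automorphisms} it entails a GI-action. The correctness of these $\sigma$-entries is the content of the MAGMA scripts referenced throughout the section: the GI- and $V_4$-properties are decided by searching $\mathrm{Aut}(G)$, and the potential RI-refinement is checked by the algorithm involving the $3$-covering group of $G$ already used for Corollary \ref{cor:TKTb10TreeCc5}; here that refinement is never triggered because $\sigma$ vanishes identically.

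For claim (2) I would read the column $\mu$ of the same table. It shows $\mu=5$ precisely for the mainline vertex $G_0^{n-4,n}(0,1,0,1)$ (type $\mathrm{d}.19^\ast$) and for the two depth-$1$ capable vertices $G_0^{n-4,n}(\pm 1,1,\pm 1,1)$ (type $\mathrm{H}.4$), while $\mu=4$ for all remaining vertices — the terminal vertices of depths $1$ and $2$ and the non-mainline capable vertices. Since $\mu(v)=\dim_{\mathbb{F}_3}\mathrm{H}_2(v,\mathbb{F}_3)$ is a strict invariant, Proposition \ref{prp:TKTd19TreeCc5} propagates this pattern to all $n\ge 11$; the only remaining work is to match the parametrized presentations $G_\rho^{m,n}(\alpha,\beta,\gamma,\delta)$ of \eqref{eqn:Presentation} appearing in the table against the generic mainline and depth-$1$ vertices of Figure \ref{fig:TKTd19TreeCc5}, which is routine bookkeeping.

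The hard part will not be any single step but the soundness of the computational input on which everything rests: one must be confident that the $p$-group generation algorithm has produced all immediate descendants of step sizes $1$ and $2$ with the correct multiplicities, that the branch maps $\mathcal{B}(i+1)\simeq\mathcal{B}(i)$ are genuinely strict isomorphisms of algebraically structured digraphs and not merely digraph isomorphisms (so that $\mu$ and $\sigma$ really transfer), and that the finitely many distinguished capable vertices are unambiguously pinned down by their presentation parameters. These concerns are met by Proposition \ref{prp:TKTd19TreeCc5}, where periodicity was confirmed by direct computation up to $\mathcal{B}(30)$, and by the virtual periodicity theorems of du Sautoy \cite{dS} and Eick and Leedham-Green \cite{EkLg}, which, in view of the uniformly bounded depth $2$, guarantee that no further values of the invariants can appear.
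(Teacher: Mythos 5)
Your proposal is correct and follows essentially the same route as the paper: the paper likewise settles both claims by reading the \(\sigma\)- and \(\mu\)-columns of Table \ref{tbl:TKTd19TreeCc5} (with the RI-property checked by a MAGMA algorithm using the \(p\)-covering group) and then extending to all \(n\ge 11\) via the strict periodicity of Proposition \ref{prp:TKTd19TreeCc5}. Your additional remarks on why \(\sigma\), \(\mu\) transfer under the structured-digraph isomorphisms only make explicit what the paper leaves implicit.
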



\begin{proof}
(of Corollary
\ref{cor:TKTd19TreeCc5})
The existence of an RI-action on \(G\) has been checked by means of
an algorithm involving the \(p\)-covering group of \(G\), written for MAGMA
\cite{MAGMA}.
The other claims follow immediately from Table
\ref{tbl:TKTd19TreeCc5},
continued indefinitely with the aid of the periodicity in Proposition
\ref{prp:TKTd19TreeCc5}. 
\end{proof}



\begin{figure}[hb]
\caption{The unique coclass-\(5\) tree \(\mathcal{T}^5(P_9-\#2;37)\) with mainline of type \(\mathrm{d}.23^\ast\)}
\label{fig:TKTd23TreeCc5}

\input{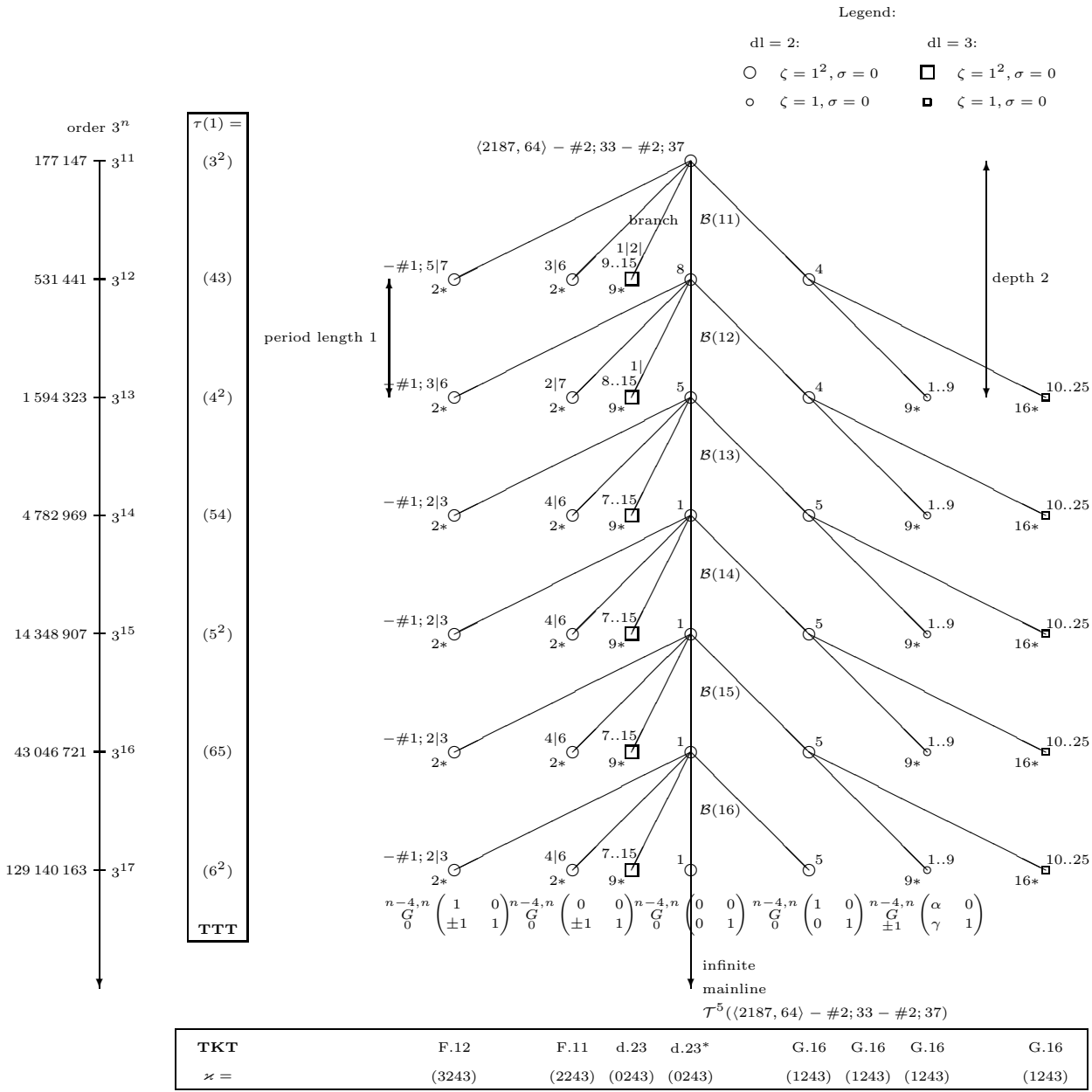}

\end{figure}



\subsection{The unique mainline of type \(\mathrm{d}.23^\ast\) for odd coclass \(r\ge 5\)}
\label{ss:d23Cc5}

\begin{proposition}
\label{prp:TKTd23TreeCc5}
\textbf{(Periodicity and descendant numbers.)} \\
The branches \(\mathcal{B}(i)\), \(i\ge n_\ast=11\), of
the unique coclass-\(5\) tree \(\mathcal{T}^5(P_9-\#2;37)\)
with mainline vertices of transfer kernel type \(\mathrm{d}.23^\ast\), \(\varkappa\sim (0243)\),
are purely periodic with primitive length \(\ell=1\) and without pre-period, \(\ell_\ast=0\), that is,
\(\mathcal{B}(i+1)\simeq\mathcal{B}(i)\) are isomorphic as structured digraphs, for all \(i\ge p_\ast=n_\ast+\ell_\ast=11\).

The graph theoretic structure of the tree is determined uniquely by the numbers
\(N_1\) of immediate descendants and \(C_1\) of capable immediate descendants
for mainline vertices \(m_n\) with logarithmic order \(n=\mathrm{lo}(m_n)\ge n_\ast=11\)
and for capable vertices \(v\) with depth \(1\) and \(\mathrm{lo}(v)\ge n_\ast+1=12\): \\
\((N_1,C_1)=(15,2)\) for all mainline vertices \(m_n\) of any logarithmic order \(n\ge 11\), \\
\((N_1,C_1)=(25,0)\) for the capable vertex \(v\) of depth \(1\) and any logarithmic order \(\mathrm{lo}(v)\ge 12\).
\end{proposition}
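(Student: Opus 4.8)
The plan is to treat the tree $\mathcal{T}^5(P_9-\#2;37)$ exactly as Propositions \ref{prp:TKTb10TreeCc5} and \ref{prp:TKTd19TreeCc5} treat their trees: determine the relevant descendant numbers directly with the $p$-group generation algorithm \cite{Nm2,Ob,HEO} in MAGMA \cite{MAGMA}, and then invoke the general virtual-periodicity theorems of du Sautoy \cite{dS} and of Eick and Leedham-Green \cite{EkLg} to pass from the finitely many computed branches to the whole infinite tree. The shortcut available at even coclass $r=4$, where the $\mathrm{d}.23^\ast$ tree was reduced to the $\mathrm{d}.19^\ast$ tree by a nearly strict isomorphism (Propositions \ref{prp:TKTd19and23TreeCc4} and \ref{prp:TKTd23TreeCc4}), is \emph{not} available here: for odd coclass the $\mathrm{d}.19^\ast$ mainline vertices have $C_1=3$ (Proposition \ref{prp:TKTd19TreeCc5}) while the $\mathrm{d}.23^\ast$ mainline vertices have $C_1=2$, so the two trees are not even abstractly isomorphic, and a separate computation is required.

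First I would compute, from the root $m_{11}=P_9-\#2;37$ onward, the numbers $N_1(m_n)$ of all children and $C_1(m_n)$ of capable children of the mainline vertices $m_n$, together with the corresponding numbers for the depth-one capable vertices, for all $n$ up to a safe bound, say up to $\mathcal{B}(30)$. The expected outcome is $(N_1,C_1)=(15,2)$ on the mainline and $(N_1,C_1)=(25,0)$ for the unique depth-one capable vertex, uniformly, already from the root, and — in contrast to the even-coclass $\mathrm{d}$-trees — independently of the parity of $n$; the parity-independence is what forces the primitive period length down to $\ell=1$ rather than $\ell=2$, and together with constancy from the root it gives $\ell_\ast=0$. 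The same run records the refinement of the $15$, resp.\ $25$, immediate descendants by derived length and by centre type ($\zeta=1^2$ versus $\zeta=1$). Since every mainline vertex has exactly one capable child of depth $1$ besides the next mainline vertex, and that child has only terminal children, the tree has uniform depth $\mathrm{dp}(\mathcal{T})=2$, whence Formula \eqref{eqn:BranchDepth2} yields $\#\mathcal{B}(11)=N_1(m_{11})+N_1(v)=15+25=40$ — the data to be packaged in the subsequent Theorem \ref{thm:TKTd23TreeCc5}.

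I would then check that the metabelian part of this output agrees with Nebelung's classification \cite[Thm.\ 5.1.16, and the third Figure, $e\ge 4$, $e\equiv 0\pmod{2}$, on the double page between pp.\ 191--192]{Ne}, and record that $\mathcal{T}^5(P_9-\#2;37)=\mathcal{T}^5{R_3^5}$ is the coclass tree attached to the infinite metabelian pro-$3$ group $S_{4,6}$ of \cite[Cnj.\ 15 (a), p.\ 116]{Ek}, paralleling the identification $\mathcal{T}^4{R_4^4}\leftrightarrow S_{3,6}$ used in Proposition \ref{prp:TKTd23TreeCc4}.

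Finally, periodicity. Because $\mathrm{dp}(\mathcal{T})=2$ is uniformly bounded, the virtual periodicity of \cite{dS,EkLg} applies without any depth-pruning and sharpens to strict periodicity of complete branches; and since the computed descendant numbers — hence the branch layer cardinalities, cf.\ Corollary \ref{cor:TreeWidth} — are constant in $n$ already from $\mathcal{B}(11)$ onward across the whole verified range, the primitive period length is $\ell=1$ with $\ell_\ast=0$, i.e.\ $\mathcal{B}(i+1)\simeq\mathcal{B}(i)$ for all $i\ge 11$. The one point that is not a machine computation, and hence the main thing actually to be argued, is the step from ``constant over a finite initial segment'' to ``constant forever'': this rests on the cited periodicity theorems, which bound the pre-period and period in terms of the (here small and explicitly bounded) depth and width, so one must confirm that the computed range $\mathcal{B}(11),\ldots,\mathcal{B}(30)$ safely exceeds those bounds.
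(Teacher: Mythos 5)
Your proposal takes essentially the same route as the paper's proof: direct computation of the descendant numbers \(N_1,C_1\) of the mainline and depth-one capable vertices with the \(p\)-group generation algorithm in MAGMA up to branch \(\mathcal{B}(30)\) (yielding the constitution \(15=6+9\) and \(25=9+16\) by centre type and derived length, hence \(\#\mathcal{B}(11)=40\)), confirmation of the metabelian skeleton against Nebelung, and the virtual periodicity theorems of du Sautoy and Eick--Leedham-Green to pass to strict periodicity with \(\ell=1\), \(\ell_\ast=0\) -- and you correctly observe that the near-isomorphism shortcut used for \(\mathrm{d}.23^\ast\) at even coclass is unavailable here. The only incidental slip is the identification with Eick's pro-\(3\) group: the paper attaches \(\mathcal{T}^5{R_3^5}\) to \(S_{4,2}\) in \cite[Cnj.~15~(a), p.~116]{Ek}, not to \(S_{4,6}\).
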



\begin{proof}
(of Proposition
\ref{prp:TKTd23TreeCc5})
The statements concerning the numbers \(N_1(m_n)\) of immediate descendants
of the mainline vertices \(m_n\) with \(n\ge n_\ast=11\),
and \(N_1(v)\) of vertices with depth \(\mathrm{dp}(v)=1\)
and logarithmic order \(\mathrm{lo}(v)\ge n_\ast+1=12\),
have been obtained by direct computation with the \(p\)-group generation algorithm
\cite{Nm2,Ob,HEO}
in MAGMA
\cite{MAGMA}.
In detail, we proved that there is no pre-period, \(\ell_\ast=0\), 
and the primitive period \((\mathcal{B}(11))\) of length \(\ell=1\) consists of \\
\(6\), resp. \(9\), metabelian vertices with \(\zeta=1^2\), resp. \(\zeta=1\), and \\
\(9\), resp. \(16\), non-metabelian vertices with \(\zeta=1^2\), resp. \(\zeta=1\), \\
(i.e. \(15=6+9\) children of \(m_{11}\), and \(25=9+16)\) children of \(v_{2}(m_{11})\) with depth \(1\)) \\
together \(40\) vertices (\(15\) of them metabelian) in branch \(\mathcal{B}(11)\).

The results concerning the metabelian skeleton confirm the corresponding statements in the dissertation of Nebelung
\cite[Thm. 5.1.16, pp. 178--179, and the third Figure, \(e\ge 4\), \(e\equiv 0\pmod{2}\), on the third page between pp. 191--192]{Ne}.
The tree \(\mathcal{T}^5{R_3^5}\) corresponds to
the infinite metabelian pro-\(3\) group \(S_{4,2}\) in
\cite[Cnj. 15 (a), p. 116]{Ek}.

The claim of the virtual periodicity of branches
has been proved generally for any coclass tree in
\cite{dS}
and
\cite{EkLg}.
Here, the strict periodicity was confirmed by computation up to branch \(\mathcal{B}(30)\)
and certainly sets in at \(p_\ast=11\).
\end{proof}

\begin{theorem}
\label{thm:TKTd23TreeCc5}
\textbf{(Graph theoretic and algebraic invariants.)} \\
The coclass-\(5\) tree \(\mathcal{T}:=\mathcal{T}^5{R_3^5}\) of \(3\)-groups \(G\) with coclass \(\mathrm{cc}(G)=5\)
arises from the metabelian root \(R_3^5:=P_9-\#2;37\)
and has the following abstract graph theoretic properties.
\begin{enumerate}
\item
The branches \(\mathcal{B}(i)\), \(i\ge 11\), are purely periodic with
primitive period \((\mathcal{B}(11))\) of length \(\ell=1\).
\item
The cardinality of the periodic branch is
\(\#\mathcal{B}(11)=40\).
\item
Depth, width, and information content of the tree are given by
\begin{equation}
\label{eqn:TKTd23TreeCc5}
\mathrm{dp}(\mathcal{T}^5{R_3^5})=2, \quad \mathrm{wd}(\mathcal{T}^5{R_3^5})=40, \quad \text{ and } \quad \mathrm{IC}(\mathcal{T}^5{R_3^5})=40.
\end{equation}
\end{enumerate}
The algebraic invariants of the vertices forming the root and the primitive period \((\mathcal{B}(11))\) of the tree
are presented in Table
\ref{tbl:TKTd23TreeCc5}.
The leading six branches \(\mathcal{B}(11),\ldots,\mathcal{B}(16)\) are drawn in Figure
\ref{fig:TKTd23TreeCc5}.
\end{theorem}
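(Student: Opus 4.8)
The plan is to follow verbatim the pattern already carried out for the coclass-$5$ tree $\mathcal{T}^5{R_2^5}$ in Theorem~\ref{thm:TKTd19TreeCc5}, feeding in the descendant numbers supplied by Proposition~\ref{prp:TKTd23TreeCc5}. First I would read off from that proposition that every mainline vertex $m_n$, $n\ge n_\ast=11$, has exactly $C_1(m_n)=2$ capable children, one of which is the next mainline vertex $m_{n+1}=v_1(m_n)$, while the remaining capable child $v=v_2(m_n)$ has depth $1$ and $C_1(v)=0$. Hence every iterated-parent path from a vertex of $\mathcal{B}(n)$ to the branch root $m_n$ has length at most $2$, so $\mathrm{dp}(\mathcal{B}(n))=2$ for all $n\ge 11$ and therefore $\mathrm{dp}(\mathcal{T})=2$; this is the depth assertion in claim~(3), and claim~(1) is then nothing but the pure-periodicity statement of Proposition~\ref{prp:TKTd23TreeCc5}.

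Next I would invoke the depth-$2$ branch-cardinality formula \eqref{eqn:BranchDepth2} of Theorem~\ref{thm:DescendantNumbers}, namely $\#\mathcal{B}=N_1(m_n)+\sum_{i=2}^{C_1(m_n)}N_1(v_i)$. With $N_1(m_n)=15$ and the single extra capable child $v_2$ contributing $N_1(v_2)=25$, this gives $\#\mathcal{B}(11)=15+25=40$, proving~(2). For the width I would use Formula~\eqref{eqn:TreeWidth2} of Corollary~\ref{cor:TreeWidth}: since $n_\ast=11$, $\ell_\ast=0$, $\ell=1$, the only layer cardinality that can attain the maximum besides the root term $N_1(m_{11})=15$ is $\#\mathrm{Lyr}_{13}{\mathcal{T}}=N_1(m_{12})+\sum_{i=1}^{C_1(m_{11})}N_1(v_i(m_{11}))=15+25=40$, whence $\mathrm{wd}(\mathcal{T})=40$. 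Finally, because there is no pre-period and the primitive period consists of the single branch $\mathcal{B}(11)$, Definition~\ref{dfn:InfoCont} (Formula~\eqref{eqn:InfoCont}) yields $\mathrm{IC}(\mathcal{T})=\#\mathcal{B}(11)=40$, which completes~(3).

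The algebraic data in Table~\ref{tbl:TKTd23TreeCc5} — depth $\mathrm{dp}$, derived length $\mathrm{dl}$, centre type $\zeta$, relation rank $\mu$, nuclear rank $\nu$, the quotients $\tau(1)$ and $\tau_2$, transfer kernel type $\varkappa$, action flag $\sigma$, and the factorised order $\#\mathrm{Aut}$ — I would obtain exactly as in the proof of Theorem~\ref{thm:TKTd19TreeCc5}: each vertex is represented by a normalised parametrised presentation $G_\rho^{m,n}(\alpha,\beta,\gamma,\delta)$ in the sense of Formula~\eqref{eqn:Presentation}, and these invariants are computed by the MAGMA program scripts \cite{MAGMA}; the periodicity in Proposition~\ref{prp:TKTd23TreeCc5} then propagates the table to all higher branches, and Figure~\ref{fig:TKTd23TreeCc5} visualises $\mathcal{B}(11),\dots,\mathcal{B}(16)$.

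I do not expect a genuine obstacle at the level of this theorem: it is essentially a bookkeeping consequence of Proposition~\ref{prp:TKTd23TreeCc5} together with the general counting lemmas of \S\ \ref{s:GraphStruc}. The one point that really deserves care — and which is discharged inside the proof of Proposition~\ref{prp:TKTd23TreeCc5} rather than here — is upgrading the \emph{virtual} branch periodicity of du Sautoy \cite{dS} and of Eick and Leedham-Green \cite{EkLg} to \emph{strict} periodicity with primitive length $\ell=1$ and empty pre-period; this is admissible precisely because the depth is uniformly bounded by $2$, so no depth-pruning is required, and the finite MAGMA verification up to branch $\mathcal{B}(30)$ then makes the onset of periodicity at $p_\ast=11$ certain.
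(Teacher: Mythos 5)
Your proposal is correct and takes essentially the same route as the paper's own proof: depth \(\mathrm{dp}(\mathcal{T})=2\) from the capable-descendant counts of Proposition \ref{prp:TKTd23TreeCc5}, the branch cardinality \(\#\mathcal{B}(11)=15+25=40\) via Formula \eqref{eqn:BranchDepth2}, the width \(40\) via Formula \eqref{eqn:TreeWidth2}, the information content \(\mathrm{IC}(\mathcal{T})=\#\mathcal{B}(11)=40\) from Formula \eqref{eqn:InfoCont}, and the table data obtained by MAGMA computation and propagated by the strict periodicity. The only slip is notational: in your layer formula the inner sum should start at \(i=2\) (otherwise the next mainline vertex \(v_1(m_{n-2})=m_{n-1}\) would be double-counted), which is exactly how you in fact evaluated it, so the value \(15+25=40\) stands.
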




\renewcommand{\arraystretch}{1.2}

\begin{table}[ht]
\caption{Data for \(3\)-groups \(G\) with \(11\le n=\mathrm{lo}(G)\le 13\) of the coclass tree \(\mathcal{T}^5{R_3^5}\)}
\label{tbl:TKTd23TreeCc5}
\begin{center}
\begin{tabular}{|c|c|l||c|c|c||c|c||c|c|l|c||c|c|}
\hline
 \(\#\) & \(m,n\)  & \(\rho;\alpha,\beta,\gamma,\delta\) & dp & dl & \(\zeta\) & \(\mu\) & \(\nu\) & \(\tau(1)\) & \(\tau_2\) & Type                   & \(\varkappa\) & \(\sigma\) & \(\#\mathrm{Aut}\) \\
\hline
  \(1\) & \(7,11\) & \(0;0,0,0,1\)                       & 0  & 2  & \(1^2\)   & \(5\)   & \(1\)   & \(3^2\)     & \(32^3\)   & \(\mathrm{d}.23^\ast\) & \((0243)\) & \(0\)    & \(3^{18}\)         \\
\hline
  \(1\) & \(8,12\) & \(0;0,0,0,1\)                       & 0  & 2  & \(1^2\)   & \(5\)   & \(1\)   & \(43\)      & \(3^22^2\) & \(\mathrm{d}.23^\ast\) & \((0243)\) & \(0\)    & \(3^{20}\)         \\
  \(2\) & \(8,12\) & \(0;0,0,\pm 1,1\)                   & 1  & 2  & \(1^2\)   & \(4\)   & \(0\)   & \(43\)      & \(3^22^2\) & \(\mathrm{F}.11\)      & \((2243)\)    & \(0\)      & \(2\cdot 3^{20}\)  \\
  \(2\) & \(8,12\) & \(0;1,0,\pm 1,1\)                   & 1  & 2  & \(1^2\)   & \(4\)   & \(0\)   & \(43\)      & \(3^22^2\) & \(\mathrm{F}.12\)      & \((3243)\)    & \(0\)      & \(3^{20}\)         \\
  \(1\) & \(8,12\) & \(0;1,0,0,1\)                       & 1  & 2  & \(1^2\)   & \(5\)   & \(1\)   & \(43\)      & \(3^22^2\) & \(\mathrm{G}.16\)      & \((1243)\)    & \(0\)      & \(3^{20}\)         \\
  \(6\) & \(8,12\) &                                     & 1  & 3  & \(1^2\)   & \(4\)   & \(0\)   & \(3^2\)     & \(32^3\)   & \(\mathrm{d}.23\)      & \((0243)\)    & \(0\)      & \(3^{19}\)         \\
  \(2\) & \(8,12\) &                                     & 1  & 3  & \(1^2\)   & \(4\)   & \(0\)   & \(3^2\)     & \(32^3\)   & \(\mathrm{d}.23\)      & \((0243)\)    & \(0\)      & \(2\cdot 3^{18}\)  \\
  \(1\) & \(8,12\) &                                     & 1  & 3  & \(1^2\)   & \(4\)   & \(0\)   & \(3^2\)     & \(32^3\)   & \(\mathrm{d}.23\)      & \((0243)\)    & \(0\)      & \(3^{18}\)         \\
  \(9\) & \(9,13\) & \(\pm 1;\alpha,0,\gamma,1\)         & 2  & 2  & \(1\)     & \(4\)   & \(0\)   & \(43\)     & \(432^2\)   & \(\mathrm{G}.16\)      & \((1243)\)    & \(0\)      & \(3^{22}\)         \\
 \(12\) & \(9,13\) &                                     & 2  & 3  & \(1\)     & \(4\)   & \(0\)   & \(43\)     & \(3^22^2\)  & \(\mathrm{G}.16\)      & \((1243)\)    & \(0\)      & \(3^{21}\)         \\
  \(4\) & \(9,13\) &                                     & 2  & 3  & \(1\)     & \(4\)   & \(0\)   & \(43\)     & \(3^22^2\)  & \(\mathrm{G}.16\)      & \((1243)\)    & \(0\)      & \(3^{20}\)         \\
\hline
\end{tabular}
\end{center}
\end{table}



\begin{proof}
(Proof of Theorem
\ref{thm:TKTd23TreeCc5})
Since every mainline vertex \(m_n\) of the tree \(\mathcal{T}\)
has two capable children, \(C_1(m_n)=2\),
but every capable vertex \(v\) of depth \(1\)
has only terminal children, \(C_1(v)=0\),
according to Proposition
\ref{prp:TKTd23TreeCc5},
the depth of the tree is \(\mathrm{dp}(\mathcal{T})=2\).
In this case,
the cardinality of a branch \(\mathcal{B}\) is the sum of the number \(N_1(m_n)\) of immediate descendants of the branch root \(m_n\)
and the numbers \(N_1(v_i)\) of terminal children of capable vertices \(v_i\) of depth \(1\),
\(2\le i\le C_1(m_n)\) (\(v_1\) the next mainline vertex must be omitted),
according to Formula
\eqref{eqn:BranchDepth2},
\[\#\mathcal{B}=N_1(m_n)+\sum_{i=2}^{C_1(m_n)}\,N_1(v_i).\]
Applied to the primitive period, this yields
\(\#\mathcal{B}(11)=15+25=40\). 
According to Formula
\eqref{eqn:TreeWidth2},
the width of the tree is the maximum of all sums of the shape
\[\#\mathrm{Lyr}_n{\mathcal{T}}=\#\lbrace v\in\mathcal{T}\mid\mathrm{lo}(v)=n\rbrace=N_1(m_{n-1})+\sum_{i=1}^{C_1(m_{n-2})}\,N_1(v_i(m_{n-2})),\]
taken over all branch roots \(m_n\) with logarithmic orders \(n_\ast+2\le n\le n_\ast+\ell_\ast+\ell+1\).
Applied to \(n_\ast=11\), \(\ell_\ast=0\), and \(\ell=1\), this yields
\(\mathrm{wd}(\mathcal{T})=\max(15+25)=\max(40)=40\).
Finally, we have \(\mathrm{IC}(\mathcal{T})=\#\mathcal{B}(11)=40\).
\end{proof}



\begin{corollary}
\label{cor:TKTd23TreeCc5}
\textbf{(Actions and relation ranks.)}
The algebraic invariants of the vertices of the structured coclass-\(5\) tree \(\mathcal{T}^5{R_3^5}\) are listed in Table
\ref{tbl:TKTd23TreeCc5}.
In particular:
\begin{enumerate}
\item
There are no groups with GI-action, let alone with RI- or \(V_4\)-action.
\item
The relation rank is given by
\(\mu=5\) for the mainline vertices \(\stackbin[0]{n-4,n}{G}\begin{pmatrix}0&0\\ 0&1\end{pmatrix}\) with \(n\ge 11\),
and the capable vertices \(\stackbin[0]{n-4,n}{G}\begin{pmatrix}1&0\\ 0&1\end{pmatrix}\) of depth \(1\) with \(n\ge 12\), and
\(\mu=4\) otherwise.
\end{enumerate}
\end{corollary}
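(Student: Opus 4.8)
The plan is to extract both assertions directly from Table~\ref{tbl:TKTd23TreeCc5} and then extend them over the whole infinite tree $\mathcal{T}^5 R_3^5$ by invoking the periodicity of Proposition~\ref{prp:TKTd23TreeCc5}. Since that proposition gives primitive period length $\ell=1$ with no pre-period ($\ell_\ast=0$), the finitely many rows of the table already list every isomorphism type of vertex that can occur: the root and the branch $\mathcal{B}(11)$ exhaust a complete period, and every vertex of higher logarithmic order is carried to one of these by a structured-digraph isomorphism $\mathcal{B}(i+1)\simeq\mathcal{B}(i)$ under which the action flag $\sigma$ and the relation rank $\mu$ are preserved, being among the algebraic invariants attached to the structured digraph (cf.\ Definition~\ref{dfn:IsoInvariants} and the list of strict invariants in Theorem~\ref{thm:FirstPeriod}). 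So it suffices to inspect the table.

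For assertion~(1) I would observe that the $\sigma$-column of Table~\ref{tbl:TKTd23TreeCc5} is identically $0$; by Definition~\ref{dfn:Automorphisms} this means no vertex admits a generator-inverting automorphism and none admits a $V_4$-action, whence a fortiori none admits an RI-action, which presupposes GI-action. To put this on a firm footing I would, for one full period, re-run the MAGMA routine based on the $p$-covering group that was already used in Corollaries~\ref{cor:TKTb10TreeCc5} and~\ref{cor:TKTd19TreeCc5} to decide whether $\mathrm{Aut}(G)$ contains a generator- (resp.\ relator-) inverting automorphism, together with the test for a bicyclic subgroup $C_2\times C_2$ of $\mathrm{Aut}(G)$; periodicity then transports the negative answers to all remaining vertices. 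For assertion~(2) I would read off the $\mu$-column: it equals $5$ exactly at the mainline vertices $G_0^{n-4,n}(0,0,0,1)$ and at the unique capable depth-$1$ vertex $G_0^{n-4,n}(1,0,0,1)$ of even nilpotency class, and equals $4$ at every other, necessarily terminal, vertex; rewriting those two presentations in the matrix notation of the statement and applying periodicity yields the claimed value of $\mu$ for every $n\ge 11$.

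The deduction itself is routine; the substantive content — and hence the only real obstacle — sits in the computational input it rests on. One needs confidence that the $p$-group generation algorithm has been pushed far enough, here through $\mathcal{B}(30)$ per Proposition~\ref{prp:TKTd23TreeCc5}, that the observed periodicity with periodic root at $p_\ast=11$ is genuine rather than a short-computation coincidence, and that the $p$-covering-group implementation of the GI/RI-action tests is correct. The former is in any event underwritten by the virtual-periodicity theorems of du Sautoy~\cite{dS} and Eick--Leedham-Green~\cite{EkLg}, which apply without depth-pruning because the depth of $\mathcal{T}^5 R_3^5$ is uniformly bounded by $2$ by Theorem~\ref{thm:TKTd23TreeCc5}. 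Granting these, the corollary follows.
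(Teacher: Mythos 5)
Your proposal is correct and follows essentially the same route as the paper: read the action flags and relation ranks off Table \ref{tbl:TKTd23TreeCc5}, verify the GI/RI/\(V_4\) negatives computationally via the \(p\)-covering-group routine in MAGMA, and extend to all logarithmic orders by the strict periodicity of Proposition \ref{prp:TKTd23TreeCc5} (under which \(\sigma\) and \(\mu\) are strict invariants). Your extra remarks invoking the virtual-periodicity theorems of du Sautoy and Eick--Leedham-Green merely make explicit what the paper's proof of the proposition already relies on.
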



\begin{proof}
(of Corollary
\ref{cor:TKTd23TreeCc5})
The existence of an RI-action on \(G\) has been checked by means of
an algorithm involving the \(p\)-covering group of \(G\), written for MAGMA
\cite{MAGMA}.
The other claims follow immediately from Table
\ref{tbl:TKTd23TreeCc5},
continued indefinitely with the aid of the periodicity in Prop.
\ref{prp:TKTd23TreeCc5}. 
\end{proof}



\begin{figure}[hb]
\caption{The unique coclass-\(5\) tree \(\mathcal{T}^5(P_9-\#2;39)\) with mainline of type \(\mathrm{d}.25^\ast\)}
\label{fig:TKTd25TreeCc5}

\input{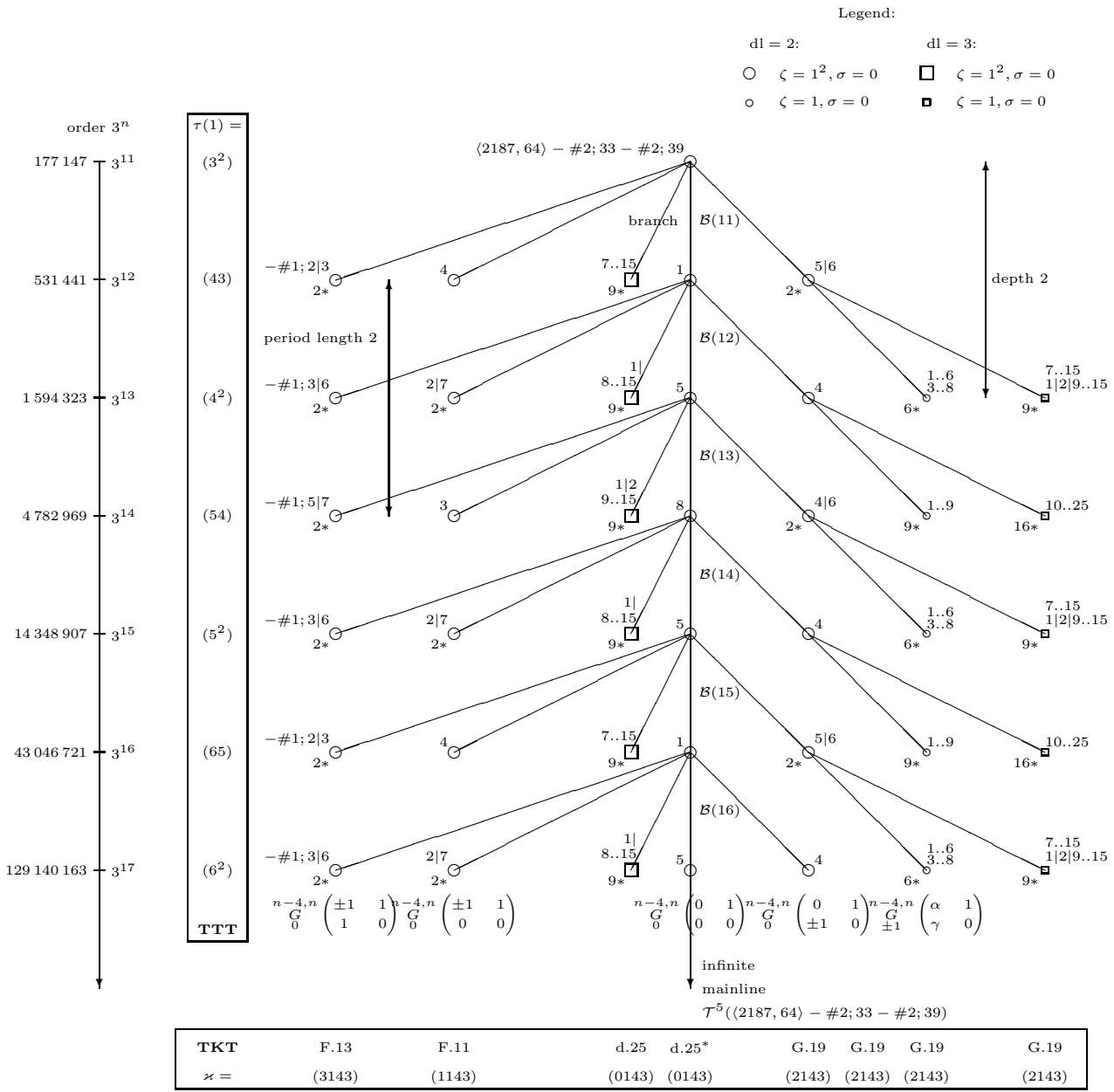}

\end{figure}



\subsection{The unique mainline of type \(\mathrm{d}.25^\ast\) for odd coclass \(r\ge 5\)}
\label{ss:d25Cc5}

\begin{proposition}
\label{prp:TKTd25TreeCc5}
\textbf{(Periodicity and descendant numbers.)} \\
The branches \(\mathcal{B}(i)\), \(i\ge n_\ast=11\), of
the unique coclass-\(5\) tree \(\mathcal{T}^5(P_9-\#2;39)\)
with mainline vertices of transfer kernel type \(\mathrm{d}.25^\ast\), \(\varkappa\sim (0143)\),
are purely periodic with primitive length \(\ell=2\) and without pre-period, \(\ell_\ast=0\), that is,
\(\mathcal{B}(i+2)\simeq\mathcal{B}(i)\) are isomorphic as structured digraphs, for all \(i\ge p_\ast=n_\ast+\ell_\ast=11\).

The graph theoretic structure of the tree is determined uniquely by the numbers
\(N_1\) of immediate descendants and \(C_1\) of capable immediate descendants
for mainline vertices \(m_n\) with logarithmic order \(n=\mathrm{lo}(m_n)\ge n_\ast=11\)
and for capable vertices \(v\) with depth \(1\) and \(\mathrm{lo}(v)\ge n_\ast+1=12\): \\
\((N_1,C_1)=(15,3)\) for all mainline vertices \(m_n\) of odd logarithmic order \(n\ge 11\), \\
\((N_1,C_1)=(15,2)\) for all mainline vertices \(m_n\) of even logarithmic order \(n\ge 12\), \\
\((N_1,C_1)=(15,0)\) for two capable vertices \(v\) of depth \(1\) and even logarithmic order \(\mathrm{lo}(v)\ge 12\), \\
\((N_1,C_1)=(25,0)\) for the capable vertex \(v\) of depth \(1\) and odd logarithmic order \(\mathrm{lo}(v)\ge 13\).
\end{proposition}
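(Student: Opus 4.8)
The plan is to prove Proposition \ref{prp:TKTd25TreeCc5} exactly as its two companions Proposition \ref{prp:TKTd19TreeCc5} and Proposition \ref{prp:TKTd23TreeCc5} are proved: by an explicit finite computation that is then propagated to all branches via the virtual periodicity theorems. First I would fix the root as the parametrised metabelian vertex $R_4^5 = G_0^{7,11}(0,1,0,0) \simeq \langle 2187,64\rangle-\#2;33-\#2;39 = P_9-\#2;39$ in the notation of Formula \eqref{eqn:Presentation}, and then, running the $p$-group generation algorithm of Newman and O'Brien \cite{Nm2,Ob,HEO} in MAGMA \cite{MAGMA}, determine for each mainline vertex $m_n$ with $n \ge n_\ast = 11$ the number $N_1(m_n)$ of all children and the number $C_1(m_n)$ of capable children, together with the refinement of the children by derived length and by the abelian type $\zeta$ of the centre (bicyclic $\zeta = 1^2$ or cyclic $\zeta = 1$), as was done for Propositions \ref{prp:TKTd19Tree1Cc4} and \ref{prp:TKTd25Tree1Cc4}.

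The expected outcome is $(N_1,C_1) = (15,3)$ for mainline vertices of odd logarithmic order $n \ge 11$ and $(N_1,C_1) = (15,2)$ for those of even logarithmic order $n \ge 12$; the capable children always comprise the next mainline vertex $v_1 = m_{n+1}$ and, besides it, two capable vertices $v$ of depth $1$ when $n$ is odd, respectively one when $n$ is even. For these depth-$1$ vertices I would likewise compute the children, expecting $(N_1,C_1) = (15,0)$ for the two of even logarithmic order $\mathrm{lo}(v) \ge 12$ and $(N_1,C_1) = (25,0)$ for the single one of odd logarithmic order $\mathrm{lo}(v) \ge 13$, every such child being terminal. Since each depth-$1$ capable vertex has only terminal children while each mainline vertex has a capable child distinct from $m_{n+1}$, the tree has depth $\mathrm{dp}(\mathcal{T}) = 2$, and Formula \eqref{eqn:BranchDepth2} of Theorem \ref{thm:DescendantNumbers} then yields the branch cardinalities $\#\mathcal{B}(11)$ and $\#\mathcal{B}(12)$ — which come out unequal, reflecting the parity alternation of the mainline descendant numbers — and hence fixes the whole graph-theoretic shape of the tree. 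The metabelian part of this picture should be matched against the third tree diagram ($e \ge 4$, $e \equiv 0 \pmod 2$) in Nebelung's dissertation \cite{Ne}, and the tree identified with the corresponding infinite metabelian pro-$3$ group in \cite[Cnj. 15 (a), p. 116]{Ek}.

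It then remains to promote this finitely verified pattern to an assertion about all branches. The virtual periodicity of the branches of any coclass tree is the theorem of du Sautoy \cite{dS} and, independently, of Eick and Leedham-Green \cite{EkLg}; since the depth of $\mathcal{T}^5 R_4^5$ is bounded uniformly by $2$, no depth-pruning is required and the virtual periodicity is in fact strict periodicity in the sense of Proposition \ref{prp:TreeIso}. I would verify directly — by constructing and comparing the branches up to, say, $\mathcal{B}(30)$, using the adjacency-matrix criterion of Proposition \ref{prp:GraphIso} — that $\mathcal{B}(i+2) \simeq \mathcal{B}(i)$ for all $i \ge 11$, that $\mathcal{B}(11) \not\simeq \mathcal{B}(12)$ (so that the primitive period length is genuinely $\ell = 2$, not $1$), and that there is no pre-period, $\ell_\ast = 0$; the periodicity theorems then extend this to all larger logarithmic orders.

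The main point requiring care — as throughout \S\ref{s:PeriodicSporadic4} and \S\ref{s:PeriodicSporadic5} — is to make sure that the depth of the tree really remains bounded (by $2$), so that strict rather than merely virtual periodicity applies, and that the two branches $\mathcal{B}(11)$, $\mathcal{B}(12)$ already realise the primitive period without an exceptional pre-period. The parity alternation responsible for $\ell = 2$ is genuine here because it is visible in the descendant numbers themselves — $(15,3)$ versus $(15,2)$ on the mainline, $(15,0)$ versus $(25,0)$ on the depth-$1$ vertices — and hence in the branch cardinalities; once this has been pinned down for $\mathcal{B}(11)$ and $\mathcal{B}(12)$ the transfer mechanism of \cite{dS,EkLg} propagates it with no further work. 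In contrast to the coclass-$4$ trees covered by Theorem \ref{thm:SecondPeriod}, the present tree cannot be obtained as a co-periodicity corollary of a smaller coclass, since coclass $3$ lies in the pre-period of the co-periodicity and outside the range of that theorem; rather, $\mathcal{T}^5 R_4^5$ serves as the base case of the odd-coclass co-periodicity chain $\mathcal{T}^5 R_4^5 \simeq \mathcal{T}^7 R_4^7 \simeq \cdots$.
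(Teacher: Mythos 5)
Your proposal follows essentially the same route as the paper's own proof: the descendant numbers $N_1$, $C_1$ (refined by derived length and centre type $\zeta$) are determined by direct computation with the $p$-group generation algorithm in MAGMA, the metabelian skeleton is cross-checked against Nebelung's third tree diagram and identified with the pro-$3$ group $S_{4,3}$ of Eick, and strict periodicity with $\ell=2$, $\ell_\ast=0$ is obtained from the virtual periodicity theorems of du Sautoy and Eick--Leedham-Green combined with explicit verification of the branches up to $\mathcal{B}(30)$. Your additional remarks (checking $\mathcal{B}(11)\not\simeq\mathcal{B}(12)$ to pin the primitive length, and the bounded depth $2$ making pruning unnecessary) are consistent with, and implicit in, the paper's argument.
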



\begin{proof}
(of Proposition
\ref{prp:TKTd25TreeCc5})
The statements concerning the numbers \(N_1(m_n)\) of immediate descendants
of the mainline vertices \(m_n\) with \(n\ge n_\ast=11\),
and \(N_1(v)\) of vertices with depth \(\mathrm{dp}(v)=1\)
and logarithmic order \(\mathrm{lo}(v)\ge n_\ast+1=12\),
have been obtained by direct computation with the \(p\)-group generation algorithm
\cite{Nm2,Ob,HEO}
in MAGMA
\cite{MAGMA}.
In detail, we proved that there is no pre-period, \(\ell_\ast=0\), 
and the primitive period \((\mathcal{B}(11),\mathcal{B}(12))\) of length \(\ell=2\) consists of \\
\(6\), resp. \(12\), metabelian vertices with \(\zeta=1^2\), resp. \(\zeta=1\), and \\
\(9\), resp. \(18\), non-metabelian vertices with \(\zeta=1^2\), resp. \(\zeta=1\), \\
(i.e. \(15=6+9\) children of \(m_{11}\), and \(30=2\cdot (6+9)\) children of \(v_{2,3}(m_{11})\), both with depth \(1\)) \\
together \(45\) vertices (\(18\) of them metabelian) in branch \(\mathcal{B}(11)\), and \\
\(6\), resp. \(9\), metabelian vertices with \(\zeta=1^2\), resp. \(\zeta=1\), and \\
\(9\), resp. \(16\), non-metabelian vertices with \(\zeta=1^2\), resp. \(\zeta=1\), \\
(i.e. \(15=6+9\) children of \(m_{12}\), and \(25=9+16\) children of \(v_{2}(m_{12})\) with depth \(1\)) \\
together \(40\) vertices (\(15\) of them metabelian) in branch \(\mathcal{B}(12)\).

The results concerning the metabelian skeleton confirm the corresponding statements in the dissertation of Nebelung
\cite[Thm. 5.1.16, pp. 178--179, and the third Figure, \(e\ge 4\), \(e\equiv 0\pmod{2}\), on the third page between pp. 191--192]{Ne}.
The tree \(\mathcal{T}^5{R_4^5}\) corresponds to
the infinite metabelian pro-\(3\) group \(S_{4,3}\) in
\cite[Cnj. 15 (a), p. 116]{Ek}.

The claim of the virtual periodicity of branches
has been proved generally for any coclass tree in
\cite{dS}
and
\cite{EkLg}.
Here, the strict periodicity was confirmed by computation up to branch \(\mathcal{B}(30)\)
and certainly sets in at \(p_\ast=11\).
\end{proof}


\begin{theorem}
\label{thm:TKTd25TreeCc5}
\textbf{(Graph theoretic and algebraic invariants.)} \\
The coclass-\(5\) tree \(\mathcal{T}:=\mathcal{T}^5{R_4^5}\) of \(3\)-groups \(G\) with coclass \(\mathrm{cc}(G)=5\)
which arises from the metabelian root \(R_4^5:=P_9-\#2;39\)
has the following abstract graph theoretic properties.
\begin{enumerate}
\item
The branches \(\mathcal{B}(i)\), \(i\ge 11\), are purely periodic with
primitive period \((\mathcal{B}(11),\mathcal{B}(12))\) of length \(\ell=2\).
\item
The cardinalities of the periodic branches are
\(\#\mathcal{B}(11)=45\) and \(\#\mathcal{B}(12)=40\).
\item
Depth, width, and information content of the tree are given by
\begin{equation}
\label{eqn:TKTd25TreeCc5}
\mathrm{dp}(\mathcal{T}^5{R_4^5})=2, \quad \mathrm{wd}(\mathcal{T}^5{R_4^5})=45, \quad \text{ and } \quad \mathrm{IC}(\mathcal{T}^5{R_4^5})=85.
\end{equation}
\end{enumerate}
The algebraic invariants of the vertices forming the root and the primitive period \((\mathcal{B}(11),\mathcal{B}(12))\) of the tree
are presented in Table
\ref{tbl:TKTd25TreeCc5}.
The leading six branches \(\mathcal{B}(11),\ldots,\mathcal{B}(16)\) are drawn in Figure
\ref{fig:TKTd25TreeCc5}.
\end{theorem}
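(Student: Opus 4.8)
The plan is to derive Theorem~\ref{thm:TKTd25TreeCc5} from Proposition~\ref{prp:TKTd25TreeCc5} together with the general counting results of Theorem~\ref{thm:DescendantNumbers} and Corollary~\ref{cor:TreeWidth}, following the template already used in the proofs of Theorems~\ref{thm:TKTd19TreeCc5} and~\ref{thm:TKTd23TreeCc5}. First I would record that, by Proposition~\ref{prp:TKTd25TreeCc5}, the root has logarithmic order \(n_\ast=11\), the pre-period is empty (\(\ell_\ast=0\)), and the primitive period length is \(\ell=2\), so that \(\mathcal{B}(i+2)\simeq\mathcal{B}(i)\) for every \(i\ge 11\); this is claim~(1). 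Next, since every mainline vertex \(m_n\) has \(C_1(m_n)\ge 2\) capable children while every capable vertex \(v\) of depth~\(1\) has \(C_1(v)=0\), no vertex of depth~\(2\) is capable, whence each branch \(\mathcal{B}(n)\) has depth~\(2\) and therefore \(\mathrm{dp}(\mathcal{T})=2\); in particular Formula~\eqref{eqn:BranchDepth2} of Theorem~\ref{thm:DescendantNumbers} applies.

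For claim~(2) I would feed the descendant numbers of Proposition~\ref{prp:TKTd25TreeCc5} into that formula. The mainline vertex \(m_{11}\) has odd logarithmic order, so \((N_1,C_1)(m_{11})=(15,3)\), and its two non-mainline capable children have depth~\(1\) and even logarithmic order~\(12\), hence \(N_1=15\) each; this yields \(\#\mathcal{B}(11)=15+15+15=45\). The mainline vertex \(m_{12}\) has even logarithmic order, so \((N_1,C_1)(m_{12})=(15,2)\), and its unique non-mainline capable child has depth~\(1\) and odd logarithmic order~\(13\), hence \(N_1=25\); this yields \(\#\mathcal{B}(12)=15+25=40\).

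Claim~(3) then follows formally. For the width I would use Corollary~\ref{cor:TreeWidth} in the depth-\(2\) form~\eqref{eqn:TreeWidth2}: the relevant quantities are the root term \(N_1(m_{11})=15\), the layer value \(N_1(m_{12})+\sum_{i=2}^{C_1(m_{11})}N_1(v_i(m_{11}))=15+15+15=45\) at logarithmic order~\(13\), and the layer value \(N_1(m_{13})+\sum_{i=2}^{C_1(m_{12})}N_1(v_i(m_{12}))=15+25=40\) at logarithmic order~\(14\), so that \(\mathrm{wd}(\mathcal{T})=\max(15,45,40)=45\). Since the pre-period is empty, Formula~\eqref{eqn:InfoCont} of Definition~\ref{dfn:InfoCont} collapses to \(\mathrm{IC}(\mathcal{T})=\#\mathcal{B}(11)+\#\mathcal{B}(12)=45+40=85\). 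Finally, the algebraic invariants compiled in Table~\ref{tbl:TKTd25TreeCc5} --- depth, derived length, centre type \(\zeta\), relation rank \(\mu\), nuclear rank \(\nu\), the abelian quotient invariants \(\tau(1)\) and \(\tau_2\), the transfer kernel type \(\varkappa\), the action flag \(\sigma\), and the factorized order \(\#\mathrm{Aut}\) --- are obtained by direct MAGMA \cite{MAGMA} computation of the vertices of logarithmic orders \(11\le n\le 12\), with each group identified by its parametrized presentation \(G_\rho^{m,n}(\alpha,\beta,\gamma,\delta)\) of Formula~\eqref{eqn:Presentation}, and then propagated to all higher branches by the periodicity of Proposition~\ref{prp:TKTd25TreeCc5}.

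There is no deep obstacle here; the point that needs care --- rather than any difficulty --- is the parity alternation of the descendant numbers, which forces the two period branches \(\mathcal{B}(11)\) and \(\mathcal{B}(12)\) to be handled separately at every step and the depth-\(2\) width formula to be evaluated at both parities before taking the maximum. The only genuinely computational input, namely that strict (not merely virtual) periodicity already sets in at \(p_\ast=11\) because the depth is uniformly bounded by~\(2\), has been discharged in Proposition~\ref{prp:TKTd25TreeCc5} by explicit calculation up to branch \(\mathcal{B}(30)\), so here it may simply be invoked.
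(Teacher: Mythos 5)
Your proposal is correct and follows essentially the same route as the paper: depth \(2\) from the capable-children data of Proposition \ref{prp:TKTd25TreeCc5}, branch cardinalities \(45=15+(15+15)\) and \(40=15+25\) via Formula \eqref{eqn:BranchDepth2}, width \(45\) via Formula \eqref{eqn:TreeWidth2} evaluated at both parities, and \(\mathrm{IC}=85\) from the empty pre-period, with the tabulated invariants coming from the MAGMA computations already invoked in the proposition. No gaps.
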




\renewcommand{\arraystretch}{1.2}

\begin{table}[ht]
\caption{Data for \(3\)-groups \(G\) with \(11\le n=\mathrm{lo}(G)\le 14\) of the coclass tree \(\mathcal{T}^5{R_4^5}\)}
\label{tbl:TKTd25TreeCc5}
\begin{center}
\begin{tabular}{|c|c|l||c|c|c||c|c||c|c|l|c||c|c|}
\hline
 \(\#\) & \(m,n\)  & \(\rho;\alpha,\beta,\gamma,\delta\) & dp & dl & \(\zeta\) & \(\mu\) & \(\nu\) & \(\tau(1)\) & \(\tau_2\) & Type                   & \(\varkappa\) & \(\sigma\) & \(\#\mathrm{Aut}\) \\
\hline
  \(1\) & \(7,11\) & \(0;0,1,0,0\)                       & 0  & 2  & \(1^2\)   & \(5\)   & \(1\)   & \(3^2\)     & \(32^3\)   & \(\mathrm{d}.25^\ast\) & \((0143)\) & \(0\)    & \(2\cdot 3^{18}\)  \\
\hline
  \(1\) & \(8,12\) & \(0;0,1,0,0\)                       & 0  & 2  & \(1^2\)   & \(5\)   & \(1\)   & \(43\)      & \(3^22^2\) & \(\mathrm{d}.25^\ast\) & \((0143)\) & \(0\)    & \(2\cdot 3^{20}\)  \\
  \(1\) & \(8,12\) & \(0;1,1,0,0\)                       & 1  & 2  & \(1^2\)   & \(4\)   & \(0\)   & \(43\)      & \(3^22^2\) & \(\mathrm{F}.11\)      & \((1143)\)    & \(0\)      & \(3^{20}\)         \\
  \(2\) & \(8,12\) & \(0;1,1,\pm 1,0\)                   & 1  & 2  & \(1^2\)   & \(4\)   & \(0\)   & \(43\)      & \(3^22^2\) & \(\mathrm{F}.13\)      & \((3143)\)    & \(0\)      & \(3^{20}\)         \\
  \(2\) & \(8,12\) & \(0;0,1,\pm 1,0\)                   & 1  & 2  & \(1^2\)   & \(5\)   & \(1\)   & \(43\)      & \(3^22^2\) & \(\mathrm{G}.19\)      & \((2143)\)    & \(0\)      & \(2\cdot 3^{20}\)  \\
  \(6\) & \(8,12\) &                                     & 1  & 3  & \(1^2\)   & \(4\)   & \(0\)   & \(3^2\)     & \(32^3\)   & \(\mathrm{d}.25\)      & \((0143)\)    & \(0\)      & \(3^{19}\)         \\
  \(2\) & \(8,12\) &                                     & 1  & 3  & \(1^2\)   & \(4\)   & \(0\)   & \(3^2\)     & \(32^3\)   & \(\mathrm{d}.25\)      & \((0143)\)    & \(0\)      & \(2\cdot 3^{18}\)  \\
  \(1\) & \(8,12\) &                                     & 1  & 3  & \(1^2\)   & \(4\)   & \(0\)   & \(3^2\)     & \(32^3\)   & \(\mathrm{d}.25\)      & \((0143)\)    & \(0\)      & \(3^{18}\)         \\
  \(3\) & \(9,13\) & \(\pm 1;\alpha,1,\gamma,0\)         & 2  & 2  & \(1\)     & \(4\)   & \(0\)   & \(43\)      & \(432^2\)  & \(\mathrm{G}.19\)      & \((2143)\)    & \(0\)      & \(2\cdot 3^{22}\)  \\
  \(3\) & \(9,13\) & \(\pm 1;\alpha,1,\gamma,0\)         & 2  & 2  & \(1\)     & \(4\)   & \(0\)   & \(43\)      & \(432^2\)  & \(\mathrm{G}.19\)      & \((2143)\)    & \(0\)      & \(3^{22}\)         \\
  \(6\) & \(9,13\) &                                     & 2  & 3  & \(1\)     & \(4\)   & \(0\)   & \(43\)      & \(3^22^2\) & \(\mathrm{G}.19\)      & \((2143)\)    & \(0\)      & \(3^{21}\)         \\
  \(2\) & \(9,13\) &                                     & 2  & 3  & \(1\)     & \(4\)   & \(0\)   & \(43\)      & \(3^22^2\) & \(\mathrm{G}.19\)      & \((2143)\)    & \(0\)      & \(2\cdot 3^{20}\)  \\
  \(1\) & \(9,13\) &                                     & 2  & 3  & \(1\)     & \(4\)   & \(0\)   & \(43\)      & \(3^22^2\) & \(\mathrm{G}.19\)      & \((2143)\)    & \(0\)      & \(3^{20}\)         \\
\hline
  \(1\) & \(9,13\) & \(0;0,1,0,0\)                       & 0  & 2  & \(1^2\)   & \(5\)   & \(1\)   & \(4^2\)     & \(432^2\)  & \(\mathrm{d}.25^\ast\) & \((0143)\) & \(0\)    & \(2\cdot 3^{22}\)  \\
  \(2\) & \(9,13\) & \(0;\pm 1,1,0,0\)                   & 1  & 2  & \(1^2\)   & \(4\)   & \(0\)   & \(4^2\)     & \(432^2\)  & \(\mathrm{F}.11\)      & \((1143)\)    & \(0\)      & \(3^{22}\)         \\
  \(2\) & \(9,13\) & \(0;\pm 1,1,1,0\)                   & 1  & 2  & \(1^2\)   & \(4\)   & \(0\)   & \(4^2\)     & \(432^2\)  & \(\mathrm{F}.13\)      & \((3143)\)    & \(0\)      & \(3^{22}\)         \\
  \(1\) & \(9,13\) & \(0;0,1,1,0\)                       & 1  & 2  & \(1^2\)   & \(5\)   & \(1\)   & \(4^2\)     & \(432^2\)  & \(\mathrm{G}.19\)      & \((2143)\)    & \(0\)      & \(2\cdot 3^{22}\)  \\
  \(6\) & \(9,13\) &                                     & 1  & 3  & \(1^2\)   & \(4\)   & \(0\)   & \(43\)      & \(3^22^2\) & \(\mathrm{d}.25\)      & \((0143)\)    & \(0\)      & \(3^{21}\)         \\
  \(2\) & \(9,13\) &                                     & 1  & 3  & \(1^2\)   & \(4\)   & \(0\)   & \(43\)      & \(3^22^2\) & \(\mathrm{d}.25\)      & \((0143)\)    & \(0\)      & \(2\cdot 3^{20}\)  \\
  \(1\) & \(9,13\) &                                     & 1  & 3  & \(1^2\)   & \(4\)   & \(0\)   & \(43\)      & \(3^22^2\) & \(\mathrm{d}.25\)      & \((0143)\)    & \(0\)      & \(3^{20}\)         \\
  \(9\) & \(10,14\)& \(\pm 1;\alpha,1,\gamma,0\)         & 2  & 2  & \(1\)     & \(4\)   & \(0\)   & \(4^2\)     & \(4^22^2\) & \(\mathrm{G}.19\)      & \((2143)\)    & \(0\)      & \(3^{24}\)         \\
 \(12\) & \(10,14\)&                                     & 2  & 3  & \(1\)     & \(4\)   & \(0\)   & \(4^2\)     & \(432^2\)  & \(\mathrm{G}.19\)      & \((2143)\)    & \(0\)      & \(3^{23}\)         \\
  \(4\) & \(10,14\)&                                     & 2  & 3  & \(1\)     & \(4\)   & \(0\)   & \(4^2\)     & \(432^2\)  & \(\mathrm{G}.19\)      & \((2143)\)    & \(0\)      & \(3^{22}\)         \\
\hline
\end{tabular}
\end{center}
\end{table}



\begin{proof}
(Proof of Theorem
\ref{thm:TKTd25TreeCc5})
Since every mainline vertex \(m_n\) of the tree \(\mathcal{T}\)
has several capable children, \(C_1(m_n)\ge 2\),
but every capable vertex \(v\) of depth \(1\)
has only terminal children, \(C_1(v)=0\),
according to Proposition
\ref{prp:TKTd25TreeCc5},
the depth of the tree is \(\mathrm{dp}(\mathcal{T})=2\).
In this case,
the cardinality of a branch \(\mathcal{B}\) is the sum of the number \(N_1(m_n)\) of immediate descendants of the branch root \(m_n\)
and the numbers \(N_1(v_i)\) of terminal children of capable vertices \(v_i\) of depth \(1\),
\(2\le i\le C_1(m_n)\) (where \(v_1\) is the next mainline vertex and must be discouraged),
according to Formula
\eqref{eqn:BranchDepth2},
\[\#\mathcal{B}=N_1(m_n)+\sum_{i=2}^{C_1(m_n)}\,N_1(v_i).\]
Applied to the primitive period, this yields
\(\#\mathcal{B}(11)=15+(15+15)=45\) and \(\#\mathcal{B}(12)=15+25=40\). 
According to Formula
\eqref{eqn:TreeWidth2},
the width of the tree is the maximum of all sums of the shape
\[\#\mathrm{Lyr}_n{\mathcal{T}}=\#\lbrace v\in\mathcal{T}\mid\mathrm{lo}(v)=n\rbrace=N_1(m_{n-1})+\sum_{i=1}^{C_1(m_{n-2})}\,N_1(v_i(m_{n-2})),\]
taken over all branch roots \(m_n\) with logarithmic orders \(n_\ast+2\le n\le n_\ast+\ell_\ast+\ell+1\).
Applied to \(n_\ast=11\), \(\ell_\ast=0\), and \(\ell=2\), this yields
\(\mathrm{wd}(\mathcal{T})=\max(15+(15+15),15+25)=\max(45,40)=45\).
Finally, we have \(\mathrm{IC}(\mathcal{T})=45+40=85\).
\end{proof}



\begin{corollary}
\label{cor:TKTd25TreeCc5}
\textbf{(Actions and relation ranks.)} \\
The algebraic invariants of the vertices of the structured coclass-\(5\) tree \(\mathcal{T}^5{R_4^5}\) are listed in Table
\ref{tbl:TKTd25TreeCc5}.
In particular:
\begin{enumerate}
\item
There are no groups with GI-action, let alone with RI- or \(V_4\)-action.
\item
The relation rank is given by
\(\mu=5\) for the mainline vertices \(\stackbin[0]{n-4,n}{G}\begin{pmatrix}0&1\\ 0&0\end{pmatrix}\) with \(n\ge 11\),
and the capable vertices \(\stackbin[0]{n-4,n}{G}\begin{pmatrix}0&1\\ \pm 1&0\end{pmatrix}\) of depth \(1\) with \(n\ge 12\), and
\(\mu=4\) otherwise.
\end{enumerate}
\end{corollary}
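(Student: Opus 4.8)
The plan is to reduce the infinite claim to the finite data already assembled in Table \ref{tbl:TKTd25TreeCc5}. First I would invoke Proposition \ref{prp:TKTd25TreeCc5}: the tree $\mathcal{T}^5{R_4^5}$ has no pre-period and is purely periodic with primitive period $(\mathcal{B}(11),\mathcal{B}(12))$ of length $\ell=2$. Moreover, by the Main Theorem on Strict Periodicity Isomorphisms of Branches (Theorem \ref{thm:FirstPeriod}), which applies verbatim to this tree since $R_4^5$ is one of the roots $R_j^5$ with $j=4$, both the relation rank $\mu$ and the action flag $\sigma$ are \emph{strict} invariants under the branch isomorphisms $\mathcal{B}(n)\simeq\mathcal{B}(n+2)$. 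Consequently every vertex of $\mathcal{T}^5{R_4^5}$ shares the values of $\mu$ and $\sigma$ with its image in $\mathcal{B}(11)$ or $\mathcal{B}(12)$, and it suffices to verify both assertions for the finitely many representatives recorded in Table \ref{tbl:TKTd25TreeCc5}.

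For part (1) I would read off the column $\sigma$ of Table \ref{tbl:TKTd25TreeCc5}, which is identically $0$. By Definition \ref{dfn:Automorphisms} and Formula \eqref{eqn:ActionFlag}, $\sigma(G)=0$ means precisely that $G$ possesses neither a GI-action nor a $V_4$-action; and since a relator inverting automorphism is in particular generator inverting, the absence of a GI-action already rules out any RI-action. Hence no vertex of the tree admits a GI-, RI- or $V_4$-action. The only non-routine input here is the computation of the $\sigma$-values themselves: the presence or absence of a $V_4$-action is detected by inspecting $\mathrm{Aut}(G)$ with MAGMA, and the (here vacuous) RI-check is carried out via the algorithm on the $p$-covering group of $G$, exactly as in the proof of Corollary \ref{cor:TKTd19Tree1Cc4}.

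For part (2) I would again consult Table \ref{tbl:TKTd25TreeCc5}. The value $\mu=5$ occurs exactly in the rows of the mainline vertices $G_0^{m,n}(0,1,0,0)$ and in the rows of the two families of capable depth-$1$ vertices of type $\mathrm{G}.19$ with $\rho=0$, realised by $G_0^{m,n}(0,1,\pm 1,0)$ (appearing for both even and odd logarithmic orders in the tabulated range), whereas every remaining vertex — the terminal leaves and the vertices with $\rho=\pm 1$ — carries $\mu=4$. Transporting this dichotomy along the periodicity isomorphisms of Proposition \ref{prp:TKTd25TreeCc5}, under which $\mu$ is strictly invariant, yields the assertion for all logarithmic orders $n\ge 11$.

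The main obstacle is organisational rather than conceptual: one must be sure that the rows of Table \ref{tbl:TKTd25TreeCc5} genuinely exhaust $\mathcal{B}(11)$ and $\mathcal{B}(12)$ up to the two invariants in question. This is guaranteed by the descendant counts established in Proposition \ref{prp:TKTd25TreeCc5} — the $45=15+(15+15)$ vertices of $\mathcal{B}(11)$ and the $40=15+25$ vertices of $\mathcal{B}(12)$, with depth uniformly bounded by $2$ — together with the fact that the branch isomorphisms of Proposition \ref{prp:TKTd25TreeCc5} are structured-digraph isomorphisms for $\mu$ and $\sigma$, which is exactly Theorem \ref{thm:FirstPeriod} specialised to $\mathcal{T}^5{R_4^5}$. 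Once these bookkeeping points are in place, the corollary follows immediately by reading off the finite table and extending by periodicity.
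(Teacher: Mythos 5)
Your proposal is correct and follows essentially the same route as the paper: the claims are read off from the finitely many representatives in Table \ref{tbl:TKTd25TreeCc5} (with the RI-question settled computationally via the \(p\)-covering group) and then continued indefinitely using the periodicity of Proposition \ref{prp:TKTd25TreeCc5}, under which \(\mu\) and \(\sigma\) are strict invariants. Your additional appeal to Theorem \ref{thm:FirstPeriod} is harmless and not circular, since that theorem does not depend on this corollary; only a minor slip occurs in your description of part (2), where at odd logarithmic order there is a single capable depth-\(1\) vertex rather than a pair.
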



\begin{proof}
(of Corollary
\ref{cor:TKTd25TreeCc5})
The existence of an RI-action on \(G\) has been checked by means of
an algorithm involving the \(p\)-covering group of \(G\), written for MAGMA
\cite{MAGMA}.
The other claims follow immediately from Table
\ref{tbl:TKTd25TreeCc5},
continued indefinitely with the aid of the periodicity in Proposition
\ref{prp:TKTd25TreeCc5}. 
\end{proof}



\section{The forest of \(3\)-groups with coclass \(1\)}
\label{s:Coclass1}
\noindent
The coclass forests \(\mathcal{F}(1)\) and \(\mathcal{F}(2)\),
and even their metabelian skeletons, sporadic parts, and individual coclass trees,
do not reveal any isomorphism to higher coclass forests \(\mathcal{F}(r)\), with \(r\ge 3\),
or parts of them.
The metabelian skeleton of the coclass forest \(\mathcal{F}(3)\)
is isomorphic to the metabelian skeleton of any coclass forest \(\mathcal{F}(r)\), with odd \(r\ge 5\),
according to Nebelung
\cite{Ne},
but neither its sporadic part nor its coclass trees
are isomorphic to the corresponding components of other coclass forests.

Whereas the complexity of the pre-periodic forests \(\mathcal{F}(2)\) and \(\mathcal{F}(3)\) is very high,
the simplest forest \(\mathcal{F}(1)\) can be described easily.
In particular,
the coclass-\(1\) forest \(\mathcal{F}(1)=\mathcal{T}^1{R_1^1}\)
coincides with its unique coclass tree
arising from the abelian root \(R_1^1:=\langle 9,2\rangle\simeq C_3\times C_3\).
Its sporadic part \(\mathcal{F}_0(1)=\emptyset\) is void.



\begin{figure}[hb]
\caption{The unique coclass-\(1\) tree \(\mathcal{T}^1\langle 9,2\rangle\) with mainline of type \(\mathrm{a}.1^\ast\)}
\label{fig:TKTa1TreeCc1}

\input{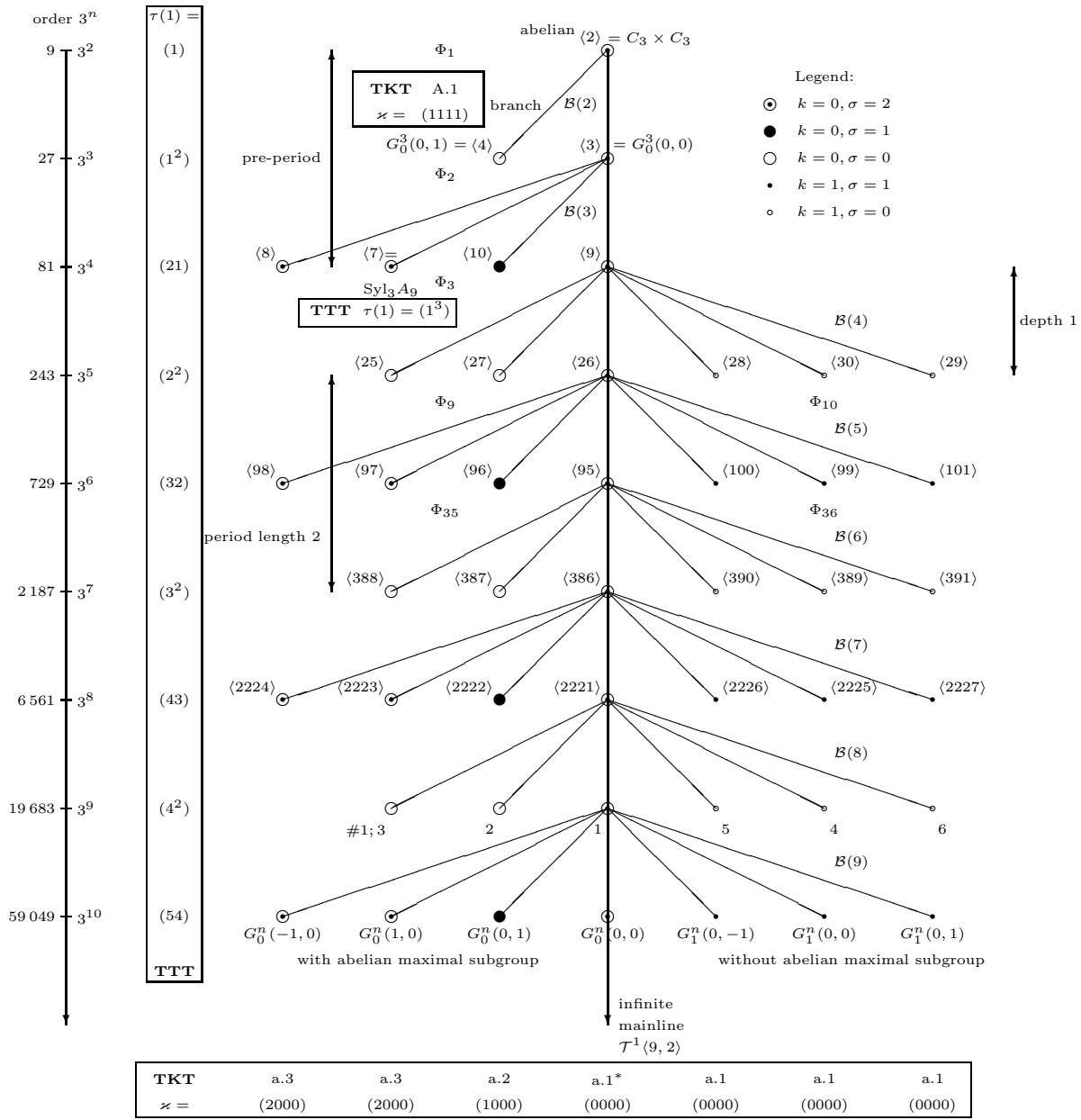}

\end{figure}



\subsection{The unique mainline of type \(\mathrm{a}.1^\ast\) for coclass \(r=1\)}
\label{ss:Coclass1}

\begin{proposition}
\label{prp:TKTa1TreeCc1}
\textbf{(Periodicity and descendant numbers.)} \\
The branches \(\mathcal{B}(i)\), \(i\ge n_\ast=2\),
of the coclass-\(1\) tree \(\mathcal{T}^1\langle 9,2\rangle\)
with mainline vertices of transfer kernel type \(\mathrm{a}.1^\ast\), \(\varkappa=(0000)\),
are periodic with pre-period length \(\ell_\ast=2\)
and with primitive period length \(\ell=2\), that is,
\(\mathcal{B}(i+2)\simeq\mathcal{B}(i)\) are isomorphic as digraphs,
for all \(i\ge p_\ast=n_\ast+\ell_\ast=4\).

The graph theoretic structure of the tree is determined uniquely by the numbers
\(N_1\) of immediate descendants and \(C_1\) of capable immediate descendants
of the mainline vertices \(m_n\) with logarithmic order \(n=\mathrm{lo}(m_n)\ge n_\ast=2\): \\
\((N_1,C_1)=(2,1)\) for the root \(m_2\) with \(n=2\), \\
\((N_1,C_1)=(4,1)\) for the mainline vertex \(m_3\) with \(n=3\), \\
\((N_1,C_1)=(6,1)\) for mainline vertices \(m_n\) with even logarithmic order \(n\ge 4\), \\
\((N_1,C_1)=(7,1)\) for mainline vertices \(m_n\) with odd logarithmic order \(n\ge 5\).
\end{proposition}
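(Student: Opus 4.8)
The plan is to mimic, in the especially simple coclass-$1$ situation, the scheme already used for the coclass-$4$ and coclass-$5$ trees in \S\ref{s:PeriodicSporadic4} and the proofs of Propositions \ref{prp:TKTb10TreeCc4} and \ref{prp:TKTd19Tree1Cc4}: first pin down the descendant numbers of the leading mainline vertices by explicit computation with the $p$-group generation algorithm, then establish that every branch has depth $1$, and finally invoke the virtual periodicity theorems of du Sautoy \cite{dS} and Eick--Leedham-Green \cite{EkLg} to propagate strict periodicity of the complete branches to all logarithmic orders, no depth-pruning being necessary because the depth is already bounded by $1$.

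First I would recall that every finite $3$-group of coclass $1$ is metabelian \cite{Bl1}, so the tree $\mathcal{T}^1\langle 9,2\rangle$ equals its metabelian skeleton and all its vertices carry a Blackburn presentation $G_a^n(z,w)$ as in \eqref{eqn:PresentationCc1}. The mainline is the path $(m_n)_{n\ge 2}$ with $m_2=\langle 9,2\rangle\simeq C_3\times C_3$, with $m_3=G_0^3(0,0)=\langle 27,3\rangle$ the extra special Blackburn group (Theorem \ref{thm:MainTrunk}(2)), and with $m_n=G_0^n(0,0)$ for $n\ge 3$; these are exactly the vertices of transfer kernel type $\mathrm{a}.1^\ast$, $\varkappa=(0000)$, i.e.\ with a single total transfer kernel. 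The crucial structural input is the depth bound: for every $n\ge 2$ the vertex $m_n$ has exactly one capable child, namely the next mainline vertex $m_{n+1}$, so $C_1(m_n)=1$ and, by \eqref{eqn:BranchDepth1} in Theorem \ref{thm:DescendantNumbers}, $\#\mathcal{B}(n)=N_1(m_n)$ with $\mathrm{dp}(\mathcal{B}(n))=1$; hence $\mathrm{dp}(\mathcal{T}^1\langle 9,2\rangle)=1$.

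Next I would run MAGMA on the branches $\mathcal{B}(2),\dots,\mathcal{B}(m)$ up to a logarithmic order $m$ comfortably past the onset of periodicity, reading off at the same time the pairs $(N_1,C_1)=(2,1)$ for $m_2$, $(4,1)$ for $m_3$, $(6,1)$ for $m_4$, $(7,1)$ for $m_5$, and the alternation $(6,1),(7,1),(6,1),\dots$ for the mainline vertices $m_n$ with $n\ge 4$ according to the parity of $n$. The two exceptional values $2$ and $4$ at $n=2,3$ force the pre-period length $\ell_\ast=2$, so the periodic root is $P=m_{p_\ast}$ with $p_\ast=n_\ast+\ell_\ast=4$, and the alternating pattern gives primitive period length $\ell=2$. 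Since $\mathrm{dp}(\mathcal{T}^1\langle 9,2\rangle)=1$, each branch $\mathcal{B}(i)$ is a star formed by $m_i$ together with $N_1(m_i)-1$ terminal children, so $\mathcal{B}(i+2)\simeq\mathcal{B}(i)$ as digraphs is equivalent to $N_1(m_{i+2})=N_1(m_i)$; this holds for all $i\ge p_\ast=4$ by the parity pattern, is verified directly by the computation for the small orders, and is guaranteed for all remaining $i$ by the virtual periodicity of branches \cite{dS,EkLg}.

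The main obstacle is the depth bound, i.e.\ the ``thinness'' of the coclass-$1$ graph at $p=3$ (in sharp contrast to $p=5,7$, where the analogous trees have unbounded depth, resp.\ unbounded width and depth): one must show that no off-mainline child of $m_n$ is capable. I would handle this exactly as for the other coclass trees, by computing the nuclear rank via the $p$-covering group in MAGMA for all branches up to $\mathcal{B}(m)$, and then using the periodicity theorems to transfer the vanishing of the nuclear rank of non-mainline children to arbitrarily large $n$. Once the depth is fixed at $1$, everything else in the statement is a mechanical consequence of Theorem \ref{thm:DescendantNumbers} and the tabulated descendant numbers.
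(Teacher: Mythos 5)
Your overall scheme (descendant numbers by finite computation, a depth-one bound, then an appeal to branch periodicity) is the one the paper uses for the coclass-\(4\) and coclass-\(5\) trees, but it is not the route the paper takes for this proposition, and as written it has a gap in the step that propagates the computed data to all \(n\). The virtual periodicity theorems of du Sautoy \cite{dS} and Eick--Leedham-Green \cite{EkLg} only assert that the (depth-pruned) branches become periodic from \emph{some} point on; they do not hand you an onset bound that a finite MAGMA run up to \(\mathcal{B}(m)\) could be matched against, so ``verified up to \(\mathcal{B}(m)\) plus virtual periodicity'' does not by itself prove that the alternation \((N_1,C_1)=(6,1),(7,1)\) persists for every \(n\ge 4\), nor that the periodicity sets in exactly at \(p_\ast=4\). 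The same difficulty affects your treatment of the depth bound: you propose to compute nuclear ranks of the off-mainline children up to a finite order and then ``transfer the vanishing by the periodicity theorems'', but those theorems provide graph isomorphisms of (possibly pruned) branches; whether the full branches coincide with their depth-one prunings for all \(n\) --- equivalently, whether every off-mainline child stays terminal --- is part of what has to be established, so invoking them here is circular.

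The paper closes exactly this gap with a theoretical input you do not use: Blackburn's classification of the \(3\)-groups of maximal class \cite[Thm.~4.2 and Thm.~4.3, p.~88]{Bl2}, organized by the defect of commutativity \(k\in\lbrace 0,1\rbrace\), which determines the immediate descendant numbers of every mainline vertex for all \(n\ge 3\) (four vertices with \(k=0\) in \(\mathcal{B}(3)\); \(3+3\) vertices in \(\mathcal{B}(4)\); \(4+3\) vertices in \(\mathcal{B}(5)\); all vertices metabelian), supplemented by Nebelung \cite[Thm.~5.1.17, pp.~179--180]{Ne} for \(N_1(m_2)=2\). In the paper MAGMA serves only as an independent verification, and the strict periodicity with \(\ell_\ast=2\) and \(\ell=2\) is then a consequence of Blackburn's results rather than an extrapolation from finitely many computed branches. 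If you replace your ``compute and extrapolate'' step by this appeal to Blackburn (or by any other determination of \(N_1(m_n)\) and of the terminality of the off-mainline vertices valid for every \(n\)), your argument becomes complete and essentially coincides with the paper's proof.
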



\begin{proof}
(of Proposition
\ref{prp:TKTa1TreeCc1})
The statements concerning the numbers \(N_1(m_n)\) of immediate descendants
of the mainline vertices \(m_n\) with \(n\ge 3\)
are due to Blackburn
\cite[Thm. 4.2 and Thm 4.3, p. 88]{Bl2},
who distinguishes the groups according to their defect of commutativity \(k=k(G)\in\lbrace 0,1\rbrace\),
which is defined by
\(\lbrack\chi_2{G},\gamma_2{G}\rbrack=\gamma_{c+1-k}{G}\)
in terms of the lower central series \((\gamma_j{G})_{j\ge 1}\),
nilpotency class \(c\ge 3\), and the two-step centralizer \(\chi_2{G}\) of \(G\).

In detail, Blackburn proved that there are \\
\(4\) vertices \(v\) with defect \(k(v)=0\) in the pre-periodic branch \(\mathcal{B}(3)\), \\
and the primitive period \((\mathcal{B}(4),\mathcal{B}(5))\) of length \(\ell=2\) consists of \\
\(3\) vertices \(v\) with defect \(k(v)=0\), and
\(3\) vertices \(v\) with defect \(k(v)=1\), \\
together \(6\) vertices in branch \(\mathcal{B}(4)\), and \\
\(4\) vertices \(v\) with defect \(k(v)=0\), and
\(3\) vertices \(v\) with defect \(k(v)=1\), \\
together \(7\) vertices in branch \(\mathcal{B}(5)\). All vertices of the tree are metabelian.

The results were reproduced and supplemented with \(N_1(m_2)=2\)
by Nebelung
\cite[Thm. 5.1.17, pp. 179--180]{Ne},
and have been verified by ourselves independently by direct computation with MAGMA
\cite{MAGMA},
where the \(p\)-group generation algorithm by Newman and O'Brien
\cite{Nm2,Ob,HEO}
is implemented.

Accordingly,
the pre-period \((\mathcal{B}(2),\mathcal{B}(3))\) of length \(\ell_\ast=2\) consists of \\
\(2\) vertices \(v\) with defect \(k(v)=0\) in branch \(\mathcal{B}(2)\), and \\
\(4\) vertices \(v\) with defect \(k(v)=0\) in branch \(\mathcal{B}(3)\).

The claim of the virtual periodicity of branches
has been proved generally for any coclass tree by du Sautoy
\cite{dS},
and independently by Eick and Leedham-Green
\cite{EkLg}.
Here, the strict periodicity is also a consequence of Blackburn's results,
and has been tested up to \(\mathcal{B}(25)\) computationally.
\end{proof}



\renewcommand{\arraystretch}{1.2}

\begin{table}[hb]
\caption{Data for \(3\)-groups \(G\) with \(2\le n=\mathrm{lo}(G)\le 6\) of the coclass tree \(\mathcal{T}^1\langle 9,2\rangle\)}
\label{tbl:TKTa1TreeCc1}
\begin{center}
\begin{tabular}{|c|c|l|l||c|c|c|c||c|c||c|c|l|c||c|c|}
\hline
 \(\#\) & \(n\) & \(a;z,w\)     & \(\langle 3^n,i\rangle\)          & \(k\) & dp & dl & \(\zeta\) & \(\mu\) & \(\nu\) & \(\tau(1)\) & \(\tau_2\) & Type                  & \(\varkappa\) & \(\sigma\) & \(\#\mathrm{Aut}\)  \\
\hline
  \(1\) & \(2\) & \(0;0,0\)     & \(\langle 3^2,2\rangle\)          & \(0\) & 0  & 1  & \(1^2\)   & \(3\)   & \(3\)   & \(1\)       & \(0\)      & \(\mathrm{a}.1^\ast\) & \((0000)\)    & \(2\)      & \(2^4\cdot 3\)      \\
\hline
  \(1\) & \(3\) & \(0;0,0\)     & \(\langle 3^3,3\rangle\)          & \(0\) & 0  & 2  & \(1\)     & \(4\)   & \(2\)   & \(1^2\)     & \(1\)      & \(\mathrm{a}.1^\ast\) & \((0000)\)    & \(2^\ast\) & \(2^4\cdot 3^{3}\)  \\
  \(1\) & \(3\) & \(0;0,1\)     & \(\langle 3^3,4\rangle\)          & \(0\) & 1  & 2  & \(1\)     & \(2\)   & \(0\)   & \(1^2\)     & \(1\)      & \(\mathrm{A}.1\)      & \((1111)\)    & \(0\)      & \(2\cdot 3^{3}\)    \\
\hline
  \(1\) & \(4\) & \(0;0,0\)     & \(\langle 3^4,9\rangle\)          & \(0\) & 0  & 2  & \(1\)     & \(4\)   & \(1\)   & \(21\)      & \(1^2\)    & \(\mathrm{a}.1^\ast\) & \((0000)\)    & \(2\)      & \(2^2\cdot 3^{5}\)  \\
  \(1\) & \(4\) & \(0;0,1\)     & \(\langle 3^4,10\rangle\)         & \(0\) & 1  & 2  & \(1\)     & \(3\)   & \(0\)   & \(21\)      & \(1^2\)    & \(\mathrm{a}.2\)      & \((1000)\)    & \(1^\ast\) & \(2\cdot 3^{5}\)    \\
  \(1\) & \(4\) & \(0;1,0\)     & \(\langle 3^4,7\rangle\)          & \(0\) & 1  & 2  & \(1\)     & \(3\)   & \(0\)   & \(1^3\)     & \(1^2\)    & \(\mathrm{a}.3\)      & \((2000)\)    & \(2^\ast\) & \(2^2\cdot 3^{4}\)  \\
  \(1\) & \(4\) & \(0;-1,0\)    & \(\langle 3^4,8\rangle\)          & \(0\) & 1  & 2  & \(1\)     & \(3\)   & \(0\)   & \(21\)      & \(1^2\)    & \(\mathrm{a}.3\)      & \((2000)\)    & \(2^\ast\) & \(2^2\cdot 3^{4}\)  \\
\hline
  \(1\) & \(5\) & \(0;0,0\)     & \(\langle 3^5,26\rangle\)         & \(0\) & 0  & 2  & \(1\)     & \(4\)   & \(1\)   & \(2^2\)     & \(21\)     & \(\mathrm{a}.1^\ast\) & \((0000)\)    & \(2^\ast\) & \(2^2\cdot 3^{7}\)  \\
  \(1\) & \(5\) & \(0;0,1\)     & \(\langle 3^5,27\rangle\)         & \(0\) & 1  & 2  & \(1\)     & \(3\)   & \(0\)   & \(2^2\)     & \(21\)     & \(\mathrm{a}.2\)      & \((1000)\)    & \(0\)      & \(2\cdot 3^{7}\)    \\
  \(1\) & \(5\) & \(0;1,0\)     & \(\langle 3^5,25\rangle\)         & \(0\) & 1  & 2  & \(1\)     & \(3\)   & \(0\)   & \(2^2\)     & \(21\)     & \(\mathrm{a}.3\)      & \((2000)\)    & \(0\)      & \(2\cdot 3^{6}\)    \\
  \(3\) & \(5\) & \(1;0,w\)     & \(\langle 3^5,28..30\rangle\)     & \(1\) & 1  & 2  & \(1\)     & \(3\)   & \(0\)   & \(21\)      & \(21\)     & \(\mathrm{a}.1\)      & \((0000)\)    & \(0\)      & \(2\cdot 3^{6}\)    \\
\hline
  \(1\) & \(6\) & \(0;0,0\)     & \(\langle 3^6,95\rangle\)         & \(0\) & 0  & 2  & \(1\)     & \(4\)   & \(1\)   & \(32\)      & \(2^2\)    & \(\mathrm{a}.1^\ast\) & \((0000)\)    & \(2\)      & \(2^2\cdot 3^{9}\)  \\
  \(1\) & \(6\) & \(0;0,1\)     & \(\langle 3^6,96\rangle\)         & \(0\) & 1  & 2  & \(1\)     & \(3\)   & \(0\)   & \(32\)      & \(2^2\)    & \(\mathrm{a}.2\)      & \((1000)\)    & \(1^\ast\) & \(2\cdot 3^{9}\)    \\
  \(2\) & \(6\) & \(0;\pm 1,0\) & \(\langle 3^6,97\vert 98\rangle\) & \(0\) & 1  & 2  & \(1\)     & \(3\)   & \(0\)   & \(32\)      & \(2^2\)    & \(\mathrm{a}.3\)      & \((2000)\)    & \(2^\ast\) & \(2^2\cdot 3^{8}\)  \\
  \(3\) & \(6\) & \(1;0,w\)     & \(\langle 3^6,99..101\rangle\)    & \(1\) & 1  & 2  & \(1\)     & \(3\)   & \(0\)   & \(2^2\)     & \(2^2\)    & \(\mathrm{a}.1\)      & \((0000)\)    & \(1^\ast\) & \(2\cdot 3^{8}\)    \\
\hline
\end{tabular}
\end{center}
\end{table}



\begin{theorem}
\label{thm:TKTa1TreeCc1}
\textbf{(Graph theoretic and algebraic invariants.)} \\
The coclass-\(1\) tree \(\mathcal{T}:=\mathcal{T}^1{R_1^1}\)
of all finite \(3\)-groups \(G\not\simeq C_9\) with coclass \(\mathrm{cc}(G)=1\)
arises from the abelian root \(R_1^1:=\langle 9,2\rangle\simeq C_3\times C_3\)
and has the following graph theoretic properties.
\begin{enumerate}
\item
The pre-period \((\mathcal{B}(2),\mathcal{B}(3))\) of length \(\ell_\ast=2\) is irregular.
\item
The cardinalities of the irregular branches are
\(\#\mathcal{B}(2)=2\) and \(\#\mathcal{B}(3)=4\).
\item
The branches \(\mathcal{B}(i)\), \(i\ge 4\), are periodic
with primitive period \((\mathcal{B}(4),\mathcal{B}(5))\) of length \(\ell=2\).
\item
The cardinalities of the regular branches are
\(\#\mathcal{B}(4)=6\) and \(\#\mathcal{B}(5)=7\).
\item
Depth, width, and information content of the tree are given by
\begin{equation}
\mathrm{dp}(\mathcal{T}^1{R_1^1})=1, \quad \mathrm{wd}(\mathcal{T}^1{R_1^1})=7, \quad \text{ and } \quad \mathrm{IC}(\mathcal{T}^1{R_1^1})=19.
\end{equation}
\end{enumerate}
The algebraic invariants of the groups represented by vertices forming
the pre-period \((\mathcal{B}(2),\mathcal{B}(3))\)
and the primitive period \((\mathcal{B}(4),\mathcal{B}(5))\) of the tree
are given in Table
\ref{tbl:TKTa1TreeCc1}.
The leading eight branches \(\mathcal{B}(2),\ldots,\mathcal{B}(9)\) are drawn in Figure
\ref{fig:TKTa1TreeCc1}.
All vertices of the tree are metabelian.
\end{theorem}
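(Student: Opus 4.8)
The plan is to obtain the graph-theoretic assertions (1)--(5) entirely from Proposition \ref{prp:TKTa1TreeCc1} combined with the general counting machinery of Section \ref{s:GraphStruc}, and to justify the algebraic data of Table \ref{tbl:TKTa1TreeCc1} from Blackburn's parametrized presentations \eqref{eqn:PresentationCc1} together with the strict periodicity already in hand; the metabelianity statement is classical.

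First I would note that Proposition \ref{prp:TKTa1TreeCc1} gives \(C_1(m_n)=1\) for every mainline vertex \(m_n\), \(n\ge n_\ast=2\). Hence the unique capable child of \(m_n\) is the next mainline vertex \(m_{n+1}\), every other child of \(m_n\) is terminal, each branch \(\mathcal{B}(n)\) has \(\mathrm{dp}(\mathcal{B}(n))=1\), and therefore \(\mathrm{dp}(\mathcal{T})=1\), which is the depth assertion in (5). With \(\mathrm{dp}(\mathcal{T})=1\), Formula \eqref{eqn:BranchDepth1} of Theorem \ref{thm:DescendantNumbers} yields \(\#\mathcal{B}(n)=N_1(m_n)\); substituting the values \(N_1(m_2)=2\), \(N_1(m_3)=4\), \(N_1(m_4)=6\), \(N_1(m_5)=7\) from Proposition \ref{prp:TKTa1TreeCc1} gives claims (2) and (4), while the mismatch between the pre-period values \((2,4)\) and the periodic pattern \((6,7)\) establishes the irregularity statements in (1) and (3); the periodicity itself, with \(\ell_\ast=2\) and \(\ell=2\), is Proposition \ref{prp:TKTa1TreeCc1}. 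For the width I would apply Formula \eqref{eqn:TreeWidth1} of Corollary \ref{cor:TreeWidth} with \(d=1\), \(n_\ast=2\), \(\ell_\ast=\ell=2\), so that \(n\) runs from \(3\) to \(6\) and \(\mathrm{wd}(\mathcal{T})=\max\{N_1(m_{n-1}):3\le n\le 6\}=\max\{2,4,6,7\}=7\). For the information content I would use Definition \ref{dfn:InfoCont} with \(p_\ast=n_\ast+\ell_\ast=4\): \(\mathrm{IC}(\mathcal{T})=(\#\mathcal{B}(2)+\#\mathcal{B}(3))+(\#\mathcal{B}(4)+\#\mathcal{B}(5))=(2+4)+(6+7)=19\). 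This completes (5).

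For the algebraic invariants displayed in Table \ref{tbl:TKTa1TreeCc1} I would proceed exactly as in the proofs of Theorems \ref{thm:TKTb10TreeCc4} and \ref{thm:TKTb10TreeCc5}: represent each vertex by the normalized Blackburn presentation \(G_a^n(z,w)\) of \eqref{eqn:PresentationCc1}, recording the defect of commutativity \(k=k(G)\in\{0,1\}\), and compute depth, derived length, centre type \(\zeta\), relation rank \(\mu\), nuclear rank \(\nu\), the quotient invariants \(\tau(1)\) and \(\tau_2\), the transfer kernel type \(\varkappa\), the action flag \(\sigma\), and the factorized order \(\#\mathrm{Aut}\) by MAGMA scripts. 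For \(n=2,\dots,6\) this is a finite verification, and strict periodicity (Proposition \ref{prp:TKTa1TreeCc1}) together with the class-dependent transformation laws — \(\tau(1)\sim A(3,c-k)\) and \(\tau_2\sim A(3,c-1)\times A(3,r-1)\) with \(r=1\) — extends the data to all higher branches. That every vertex is metabelian is the classical fact that a finite \(3\)-group of coclass \(1\) is metabelian (Blackburn \cite{Bl1}); this is also why no pruning occurs and \(\mathcal{F}(1)=\mathcal{T}^1\langle 9,2\rangle\).

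I do not expect a genuine obstacle here: the coclass-\(1\) case is the simplest instance of the framework built up in Sections \ref{s:GraphStruc}--\ref{s:MainTrunk}, and is already encompassed by Blackburn's and Nebelung's classifications. The only mildly delicate point is purely clerical — checking that the period-\(2\) alternation of the defect \(k\in\{0,1\}\) predicted by Proposition \ref{prp:TKTa1TreeCc1} reproduces the alternating pattern of \(\tau(1)\) and \(\sigma\)-values tabulated in Table \ref{tbl:TKTa1TreeCc1} across a full primitive period, which amounts to inspecting the rows for \(n=4\) and \(n=5\) and invoking the transformation laws.
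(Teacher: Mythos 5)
Your proposal is correct and follows essentially the same route as the paper: depth \(1\) from \(C_1(m_n)=1\) in Proposition \ref{prp:TKTa1TreeCc1}, branch cardinalities from Formula \eqref{eqn:BranchDepth1}, width from Formula \eqref{eqn:TreeWidth1}, information content from Definition \ref{dfn:InfoCont}, and the tabulated algebraic invariants by computation on the Blackburn presentations \eqref{eqn:PresentationCc1}, extended by periodicity. The appeal to Blackburn \cite{Bl1} for metabelianity likewise matches the paper's treatment.
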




\begin{proof}
(of Theorem
\ref{thm:TKTa1TreeCc1})
According to Proposition
\ref{prp:TKTa1TreeCc1},
the logarithmic order of the tree root, respectively of the periodic root, is
\(n_\ast=2\), respectively \(p_\ast=n_\ast+\ell_\ast=4\).

Since \(C_1(m_n)=1\) for all mainline vertices \(m_n\) with \(n\ge n_\ast\),
according to Proposition
\ref{prp:TKTa1TreeCc1},
the unique capable child of \(m_n\) is \(m_{n+1}\),
and each branch has depth \(\mathrm{dp}(\mathcal{B}(n))=1\), for \(n\ge n_\ast\).
Consequently, the tree is also of depth \(\mathrm{dp}(\mathcal{T})=1\).

With the aid of Formula
\eqref{eqn:BranchDepth1}
in Theorem
\ref{thm:DescendantNumbers},
the claims (2) and (4) are consequences of Proposition
\ref{prp:TKTa1TreeCc1}:\\
\(\#\mathcal{B}(2)=N_1(m_2)=2\),
\(\#\mathcal{B}(3)=N_1(m_3)=4\),
\(\#\mathcal{B}(4)=N_1(m_4)=6\), and
\(\#\mathcal{B}(5)=N_1(m_5)=7\).

According to Formula
\eqref{eqn:TreeWidth1}
in Corollary
\ref{cor:TreeWidth},
where \(n\) runs from \(n_\ast+1=3\) to \(n_\ast+\ell_\ast+\ell+0=6\),
the tree width is the maximum \(\mathrm{wd}(\mathcal{T})=7\)
of the expressions \(N_1(m_2)=2\), \(N_1(m_3)=4\), \(N_1(m_4)=6\), and \(N_1(m_5)=7\).

The information content of the tree is given by Formula
\eqref{eqn:InfoCont}
in the Definition
\ref{dfn:InfoCont}: \\
\(\mathrm{IC}(\mathcal{T})=(\#\mathcal{B}(2)+\#\mathcal{B}(3))+(\#\mathcal{B}(4)+\#\mathcal{B}(5))=(2+4)+(6+7)=19\).

The algebraic invariants in Table
\ref{tbl:TKTa1TreeCc1},
that is,
defect of commutativity \(k\),
depth \(\mathrm{dp}\),
derived length \(\mathrm{dl}\),
abelian type invariants of the centre \(\zeta\),
relation rank \(\mu\),
nuclear rank \(\nu\),
abelian quotient invariants \(\tau(1)\) of the first maximal subgroup,
respectively \(\tau_2\) of the commutator subgroup,
transfer kernel type \(\varkappa\),
action flag \(\sigma\), and
the factorized order \(\#\mathrm{Aut}\) of the automorphism group
have been computed by means of program scripts written for MAGMA
\cite{MAGMA}.

Each group is characterized by the parameters
of the normalized representative \(G_a^n(z,w)\) of its isomorphism class,
according to Formula
\eqref{eqn:PresentationCc1},
and by its identifier \(\langle 3^n,i\rangle\) in the SmallGroups Database
\cite{BEO}.

The column with header \(\#\) contains
the number of groups with identical invariants (except the presentation and identifier),
for each row.
\end{proof}


\begin{corollary}
\label{cor:TKTa1TreeCc1}
\textbf{(Actions and relation ranks.)} \\
The algebraic invariants of the vertices of the structured coclass-\(1\) tree \(\mathcal{T}^1{R}\)
with abelian root \(R=\langle 9,2\rangle\simeq C_3\times C_3\),
which is drawn in Figure
\ref{fig:TKTa1TreeCc1},
are listed in Table
\ref{tbl:TKTa1TreeCc1}.
In particular:
\begin{enumerate}
\item
The groups with \(V_4\)-action are
the root \(R\),
all mainline vertices \(G_0^n(0,0)\), \(n\ge 3\),
and the terminal vertices \(G_0^n(\pm 1,0)\) with even logarithmic order \(n\ge 4\).
\item
With respect to the transfer kernel types, 
all mainline groups of type \(\mathrm{a}.1^\ast\), \(\varkappa=(0000)\),
and the leaves of type \(\mathrm{a}.3\), \(\varkappa\sim (2000)\), with odd class \(c=n-1\ge 3\),
possess a \(V_4\)-action.
\item
The relation rank is given by
\(\mu=4\) for the mainline vertices \(G_0^n(0,0)\), \(n\ge 3\),
\(\mu=2\) for the terminal extraspecial group \(G_0^3(0,1)\), and
\(\mu=3\) otherwise.
\item
All terminal vertices with odd class,
and the mainline vertices with even class,
possess an RI-action.
The terminal vertices with odd class are Schur\(+1\) \(\sigma\)-groups
\cite{BBH,BBH2}.
\end{enumerate}
\end{corollary}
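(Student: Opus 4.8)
The plan is to reduce everything to a finite computation over the pre-period and one primitive period of $\mathcal{T}^1\langle 9,2\rangle$, and then to propagate by the strict periodicity of Proposition \ref{prp:TKTa1TreeCc1}. By Theorem \ref{thm:TKTa1TreeCc1} every vertex of logarithmic order $2\le n\le 6$ is represented by exactly one normalized presentation $G_a^n(z,w)$ of Formula \eqref{eqn:PresentationCc1}, with its action flag $\sigma$ (including the RI-star), relation rank $\mu$, transfer kernel type $\varkappa$ and defect of commutativity $k$ as recorded in Table \ref{tbl:TKTa1TreeCc1}. Since, by Blackburn's explicit parametrization \cite[Thm. 4.2 and 4.3]{Bl2}, the branch isomorphisms $\mathcal{B}(n+2)\simeq\mathcal{B}(n)$ of Proposition \ref{prp:TKTa1TreeCc1} are realized by simply shifting the nilpotency-class parameter inside the family $G_a^n(z,w)$, they are $\phi$-isomorphisms of structured digraphs for which $\sigma()$, $\mu()$, $\varkappa()$ and $k()$ are \emph{strict} invariants (cf. Theorem \ref{thm:FirstPeriod}); hence each of the four claims for arbitrary $n$ follows once it is verified for $n\le 6$.

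For claims (1) and (2) I would, in the base range $2\le n\le 6$, exhibit in each relevant case two commuting involutory automorphisms generating a subgroup $C_2\times C_2\le\mathrm{Aut}(G)$: for the abelian root $R=C_3\times C_3$ this is the elementary fact that $\mathrm{Aut}(R)=\mathrm{GL}(2,3)$ contains a Klein four-group; for the symmetric mainline presentations $G_0^n(0,0)$ one takes the generator-inverting automorphism $\sigma\colon x\mapsto x^{-1},\ y\mapsto y^{-1}$ together with a second involution coming from the diagonal action $x\mapsto x^{\epsilon_1},\ y\mapsto y^{\epsilon_2}$, $\epsilon_i\in\{\pm1\}$, which extends to $G$ precisely because the power relations degenerate ($z=w=0$); and for the type-$\mathrm{a}.3$ leaves $G_0^n(\pm1,0)$ with even $n$ an analogous pair, using that for even $n$ the relation $x^3=s_{n-1}^{0}=1$ leaves enough symmetry. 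Conversely, for the flag-$0$ vertices — the odd-$n$ a.3 leaves $G_0^5(\pm1,0)$, all type-$\mathrm{a}.2$ leaves $G_0^n(0,1)$, and the $k=1$ groups $G_1^n(0,w)$ — I would exclude a Klein four-subgroup either by divisibility of $\#\mathrm{Aut}(G)$ read from Table \ref{tbl:TKTa1TreeCc1}, or, in the cases where $\mathrm{Aut}(G)$ does contain an element of order $4$, by the classical description of automorphism groups of coclass-$1$ $3$-groups; Remark \ref{rmk:Automorphisms} is exactly the trap to be avoided here. Translating parameters into transfer kernel types (mainline $\leftrightarrow\mathrm{a}.1^\ast$; $G_0^n(\pm1,0)\leftrightarrow\mathrm{a}.3$ with class $c=n-1$) then yields (2).

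Claim (3) is the most mechanical: the relation rank equals the rank of the $3$-multiplicator, computed from the $3$-covering group for each of the finitely many base-case presentations. The exponent-$3$ Heisenberg group $G_0^3(0,0)=\langle 27,3\rangle$ gives $\mu=4$, the exponent-$9$ extraspecial group $G_0^3(0,1)=\langle 27,4\rangle$ the classical metacyclic value $\mu=2$, the root $\mu=3$, and every other base-case vertex $\mu=3$; the strict periodicity then propagates $\mu=4$ to the entire mainline $(G_0^n(0,0))_{n\ge 3}$ and $\mu=3$ to all remaining vertices.

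For claim (4) I would run the $p$-covering-group test for an RI-action on the base-case groups: $G$ admits an RI-action iff the generator-inverting automorphism $\sigma$ lifts to the $3$-covering group $G^\ast$ acting as inversion on the $3$-multiplicator $G^\ast/G$, which is the criterion implemented in MAGMA \cite{MAGMA}. Its output is an RI-action exactly for the mainline vertices of even class and the terminal vertices of odd class, and this parity pattern is then carried to all $n$ by periodicity. Combining with claim (3), a terminal vertex of odd class has generator rank $2$ and relation rank $\mu=3=2+1$, so, being a $\sigma$-group with RI-action, it is a Schur$+1$ $\sigma$-group in the sense of \cite{BBH,BBH2}. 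I expect the genuine obstacle to be precisely this last point: unlike the GI-action, which is already visible on the abelianized presentation, the RI-action cannot be detected without passing to the $3$-covering group, so the honest work is to justify the correctness of that criterion and to check that its (finite) output indeed respects the stated parity dichotomy — after which Proposition \ref{prp:TKTa1TreeCc1} does the rest.
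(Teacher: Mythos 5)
Your proposal is correct and follows essentially the same route as the paper: the paper's proof simply reads the base-case data from Table \ref{tbl:TKTa1TreeCc1} (computed with MAGMA, the RI-flag via the \(p\)-covering-group algorithm), continues it indefinitely by the periodicity of Proposition \ref{prp:TKTa1TreeCc1}, and invokes the characterization of Schur\(+1\) \(\sigma\)-groups by RI-action and relation rank \(\mu\le 3\). Your additional remarks — explicit \(V_4\)-constructions from the Blackburn presentations and the observation that the branch isomorphisms leave \(\sigma\), \(\mu\), \(\varkappa\), \(k\) strictly invariant — merely make explicit what the paper leaves implicit in citing the table and Blackburn, so there is no substantive divergence.
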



\begin{proof}
(of Corollary
\ref{cor:TKTa1TreeCc1})
The existence of an RI-action on \(G\) has been checked by means of
an algorithm involving the \(p\)-covering group of \(G\), written for MAGMA
\cite{MAGMA}.
The other claims follow immediately from Table
\ref{tbl:TKTa1TreeCc1},
continued indefinitely with the aid of the periodicity in Proposition
\ref{prp:TKTa1TreeCc1}.
A Schur\(+1\) \(\sigma\)-group has an RI-action and relation rank \(\mu\le 3\)
\cite{BBH,BBH2}.
\end{proof}



\section{Conclusion}
\label{s:Conclusion}
\noindent
In the core sections
\ref{s:PeriodicSporadic4}
and
\ref{s:PeriodicSporadic5}
of this paper,
we have elaborated our long desired proof that
the pruned tree of all finite \(3\)-groups with elementary bicyclic commutator quotient,
which do not arise as descendants of non-metabelian groups,
can be described with a finite amount of data.

\begin{theorem}
\label{thm:CoclassForestsIC}
\textbf{(Main Theorem on the Finite Information Content.)} \\
The total information content of the coclass forest \(\mathcal{F}(r)\) is given by
\begin{equation}
\label{eqn:CoclassForestsIC}
\mathrm{IC}(\mathcal{F}(r))=
\begin{cases}
19 & \text{ for } r=1, \\
739=515+224 & \text{ for } r=4, \\
501=207+294 & \text{ for each odd } r\ge 5, \\
581=357+224 & \text{ for each even } r\ge 6.
\end{cases}
\end{equation}
\end{theorem}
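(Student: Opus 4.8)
The plan is to assemble $\mathrm{IC}(\mathcal{F}(r))$ out of the pieces already determined in Sections \ref{s:PeriodicSporadic4}, \ref{s:PeriodicSporadic5} and \ref{s:Coclass1}, using the decomposition $\mathcal{F}(r)=\mathcal{F}_0(r)\,\dot{\cup}\,\bigl(\dot{\bigcup}_{j=1}^{t}\,\mathcal{T}^r_j\bigr)$ of Definition \ref{dfn:Forests}. Since the sporadic part is finite, its information content equals its cardinality $\#\mathcal{F}_0(r)$, while the information content of a coclass tree is the sum of its branch cardinalities over the pre-period and the primitive period (Definition \ref{dfn:InfoCont}); thus $\mathrm{IC}(\mathcal{F}(r))$ is the cardinality of the sporadic part plus the information contents of the coclass trees, each distinct isomorphism type of tree being counted only once. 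The proof is therefore a bookkeeping argument, once the relevant numbers are collected and the identifications among the trees are made precise. Note that the formula deliberately omits $r=2,3$, which are the high-complexity pre-periodic forests treated separately; these need no argument here.

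First I would dispose of $r=1$: here $\mathcal{F}(1)=\mathcal{T}^1\langle 9,2\rangle$ is a single coclass tree with empty sporadic part, and Theorem \ref{thm:TKTa1TreeCc1} already records $\mathrm{IC}=(\#\mathcal{B}(2)+\#\mathcal{B}(3))+(\#\mathcal{B}(4)+\#\mathcal{B}(5))=(2+4)+(6+7)=19$. For the standard even case $r\ge 6$, Theorem \ref{thm:SporCc4} (via Proposition \ref{prp:SporCc6}) gives $\#\mathcal{F}_0(r)=357$, and the six coclass trees fall, up to isomorphism of structured in-trees, into three types: the unique mainline of type $\mathrm{b}.10^\ast$ with $\mathrm{IC}=78$ (Theorem \ref{thm:TKTb10TreeCc4}); the two strictly isomorphic trees of type $\mathrm{d}.19^\ast$ together with the almost strictly isomorphic tree of type $\mathrm{d}.23^\ast$, all with $\mathrm{IC}=89$ (Theorems \ref{thm:TKTd19Tree1Cc4}, \ref{thm:TKTd19Tree2Cc4} and Proposition \ref{prp:TKTd19and23TreeCc4}); and the two strictly isomorphic trees of type $\mathrm{d}.25^\ast$ with $\mathrm{IC}=57$ (Theorems \ref{thm:TKTd25Tree1Cc4}, \ref{thm:TKTd25Tree2Cc4}). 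Hence the coclass-tree part contributes $78+89+57=224$, and $\mathrm{IC}(\mathcal{F}(r))=357+224=581$; the co-periodicity of both the sporadic parts and the coclass trees (Theorems \ref{thm:FirstPeriod} and \ref{thm:SecondPeriod}) makes this value independent of the even $r\ge 6$. For $r=4$ the only difference is the pre-periodic enlargement $\#\mathcal{F}_0(4)=515$ from Proposition \ref{prp:SporCc4}, while the three coclass-tree types are unchanged, so $\mathrm{IC}(\mathcal{F}(4))=515+224=739$ and the excess $739-581=158$ matches $515-357$.

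For odd $r\ge 5$ the pattern is the same but now all four coclass trees are pairwise non-isomorphic: type $\mathrm{b}.10^\ast$ with $\mathrm{IC}=94$ (Theorem \ref{thm:TKTb10TreeCc5}), type $\mathrm{d}.19^\ast$ with $\mathrm{IC}=75$ (Theorem \ref{thm:TKTd19TreeCc5}), type $\mathrm{d}.23^\ast$ with $\mathrm{IC}=40$ (Theorem \ref{thm:TKTd23TreeCc5}), and type $\mathrm{d}.25^\ast$ with $\mathrm{IC}=85$ (Theorem \ref{thm:TKTd25TreeCc5}), so the coclass-tree part contributes $94+75+40+85=294$. With $\#\mathcal{F}_0(r)=207$ from Theorem \ref{thm:SporCc5} this gives $\mathrm{IC}(\mathcal{F}(r))=207+294=501$, again independent of the odd $r\ge 5$ by the co-periodicity theorems.

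The main obstacle is not the arithmetic but the careful accounting of which coclass trees carry the same data: in the even case I would have to invoke the strict isomorphism of the two $\mathrm{d}.19^\ast$ trees (resp. the two $\mathrm{d}.25^\ast$ trees) and the almost strict isomorphism of the $\mathrm{d}.23^\ast$ tree to a $\mathrm{d}.19^\ast$ tree, and check that these identifications hold at the level of the underlying structured digraph so that no branch cardinality is double-counted; in the odd case one must conversely confirm that the four trees are genuinely distinct — they are already separated by the descendant numbers $N_1$ of their mainline vertices in Propositions \ref{prp:TKTb10TreeCc5}, \ref{prp:TKTd19TreeCc5}, \ref{prp:TKTd23TreeCc5}, \ref{prp:TKTd25TreeCc5}. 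A secondary, easier point is to observe that the isomorphisms of structured in-trees in Theorems \ref{thm:FirstPeriod} and \ref{thm:SecondPeriod} preserve branch cardinalities and hence transport the information content unchanged from $r$ to $r+2$, which is what legitimates passing from the computed cases $r\in\{1,4,5,6\}$ to all admissible $r$.
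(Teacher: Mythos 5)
Your proposal is correct and follows essentially the same route as the paper: the paper's proof simply states \(\mathrm{IC}(\mathcal{F}(r))=\#\mathcal{F}_0(r)+\sum_i\mathrm{IC}(\mathcal{T}^rR_i^r)\) taken over pairwise non-isomorphic coclass trees, with the same inputs you cite (\(\#\mathcal{F}_0(r)\) from Theorems \ref{thm:SporCc4} and \ref{thm:SporCc5}, tree information contents \(78+89+57=224\) resp. \(94+75+40+85=294\), and \(19\) for \(r=1\)), and the co-periodicity results to extend the computed cases to all \(r\). Your explicit bookkeeping of which trees are identified (the strict isomorphisms among the \(\mathrm{d}.19^\ast\) and \(\mathrm{d}.25^\ast\) trees and the nearly strict isomorphism of the \(\mathrm{d}.23^\ast\) tree) just makes precise what the paper leaves implicit in the phrase ``pairwise non-isomorphic.''
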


\begin{proof}
The total information content of a coclass forest
is the sum of
the cardinality of its sporadic part \(\mathcal{F}_0(r)\) and
the information contents of its \textit{pairwise non-isomorphic} coclass trees \(\mathcal{T}^rR_i^r\)
\[\mathrm{IC}(\mathcal{F}(r))=\#\mathcal{F}_0(r)+\sum_i\,\mathrm{IC}(\mathcal{T}^rR_i^r).\qedhere\]
\end{proof}

\noindent
Due to the exceptional complexity of the pre-periodic forests \(\mathcal{F}(2)\) and \(\mathcal{F}(3)\),
their information contents are unknown up to now.
However, since they are certainly finite,
this does not obfuscate our clear and beautiful results
concerning the infinitely many co-periodic forests \(\mathcal{F}(r)\) with \(r\ge 4\),
which can be reduced to the finite information content of the primitive co-period \((\mathcal{F}(5),\mathcal{F}(6))\).



\section{Acknowledgement}
\label{s:Thanks}
\noindent
We gratefully acknowledge that our research was supported by
the Austrian Science Fund (FWF): Project P 26008-N25.
Indebtedness is expressed to the anonymous referees
for valuable suggestions concerning the readability
and, in particular, for drawing our attention to the paper
\cite{Ek}.




\end{document}